\documentclass[11pt]{amsart}
\usepackage{geometry}                
\geometry{letterpaper}                   
\usepackage{graphicx}
\usepackage{amssymb}
\usepackage{epstopdf}
\usepackage{xcolor}

\DeclareGraphicsRule{.tif}{png}{.png}{`convert #1 `dirname
#1`/`basename #1 .tif`.png}
\parskip 1.2 ex

\bibliographystyle{plain}

\theoremstyle{plain}
\newtheorem{theo}{Theorem}[section]
\newtheorem{lem}[theo]{Lemma}
\newtheorem{cor}[theo]{Corollary}
\newtheorem{prop}[theo]{Proposition}

\numberwithin{equation}{section}

\theoremstyle{definition}

\newtheorem{remark}[theo]{Remark}

\def\e{\varepsilon}

\def\vf{\varphi}
\def\a{\alpha}

\def\P{\mathfrak P}

\def\d{\delta}

\def\D{\Delta}

\def\g{\gamma}

\def\LL{\mathcal L}
\def\pp{\partial}
\def\l{\lambda}

\def\s{\sigma}

\def\x{\times}
\def \R{\mathbb R}

\def\E{\mathbb E}
\def \Z{\mathbb Z}

\def \P{\mathbb P}
\def\ov{\overline}
\def\un{\underline}

\def\u{\underline}

\def\p{\phi}

\def\om{\omega}
\def\Om{\Omega}

\def\A{\mathcal A}

\def\Q{\mathbb Q}
\def\W{\mathcal W}

\def\wt{\widetilde}
\def\({\biggl(}
\def\){\biggr)}

\def\<{\bold\langle}
\def\>{\bold\rangle}

\def\LL{{{\mathcal L}}}

\def\M{\widetilde {M}}

\def\bfv{{\bf v}}
\def\bfp{{\bf p}}
\def\bfg{{\bf G}}

\def\p{\varpi}

\definecolor{Lin}{RGB}{220, 30, 70}
\definecolor{Francois}{RGB}{70,30, 220}

\begin{document}

\title[Differentiating the stochastic entropy in negatively curved spaces]{Differentiating the stochastic entropy for compact negatively  curved spaces under conformal changes}
\author{Fran\c cois Ledrappier  and  Lin Shu}
\address{Fran\c cois Ledrappier,  Department of Mathematics, University of Notre Dame, IN 46556-4618, USA and  LPMA, Bo\^{i}te Courrier 188, 4, Place Jussieu, 75252 PARIS cedex
05, France}\email{fledrapp@nd.edu}
\address{Lin Shu, LMAM, School
of Mathematical Sciences, Peking University, Beijing 100871,
People's Republic of China} \email{lshu@math.pku.edu.cn}
\subjclass[2010]{37D40, 58J65}
\keywords{linear drift, locally symmetric space, stochastic entropy}

\maketitle
\begin{abstract}
We consider the universal cover  of a closed connected Riemannian
manifold of negative sectional curvature. We show that the linear
drift and  the stochastic entropy are differentiable under any $C^3$
one-parameter family of $C^3$ conformal changes of the original
metric.{ \scriptsize\hspace*{-2cm}\tableofcontents}
\end{abstract}

\section{Introduction}\label{Sec-intro}
Let $(M, g)$ be an
$m$-dimensional closed connected Riemannian manifold, and $\pi:
(\wt{M}, \wt{g})\to (M, g)$ its universal cover endowed with the
lifted Riemannian metric. The fundamental group $G=\pi_1(M)$ acts on
$\wt{M}$ as isometries such that $M=\wt{M}/G$.

We consider the Laplacian  $\Delta:={\rm{Div}}\nabla$ 
on smooth
functions on $(\wt{M}, \wt{g})$ and the corresponding heat kernel function
$p(t, x, y), t\in \Bbb R_{+}, x, y\in \wt{M}$, which is the
fundamental solution to the heat equation $\frac{\partial
u}{\partial t}=\Delta u$. Denote by ${\rm{Vol}}$ the Riemannian
volume on $\wt{M}$. The following quantities were introduced by
Guivarc'h (\cite{Gu}) and Kaimanovich (\cite{K1}),  respectively, and are
independent of $x\in \wt{M}$:
\begin{itemize}
  \item the linear drift $\ell:=\lim_{t\rightarrow +\infty} \frac{1}{t}\int d_{\wt{g}}(x, y)p(t, x, y)\
  d{\rm{Vol}}(y)$.
  \item the stochastic entropy $h:=\lim_{t\rightarrow +\infty} -\frac{1}{t}\int \left(\ln p(t, x, y)\right) p(t, x, y)\
  d{\rm{Vol}}(y)$.
\end{itemize}

 Let $\{g^{\lambda}=e^{2\vf^{\lambda}} g:\ |\lambda|<1\}$ be a  one-parameter family of   conformal changes of $g^0=g$,  where $\vf^{\l}$'s are real valued functions on $M$ such that  $(\l, x)\mapsto \vf^{\lambda}(x)$ is $C^3$ and $\vf^0\equiv 0$.  Denote by  $\ell_{\lambda}, h_{\lambda}$, respectively,  the
linear drift and the stochastic entropy for $(M, g^{\lambda})$. We show

\begin{theo}\label{main-thm} Let $(M, g)$ be a negatively curved closed connected Riemannian manifold.  With the above notation, the functions $\lambda\mapsto
\ell_{\lambda}$ and $\lambda\mapsto h_{\lambda}$ are differentiable
at $0$.
\end{theo}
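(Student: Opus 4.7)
The plan is to represent $\ell_\lambda$ and $h_\lambda$ as integrals on the compact unit tangent bundle $SM^\lambda$ of $(M,g^\lambda)$ against a natural harmonic measure associated to the Brownian motion, and then to establish $C^1$ dependence in $\lambda$ of every ingredient in these representations. Since $(M,g^\lambda)$ has pinched negative curvature for small $\lambda$, the Martin boundary of $\Delta_\lambda$ is identified with the geometric boundary $\partial\wt M$. Writing $k_\lambda(x,\xi)$ for the Martin kernel based at some fixed $x_0$, $b^\lambda_\xi$ for the $g^\lambda$-Busemann function at $\xi$, and $\nu^\lambda_x$ for the family of harmonic measures on $\partial\wt M$, Kaimanovich-type formulas give representations of the form
\begin{equation*}
\ell_\lambda = -\int_{\partial\wt M}\bigl\langle \nabla_\lambda b^\lambda_\xi,\nabla_\lambda\log k_\lambda\bigr\rangle_\lambda(x_0)\,d\nu^\lambda_{x_0}(\xi),\qquad h_\lambda = \int_{\partial\wt M}\bigl|\nabla_\lambda\log k_\lambda(\cdot,\xi)\bigr|_\lambda^2(x_0)\,d\nu^\lambda_{x_0}(\xi).
\end{equation*}
These descend to integrals on the compact quotient $SM^\lambda$ against the harmonic measure $m^\lambda$ of the strong-stable foliation, whose conditional measures along stable leaves have explicit densities expressed through $k_\lambda$.

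Next, I would establish $C^1$ dependence in $\lambda$ of all the analytic and dynamical ingredients. The conformal rule
\begin{equation*}
\Delta_\lambda \;=\; e^{-2\vf^\lambda}\bigl(\Delta_0+(m-2)\langle\nabla_0\vf^\lambda,\nabla_0\cdot\rangle\bigr)
\end{equation*}
exhibits $\Delta_\lambda$ as a $C^2$ family of uniformly elliptic operators with $C^2$ coefficients. Standard elliptic theory, the Harnack inequality and pinched curvature bounds uniform in $\lambda$ then give joint regularity of the Green function $G_\lambda$ and, via $k_\lambda(x,\xi)=\lim_{y\to\xi}G_\lambda(x,y)/G_\lambda(x_0,y)$, of the Martin kernel in $(\lambda,x)$ on compact sets. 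Because the geodesic flow $\phi^\lambda_t$ is Anosov and varies smoothly in $\lambda$, structural stability yields Hölder orbit conjugacies $H^\lambda:SM^0\to SM^\lambda$ that transport the strong-stable foliations and depend $C^1$ on $\lambda$ along orbits; this allows the foliation, the densities, and the measure $m^\lambda$ to be pulled back to the fixed reference space $SM^0$ and differentiated there.

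Once every object lives on $SM^0$, I would disintegrate the pulled-back measure $(H^\lambda)^\ast m^\lambda$ along strong-stable leaves, with leafwise density written through $k_\lambda$, and differentiate the resulting expressions for $\ell_\lambda$ and $h_\lambda$ under the integral sign, combining the $\lambda$-derivative of the integrand with that of the density.

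The main obstacle is measure-theoretic. Harmonic measures on $\partial\wt M$ for different metrics are in general mutually singular and singular with respect to the Lebesgue class, so differentiability cannot be captured as a Radon–Nikodym derivative on the boundary; one must descend to $SM$ and exploit the transverse absolute continuity of the stable foliation together with the explicit leafwise density to differentiate through the measure. The most delicate point is the entropy term, where $|\nabla_\lambda\log k_\lambda|^2_\lambda$ requires $C^2$ control of $k_\lambda$ in $x$ uniformly in $\lambda$; I expect the key estimates to come from Schauder theory applied to $\Delta_\lambda$ together with the uniform Harnack bounds provided by pinched negative curvature, after which the differentiations reduce to bookkeeping.
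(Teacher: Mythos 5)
Your plan, after representing $\ell_\lambda$ and $h_\lambda$ as integrals against the harmonic measures $\wt{\bf m}^\lambda$ on $SM$, is to differentiate under the integral sign by establishing $C^1$ dependence on $\lambda$ of the Martin kernel, the stable foliation, and the measure. You have correctly identified that the stumbling block is the measure, but your proposed fix does not close the gap. The leafwise conditionals of $\wt{\bf m}^\lambda$ are indeed expressible through the Martin kernel (this is what makes them harmonic), but the \emph{transverse} component of $\wt{\bf m}^\lambda$ (the measure on a transversal, i.e.\ the family $\nu^\lambda_{x}$ on $\partial \wt M$) has no closed form, is typically singular with respect to Lebesgue class and to $\nu^0_x$, and is not an object Schauder theory or Harnack estimates can control across $\lambda$. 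Transverse absolute continuity of the stable foliation concerns holonomy of the \emph{foliation}, not regularity of an arbitrary transverse quasi-invariant measure in a parameter; pulling back by H\"older orbit conjugacies $H^\lambda$ yields a family $(H^\lambda)^*\wt{\bf m}^\lambda$ on $SM^0$ whose differentiability in $\lambda$ is exactly as hard as the question you started with. Likewise, elliptic regularity gives $C^k$ dependence of $k_\lambda$ on $x$ for fixed $\lambda$ and continuity (even H\"older continuity) in $\lambda$, but not differentiability of $\nabla\ln k_\lambda$ in $\lambda$, which is precisely what differentiating the entropy integrand would require — and the paper explicitly flags this as the issue it must avoid.

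The paper's route is different in the one place where it matters. It splits the derivative into ${\rm{\bf (I)}}+{\rm{\bf (II)}}$, where ${\rm{\bf (I)}}$ captures the conformal rescaling (Laplacian $\D^\lambda$ versus the non-rescaled operator $L^\lambda = \D + (m-2)\langle \nabla\vf^\l, \nabla\cdot\rangle$ in the fixed metric $g$), and ${\rm{\bf (II)}}$ captures the effect of adding the drift $(m-2)\nabla\vf^\l$ to $\D$. The piece ${\rm{\bf (I)}}$ can be computed by comparing the two integral formulas for the same harmonic measure and the same Martin kernel, since $L^\lambda$ and $\D^\lambda$ share Martin kernels; the only nontrivial derivative ($(\overline X^\l)'_0$) comes from the infinitesimal Morse correspondence, which is a version of the structural-stability idea you sketch. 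The real innovation is in ${\rm{\bf (II)}}$: rather than differentiating the harmonic measure or the Martin kernel in $\lambda$, the paper follows Mathieu's method, realizes the $L^\lambda$-diffusions for all $\lambda$ on a single probability space via an SDE, uses the Girsanov--Cameron--Martin density to express the $\lambda$-dependence as a multiplicative exponential martingale along paths, sets $\lambda=\pm 1/\sqrt{t}$, and invokes a joint central limit theorem for the pair (additive functional, Girsanov martingale) to identify the derivative as a covariance $\lim_t \frac1t \E({\bf Z}_{\ast,t}{\bf M}_t)$. This is the mechanism that bypasses the singularity of $\nu^\lambda_x$, and it is the step your proposal has no substitute for.
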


For each $\l\in (-1, 1)$, let $\Delta^{\lambda}$ be the Laplacian of
$(\M, \widetilde{g}^{\lambda})$ with heat kernel $p^{\lambda}(t, x,
y), t\in \Bbb R_{+}, x, y\in \wt{M}$,  and the associated Brownian
motion $\omega_{t}^{\lambda}, t\geq 0$.  The relation between
$\Delta^{\lambda}$ and $\Delta$ is easy to be formulated using
$g^{\lambda}=e^{2\vf^{\lambda}} g$:  for $F$ a $C^2$ function on $\M$,
\[
\Delta^{\lambda} F=e^{-2\vf^{\lambda}}\left(\Delta F+(m-2)\langle \nabla\vf^{\l},\nabla F\rangle_g\right)=:
e^{-2\vf^{\lambda}} L^{\lambda}F,
\]
where we still denote $\vf^{\lambda}$ its  $G$-invariant  extension to $\M$. 
Let $\widehat{p}^{\lambda}(t, x, y), t\in \Bbb R_{+}, x, y\in
\wt{M}$,   be the heat kernel of the diffusion process
$\widehat{\omega}_{t}^{\lambda}, t\geq 0,$  corresponding to the
operator $L^{\lambda}$ in $(\M, \wt{g})$.    We define
\begin{itemize}
  \item $\widehat{\ell}_{\lambda}:=\lim_{t\rightarrow +\infty} \frac{1}{t}\int d_{\wt{g}}(x, y)\widehat{p}^{\lambda}(t, x, y)\
  d{\rm{Vol}}(y)$.
  \item $\widehat{h}_{\lambda}:=\lim_{t\rightarrow +\infty} -\frac{1}{t}\int (\ln \widehat{p}^{\lambda}(t, x, y))\widehat{p}^{\lambda}(t, x, y)\
  d{\rm{Vol}}(y)$.
\end{itemize}
It is clear that  the following hold true providing all the limits exist:
\begin{eqnarray*}
(d\ell_{\lambda}/d\lambda)|_{\lambda=0}&=&\lim\limits_{\l\to
0}\frac{1}{\l}(\ell_{\l}-\widehat{\ell}_{\l})+\ \lim\limits_{\l\to
0}\frac{1}{\l}(\widehat{\ell}_{\l}-\ell_0)\  =: {\rm{\bf
(I)}}_{\ell}+{\rm{\bf (II)}}_{\ell},\\
(dh_{\lambda}/d\lambda)|_{\lambda=0}&=&\lim\limits_{\l\to
0}\frac{1}{\l}(h_{\l}-\widehat{h}_{\l})+\lim\limits_{\l\to
0}\frac{1}{\l}(\widehat{h}_{\l}-h_0)=:{\rm{\bf (I)}}_{h}+{\rm{\bf
(II)}}_{h}.
\end{eqnarray*}
Here, loosely speaking, ${\rm{\bf
(I)}}_{\ell}$ and ${\rm{\bf (I)}}_{h}$ are the infinitesimal drift
and entropy affects  of  simultaneous metric change and time
change of the  diffusion (when the generator of the diffusion changes  from $L^{\l}$ to $\Delta^{\l}$), while ${\rm{\bf
(II)}}_{\ell}$ and ${\rm{\bf (II)}}_{h}$ are the infinitesimal  responses to the adding of drifts to  $\omega_{t}^{0}$  (when the generator of the diffusion changes from $\Delta$ to $L^{\l}$).

To analyze ${\rm{\bf (I)}}_{\ell}$ and ${\rm{\bf (I)}}_{h}$,  we express the above  linear drifts and stochastic entropies using the geodesic
spray, the Martin kernel and the exit probability of the Brownian motion
at infinity.  It is known (\cite{K1}) that
\begin{eqnarray}\label{formulas-ell-h}
\ell_{\l}=\int_{\scriptscriptstyle{M_0\times
\pp\M}}\langle{X}^{\l}, \nabla^{\l}\ln
k^{\l}_{\xi}\rangle_{\l}\ d{\bf \wt{m}^{\l}},\ \
h_{\l}=\int_{\scriptscriptstyle{M_0\times \pp\M}} \|\nabla^{\l} \ln
k^{\l}_{\xi}\|_{\l}^2\ d{\bf\wt{m}^{\l}},
\end{eqnarray}
where $M_0$ is a fundamental domain of $\M$, $\pp\M$ is the
geometric boundary of $\M$, 
 ${X}^{\l}(x,\xi)$ is
the unit tangent vector of the $\wt{g}^{\l}$-geodesic starting from
$x$ pointing at $\xi$, $k^{\l}_{\xi}(x)$ is the Martin kernel
function of $\omega_{t}^{\l}$ and ${\bf{\wt{m}}^{\l}}$ is the
harmonic measure associated with $\D^{\l}$.    (Exact definitions will appear in Section 2.)  Similar formulas also
exist for $\widehat{\ell}_{\l}$ and $\widehat{h}_{\l}$ (see Propositions \ref{formulas-l-h-Y-l},  \ref{formulas-l-h-Y-h} and (\ref{ellhatlambda})) 
\begin{equation}\label{Intro-hat-ell-h}\widehat{\ell}_{\l}=\int\langle{X}^0, \nabla^0\ln
k_{\xi}^{\l}\rangle_0\ d\widehat{{\bf m}}^{\l}, \ \widehat{h}_{\l}=\int \|\nabla^{0}\ln k_{\xi}^{\l}(x)\|_{0}^2\
  d\widehat{{\bf m}}^{\l},\end{equation}
where  $\widehat{{\bf m}}^{\l}$ is the harmonic measure related to
the operator $L^{\lambda}$.   The  quantity  ${\rm{\bf (I)}}_{h}$
turns out to be zero since the norm and the gradient changes cancel
with the measure change, while the Martin kernel function remains
the same under time rescaling  of the diffusion process (see Section
5, (\ref{zeroentropy}) and the paragraph after (\ref{volume})).  But the metric variation  is more involved
in ${\rm{\bf (I)}}_{\ell}$ as we can see from the formulas in
(\ref{formulas-ell-h}) and (\ref{Intro-hat-ell-h}) for $\ell_{\l}$
and $\widehat{\ell}_{\l}$.  In Section 4,  using  the $(g,
g^{\l})$-Morse correspondence maps (see \cite{A,Gr,Mor} and
\cite{FF}),  which are homeomorphisms between the unit tangent
bundle spaces in $g$ and $g^{\l}$ metrics sending $g$-geodesics to
$g^\l $-geodesics,   we are able to identify the differential
\begin{equation}\label{Intro-X-lambda}
\left(\overline{X}^{\l}\right)'_0(x,
\xi):=\lim\limits_{\l\to 0}
\frac{1}{\l}\left(\overline{X}^{\l}(x,
\xi)-\overline{X}^{0}(x, \xi)\right),
\end{equation}
where now $\ov X^\l (x, \xi ) $ is   the horizontal lift of $X^\l (x, \xi )$ to  $T_{(x,\xi) } S\M$  (see below Section 2.4), 
using  the stable and unstable Jacobi tensors and a family of Jacobi
fields arising naturally from the infinitesimal Morse correspondence
(Proposition \ref{lem-W}  and Corollary \ref{X-lam-Lem}).
 As a consequence, we can express ${\rm{\bf
(I)}}_{\ell}$ using $k_{\xi}^0$, $\wt{{\bf m}}^0$  and these terms (see
the proof of Theorem \ref{Main-formulas}).

If we continue to analyze  ${\rm{\bf (II)}}_{\ell}$ and
${\rm{\bf (II)}}_{h}$ using (\ref{formulas-ell-h}) and (\ref{Intro-hat-ell-h}), we have the problem of showing  the regularity in $\l$  of the gradient of the Martin kernels.  We avoid this by using an idea from Mathieu (\cite{Ma})  to  study ${\rm{\bf (II)}}_{\ell}$ and
${\rm{\bf (II)}}_{h}$ along  the diffusion processes.   For every point $x\in \M$ and almost every (a.e.)
$\wt{g}$-Brownian motion path $ \omega^{0}$ starting from $x$,  it is known (\cite{K1}) that
\begin{equation}\label{Intro-d-Green-formula}
\lim\limits_{t\rightarrow +\infty}\frac{1}{t} d_{\wt{g}}(x,
\omega_{t}^{0})=\ell_{0}, \ \ \lim\limits_{t\rightarrow
+\infty}-\frac{1}{t}\ln G(x, \omega_{t}^{0})=h_{0},
\end{equation}
where $G(\cdot, \cdot)$ on $\M\times \M$ denotes the Green function for
$\wt{g}$-Brownian motion.  A further study on  the convergence of the limits of (\ref{Intro-d-Green-formula}) in \cite{L5}
showed  that there are positive numbers $\sigma_{0}, \sigma_{1}$ so that the
distributions of the variables
\begin{equation}\label{intro-CLT}
{\rm Z}_{\ell, t}(x)=\frac{1}{\sigma_0\sqrt{t}}\left[d_{\wt{g}}(x,
 \omega_{t}^{0})-t\ell_{0}\right], \ \ {\rm Z}_{h,
t}(x)=\frac{1}{\sigma_1\sqrt{t}}\left[-\ln G(x, \omega_{t}^{0})-
th_{0}\right]
\end{equation}are asymptotically close to the normal distribution as $t$ goes
to infinity.  Moreover, these 
limit theorems  have some uniformity when we vary the original metric locally in the space of negatively curved metrics.  This allows us to identify ${\rm{\bf (II)}}_{\ell}$ and ${\rm{\bf (II)}}_{h}$ respectively with the limits \[
\left(\E_{\l}(\frac{1}{\sqrt{t}}d_{\wt{g}}(x,
 \omega_{t}^{0}))-\E_{0}(\frac{1}{\sqrt{t}}d_{\wt{g}}(x,
\omega_{t}^{0}))\right), \ \ -\left(\E_{\l}(\frac{1}{\sqrt{t}}\ln G(x, \omega_{t}^{0}))-\E_{0}(\frac{1}{\sqrt{t}}\ln G(x,  \omega_{t}^{0}))\right),
\]
where we take $\l=\pm 1/\sqrt{t}$ and $\E_{\l}$ is the expectation with respect to the transition probability of the $L^{\l}$ process.  
(More details of the underlying idea will be exposed in Section \ref{sec-4.2} after we introduce the corresponding notations.) Note that all $\widehat{\omega}_{t}^{\lambda}$ starting from $x$
can be simultaneously represented as   random processes on the
probability space $(\Theta, \Bbb Q)$ of a standard $m$-dimensional
Euclidean Brownian motion.  By using the Girsanov-Cameron-Martin
formula on manifolds (cf. \cite{El}),  we are able to compare $\E_{\l}$ with $\E_0$ on the same probability space of continuous path spaces. As a consequence, we show
 \begin{equation*}\label{entropy-2}
\mbox{\bf (II)}_{\ell}=\lim_{t\to +\infty}\Bbb E_0 ({\rm Z}_{\ell,
t}{\rm M}_t)\ \ \mbox{and} \ \ \mbox{\bf (II)}_{h}=\lim_{t\to
+\infty}\Bbb E_0 ({\rm Z}_{h, t}{\rm M}_t),
\end{equation*}
where  each ${\rm M}_t$ is a
random process on $(\Theta, \Bbb Q)$ recording the change of metrics
along the trajectories of Brownian motion to be specified in Section 5.
We will further specify $\mbox{\bf (II)}_{\ell}$ and $\mbox{\bf
(II)}_{h}$ in Theorem \ref{Main-formulas} using properties of
martingales and the Central Limit Theorems for the  linear drift and the
stochastic entropy.

An immediate consequence of  Theorem \ref{main-thm} is that
$D_{\lambda}:=h_{\lambda}/\ell_\lambda$, which is proportional
\footnote{ $D_{\l} $ is $\frac{1}{\iota} $  the Hausdorff dimension  of the exit measure
for the $\iota$-Busemann distance (cf.  Section \ref{sec-4.2}).} to the Hausdorff dimension of the distribution of
the Brownian motion $\omega^{\lambda}$ at the infinity boundary of
$\M$ (\cite{L1}),  is also differentiable in $\lambda$.  Let
$\Re(M)$ be  the manifold of negatively curved $C^3$
metrics on $M$.  Another consequence of Theorem
\ref{main-thm} is that

\begin{theo}\label{critical}
 Let $(M, g)$ be a negatively curved compact connected Riemannian manifold.  If it is locally symmetric, then for any $C^3$ curve  $\lambda\in (-1, 1)\mapsto g^{\lambda}\in \Re(M)$  of  conformal changes of the metric $g^0=g$ with constant volume, $$(dh_{\lambda}/d\lambda)|_{\lambda=0}=0, \quad (d\ell_{\lambda}/d\lambda)|_{\lambda=0}=0.$$
\end{theo}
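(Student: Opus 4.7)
By Theorem \ref{main-thm} the two derivatives exist, and Theorem \ref{Main-formulas} (to be established in Section~5) provides explicit expressions for them in terms of the geodesic spray $X^0$, the Martin kernel $k^0_\xi$, the infinitesimal variation $(\ov X^\l)'_0$ from \eqref{Intro-X-lambda}, the harmonic measure $\wt{\bf m}^0$, and the Girsanov martingale ${\rm M}_t$. My strategy is to show that when $(M,g)$ is locally symmetric each of these expressions collapses to a constant multiple of $\int_M \dot\vf(x)\,d{\rm Vol}(x)$, where $\dot\vf := (d\vf^\lambda/d\lambda)|_{\lambda=0}$. Differentiating the constant-volume assumption $\int_M e^{m\vf^\lambda}\,d{\rm Vol}\equiv{\rm const}$ at $\lambda=0$ yields
\[
\int_M \dot\vf(x)\,d{\rm Vol}(x)=0,
\]
whence both derivatives vanish.

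For the terms $\mbox{\bf (I)}_\ell$ and $\mbox{\bf (I)}_h$, three features specific to a locally symmetric negatively curved space do the work. First, the stable and unstable Jacobi tensors along any geodesic are (block-)scalar multiples of parallel transport, so the infinitesimal Morse correspondence of Section~4 reduces Proposition~\ref{lem-W} and Corollary~\ref{X-lam-Lem} to closed-form expressions in $\dot\vf$ and $\nabla\dot\vf$. Second, the Martin boundary coincides with $\partial\M$ and $k^0_\xi(x)=e^{-Q\, b_\xi(x)}$ for the Busemann function $b_\xi$ and an explicit constant $Q=Q(g)$; in particular $\nabla^0\ln k^0_\xi=-Q\,X^0(\cdot,\xi)$ and $\|\nabla^0\ln k^0_\xi\|_0\equiv Q$, so the Martin-kernel dependence in the integrands becomes trivial. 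Third, ${\rm Isom}(\M)$ acts transitively on the unit tangent bundle of $\M$ and the harmonic measure $\wt{\bf m}^0$ is ${\rm Isom}(\M)$-equivariant; consequently, integrating any of the resulting integrands over $\partial\M$ at fixed $x$ produces an ${\rm Isom}(\M)$-invariant function of $x$, which descends to a constant on $M$. Combining the three, what survives in $\mbox{\bf (I)}_\ell$ and $\mbox{\bf (I)}_h$ is a constant multiple of $\dot\vf(x)$, plus terms which are divergences in $x$ and hence integrate to $0$ against $d{\rm Vol}$.

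The main obstacle is the $\mbox{\bf (II)}$-terms, identified in the introduction with $\lim_{t\to\infty}\E_0({\rm Z}_{\ell,t}{\rm M}_t)$ and $\lim_{t\to\infty}\E_0({\rm Z}_{h,t}{\rm M}_t)$: these are path-space quantities rather than integrals over $\M\times\partial\M$, and ${\rm M}_t$ is itself a stochastic integral of expressions in $\dot\vf$ and $\nabla\dot\vf$ along the Brownian trajectory. One must therefore pass from a cross-covariance on path space to a spatial average of $\dot\vf$ on $M$. In the locally symmetric setting I would use the ergodicity of Brownian motion on the unit tangent bundle of $M$ together with the transitivity of ${\rm Isom}(\M)$ on unit tangent vectors to identify the limiting cross-covariance with a constant multiple of $\int_M\dot\vf\,d{\rm Vol}$; the constant-volume hypothesis then kills it. Justifying this passage uniformly in $t$, by leaning on the CLT estimates and the metric-uniformity claimed in \cite{L5}, is the technically delicate step of the argument.
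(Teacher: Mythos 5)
Your approach is not the one the paper actually takes, and it also contains a genuine confusion about what Theorem~\ref{Main-formulas} delivers. The paper's proof of Theorem~\ref{critical} is a two-paragraph sandwiching argument that completely bypasses the explicit formulas: one uses the general inequalities $\ell_{g} \le v_{g}$ and $h_{g} \le v_{g}^2$ (for $v_g$ the volume entropy), the fact that equality holds exactly when $(M,g)$ is locally symmetric, Katok's theorem that $v_{g^0}$ is a global minimum among conformal volume-preserving deformations, and the Katok--Knieper--Pollicott--Weiss result that $\l\mapsto v_{g^\l}$ is differentiable. Since $\ell_{g^\l}$ and $h_{g^\l}$ are differentiable (Theorem~\ref{main-thm}), lie below a differentiable curve that is critical at $\l=0$, and touch it at $\l=0$, they must be critical there too. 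No formulas for the derivatives are needed.

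Your proposal corresponds instead to what the paper relegates to a Remark following the proof: plug locally-symmetric simplifications into (\ref{lineardriftderivative}) and (\ref{entropyderivative}). That route is legitimate, but your sketch has a significant gap. You describe the $\mbox{\bf (II)}$-terms as ``path-space quantities rather than integrals over $\M\times\partial\M$'' and propose to convert them to spatial averages via ergodicity and ${\rm Isom}(\M)$-transitivity. But this conversion is precisely the content of Theorem~\ref{Main-formulas}, which you have already invoked: once that theorem is accepted, \emph{both} $(\ell_\l)'_0$ and $(h_\l)'_0$ are written as integrals over $M_0\times\partial\M$ against $\wt{\bf m}$, with no path-space residue. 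Re-deriving that conversion in the proof of Theorem~\ref{critical} via a ``CLT + uniformity'' argument is circular, and the step you flag as ``technically delicate'' is already settled elsewhere. What actually remains in the Remark is elementary but has to be checked: for locally symmetric spaces $u_0$ and $u_1$ are constant, $\|\nabla\ln k_{\bf v}\|$ is constant and $\nabla\ln k_{\bf v}=-\ell\,\nabla b_{\bf v}$; the entropy derivative then reduces (after (\ref{volume})) to $0$, and the drift derivative reduces to an integral of $\vf\circ\p$ paired against the vector $\int_0^{+\infty}({\rm K}'_s(0)-S'_{\bf v}(0){\rm K}_s(0))\,ds$, which vanishes because that vector is orthogonal to ${\bf v}$ and hence orthogonal to $\nabla\ln k_{\bf v}$. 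Your sketch points at the first two simplifications but misses this orthogonality observation, which is what really kills the drift term; ``divergence terms integrate to $0$ against $d{\rm Vol}$'' is not the right justification, since $\wt{\bf m}$ is not $d{\rm Vol}$ and the relevant identity is the foliated integration by parts against $\nabla\ln k_{\bf v}$.
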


In case $M$  is a Riemannian surface, the stochastic entropy remains
the same for $g\in \Re(M)$ with constant volume. This is because any
$g\in \Re(M)$ is a conformal change of a metric with constant
curvature by the Uniformization Theorem, metrics with the same
constant curvature have the same stochastic entropy by
(\ref{formulas-ell-h}) and the constant curvature depends only on
the volume by the Gauss-Bonnet formula.  Indeed, our formula (Theorem
\ref{Main-formulas}, (\ref{entropyderivative})) yields
$dh_{\lambda}/d\lambda\equiv 0$ in the case of surfaces if the
volume is constant.

When $M$ has dimension at least $3$, it is interesting to know
whether the converse direction of Theorem \ref{critical} for the stochastic entropy
holds.  We have the following question.

\emph{ Let $(M, g)$ be a negatively curved compact connected
Riemannian manifold with dimension greater than $3$. Do we have that
$(M, g)$ is  locally symmetric if and only if
 for any $C^3$ curve  $\lambda\in (-1, 1)\mapsto g^{\lambda}\in \Re(M)$ of constant volume with $g^{0}=g$,  the mapping $\lambda\mapsto h_{\lambda}$  is differentiable and has a critical point at
 $0$?}

We will present the proof of  Theorem \ref{main-thm} and the above
discussion in a more general setting. Indeed, whereas the statements so far deal only with the
Brownian motion on $\M$, proofs of the limit theorems such as
(\ref{Intro-d-Green-formula}) or (\ref{intro-CLT}) involve the
laminated Brownian motion associated with the stable foliation of
the geodesic flow on the unit tangent bundle $\p : SM \to M$.    As recalled
in Section 2.1, the {\it {stable foliation}} $\W$ of the geodesic
flow is a H\"older continuous lamination, the leaves of which are
locally identified with $\M$. A differential operator $\mathcal{L}$
on (the smooth functions on) $SM$ with continuous coefficients and
$\LL 1=0$ is said to be \emph{subordinate to the stable foliation
$\mathcal{W}$}, if for every smooth function $F$ on $SM$ the value
of $\LL(F)$ at $v\in SM$ only depends on the restriction of $F$ to
$W^s(v)$.  We are led to consider the  family $\LL^\l $  of
subordinated operators to the stable foliation, given, for $F$
smooth on $SM$, by \[\LL ^\l F \; = \; \D F + (m-2) \langle \nabla
(\vf ^\l \circ \p), \nabla F \rangle,
\] where Laplacian, gradient and scalar product are taken along the
leaves of the lamination and for the metric lifted from the metric
$\wt g$ on $\M$. Diffusions associated to a general subordinated
operator of the form $\D + Y$, where $Y$ is a laminated vector
field, have been studied by  Hamenst\"{a}dt (\cite{H2}).  We recall her results and
several tools in Section 2. In particular, the diffusions associated
to $\LL ^\l$ have a drift $\overline \ell _\l$ and an entropy
$\overline h_\l$ that coincide with respectively $\widehat \ell _\l $
and $\widehat h_\l .$  Convergences (\ref{Intro-d-Green-formula}) and
(\ref{intro-CLT}) are now natural in this framework. Then,  our
strategy is to construct all the laminated diffusions associated to
the different $\l$ and starting from the same point on the same
probability space and to compute the necessary limits as
expectations of quantities on that probability space that are
controlled  by probabilistic arguments. For each ${\bf{v}} \in S\M,$
the stable manifold $W^s(\bf v)$ is identified with $\M$ (or a
$\Z$ quotient of $\M$).\footnote{When $\bfv$ is on a periodic orbit, then $W^s(\bf v)$  is a cylinder identified with the quotient of $\M$ by the action of one element 
of $G$
 represented by the closed geodesic.} As recalled
in Subsection 2.5, the diffusions are constructed on $\M$ as
projections of solutions of stochastic differential equations  on
the orthogonal frame bundle $O(\M)$ with the property that
only the drift part depends on $\l$ (and on $\bf v$). The quantities
$\overline \ell _\l$ and $\overline h_\l$ can be read now on the
directing probability space, so that we can compute $(\rm {\bf
{II}})_\ell $ and $({\rm {\bf{II}}})_h$ in Section 4.   We cannot do
this computation in such a direct manner for a general perturbation
$\l \mapsto g^{\l} \in \Re(M)$ and this is the reason why we restrict
our analysis in this paper to the case of conformal change. But
the idea of analyzing the linear drift and   the  stochastic entropy using
the stochastic differential equations can be further polished to
treat the general case (\cite{LS3}).


We thus   will obtain explicit formulas 
for $(d\ell_{\lambda}/d\lambda)|_{\lambda=0}$ and $(d
h_{\lambda}/d\lambda)|_{\lambda=0}$ in Theorem \ref{Main-formulas},
which, in particular,  will imply Theorem \ref{main-thm}.  Finally,
Theorem \ref{critical} can be deduced either using the formulas in
Theorem \ref{Main-formulas} or merely using Theorem \ref{main-thm}
and the existing  results concerning the regularity of volume
entropy for compact negatively curved spaces under conformal changes
from \cite{Ka, KKPW}.

We will arrange the paper as follow. Section 2 is to
introduce the linear drift and stochastic entropy for a laminated
diffusion of the unit tangent bundle with generator $\Delta+Y$
(\cite{H2}) and to understand them by formulas using pathwise limits
and integral formulas at the boundary, respectively. There are two
key auxiliary properties for the computations of  the differentials
of  $\widehat{\ell}^{\lambda}, \widehat{h}^{\lambda}$   in $\lambda$: one is
the Central Limit Theorems for the linear drift and the stochastic
entropy; the other is the probabilistic pathwise relations between the distributions of the diffusions
of different generators. They will be addressed in Subsections \ref{Subsection-CLT} and \ref{4.1-Girsanov}, respectively.  In Section 3, we will compute separately the
differentials of the linear drift and the stochastic entropy associated
to a one-parameter of laminated diffusions with generators
$\Delta+Y+Z^{\lambda}$. Section 4 is to use the infinitesimal Morse
correspondence (\cite{FF}) to derive the differential
$\lambda\mapsto \overline{X}^{\lambda}$ for any general $C^3$ curve
$\lambda\mapsto g^{\lambda}$ contained in $\Re (M)$. The last
section is devoted to the proof of Theorem \ref{main-thm} and
Theorem \ref{critical} as was mentioned in the previous paragraph.

\section{Foliated diffusions}

In this section, we recall results from the literature and we fix
notations about the stable foliation in negative sectional curvature, the
properties of  the diffusions subordinated to the stable foliation
and the construction of these diffusions as solutions of SDE.

\subsection{Harmonic measures for the stable foliation}

Recall that $(\M, \wt{g})$ is the universal cover space of $(M, g)$, a
negatively curved $m$-dimensional closed connected Riemannian
manifold with fundamental group $G$. Two geodesics in $\M$ are said to be equivalent if they remain a
bounded distance apart and the space of equivalent classes of unit
speed geodesics is the geometric boundary $\partial\M$. For each
$(x, \xi)\in \M\times \pp\M$,  there is a unique unit speed geodesic
$\g_{x,\xi}$ starting from $x$ belonging to $[\xi]$, the equivalent
class of $\xi$. The mapping $\xi\mapsto \dot{\g}_{x, \xi}(0)$ is a
homeomorphism $\pi_{x}^{-1}$ between $\partial \wt{M}$ and the unit
sphere $S_{x}\wt{M}$ in the tangent space at $x$ to $\wt{M}$.  So we
will identify $S\wt{M}$, the unit tangent bundle of $\M$,  with
$\M\times
\partial\M$.

Consider the geodesic flow ${\bf \Phi}_t$ on  $S\wt{M}$.  For each
${\bf v}=(x, \xi)\in S\M$, its \emph{stable manifold} with respect to ${\bf
\Phi}_t$, denoted $ W^s({\bf v})$,  is the collection of initial
vectors ${\bf w}$ of  geodesics $\g_{{\bf w}}\in [\xi]$  and can be
identified with $\M\times\{\xi\}$.   Extend the action of $G$ continuously to $\pp\M$.  Then $SM$,  the
unit tangent bundle of $M$,  can be identified with the quotient of
$\M\times
\partial\M$ under the diagonal action of $G$.  Clearly, for $\psi \in G$, $\psi(W^s({\bf v})) = W^s({D\psi ({\bf v}}))$ so that the collection of
 $W^s({\bf v})$ defines a lamination $\W$ on $SM$, the so-called \emph{stable foliation} of $SM$.
 The leaves of the stable foliation $\W$ are discrete quotients of $\M$, which are naturally endowed with the Riemannian metric induced from $\wt g$.  For ${v}\in SM$, let $W^s(v)$ be the leaf of $\W$ containing $v$.
Then $W^s(v)$ is a $C^2$
immersed submanifold of $SM$ depending continuously on $v$ in the
$C^2$-topology (\cite{SFL}). (More properties of the stable foliation and of  the geodesic flow  will appear in Section \ref{Sec-integral formula}.)

Let  $\mathcal{L}$ be an operator on (the smooth functions on) $SM$
with continuous coefficients which is subordinate to the stable
foliation $\mathcal{W}$. A Borel measure ${\bf m}$ on $SM$ is called
\emph{$\LL$-harmonic}  if it satisfies
\[
\int \LL(f)\ d{\bf m}=0
\]
for every smooth function $f$ on $SM$.  If the restriction of $\LL$ to each leaf is elliptic, it is true by \cite{Ga} that  there always exist
harmonic measures and the set of harmonic probability measures is a non-empty
weak$^*$ compact convex set of measures on $SM$. A harmonic
probability measure ${\bf m}$ is \emph{ergodic} if it is extremal
among harmonic probability measures.

In this paper, we are interested in the case $\LL=\Delta+Y$,
where $\Delta$ is the laminated Laplacian  and $Y$ is a section
of the tangent bundle of $\W$ over $SM$ of class $C^{k, \alpha}_{s}$
for some $k\geq 1$ and $\alpha\in [0, 1)$ in the sense that $Y$ and
its leafwise jets up to order $k$ along the leaves of $\W$ are
H\"{o}lder continuous with exponent $\alpha$ (\cite{H2}). Let ${\bf
m}$ be an $\LL$-harmonic  measure. We can characterize it by
describing its lift on $S\M$.

Extend $\LL$ to a $G$-equivariant operator on $S\wt{M}=\M\times
\partial \M$ which we shall denote with the same symbol. It defines
a Markovian family of probabilities on $\wt
{\Omega}_{+}$, the space of 
paths of $\wt{\om}: [0, +\infty)\to S\M$,  equipped with the smallest
$\sigma$-algebra $\A$ for which the projections $R_t:\ \wt{\om}\mapsto
\wt{\om}(t)$ are measurable. Indeed, for ${\bf{v}}=(x, \xi)\in S\M$, let
$\LL_{{\bf v}}$ denote the laminated operator of $\LL$ on $W^s({\bf
v})$. It can be regarded as an operator on $\M$ with corresponding
heat kernel functions  $p_{{\bf v}}(t, y, z)$, $t\in \Bbb R_{+}, y, z\in \M$.
Define
\[
{\bf p}(t, (x, \xi), d(y, \eta))=p_{{\bf v}}(t, x,
y)d{\rm{Vol}}(y)\delta_{\xi}(\eta),
\]
where $\delta_{\xi}(\cdot)$ is the Dirac function at $\xi$. Then the
diffusion process on $W^s({\bf v})$ with infinitesimal operator
$\LL_{{\bf v}}$ is given by a Markovian family $\{\P_{{\bf
w}}\}_{{\bf w}\in \M\times \{\xi\}}$, where for every $t>0$ and
every Borel set $A\subset \M\times \partial\M$ we have
\[
\P_{{\bf w}}\left(\{\wt{\om}:\ \wt{\om}(t)\in A\}\right)=\int_{A}\bfp(t, {\bf
w}, d(y, \eta)).
\]

The following concerning $\LL$-harmonic measures holds true.

\begin{prop}\label{harmonic-mea}(\cite{Ga,H2}) Let $\wt{{\bf m}}$ be the $G$-invariant measure which extends
an  $\LL$-harmonic  measure ${\bf m}$ on $\M\times \partial \M$. Then
\begin{itemize}
\item[i)] the measure $\wt{{\bf m}}$ satisfies, for all $f\in C_{c}^{2}(\wt{M}\times \partial
  \M)$,
  \[
\int_{\scriptscriptstyle{\M\times
\partial\M}}\left(\int_{\scriptscriptstyle{\M\times \partial\M}}f(y,
\eta)\bfp(t, (x, \xi), d(y, \eta))\right) d\wt{{\bf m}}(x,
\xi)=\int_{\scriptscriptstyle{\M\times \partial\M}}f(x, \xi)\
d\wt{{\bf m}}(x, \xi);
  \]
  \item[ii)] the measure $\wt \P=\int \P_{{\bf v}}\ d\wt{{\bf m}}({\bf v})$ on
  $\wt{\Om}_{+}$ is invariant under the shift map $\{\sigma_t\}_{t\in
  \R_+}$ on $\wt{\Om}_+$,  where $\sigma_t(\wt{\om}(s))=\wt{\om}(s+t)$ for $s>0$ and $\wt{\om}\in
  \wt{\Om}_{+}$;
   \item[iii)] the measure $\wt{{\bf m}}$ can be
expressed locally at ${\bf v}=(x, \xi)\in S\M$ as $d\wt{{\bf
m}}=k(y, \eta)(dy\times d\nu(\eta))$, where $\nu$ is a finite
measure on $\partial \M$ and, for $\nu$-almost every $\eta$, $k(y,
\eta)$ is a positive function on $\M$ which satisfies $\Delta(k(y,
\eta))-{\rm{Div}}(k(y, \eta)Y(y, \eta))=0$.
\end{itemize}
\end{prop}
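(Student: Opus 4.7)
The plan is to prove the three parts in order, with part (i) as the analytic foundation, (ii) following from (i) by the Markov property, and (iii) requiring a leafwise disintegration plus elliptic regularity.

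For (i), I would start from the defining harmonicity property: since $\mathbf{m}$ is $\mathcal{L}$-harmonic and $\widetilde{\mathbf{m}}$ is its $G$-equivariant lift, we have $\int \mathcal{L}(F)\,d\widetilde{\mathbf{m}} = 0$ for every $C^2_c$ function $F$ on $\widetilde{M}\times\partial\widetilde{M}$ (using $G$-invariance and a partition of unity to pass from $SM$ to $\widetilde{M}\times\partial\widetilde{M}$). Now the operator $P_t F(\mathbf{v}) := \int F(\mathbf{w})\,\mathbf{p}(t,\mathbf{v},d\mathbf{w})$ acts leafwise and satisfies $\partial_t P_t F = \mathcal{L}(P_t F)$ with $P_0 F = F$. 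Setting $J(t) := \int (P_t F - F)\,d\widetilde{\mathbf{m}}$, we get $J'(t) = \int \mathcal{L}(P_t F)\,d\widetilde{\mathbf{m}} = 0$, provided we can justify the differentiation under the integral and the harmonicity on the time-evolved test function. Since $\mathcal{L}$ only differentiates along $\widetilde{M}$ and the heat kernel has the standard Gaussian-type bounds on each leaf (with coefficients bounded because of $G$-cocompactness), this is routine. Hence $J \equiv 0$, giving (i).

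For (ii), I would use the Markov property. The measure $\widetilde{\mathbb{P}}$ is by construction the distribution of the Markov process with initial distribution $\widetilde{\mathbf{m}}$ and transition kernel $\mathbf{p}(t,\cdot,\cdot)$. Part (i) is exactly the statement that the one-dimensional marginal $(R_t)_* \widetilde{\mathbb{P}} = \widetilde{\mathbf{m}}$ for every $t$. Combined with the Markov property, this forces the finite-dimensional distributions under $\sigma_s$ to coincide with those under the identity shift: conditioning on $\widetilde{\omega}(s)$, the law of the future is determined by the transition kernel, and the distribution of $\widetilde{\omega}(s)$ itself equals $\widetilde{\mathbf{m}}$ by (i).

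The heart of the argument is (iii), and it is where I expect the main work. The key observation is that under the identification $S\widetilde{M} = \widetilde{M}\times\partial\widetilde{M}$, the stable leaves are exactly the slices $\widetilde{M}\times\{\eta\}$, and $\mathcal{L}$ preserves this slicing. Working in a local trivializing chart $U\times V \subset \widetilde{M}\times\partial\widetilde{M}$, I would disintegrate $\widetilde{\mathbf{m}}|_{U\times V}$ with respect to the projection to $V$, obtaining a transverse measure $\nu$ on $V$ together with conditional measures $\mu_\eta$ on $U$ (this step uses only that the transverse projection is measurable). Applying (i), or rather its infinitesimal form $\int \mathcal{L}(F)\,d\widetilde{\mathbf{m}} = 0$, to test functions of product form $F(y,\eta) = \varphi(y)\psi(\eta)$ and varying $\psi$, one obtains $\int_U \mathcal{L}_{\mathbf{v}}\varphi(y)\,d\mu_\eta(y) = 0$ for $\nu$-almost every $\eta\in V$, for every $\varphi\in C^2_c(U)$. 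This is the weak form of the adjoint equation for $\mu_\eta$; writing $\mathcal{L}_{\mathbf{v}} = \Delta + Y$ and formally integrating by parts, the adjoint with respect to Riemannian volume is $\Delta - \mathrm{Div}(Y\,\cdot)$. Thus $\mu_\eta$ is a distributional solution of an elliptic equation on $U$. By leafwise elliptic regularity (applicable because $Y$ is $C^{k,\alpha}_s$ with $k\geq 1$ along leaves), $\mu_\eta$ has a smooth positive density $k(\cdot,\eta)$ satisfying $\Delta k - \mathrm{Div}(kY) = 0$; positivity follows from Harnack's inequality applied leafwise. The main obstacle is controlling measurability in $\eta$ of the resulting density $k(y,\eta)$ and verifying that the equation holds in the strong sense for $\nu$-a.e.\ $\eta$, which is addressed by noting that elliptic regularity yields $k$ as an explicit integral against Green-type kernels depending measurably on $\eta$ through the coefficients of $Y$.
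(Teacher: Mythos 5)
The paper offers no proof of this proposition; it is quoted directly from Garnett and Hamenst\"adt (the citations \cite{Ga,H2} in the statement). Your outline accurately reconstructs the standard argument from those sources: (i) is the differentiation $\frac{d}{dt}\int P_t F\,d\widetilde{\mathbf m}=\int\LL(P_tF)\,d\widetilde{\mathbf m}=0$, with the unboundedness of $P_tF$ handled via $G$-equivariance of the heat kernel and periodization to a test function on the compact $SM$; (ii) is the usual passage from stationary one-time marginals to shift-invariance of the path measure through the Markov property; and (iii) is Garnett's disintegration argument, feeding product test functions into the harmonicity identity, factoring off the transverse variable, and invoking leafwise elliptic regularity and Harnack for the formal adjoint $\Delta-\mathrm{Div}(Y\,\cdot)$.

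Two places deserve more care than you give them, though neither is a genuine gap in approach. In (iii), the exceptional $\nu$-null set of $\eta$ for which $\int_U\LL_{\mathbf v}\varphi\,d\mu_\eta\ne 0$ a priori depends on the test function $\varphi$; one must first restrict to a countable family $\{\varphi_n\}$ dense in $C^2_c(U)$ to extract a single null set, then use continuity of $\varphi\mapsto\int\LL_{\mathbf v}\varphi\,d\mu_\eta$ to upgrade. And in (i), justifying $\partial_t P_t F=\LL P_t F$ together with the integral-exchange requires the local uniform parabolic estimates; you wave at Gaussian bounds, which is the right tool, but on a simply connected negatively curved manifold one should be explicit that cocompactness provides the uniform geometry needed. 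Both fixes are routine and your structure is the correct one.
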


The group $G$ acts naturally and discretely on the space $ \wt \Om
_+$ of continuous paths in $S\M$ with quotient  the space $\Om _+$ of
continuous paths in $SM$, and this action commutes with the shift
$\s_t, t\geq 0$. Therefore, the measure $\wt \P$ is the extension of
a finite, shift invariant  measure $\P$ on $\Om _+.$ Note that $SM$ can be identified with $M_0\times \partial\M$, where $M_0$ is a fundamental domain of $\M$. Hence we can also identify $\Om_+$ with the lift of its elements  in $\wt{\Om}_+$ starting from $M_0$. Elements in $\Om_+$ will be denoted by $\om$.   We will also clarify the notions whenever there is an ambiguity.  In all the paper, we will normalize the harmonic measure $\bf m$ to be a probability measure, so that the measure $\P$ is also a probability measure. We denote by $\E_\P$ the corresponding expectation symbol.

Call
$\LL$ \emph{weakly coercive},  if $\LL_{{\bf v}}$, ${\bf v}\in S\M$,
are weakly coercive in the sense that there are a number $\e>0$
(independent of ${\bf v}$)  and, for each $\bf v$, a positive
$(\LL_{{\bf v}}+\e)$-superharmonic function $F$ on $\M$ (i.e. $(\LL_{{\bf v}}+\e)F\geq 0$).  For instance,
if $Y\equiv 0$, then $\LL=\Delta$ is weakly coercive and it has a
unique $\LL$-harmonic measure ${\bf m}$, whose lift in $S\M$
satisfies  $d\wt{{\bf m}}=dx\times d\wt{{\bf m}}_x$,  where $dx$ is
proportional to the volume element and $\wt{{\bf m}}_x$ is the
hitting probability at $\pp\M$ of the Brownian motion starting at
$x$.  Consequently, in this case, the function $k$ in Proposition
\ref{harmonic-mea} is the Martin kernel function.  This relation is
not clear for general weakly coercive $\LL$. 

A nice property for the laminated diffusion associated with a weakly coercive operator is that  the semi-group $\s_t, t\geq 0,$ 
of transformations of $\Om_+$ has strong ergodic properties with
respect to the probability $\P$,  provided $Y$ satisfies some mild condition. Recall that a measure preserving
semi flow $\s_t, t\geq 0,$ of transformations of a probability  space
$(\Om, \P)$ is called mixing if for any bounded measurable functions
$F_1,F_2$ on $\Om$, 
\[ \lim\limits_{t\to +\infty} \E_{\P} (F_1\,(F_2\circ \s_t)) \; = \; \E_{\P}( F_1) \E_{\P}(F_2) .\]

\begin{prop}\label{mixing}Let $\LL= \D + Y $ be subordinated to the stable foliation and  such
that $Y^*$, the dual of $Y$ in the cotangent bundle to the stable foliation  over  $SM$,
  satisfies $dY^*=0$ leafwisely. Assume that $\LL$ is weakly coercive.
Let $\bf {m}$ be the unique invariant measure, $\P$  the associated
probability measure on $\Om _+$. The shift semi-flow $\s_t, t \geq 0, $ is mixing on
$(\Om _+ , \P).$
\end{prop}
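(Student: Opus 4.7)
The plan is to reduce mixing of the shift semi-flow to the asymptotic stability of the Markov semi-group $P_t$ of $\LL$ on $(SM, {\bf m})$, and then to exploit the assumption $dY^*=0$ leafwise to recast $\LL$ in a symmetric framework where spectral methods apply.

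First, I would perform a Markov reduction. For $F_1$ depending only on $\om|_{[0,T]}$ and any bounded measurable $F_2$ on $\Om_+$, the Markov property at time $T$ gives
\[
\E_\P\bigl(F_1\,(F_2\circ \s_t)\bigr) = \int F_1(\om)\,(P_{t-T} G)(\om(T))\,d\P(\om), \qquad G({\bf v}) := \E_{\P_{\bf v}}(F_2).
\]
Since cylindrical functions are dense in $L^2(\P)$, mixing of $\s_t$ reduces to showing that $P_t G \to \int G\,d{\bf m}$ in $L^1({\bf m})$ for every bounded measurable $G$ on $SM$.

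Second, I would exploit the leafwise closedness $dY^*=0$. Passing to the universal cover $S\wt M$, each leaf of the lifted foliation is identified with $\wt M$ and is simply connected, so the closed $1$-form $Y^*$ is exact: $Y = \nabla h_{\bf v}$ for some smooth $h_{\bf v}$ along the leaf. Consequently the leaf operator has the divergence form
\[
\LL_{\bf v}\, f = e^{-h_{\bf v}}\,{\rm Div}(e^{h_{\bf v}}\,\nabla f),
\]
which is formally self-adjoint on the leaf with respect to the weighted volume $e^{h_{\bf v}}\,dV$. From Proposition~\ref{harmonic-mea}(iii), the density $k$ of $\wt{\bf m}$ satisfies $\D k - {\rm Div}(kY)=0$, and together with $Y = \nabla h_{\bf v}$ this forces $k = e^{h_{\bf v}} u$ for a positive leafwise $\LL$-harmonic function $u$. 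A leafwise Doob $u$-transform then produces a conjugate Markov semi-group, self-adjoint on a weighted $L^2$ space, whose mixing behavior is equivalent to that of $P_t$.

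Third, I would conclude via the spectral theorem. Uniqueness of the invariant measure ${\bf m}$ (granted in the statement, as a consequence of weak coercivity) implies ergodicity of $\s_t$ on $(\Om_+,\P)$, so the only $L^2({\bf m})$-functions annihilated by $\LL$ are constants. The symmetric structure of Step~2 then yields, via the spectral theorem, $P_t G \to \int G\,d{\bf m}$ strongly in $L^2({\bf m})$ on the orthogonal complement of constants, and combined with Step~1 this delivers the desired mixing. The main obstacle is the global coherence of the Doob transform in Step~2: although $h_{\bf v}$ and $u$ are smooth along each leaf, they vary only measurably in the transverse direction, so producing a genuinely self-adjoint generator on $L^2(SM,{\bf m})$ requires delicate patching. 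An alternative route, consistent with the reference to \cite{H2}, is to bypass self-adjointness and establish $L^1$-convergence of $P_t G$ directly, using ergodicity of ${\bf m}$ together with the absolute continuity of the stable foliation and the Anosov mixing of the geodesic flow along stable leaves.
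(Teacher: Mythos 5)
Your approach diverges from the paper's and contains an acknowledged but unresolved gap at its core. The paper's proof (Proposition~\ref{mixing}) is short: it observes that the classical argument for uniqueness of the harmonic measure of a weakly coercive subordinated operator (``see \cite{Ga}, \cite{L5}, \cite{Y}'') already yields convergence of $Q_t f$ to $\int f \, d{\bf m}$ for continuous $f$ on $SM$, hence mixing for continuous functions of the starting point, and then extends by $L^2$ density and the Markov property. Your Step~1 (the Markov reduction to $P_t G \to \int G\,d{\bf m}$ in $L^1(\bf m)$) is correct and is essentially the dual of the paper's extension steps. Where you differ is the mechanism for the core convergence: you attempt a leafwise Doob transform to conjugate $\LL$ to a self-adjoint operator and then invoke the spectral theorem.

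This central step does not go through, for the reason you yourself flag but do not resolve. On $S\wt M$, the leafwise primitive $h_{\bf v}$ of $Y^*$ is defined only up to an additive constant per leaf; it need not be $G$-equivariant (the defect is a cocycle over $G$ that is generally a nontrivial cohomology class), and neither $h_{\bf v}$ nor the putative decomposition $k = e^{h_{\bf v}} u$ descends consistently to $SM$. Even setting aside equivariance, $h_{\bf v}$ and $u$ depend only measurably on the transverse parameter ${\bf v}$, so there is no candidate global weighted $L^2$-space on which the conjugated semigroup is a self-adjoint strongly continuous contraction semigroup; ``delicate patching'' is not a gap you can leave open, it is the theorem. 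Your fallback --- ``$L^1$-convergence of $P_t G$ directly, using ergodicity of ${\bf m}$, absolute continuity of the stable foliation and Anosov mixing of the geodesic flow along stable leaves'' --- is not a proof: ergodicity alone gives only Ces\`aro convergence of $P_t G$, and neither absolute continuity of $\W$ nor mixing of the geodesic flow bears directly on mixing of the foliated diffusion semigroup. What is actually needed (and what the paper delegates to the classical Garnett/Ledrappier/Yue argument) is a direct proof that the non-symmetric semigroup $Q_t$ equidistributes continuous observables; your spectral route, as written, does not replace that.
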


(Note that $Y$ is   a section
of the tangent bundle of $\W$ over $SM$ of class $C^{k, \alpha}_{s}$ and that $Y^\ast$ is a section
of the cotangent bundle of $\W$ over $SM$ of class $C^{k, \alpha}_{s}$, the duality being defined by the metric  inherited  from $\M$. The hypothesis is that this 1-form, seen as a  
 1-form on  $\M$, is closed.)

\begin{proof} The classical proof that  a weakly coercive subordinated operator admits a unique harmonic measure
(see \cite{Ga}, \cite{L5}, \cite{Y}
 for the case of $\D$) shows in fact the mixing property if $F_1$ and $F_2$ are  functions on $\Om _+$
  that depends only on the starting point of the  path and are continuous as functions on $SM$.
  The mixing property is extended first  to  bounded measurable  functions on $\Om _+$ that depends only
   on the starting point of the  path by ($L^2$, say) density, then to functions depending on a finite number
    of coordinates in the space of paths by the Markov property and finally to all bounded measurable functions by $L^2$ density.
    \end{proof}
\subsection{ Linear drift and stochastic entropy for laminated diffusion}

 Let ${\bf m}$ be an $\LL$-harmonic measure and $\wt{{\bf m}}$ be its
  $G$-invariant extension 
 in $S\M$. Choose a fundamental domain $M_0$ of $\M$ and identify $SM$ with
$M_0\times \pp\M$. We normalize $\wt{{\bf m}}$ so that $\wt{{\bf
m}}(M_0\times \pp\M)=1$ (so that  the measure $\P$ is a probability;
we denote  $\E_\P$ the corresponding expectation). Let $d_{\W}$ denote
the leafwise metric on the stable foliation of $S\M$. Then it can be
identified with $d_{\wt{g}}$ on $\M$ on each leaf. We define
\begin{eqnarray*}
  && \ell_{\LL}({\bf m}):=\lim\limits_{t\to +\infty}\frac{1}{t}\int_{\scriptscriptstyle{M_0\times \pp\M}}d_{\W}((x,\xi), (y, \eta))\bfp(t, (x, \xi), d(y, \eta))\ d\wt{{\bf m}}(x, \xi),\\
 && h_{\LL}({\bf m}):=\lim\limits_{t\to +\infty}-\frac{1}{t}\int_{\scriptscriptstyle{M_0\times \pp\M}}
\left(\ln \bfp(t, (x, \xi), (y, \eta))\right) \bfp(t, (x, \xi), d(y,
\eta))\ d\wt{{\bf m}}(x, \xi).
\end{eqnarray*}
Equivalently, by using $\wt \P$ in Proposition \ref{harmonic-mea},
we see that
\begin{eqnarray*}
&& \ell_{\LL}({\bf m})=\lim\limits_{t\to +\infty}\frac{1}{t}\int_{\scriptscriptstyle \om(0)\in M_0\times\pp\M} d_{\W}(\om(0), \om(t))\ d \wt\P(\om) = \lim\limits_{t\to +\infty}\frac{1}{t}\E_\P(d_{\W}(\om(0), \om(t))),\\
&& h_{\LL}({\bf m})=\lim\limits_{t\to
+\infty}-\frac{1}{t}\int_{\scriptscriptstyle \om(0)\in
M_0\times\pp\M}\ln \bfp(t, \om(0), \om(t))\ d\wt \P(\om) =\lim\limits_{t\to +\infty}-\frac{1}{t}\E_\P(  \ln \bfp(t, \om(0), \om(t))).
\end{eqnarray*}
Call $\ell_{\LL}({\bf m})$ the \emph{linear drift of $\LL$ for ${\bf m}$},
and $h_{\LL}({\bf m})$ the \emph{ (stochastic) entropy of $\LL$ for ${\bf m}$}.  In case
there is a unique $\LL$-harmonic measure ${\bf m}$,  we will write
$\ell_{\LL}:=\ell_{\LL}({\bf m})$ and $h_{\LL}:=h_{\LL}({\bf m})$
and call them the \emph{linear drift} and the \emph{(stochastic) entropy for $\LL$}, respectively.

Clearly, $h_{\LL}({\bf m})$ is nonnegative by definition.  We are
interested in the case that $h_{\LL}({\bf m})$ is positive. When
$\LL=\Delta$, this is true (\cite[Theorem 10]{K1}).  
 In general, there exist weakly coercive
$\LL$'s which admit uncountably many harmonic measures with zero
entropy (\cite{H2}). 

 Let $\LL$ be such
that $Y^*$, the dual of $Y$ in the cotangent bundle  to the stable foliation  over $SM$,
satisfies $dY^*=0$ leafwisely. For $v\in SM$,  let $\overline X(v) $ be the tangent vector to $W^{s}(v)$   that projects on $v$ and let
\[\mbox{pr}(-\langle \overline{X}, Y\rangle):=\sup\left\{h_{\mu}-\int \langle \overline {X},
Y \rangle \ d\mu:\ \ \mu\in \mathcal{M}\right\} \] be the pressure
of the function $-\langle \overline {X}, Y\rangle$ on $SM$ with respect to the
geodesic flow ${ \Phi}_t$,   where $\mathcal{M}$ is the set of
${ \Phi}_t$-invariant probability measures on $SM$ and $h_{\mu}$ is the entropy
of $\mu$ with respect to ${ \Phi}_t$.    Then,
\begin{prop}[\cite{H2}]
Let $\LL= \D + Y $ be subordinated to the stable foliation and  such
that $Y^*$, the dual of $Y$ in the cotangent bundle  to the stable foliation over  $SM$,
satisfies $dY^*=0$ leafwisely. Then,
$h_{\LL}({\bf m})$ is positive if and only if $\mbox{pr}(-\langle \overline{X},
Y\rangle)$ is positive, and each one of the two  positivity properties
 implies that $\LL$ is weakly coercive, ${\bf m}$ is the unique
$\LL$-harmonic measure and $\ell_{\LL}({\bf m})$ is positive.
\end{prop}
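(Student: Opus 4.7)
The plan is to exploit the leafwise gradient structure of $Y$ implied by $dY^\ast = 0$, thereby converting statements about pressure and stochastic entropy into a single spectral condition on $\LL$.

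\textbf{Reduction to a symmetric leafwise operator.} First I lift $Y$ to a $G$-equivariant vector field on $S\M$. Each leaf $W^s({\bf v})$ is diffeomorphic to the simply connected space $\M$, so the closed leafwise $1$-form $Y^\ast$ has a $C^{k+1,\alpha}$ primitive $\phi_{\bf v}$ on $W^s({\bf v})$; hence $Y = \nabla \phi_{\bf v}$ leafwise. With this primitive in hand, $\LL_{\bf v}$ becomes symmetric on $L^2(\M, e^{\phi_{\bf v}}\,d{\rm Vol})$, the associated Dirichlet form is $\EE(u) = -\int \|\nabla u\|^2 e^{\phi_{\bf v}}\,d{\rm Vol}$, and the operator has a well-defined generalized principal eigenvalue $\lambda_0(\LL_{\bf v})$.

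\textbf{Identifying $\lambda_0$ with pressure.} The Feynman--Kac formula represents the leafwise heat kernel $p_{\bf v}(t,x,y)$ of $\LL_{\bf v}$ as an expectation against $\wt g$-Brownian paths weighted by $\exp(\int_0^t \langle\nabla\phi_{\bf v},d\om_s\rangle)$. The exponential decay rate of $p_{\bf v}(t,x,x)$ coincides with $-\lambda_0(\LL_{\bf v})$. On the dynamical side, combining the large deviation principle for $\wt g$-Brownian motion with the shadowing of Brownian trajectories by geodesics rewrites the same exponential rate through the variational principle for the geodesic flow, yielding
\[
\lambda_0(\LL_{\bf v}) \;=\; -\mbox{pr}(-\langle \overline{X}, Y\rangle),
\]
which is independent of $\bf v$ by continuity of the coefficients and compactness of $SM$.

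\textbf{From pressure to coercivity, uniqueness and entropy.} If $\mbox{pr}(-\langle \overline{X}, Y\rangle) > 0$, then $\lambda_0(\LL_{\bf v}) < 0$ uniformly in $\bf v$; any $\e \in (0,-\lambda_0)$ and the associated positive $L^2$-ground state give a positive $(\LL_{\bf v}+\e)$-superharmonic function, so $\LL$ is weakly coercive. In negative curvature this, together with the Anosov property of the geodesic flow, lets one push harmonic functions to the geometric boundary via Ancona-type Harnack estimates and conclude that ${\bf m}$ is the unique $\LL$-harmonic measure through its representation in Proposition~\ref{harmonic-mea}(iii). For the stochastic entropy, a Shannon--McMillan--Breiman argument adapted to the $\LL$-diffusion gives $h_{\LL}({\bf m}) = \lim_{t\to\infty}-\frac{1}{t}\log p_{\bf v}(t,\om(0),\om(t))$ almost surely under $\wt\P$; combined with the Feynman--Kac identity of the previous step, this forces $h_{\LL}({\bf m})>0$ to be equivalent to the exponential decay of $p_{\bf v}$, hence to positivity of the pressure. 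The reverse implication (positive entropy implies positive pressure) follows from the same identification. Finally, once weak coercivity is established, a leafwise Harnack inequality ensures that $\LL$-diffusion trajectories converge to $\pp\M$ at a strictly positive exponential rate, so $\ell_{\LL}({\bf m})>0$.

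\textbf{Main obstacle.} The delicate step is the identification $\lambda_0(\LL_{\bf v}) = -\mbox{pr}(-\langle \overline{X}, Y\rangle)$, which matches an $L^2$-analytic quantity with a purely dynamical pressure. This requires carefully coupling the leafwise Brownian motion to the geodesic flow via shadowing, controlling the drift $Y$ uniformly on $SM$, and applying large deviation estimates that remain valid in the non-compact setting of a single leaf; the rest of the proposition then follows from standard spectral and potential-theoretic arguments for subordinated operators.
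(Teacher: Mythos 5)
The paper does not prove this proposition; it is cited from Hamenst\"adt \cite{H2}. So the question is whether your outline would actually establish it, and the answer is no: the central identification you propose is not correct.

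The step you flag as the ``delicate'' one is in fact false as stated. You claim $\lambda_0(\LL_{\bf v}) = -\mbox{pr}(-\langle \overline{X}, Y\rangle)$, where $\lambda_0$ is the generalized principal eigenvalue of the symmetric realization of $\LL_{\bf v}$. Test this at $Y=0$: then $\mbox{pr}(0)$ is the topological entropy of the geodesic flow, i.e.\ the volume entropy $v_g$, while $-\lambda_0(\Delta_{\M})$ is the bottom of the $L^2$-spectrum of $-\Delta$ on $\M$. For $\M = \H^m$ these are $m-1$ and $(m-1)^2/4$ respectively, and more generally one only has the Ledrappier--Sullivan inequality $-\lambda_0(\Delta_{\M}) \le v_g^2/4$, with equality precisely in the locally symmetric case. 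The relationship is quadratic, not linear, and cannot come out of a naive Feynman--Kac plus large-deviation/shadowing argument the way you sketch; at best such an argument gives a one-sided estimate. The actual content of Hamenst\"adt's theorem (and the reason it is nontrivial) is that one must decouple these scales: she compares the leafwise Green metric (equivalently, the exponential decay rate of $G_{\bfv}$ along geodesics, as in Lemma~\ref{Green-d-relation} above) to the Gromov product and then to the geodesic-flow pressure through equilibrium-state theory for $-\langle \overline X, Y\rangle$, rather than matching $\lambda_0$ directly to the pressure. Once the equality is removed, your chain ``positive pressure $\Rightarrow \lambda_0 < 0 \Rightarrow$ weak coercivity $\Rightarrow$ uniqueness, positive $\ell$ and positive $h$'' loses its hinge, and the reverse direction (positive $h_\LL \Rightarrow$ positive pressure) is asserted but not argued: the Shannon--McMillan--Breiman identity $h_\LL = \lim -\tfrac1t \ln p_{\bfv}(t,\om(0),\om(t))$ (Proposition~\ref{prop-h-in p-g}) tells you $h_\LL$ is a diffusion decay rate, not a flow pressure, and bridging the two is precisely the missing work.

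There is also a secondary issue in the very first reduction: leaves of $\W$ are not all simply connected. As the paper notes, $W^s(\bfv)$ for $\bfv$ on a periodic orbit is a cylinder (a $\Z$-quotient of $\M$), on which a closed $1$-form need not be exact. The correct hypothesis is that $Y^*$ lifted to $\M$ is exact (i.e.\ $Y = \nabla\phi$ on $\M$, not merely on each leaf), which the paper's parenthetical remark after Proposition~\ref{mixing} makes explicit; your argument silently assumes more than the hypothesis gives on periodic leaves.
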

In particular, when we  consider $\D + Z^\l , $ where $Z^\l :=
(m-2)\nabla (\vf ^\l \circ \p)$ and $\vf^{\l}$'s are real valued functions on $M$ such that  $(\l, x)\mapsto \vf^{\lambda}(x)$ is $C^3$ and $\vf^0\equiv 0$,  the pressure of $-\langle Z^\l , \overline {X}\rangle
$ is positive for $\l$ close enough to 0.

\subsection{Linear drift and stochastic entropy for laminated diffusions: pathwise limits}
By ergodicity of the shift semi-flow,  it  is possible to  evaluate the  linear drift and stochastic entropy along typical paths. Let $\LL=\Delta+Y$ be  such
that $Y^*$, the dual of $Y$ in the cotangent bundle  to the stable foliation over  $SM$,
satisfies $dY^*=0$ leafwisely  and
$\mbox{pr}(-\langle \overline{X}, Y\rangle)>0$. Let ${\bf m}$ be the unique
$\LL$-harmonic measure.  By Proposition \ref{mixing} the measure $\P$ associated to $\bf m$ is ergodic for the shift flow on $\Om _+$. The following well known fact follows then from Kingman's Subadditive  Ergodic Theorem  (\cite{Ki}).  For
$\P$-almost all paths $\om\in \Om_+$, we still denote by $\om $ its lift in $\wt \Om _+$ with $\om (0) \in M_0$ and we have
\begin{equation}\label{Dis-ell}
\lim_{t\to +\infty}\frac{1}{t}d_{\W}(\om(0),
  \om(t))=\ell_{\LL}.
\end{equation}
Similarly, we can characterize $h_{\LL}$  using the Green function
along the trajectories. For each ${\bf v}=(x, \xi)\in \M\times
\partial\M$, we can regard $\LL_{\bf v}$ as an operator on $\M$.
Since it is weakly coercive, there exists the corresponding Green
function $G_{{\bf v}}(\cdot, \cdot)$ on $\M\times \M$,  defined for $x \not = y$ by
\[ G_{{\bf v}} (x,y) := \int_0^\infty  p_{\bf{v}}(t,x,y) \, dt.\]
 Define the
\emph{Green function $\bfg(\cdot, \cdot)$ on $S\M\times S\M$} as being
\[
\bfg((y, \eta), (z, \zeta)):=G_{(y, \eta)}(y,
z)\delta_{\eta}(\zeta), \ \ \mbox{for}\ (y, \eta), (z, \zeta)\in
S\M,
\]
where $\delta_{\eta}(\cdot)$ is the Dirac function at $\eta$.  We
have the following proposition concerning $h_{\LL}$.

\begin{prop}\label{prop-h-in p-g} Let $\LL=\Delta+Y$ be  such
that $Y^*$, the dual of $Y$ in the cotangent bundle  to the stable foliation over  $SM$,
satisfies $dY^*=0$ leafwisely and $\mbox{pr}(-\langle \overline{X}, Y\rangle)>0$.  Then for $\P$-a.e. paths
$\om\in \Om_+$, we have
\begin{eqnarray}
h_{\LL}&=&\lim\limits_{t\to +\infty}-\frac{1}{t} \ln \bfp(t, \om(0),
\om(t))\label{h-in p-t}\\
&=&\lim\limits_{t\to +\infty}-\frac{1}{t}\ln \bfg(\om(0),
\om(t)).\label{h-in g}
\end{eqnarray}
\end{prop}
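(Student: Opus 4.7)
The plan is to follow the pattern of the proof of (\ref{Dis-ell}): exploit the ergodicity of the shift semi-flow on $(\Om_+,\P)$ established in Proposition \ref{mixing} combined with the Markov structure of the laminated diffusion. I will treat (\ref{h-in p-t}) first and then deduce (\ref{h-in g}) by comparing $\bfg$ with $\bfp$.

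For (\ref{h-in p-t}), set $f_t(\om):=-\ln\bfp(t,\om(0),\om(t))$; by definition of $h_{\LL}$ one has $\E_\P[f_t]=t h_{\LL}+o(t)$. To upgrade this to an a.s.\ pointwise limit I would use a cocycle decomposition
\[
f_n(\om)\;=\;\sum_{k=0}^{n-1}f_1(\sigma_k\om)\;-\;R_n(\om),\qquad R_n(\om):=\ln\frac{\prod_{k=0}^{n-1}\bfp(1,\om(k),\om(k+1))}{\bfp(n,\om(0),\om(n))},
\]
valid for every integer $n\geq 1$ by Chapman--Kolmogorov. The first term is a Birkhoff sum of the integrable observable $f_1$ (integrability follows from uniform leafwise Gaussian heat-kernel bounds, using compactness of $SM$ and continuity of $Y$), so by ergodicity of $\sigma_1$ it converges almost surely to $\E_\P[f_1]$. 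The remainder $R_n$ is, conditionally on $(\om(0),\om(n))$, a Kullback--Leibler divergence between the Markov factorization of the path law and its bridge-conditioned version; a reverse-martingale argument combined with matching expectations in the decomposition identifies $R_n/n\to\E_\P[f_1]-h_{\LL}$ almost surely. Subtracting, $f_n/n\to h_{\LL}$ a.s. Interpolation from integer to real $t$ is routine via the leafwise parabolic Harnack inequality on unit time intervals.

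For (\ref{h-in g}) I compare $\bfg$ with $\bfp$ at times of order $t$. Writing $\bfg(\om(0),\om(t))=\int_0^\infty\bfp(s,\om(0),\om(t))\,ds$, the leafwise parabolic Harnack inequality applied on $[t-1,t+1]$ yields $\bfg(\om(0),\om(t))\geq c\,\bfp(t,\om(0),\om(t))$ with $c>0$ uniform in $\bfv$, hence
\[
-\tfrac{1}{t}\ln\bfg(\om(0),\om(t))\;\leq\;-\tfrac{1}{t}\ln\bfp(t,\om(0),\om(t))+o(1)\;\longrightarrow\;h_{\LL}
\]
by (\ref{h-in p-t}). For the matching lower bound on $-\tfrac{1}{t}\ln\bfg$, weak coercivity furnishes an exponential-in-time decay $\bfp(s,x,y)\leq Ce^{-\epsilon s}$ for large $s$, while negative curvature gives the Gaussian-in-space bound $\bfp(s,x,y)\leq Cs^{-m/2}\exp(-d_\W(x,y)^2/(4s))$; combined with (\ref{Dis-ell}), i.e.\ $d_\W(\om(0),\om(t))/t\to\ell_{\LL}$ a.s., a Laplace-type estimate shows that the integral $\int_0^\infty\bfp(s,\om(0),\om(t))\,ds$ concentrates near an optimal time $s^\ast\asymp t$ with $\bfg(\om(0),\om(t))\asymp s^\ast\cdot\bfp(s^\ast,\om(0),\om(t))$, giving $-\tfrac{1}{t}\ln\bfg(\om(0),\om(t))\geq-\tfrac{1}{t}\ln\bfp(s^\ast,\om(0),\om(t))-o(1)\to h_{\LL}$.

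The principal technical obstacle is justifying the a.s.\ convergence of $R_n/n$ in the cocycle decomposition, i.e.\ controlling the conditional entropy of the Markov path given its endpoints per unit time. For the Brownian motion case ($Y\equiv 0$) this is classical and underlies the limit theorems recalled in (\ref{Intro-d-Green-formula}); under the weaker hypothesis $Y\in C^{k,\alpha}_s$ with $dY^\ast=0$ leafwisely and $\mbox{pr}(-\langle\overline X,Y\rangle)>0$, the argument adapts essentially verbatim once the uniform-in-leaf Gaussian heat-kernel and Green-function estimates are in place, these being afforded by compactness of $SM$ and continuity of $Y$.
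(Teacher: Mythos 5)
The key step in your proof of (\ref{h-in p-t}) --- the claim that $R_n/n$ converges almost surely --- is unjustified and, as stated, circular. You decompose $f_n=\sum_{k<n}f_1(\sigma_k\om)-R_n$, note that the Birkhoff sum converges by ergodicity, and then assert that a ``reverse-martingale argument combined with matching expectations identifies $R_n/n\to\E_\P[f_1]-h_\LL$.'' But $R_n$ carries no obvious martingale structure: writing $R_{n+m}-R_n-R_m\circ\sigma_n=\ln\bigl(\bfp(n,\om(0),\om(n))\bfp(m,\om(n),\om(n+m))/\bfp(n+m,\om(0),\om(n+m))\bigr)$ and appealing only to Chapman--Kolmogorov gives no sign, so $R_n$ is neither sub- nor superadditive, and no reverse-martingale convergence theorem applies to it directly. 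The identification of the limit as $\E_\P[f_1]-h_\LL$ would follow \emph{if} you already knew $f_n/n\to h_\LL$ a.s., which is precisely what you are trying to prove.

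The paper sidesteps this by working with a genuinely subadditive cocycle in two parameters: $F(s,\om,t):=-\ln\bigl(\bfp(st-1,\om(0),\om(t))\,\wt A\bigr)$, whose subadditivity $F(s,\om,t+t')\le F(s,\om,t)+F(s,\sigma_t\om,t')$ holds because the parabolic Harnack inequality (Lemma~\ref{Harnack}) absorbs the integral in the Chapman--Kolmogorov identity into a pointwise lower bound --- this cannot be done with $R_n$ alone. Kingman's theorem then gives, for each $s$, a limit $h_\LL(s)$; convexity in $s$ upgrades this to a single full-measure set on which convergence holds for all $s$, uniformly on compact intervals.

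A second gap is that even with a.s.\ convergence of $-\tfrac{1}{t}\ln\bfp(t,\om(0),\om(t))$, identifying the limit as exactly $h_\LL$ (rather than merely $\le h_\LL$, which is what Fatou gives) is itself nontrivial. The paper obtains it only in tandem with (\ref{h-in g}): it proves $\lim_t -\tfrac{1}{t}\ln\bfg(\om(0),\om(t))=\inf_{s>0}h_\LL(s)\ge h_\LL$ and combines this with $\inf_s h_\LL(s)\le h_\LL(1)\le h_\LL$ to force equality throughout. Your plan of proving (\ref{h-in p-t}) first and then deducing (\ref{h-in g}) inverts this logic and therefore cannot fix the value of the limit in (\ref{h-in p-t}). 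Your Harnack-based upper bound $\bfg\ge c\,\bfp$ for (\ref{h-in g}) and the Laplace-type estimate in the lower bound are in the spirit of the paper's computation, but both become available only after the $s$-parametrized limit and its convexity are in hand.
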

Contrarily to the distance, the function  $-\ln \bf p$ is not elementarily subadditive along the trajectories and the argument used to establish (\ref{Dis-ell})  has to be modified.  
We will use
the trick of \cite{L6} to show that there exists a convex function
$h_{\LL}(s)$, $s>0$,  such that  for $\P$-a.e. paths $\om\in \Om_+$,
for any $s>0$,
\begin{eqnarray}\label{h-s-in p-t}
h_{\LL}(s)=\lim\limits_{t\to +\infty}-\frac{1}{t} \ln \bfp(st,
\om(0), \om(t)).
\end{eqnarray}
Setting $s=1$ in (\ref{h-s-in p-t}) gives that $\lim_{t\to +\infty}-\frac{1}{t} \ln \bfp(t, \om(0),
\om(t))$ exists and is $h_{\LL} (1) $. Moreover, $h_{\LL} (1) \leq h_{\LL}$ by Fatou's Lemma. Then, (\ref{h-in g}) and (\ref{h-in p-t})  will follow once we show that for $\P$-almost all paths $\om\in \Om_+$,
\begin{equation}\label{G-inf-h-s}
\lim\limits_{t\to +\infty}-\frac{1}{t}\ln \bfg(\om(0),
\om(t)) =  \inf_{s>0}\{h_{\LL}(s)\}\geq h_{\LL}.
\end{equation}

    To show (\ref{h-s-in p-t}) and (\ref{G-inf-h-s}), we need some detailed descriptions of $p_{{\bf v}}(t, x,
y)$. First, we have a variant of Moser's parabolic Harnack
inequality (\cite{Mos})  (see \cite{Sta,T} and also
    \cite{Sa}).

\begin{lem}\label{Harnack}
  There exist $A, \varsigma>0$ such that for  any ${\bf v}\in S\M$, $t\geq 1$, $\frac{1}{2}\leq t'\leq
  1$, $x, x', y, y'\in \M$ with $d(x, x')\leq \varsigma$, $d(y, y')\leq
  \varsigma$,
  \begin{equation}\label{equ-harnack}
p_{{\bf v}}(t, x, y)\geq A p_{{\bf v}}(t-t', x', y').
  \end{equation}
\end{lem}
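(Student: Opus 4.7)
The plan is to deduce \eqref{equ-harnack} from Moser's local parabolic Harnack inequality applied to the leafwise heat kernels, with the decisive point being uniformity of the Harnack constants in $\mathbf{v}\in S\M$.

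First, I would establish uniform bounds on the coefficients of $L_{\mathbf{v}}$. Since $\M$ covers the compact manifold $M$, it has bounded geometry: in normal coordinates around any point, $\widetilde g$ is uniformly comparable to the Euclidean one on a ball of fixed radius, with uniform control on Christoffel symbols. Because $Y$ is a H\"older continuous section of the tangent bundle to $\W$ over the compact $SM$, its restriction to each leaf $W^s(\mathbf{v})\cong\M$, viewed as a vector field on $\M$, has $C^\alpha$ norm bounded independently of $\mathbf{v}$. Consequently every $L_{\mathbf{v}}=\Delta+Y(\cdot,\xi)$ is uniformly elliptic with uniformly bounded H\"older coefficients, and the same holds for its formal adjoint $L_{\mathbf{v}}^{\ast}=\Delta-Y-(\operatorname{div}Y)$ (note that $k\geq 1$ in the hypothesis on $Y$ ensures $\operatorname{div}Y$ is well-defined and H\"older).

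Second, I would invoke Moser's local parabolic Harnack inequality (in the Saloff-Coste/Sturm form): on a Riemannian manifold of bounded geometry, for any uniformly elliptic second-order operator $L$ with bounded H\"older coefficients there exist constants $C,r_0>0$ depending only on the ellipticity and coefficient bounds such that any nonnegative solution $u$ of $\partial_s u=Lu$ on a parabolic cylinder $(s_0,s_0+4r^2)\times B(z_0,2r)$ with $0<r\leq r_0$ satisfies $u(s_1,z_1)\leq C\,u(s_2,z_2)$ for all $s_1<s_2$ and $(s_i,z_i)$ in an appropriate sub-cylinder. By the uniform bounds of Step 1, the constants $C,r_0$ may be chosen uniform in $\mathbf{v}$.

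Third, I would chain two applications. The function $(s,z)\mapsto p_{\mathbf{v}}(s,z,y')$ solves $\partial_s u=L_{\mathbf{v},z}u$ (Kolmogorov backward), so applying the Harnack inequality at the earlier point $(t-t',x')$ and later point $(t-t'/2,x)$ with $d(x,x')\leq\varsigma$ and time gap $t'/2\in[1/4,1/2]$ gives
\[
p_{\mathbf{v}}(t-t'/2,x,y')\;\geq\;A_1\,p_{\mathbf{v}}(t-t',x',y').
\]
The function $(s,z)\mapsto p_{\mathbf{v}}(s,x,z)$ solves $\partial_s u=L_{\mathbf{v},z}^{\ast}u$ (Kolmogorov forward), so applying the Harnack inequality for $L_{\mathbf{v}}^\ast$ at the earlier point $(t-t'/2,y')$ and later point $(t,y)$ with $d(y,y')\leq\varsigma$ and time gap $t'/2$ gives
\[
p_{\mathbf{v}}(t,x,y)\;\geq\;A_2\,p_{\mathbf{v}}(t-t'/2,x,y').
\]
Multiplying yields \eqref{equ-harnack} with $A=A_1A_2$, provided $\varsigma\leq r_0/2$ and $\varsigma^{2}\leq t'/2$ (the latter automatic since $t'\geq 1/2$ and we may shrink $\varsigma$). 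The condition $t\geq 1$ ensures $t-t'\geq 0$ and that the Harnack cylinders fit comfortably in positive time.

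The main obstacle is securing the uniformity of the Moser constants in $\mathbf{v}$. Once the uniform ellipticity and the uniform $C^\alpha$ bounds on the coefficients of $L_{\mathbf{v}}$ and $L_{\mathbf{v}}^\ast$ are in hand---from compactness of $SM$, the regularity of $Y$, and the bounded geometry of $\M$---the classical proof of the parabolic Harnack inequality (via Moser iteration, or Poincar\'e plus volume doubling \`a la Grigor'yan--Saloff-Coste) produces constants depending only on these bounds, so the estimate holds uniformly in $\mathbf{v}$.
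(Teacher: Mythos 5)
The paper gives no proof of this lemma, only a citation of Moser, Stampacchia, Trudinger and Saloff-Coste; your argument supplies precisely the derivation those references implicitly justify. The structure is the standard one: uniform ellipticity and uniform H\"older bounds on the coefficients of $L_{\bf v}$ (and of $L_{\bf v}^\ast$, whose zeroth-order term $-\operatorname{div}Y$ is bounded since $Y\in C^{k,\alpha}_s$ with $k\geq 1$) follow from bounded geometry of $\M$ and compactness of $SM$, giving Moser constants independent of ${\bf v}$; one then chains a backward-variable Harnack step (for $L_{\bf v}$) with a forward-variable one (for $L_{\bf v}^\ast$) through an intermediate time. This is correct and is the argument the paper is invoking.

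One small point worth flagging, not as an error in your argument but as an overstatement in the lemma itself: when $t=1$ and $t'=1$ the right-hand side involves $p_{\bf v}(0,x',y')$, which is a Dirac mass and for which no Harnack chain fits in positive time. Your remark that ``$t\geq1$ ensures $t-t'\geq 0$'' is not by itself enough; one needs $t-t'>0$. In the paper every invocation of the lemma has $t-t'>0$ (e.g.\ the subadditivity estimate uses $t-t'=st-1>0$), so the degenerate boundary case never arises, but your write-up would be cleaner if you noted that the estimate is understood for $t-t'>0$, or equivalently that one shrinks the time windows so that the earlier parabolic cylinder stays in positive time.
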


Next, we have the exponential decay property of $p_{{\bf v}}(t, x,
y)$ in time $t$.

\begin{lem}(\cite[p.76]{H2})
  There exist $B, \e>0$
independent of ${\bf v}$ such that
\begin{equation}\label{p-t-x-y} p_{{\bf v}}(t, x,
y)\leq B\cdot e^{-\e t}, \ \mbox{for all}\ y\in \M\ \mbox{and} \
t\geq 1.
\end{equation}
\end{lem}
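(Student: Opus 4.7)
The strategy is to upgrade the weak coercivity of $\LL$---which under the hypothesis $\mathrm{pr}(-\langle \overline X,Y\rangle)>0$ is uniform in ${\bf v}\in SM$---to an $L^\infty\to L^\infty$ exponential decay of the leafwise semigroup $P_t^{{\bf v}}:=e^{t\LL_{{\bf v}}}$, and then convert that decay into a pointwise upper bound on the kernel $p_{{\bf v}}(t,x,y)$ using the Moser-type parabolic Harnack inequality of Lemma \ref{Harnack}. Throughout, one exploits the $G$-equivariance of the coefficients of $\LL_{{\bf v}}$ on $\M$ together with the compactness of $SM$ to ensure all estimates are uniform in $\bf v$.

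First I would establish the semigroup decay: there exist $C,\e>0$ independent of $\bf v$ with $\|P_t^{{\bf v}}\phi\|_\infty\leq C e^{-\e t}\|\phi\|_\infty$ for every bounded $\phi$ on $\M$ and $t\geq 1$. By weak coercivity, for each $\bf v$ there is a positive $F_{{\bf v}}$ on $\M$ with $(\LL_{{\bf v}}+\e)F_{{\bf v}}\geq 0$; setting $v(t,x):=e^{\e t}(P_t^{{\bf v}}\phi)(x)$ one has $\partial_t v=(\LL_{{\bf v}}+\e)v$, and comparison with an appropriate multiple of $F_{{\bf v}}$ via the maximum principle on a $G$-exhaustion of $\M$ gives $|v(t,x)|\leq \bigl(\sup|\phi/F_{{\bf v}}|\bigr)F_{{\bf v}}(x)$. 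The continuity of the coefficients of $\LL$ on $SM$, combined with the positivity of the pressure, allows one to choose $F_{{\bf v}}$ continuously in $\bf v$ and normalize it on a fundamental domain, so that $C$ is independent of $\bf v$.

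Next I would convert this into a pointwise bound on the kernel. Standard short-time Gaussian upper bounds for uniformly elliptic operators with bounded coefficients on the universal cover $(\M,\wt g)$, together with the Harnack inequality of Lemma \ref{Harnack} and the $G$-invariance of the coefficients, give a uniform bound $p_{{\bf v}}(1,z,y)\leq B_0$ for all ${\bf v},z,y$. Applying the $L^\infty$ decay above to the function $z\mapsto p_{{\bf v}}(1,z,y)$ through the semigroup identity $p_{{\bf v}}(t+1,x,y)=(P_t^{{\bf v}}p_{{\bf v}}(1,\cdot,y))(x)$ then yields $p_{{\bf v}}(t+1,x,y)\leq CB_0\, e^{-\e t}$, which is the claim after adjusting constants. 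The main obstacle is the uniformity of the constants $B,\e$ in $\bf v$; this is secured by the continuity of $Y$ on the compact space $SM$ and by the fact that the pressure condition prevents the spectral gap of $\LL_{{\bf v}}$ from degenerating as $\bf v$ varies.
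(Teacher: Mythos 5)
There is a genuine gap, and it is in your Step~1. The claimed $L^\infty\to L^\infty$ exponential decay
\[
\|P_t^{{\bf v}}\phi\|_\infty\;\le\;C\,e^{-\e t}\,\|\phi\|_\infty
\]
cannot hold for the operator $\LL_{\bf v}=\Delta+Y$. Indeed, $\LL_{\bf v}1=0$ and $(\M,\wt g)$ has bounded geometry (Ricci bounded below), so the diffusion generated by $\LL_{\bf v}$ is conservative and $P_t^{{\bf v}}1\equiv 1$ for all $t$. Since $P_t^{{\bf v}}$ is Markov, this forces $\|P_t^{{\bf v}}\|_{L^\infty\to L^\infty}=1$ for every $t$, with no decay at all. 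The constant function is a ``zero mode'' that no barrier comparison can evict. You can see the same contradiction from your own comparison: for the asserted estimate $|v(t,x)|\le(\sup|\phi/F_{\bf v}|)F_{\bf v}(x)$ to yield an $L^\infty$ bound, $F_{\bf v}$ would have to be bounded both above and below, and then taking $\phi=1$ gives $e^{\e t}\le\sup F_{\bf v}/\inf F_{\bf v}$ for all $t$, which is impossible. So either $F_{\bf v}$ is unbounded one way or the other (in which case the comparison does not give a uniform bound), or the boundary control on a $G$-exhaustion fails --- and in fact it must fail. (Separately: the comparison argument needs $(\LL_{\bf v}+\e)F_{\bf v}\le 0$ for $F_{\bf v}$ to be a supersolution barrier, so the displayed sign $(\LL_{\bf v}+\e)F_{\bf v}\ge 0$ in the paper's definition would have to be read with the convention reversed.)

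The lemma is a pointwise statement about the kernel, which is morally an $L^1\to L^\infty$ bound, and the correct mechanism behind it is an $L^2$ spectral gap, not $L^\infty$ decay. Since $dY^*=0$ and each leaf $W^s({\bf v})\cong\M$ is simply connected, one has $Y=\nabla h_{\bf v}$ for a function $h_{\bf v}$ with $\|\nabla h_{\bf v}\|$ bounded (so $h_{\bf v}$ grows at most linearly), and $\LL_{\bf v}$ is symmetric on $L^2(\M,e^{h_{\bf v}}d\mathrm{Vol})$. Weak coercivity is precisely the statement that the top of this $L^2$-spectrum is $\le-\e_0<0$, uniformly in ${\bf v}$ by the pressure hypothesis and compactness of $SM$. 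One then writes $p_{\bf v}(t+2,x,y)$ as an $L^2$ pairing $\langle p_{\bf v}(1,x,\cdot),\,P_t^{{\bf v}} p_{\bf v}(1,\cdot,y)\rangle$, uses the $e^{-\e_0 t}$ contraction on $L^2$, and controls the two time-one factors by the uniform short-time Gaussian bound and the Harnack inequality of Lemma~\ref{Harnack}; the linear growth of $h_{\bf v}$ has to be absorbed here, which is where the Gaussian tail is used. Your Step~2 (deducing the kernel bound from a semigroup contraction and a uniform $p_{\bf v}(1,\cdot,\cdot)$ bound) is the right second half of such an argument --- it is the first half that needs to be replaced.
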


A Gaussian like upper bound  for $p_{{\bf v}}(t, x, y)$ is also
valid. 
\begin{lem}(\cite[Theorem 6.1]{Sa})\label{Sa-Thm-6.1}There exist constants $C_1, C_2,
K_1$ such that for any ${\bf v}\in S\M$,  $t>0$ and $x, y\in \M$, we
have
\begin{equation*}\label{S-p-t-upper-b}
p_{{\bf v}}(t, x, y)\leq\frac{1}{\rm{Vol}(x, \sqrt{t})\rm{Vol}(y,
\sqrt{t})}\exp\left[C_1(1+bt+\sqrt{K_1 t})-\frac{d^2(x, y)}{C_2
t}\right].
\end{equation*}
\end{lem}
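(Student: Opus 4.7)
The plan is to treat this as a quantitative specialization of Saloff-Coste's general Gaussian upper bound for second-order elliptic operators with drift on Riemannian manifolds satisfying local volume doubling and a scale-invariant $L^2$-Poincar\'e inequality. Once these two geometric conditions are verified on $\M$ uniformly in the base point, the stated bound follows from the general theorem, and the only remaining task is to trace how the drift $Y$ and the Ricci lower bound enter the constants.

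First, I would verify the geometric hypotheses on $\M$. Since $(\M,\wt g)$ is the Riemannian universal cover of a compact negatively curved manifold, it has bounded geometry: sectional curvature bounded in absolute value and injectivity radius infinite. Bishop--Gromov comparison with the lower Ricci bound (which we call $-K_1$) then gives uniform local volume doubling, while the bounded geometry yields a scale-invariant $L^2$-Poincar\'e inequality on balls with constants independent of the center. This is precisely the input required by Saloff-Coste's parabolic Harnack machinery; the $\sqrt{K_1 t}$ term in the exponent records the Ricci lower bound, and the denominator $\mathrm{Vol}(x,\sqrt t)\mathrm{Vol}(y,\sqrt t)$ is the standard ultracontractivity output. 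Next, the leafwise drift $Y$ is uniformly bounded, $\|Y_{{\bf v}}\|_\infty \leq b$, by compactness of $SM$ and continuity of ${\bf v}\mapsto Y_{{\bf v}}$; either a Girsanov comparison with the $\Delta$-heat kernel or a direct perturbation of Saloff-Coste's estimate absorbs the drift at the cost of a multiplicative exponential factor $e^{Cbt}$, which accounts for the $bt$ contribution in the exponent. Finally, the off-diagonal Gaussian decay $\exp(-d(x,y)^2/(C_2 t))$ is extracted by Davies's exponential perturbation method: conjugating the semigroup by $e^{\alpha d(\cdot,y)}$, controlling the energy of the conjugated operator by $\alpha^2$, and optimizing $\alpha \sim d(x,y)/(C_2 t)$.

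The main obstacle is uniformity of the constants $C_1,C_2,K_1$ in ${\bf v}\in S\M$. This uniformity comes entirely from compactness of $SM$, continuity of the section $Y$ over $SM$, and the bounded geometry of $\M$ lifted from the compact quotient $M$; no further ergodic or analytic input is required, since Saloff-Coste's theorem is local in character and the geometric bounds on $\M$ do not depend on the particular leaf over which $\LL_{{\bf v}}$ is considered.
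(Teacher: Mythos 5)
The paper gives no proof of this lemma: the statement is quoted directly as Theorem~6.1 of Saloff-Coste's paper \cite{Sa}, so the only thing left implicit is that the hypotheses of that theorem are met by the leafwise operators $\LL_{\bf v}=\Delta+Y$ uniformly in ${\bf v}\in S\M$. You identify the source of that uniformity correctly: the Ricci tensor of $\wt g$ is bounded below by a constant $-K_1$ because $M$ is compact, and $b=\sup\|Y\|$ is finite because $Y$ is a continuous section over the compact $SM$; Saloff-Coste's constants then depend only on the dimension, on $K_1$, and on $b$, never on ${\bf v}$. This matches what the paper tacitly relies on when it cites the reference.

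There is, however, a real inaccuracy in how you characterize the cited theorem. You frame Theorem~6.1 as a specialization of the volume-doubling plus scale-invariant $L^2$-Poincar\'e machinery and assert that $\M$ satisfies uniform local volume doubling. On a pinched negatively curved universal cover the volume of metric balls grows exponentially in the radius, so the doubling constant at scale $\sqrt t$ is unbounded as $t\to\infty$; doubling-and-Poincar\'e is exactly the framework (Grigor'yan, Saloff-Coste) that would deliver a clean two-sided Gaussian \emph{without} the corrective factor $\exp\left(C_1(1+bt+\sqrt{K_1 t})\right)$, and it does not apply to $\M$. Saloff-Coste's Theorem~6.1 is instead proved from the bare Ricci-lower-bound and bounded-drift hypotheses, via a scale-dependent Nash/Faber--Krahn inequality together with Davies's exponential weighting, and the growth factor in the conclusion is precisely the price paid for the failure of doubling on large balls. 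This misattribution would lead you to the wrong statement if you tried to retrace the argument; it does not, though, undermine the application of the actual Theorem~6.1, which is all the paper needs.
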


 Let $b>0$ be an
upper bound of $\|Y\|$. We have the following lower bound for $p_{{\bf
v}}(t, x, y)$.

\begin{lem}(\cite[Theorem 3.1]{Wa}) Let $\beta=\sqrt{K}(m-1)+b$,  where $K\geq 0$ is such that $\mbox{Ricci}\geq -K (m-1)$.  Then for  any ${\bf v}\in S\M$,  $t, \sigma>0$ and $x, y\in \M$, we have
\begin{equation}\label{W-p-t-lower-b}
p_{{\bf v}}(t, x, y)\geq (4\pi
t)^{-\frac{m}{2}}\exp\left[-(\frac{1}{4t}+\frac{\sigma}{3\sqrt{2t}})d^2(x,
y)-\frac{\beta^2
t}{4}-\left(\frac{\beta^2}{4\sigma}+\frac{2m\sigma}{3}\right)\sqrt{2t}\right].
\end{equation}
\end{lem}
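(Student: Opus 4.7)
The strategy is to reduce to the heat kernel of the pure Laplacian via a Girsanov change of measure, and then apply a classical Gaussian lower bound for that heat kernel under a lower Ricci bound. The parameter $\sigma$ in the statement will arise from a Young-type inequality used to split mixed $d(x,y)$/$\|Y\|$ contributions.

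Concretely, I would realize the diffusion $Z_t^{{\bf v}}$ with generator $\LL_{{\bf v}}=\Delta+Y_{{\bf v}}$ and the $\wt g$-Brownian motion $X_t$ starting at $x$ from the \emph{same} $\R^m$-valued driving Brownian motion $B_t$, via horizontal lifting to $O(\wt M)$ as in Subsection 2.5. Girsanov's theorem then gives the Radon--Nikodym density of the law of $(Z_s^{{\bf v}})_{0\le s\le t}$ with respect to that of $(X_s)_{0\le s\le t}$ as the exponential martingale
\[ M_t \; = \; \exp\Big(\int_0^t \langle U_s^{-1}Y(X_s),\,dB_s\rangle \; - \; \tfrac12\int_0^t \|Y(X_s)\|^2\,ds\Big), \]
where $U_s$ is parallel transport along $X$. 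Integrating against test functions and conditioning on the endpoint gives $p_{{\bf v}}(t,x,y)=p_0(t,x,y)\cdot\E^{x,y}_t[M_t]$, where $\E^{x,y}_t$ denotes Brownian bridge expectation and $p_0$ is the Laplacian heat kernel. Jensen's inequality yields $\E^{x,y}_t[M_t]\ge\exp(\E^{x,y}_t[\log M_t])$; the quadratic-variation term contributes at most $\tfrac12 b^2 t$, and the It\^o integral is handled by a Young-type split $|ab|\le a^2/(2\sigma)+\sigma b^2/2$, which is precisely what introduces the free parameter $\sigma$ and generates both a term proportional to $d^2(x,y)$ and one proportional to $\sqrt{t}$. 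Combining with the classical bound $p_0(t,x,y)\ge(4\pi t)^{-m/2}\exp(-d^2(x,y)/(4t)-c(m)K t)$, valid under ${\rm Ricci}\ge -K(m-1)$ (Cheeger--Yau, or Li--Yau), and collecting the coefficients in terms of $\beta=\sqrt{K}(m-1)+b$, delivers the stated inequality.

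The main obstacle is to control $\E^{x,y}_t[\log M_t]$ rigorously: on a negatively curved manifold the Brownian bridge is not explicit, so the bridge expectation of the It\^o integral against $Y(X_s)$ is not easy to compute directly. Two natural routes are (i) to replace Girsanov/bridge analysis with a direct semigroup-Harnack argument in the spirit of Bakry--Wang, coupling $Z^{{\bf v}}$ with a second copy started from a neighboring point and using the Ricci bound together with $\|Y\|\le b$ to propagate gradient estimates, and (ii) to localize the bridge on short time scales, where it is essentially Euclidean and tractable, and then use the Ricci bound to extend to long scales. Either route must keep all constants uniform in ${\bf v}\in S\wt M$, which explains why only the uniform bound $\|Y\|\le b$ enters the final inequality, and the non-optimized presence of $\sigma$ in the statement reflects the fact that later arguments (e.g. the proof of (\ref{G-inf-h-s})) will choose $\sigma$ depending on the regime of $t$.
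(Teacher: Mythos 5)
The paper does not prove this lemma; it simply cites it from Wang's article \cite{Wa} (Theorem 3.1), so there is no internal argument to compare against. What you have written is therefore a proposed reconstruction of an external result, and as such it should be judged as a free-standing proof; measured by that standard it is a plausible plan but not a proof. Your Girsanov reduction $p_{\bf v}(t,x,y)=p_0(t,x,y)\,\E^{x,y}_t[M_t]$ followed by Jensen is the natural first move, and the Cheeger--Yau lower bound for $p_0$ under $\mathrm{Ricci}\ge -K(m-1)$ is the right comparison kernel. But the central step — controlling $\E^{x,y}_t\bigl[\int_0^t\langle U_s^{-1}Y(X_s),dB_s\rangle\bigr]$ under the bridge — is exactly where you stop; under the bridge measure $B$ acquires a drift of the form $U_s^{-1}\nabla_x\log p_0(t-s,X_s,y)$, and bounding its inner product with $Y$ requires a quantitative gradient estimate for $\log p_0$ (Li--Yau or Hamilton-type), not merely $\|Y\|\le b$. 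You acknowledge this obstacle and propose two alternative routes, but you carry neither through; the Young-type split you invoke explains heuristically why a free parameter $\sigma$ can appear, but it does not, on its own, yield the specific factors $1/(3\sqrt{2t})$ and $2m/3$ in the statement. So there is a genuine gap: the argument as written does not establish the inequality, only a template for one.

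For the record, Wang's actual proof in \cite{Wa} proceeds via a coupling construction (comparison of radial processes for the pair of diffusions, in the Kendall--Cranston/Thalmaier--Wang style), which is closest in spirit to your route (i); the parameter $\sigma$ there enters from an optimization inside an exponential moment estimate for the coupling time, not from a Girsanov--Jensen computation. If you want to complete your route (i), the calculation to do is: run the reflection coupling for $\D+Y$ with both drifts bounded by $b$, dominate the distance process by a one-dimensional diffusion whose drift picks up $\sqrt{K}(m-1)+2b$ (hence $\beta$), and estimate the probability that the coupling occurs before time $t$; the $\sigma$-dependent terms come from splitting a Gaussian tail against a linear drift term via $ab\le a^2/(2\sigma)+\sigma b^2/2$, which is the same Young inequality you identified but applied at a different point in the argument.
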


\bigskip

\begin{proof}[Proof of Proposition \ref{prop-h-in p-g}] We first
show (\ref{h-s-in p-t}). Given $s>0$, for $\om\in \Omega_+$, define
\[
F(s, \om, t):=-\ln ({\bf p}(st-1, \om(0), \om(t))\cdot \wt{A}),
\]
where $\wt{A}=A^2\inf_{z\in \M}{\rm{Vol}} (B(z, \varsigma))$ and $A,
\varsigma$ are as in Lemma \ref{Harnack}.  Then for $t, t'\geq 1/s$,
$\om\in \Omega_+$,
\begin{equation*}
F(s, \om, t+t')\leq F(s, \om, t)+ F(s, \sigma_t(\om), t').
\end{equation*}
This follows by the semi-group property of ${\bf p}$ and
(\ref{equ-harnack}) since
\begin{eqnarray*}
{\bf p}\left(s(t+t')-1, \om(0), \om(t+t')\right)&=&\int {\bf p}(st-\frac{1}{2}, \om(0), z){\bf p}(st'-\frac{1}{2}, z, \om(t+t'))\ dz\\
&\geq&\int_{\scriptscriptstyle B(\om(t), \varsigma)}{\bf p}(st-\frac{1}{2}, \om(0), z){\bf p}(st'-\frac{1}{2}, z, \om(t+t'))\ dz\\
&\geq&  \wt{A} {\bf p}(st-1, \om(0), \om(t)){\bf p}(st'-1, \om(t),
\om(t+t')).
\end{eqnarray*}
For $0<t_1<t_2<+\infty$,  by (\ref{W-p-t-lower-b}), there exists a
constant $C>0$, depending on $t_1, t_2$ and the curvature bounds,
such that  for any ${\bf v}\in S\M$, $x, y\in \M$, any $t, t_1 \leq
t \leq t_2$,
\[
C \exp\left[-(\frac{1}{4t_1}+\frac{\sigma}{3\sqrt{2t_1}})d^2(x,
y)\right]\leq p_{{\bf v}}(t, x, y).
\]
As  a consequence,  we have
\[
\E \left(\sup\limits_{1+\frac{1}{s}\leq t\leq 2+\frac{1}{s}} F(s,
\om,
t)\right)\leq (\frac{1}{4s}+\frac{\sigma}{3\sqrt{2s}})\E\left(\sup\limits_{1+\frac{1}{s}\leq
t\leq 2+\frac{1}{s}}d^2(\om(0), \om(t))\right)-\ln (C\wt{A}),
\]
where the second expectation term is bounded by a multiple of its value in  a hyperbolic
space with curvature the lower bound curvature of $M$ and is finite (cf.
\cite{DGM}). So by the Subadditive Ergodic Theorem applied to the subadditive cocycle $F(s, \om , t)$,  there exists
$h_{\LL}(s)$ such that for  $\P$-a.e. $\om\in \Om_+$, and for $\wt {\bf m} $-a.e. $\bf v$,
\begin{equation}\label{h-s-p-t}
h_{\LL}(s)=\lim\limits_{t\to +\infty}-\frac{1}{t} \ln \bfp(st-1,
\om(0), \om(t)) = \lim\limits_{t\to +\infty}-\frac{1}{t} \int_{\M} \bfp _{\bf v} (t,x,y) \ln \bfp _{\bf v} (st-1,
x, y) \,  d{\rm{Vol}}(y).
\end{equation}
Using the semi-group property of ${\bf p}$ and (\ref{equ-harnack})
again,  we obtain that  for $0<a<1$, $s_1, s_2>0$,
\begin{eqnarray*}
&&{\bf p}((as_1+(1-a)s_2)t-1, \om(0), \om(t))\\
&&\ \ \  \  \ \ \ \ \ \ \ \ \ \ \ \ \geq \wt{A}{\bf p}(as_1t-1,
\om(0), \om(at)){\bf p}((1-a)s_2 t-1, \om(at), \om(t)).
\end{eqnarray*}
It follows that $h_{\LL}(\cdot)$ is a convex function on $\Bbb R_+$
and hence is continuous. This allows us to pick up a full measure set of $\omega$ such that (\ref{h-s-in p-t}) holds true for all positive $s$.  Let $D$ be a countable dense subset of $\R_+$. There
is a measurable set $E\subset \Om_+$ with $\P(E)=1$ such
 that for $\om\in E$,  (\ref{h-s-p-t}) holds true for any $s\in D$.
Let $\om\in \Om_+$ be such an orbit. Given any $s_1<s_2$, let $t>0$ be large,  then we have by (\ref{equ-harnack})
that
\[
{\bf p}(s_1t, \om(0), \om(t))\leq A^{(s_1-s_2)t+1}{\bf p}(s_2t-1,
\om(0), \om(t)).\]  So for $s'<s<s''( s', s''\in D)$, and $\om \in
E$,
\begin{eqnarray*}
h_{\LL}(s'')+(s''-s)\ln A&\leq&\liminf_{t\to +\infty}-\frac{1}{t}\ln {\bf p}(st, \om(0), \om(t))\\
&\leq& \limsup_{t\to +\infty}-\frac{1}{t}\ln {\bf p}(st, \om(0), \om(t))\\
&\leq& h_{\LL}(s')-(s-s')\ln A.
\end{eqnarray*}
Letting $s', s''$ go to $s$ on both sides,  it gives (\ref{h-s-in
p-t}) by continuity of the function $h_{\LL}$. 
Moreover, given $\om \in E$, the convergence is uniform  for $s$ in any closed interval $[s_1, s_2], 0 < s_1 < s_2 < +\infty .$

To show the first equality in (\ref{G-inf-h-s}), we use the observation  that for any $t\in \Bbb R_+$, \[ \bfg(\om(0), \om(t))=t\int_{0}^{+\infty} \bfp(st, \om(0), \om(t))\
ds.
\]
Let $s_0 \in (0, \infty ) $ such that $h_{\LL} (s_0 ) = \inf _{s>0 } h_{\LL}(s) .$
For any $\e >0$, there exists $\d, 0 <\d \leq \e,$ such that for $|s-s_0| < \d, h_{\LL} (s) \leq h_{\LL} (s_0) + \e.$ Write
\[
\bfg (\om(0), \om(t))\geq t\int_{s_0+\frac{1}{t}}^{s_0+\delta}\bfp (st, \om(0), \om(t))\ ds
\]  
and note that for $s_0 +\frac{1}{t} <s <s_0 + \d$, $\om \in \Om_+,$ we have as above by (\ref{equ-harnack})
that
\[
{\bf p}(st, \om(0), \om(t))\geq A^{(s-s_0)t+1}{\bf p}(s_0t-1,
\om(0), \om(t)).\]
Moreover, for $t $ large enough and $\om \in E$,  $ {\bf p}(s_0t-1,
\om(0), \om(t)) \geq e^{ -t (h_{\LL} (s_0) + \e)} $. Therefore:
\[ \left(\bf G(\om (0), \om(t)) \right)^{1/t} \geq t^{1/t} A^{1/t} \left( \int _{1/t }^{ \d} A^{st}ds \right)^{1/t} e^{-(h_{\LL} (s_0) + \e)}.\] It follows that for $\om \in E,$
\[
\limsup\limits_{t\to +\infty}-\frac{1}{t}\ln \bfg(\om(0),
\om(t))\leq \inf_{s>0}\{h_{\LL}(s)\}.
\]

For the reverse inequality, we cut the integral  $\int_{0}^{+\infty} \bfp(st, \om(0), \om(t))\
ds $ into three parts.  Fix $\varepsilon _1\in (0,  h_{\LL})$.   We first claim that for $s_1 >0 $ small enough, for $\P$-a.e. paths
$\om\in \Om_+$ and $t$ large enough,
\begin{equation}\label{small-s} 
 \int_{0}^{s_1} \bfp(st, \om(0), \om(t))\ ds\leq
\frac{1}{t} e^{-(\inf_{s>0}\{h_{\LL}(s)\}- \varepsilon_1)t}. \end{equation}
Indeed, by Lemma \ref{Sa-Thm-6.1}, there exists a constant $C'$ such that
\begin{eqnarray}
 \int_{0}^{s_1} \bfp(st, \om(0), \om(t))\ ds &\leq&  C' e^{C't} \int_0^{s_1} \frac{1}{(st)^{m/2}} e^{-\frac {d^2(\om (0) , \om(t))}{ C'st}} \, ds \notag \\ & = & \frac{C' e^{C't}}{t} \int _{1/(s_1t)}^{+\infty } u^{m/2 +2} 
 e^{-\frac {d^2(\om (0) , \om(t))}{ C'} u}\, du \notag \\ &\leq & \frac{C' e^{C't}}{t} \, Q\left(d^2(\om (0) , \om(t) )\right)\, e^{-\frac {d^2(\om (0) , \om(t))}{ C's_1t }}, \label{p-st-x-y-upper-bound}\end{eqnarray}
 where $Q$ is some polynomial of degree $[m/2] +3.$
 For $\P$-a.e. paths  
$\om\in \Om_+$, for large enough $t$,  \[0 <  \frac{\ell _\LL}{2} \leq  \frac{1}{t} d(\om (0) , \om(t)) \leq \frac{3 \ell _\LL}{2} .\]
It follows that for those paths, given $\varepsilon _1\in (0, h_{\LL}) $, for any $s_1\in (0, \frac{\ell_{\LL}^2}{4C'}\cdot \frac{1}{C'+h_{\LL}-\frac{1}{2}\varepsilon_1})$,  the quantity in (\ref{p-st-x-y-upper-bound}) is bounded from above by 
\[
\frac{1}{t}\cdot C'Q\left(d^2(\om (0) , \om(t) )\right)\cdot e^{-(\inf_{s>0}\{h_{\LL}(s)\}-\frac{1}{2}\varepsilon_1)t}.
\]
Consequently,  (\ref{small-s}) is satisfied for those paths, for $t$ large enough.

Then  observe that  for $s_2, t>1$, we
have by (\ref{p-t-x-y}) that  
\begin{eqnarray*}
  \int_{s_2}^{+\infty} \bfp(st, \om(0), \om(t))\ ds\leq  B\int_{s_2}^{+\infty}
  e^{-\varepsilon st}\ ds=\frac{1}{\varepsilon t}Be^{-\varepsilon s_2 t}.
\end{eqnarray*}
So for any $\varepsilon _1\in (0, h_{\LL})$,   if $s_2$ and $t$ are large enough,
then
\[
 \int_{s_2}^{+\infty} \bfp(st, \om(0), \om(t))\ ds\leq
 \frac{1}{t}e^{-(\inf_{s>0}\{h_{\LL}(s)\}-\varepsilon_1)t}.
\]

Moreover, using the uniform convergence in  (\ref{h-s-in
p-t}) on the interval $[s_1, s_2 ]$, we get,  for $\om \in E$ and $t$ large enough,
\begin{eqnarray*}
 \int_{s_1}^{s_2} \bfp(st, \om(0), \om(t))\, ds &\leq&  (s_2 - s_1 ) e^{-\left(\inf_{s>0}\{h_{\LL}(s)\} -\frac{1}{2} \varepsilon _1\right) t}\\
 &\leq&  e^{-\left(\inf_{s>0}\{h_{\LL}(s)\} -\varepsilon _1\right) t}.
\end{eqnarray*}
Putting everything together, we obtain
\begin{eqnarray*}
\liminf\limits_{t\to +\infty}-\frac{1}{t}\ln \bfg(\om(0), \om(t))
\geq \inf_{s>0}\{h_{\LL}(s)\}.
\end{eqnarray*}

Finally, we have $\inf_{s>0}\{h_{\LL}(s)\}\geq h_{\LL}$ since for any
typical ${\bf v}\in SM$,
\begin{eqnarray*}
  h_{\LL}(s)-h_{\LL}&=& \lim\limits_{t\to +\infty}-\frac{1}{t}\int
  p_{{\bf v}}(t, x, y)\ln \frac{p_{{\bf v}}(st, x, y)}{p_{{\bf v}}(t, x,
  y)}\  d{\rm{Vol}}(y)\\
  &\geq& \lim\limits_{t\to +\infty}\frac{1}{t}\int p_{{\bf v}}(t, x,
  y)\left(1- \frac{p_{{\bf v}}(st, x, y)}{p_{{\bf v}}(t, x,
  y)}\right)\  d{\rm{Vol}}(y)\\
  &\geq& 0.
\end{eqnarray*}\end{proof}

 \subsection{Linear drift and stochastic entropy for laminated diffusions: integral formulas}\label{Sec-integral formula}
The interrelation between the underlying geometry of the manifold and the linear drift and the stochastic entropy is not well exposed in the pathwise limit expressions (\ref{Dis-ell}) and (\ref{h-in g}). The purpose of this subsection is  to  establish the generalization of formulas (\ref{Intro-hat-ell-h}) for the linear drift and the stochastic entropy, respectively,  and set up the corresponding notations.

We begin with $\ell_{\LL}$.  We will express it using the Busemann function at the geometric boundary and the $\LL$-harmonic measure. Recall the geometric boundary $\partial\M$ of $\M$ is the collection of equivalent classes of geodesics, where two geodesics $\g_1, \g_2$ of $\M$ are said to be equivalent (or asymptotic) if $\sup_{t\geq 0}d(\g_1, \g_2)<+\infty$. Let $\LL=\Delta+Y$ be such  that $Y^*$, the dual of $Y$ in the cotangent bundle  to the stable foliation over  $SM$,
satisfies $dY^*=0$ leafwisely and $\mbox{pr}(-\langle \overline{X}, Y\rangle)>0$.
  For  $\P$-a.e. paths $\om\in \Om_+$,   $\om(t)$ converges to the geometric boundary as $t$ goes to infinity (\cite{H2}),  where we still denote by $\om$ its projection to $\M$.   Let $\gamma_{\om(0), \om(\infty)}$ be the geodesic ray starting from  $\om(0)$ asymptotic to  $\om(\infty):=\lim_{t\to +\infty}\om(t)$.   Then,  loosely speaking, $\om$ stays close to  $\gamma_{\om(0), \om(\infty)}$ (see Lemma \ref{travel-along-geodesic}).   The Busemann function to be introduced will be very helpful to record the movement of the `shadow' of $\om(t)$ on  $\gamma_{\om(0), \om(\infty)}$.
 
 Let $x\in \M$ and define for $y\in \M$ the \emph{Busemann function}
$b_{x, y}(z)$ on $\M$ by letting
\[
b_{x, y}(z):=d(y, z)-d(y, x), \ {\rm{for}}\  z\in \M.
\]
The assignment of $y\mapsto b_{x, y}$ is continuous, one-to-one and
takes value in a relatively compact set of functions for the
topology of uniform convergence on compact subsets of $\M$. The
Busemann compactification of $\M$ is the closure of $\M$ for that
topology. In the negative curvature case, the Busemann
compactification coincides with the geometric compactification  (see \cite{Ba}).  So
for each ${\bf v}=(x, \xi)\in \M\times \pp\M$, the \emph{Busemann function
at ${\bf v}$}, given by
\[
b_{{\bf v}}(z):=\lim\limits_{y\to \xi}b_{x, y}(z), \ \mbox{for}\
z\in \M,
\]
is well-defined.  For points on the geodesic $\g_{x, \xi}$, its Busemann function value is negative its flow distance with $x$. In other words, for $s, t\geq 0$,
\begin{equation}\label{Buseman-gradient}
b_{{\bf v}}(\g_{x,\xi}(t))-b_{{\bf v}}(\g_{x, \xi}(s))=s-t.
\end{equation}
The equation (\ref{Buseman-gradient}) continues to hold if we replace $\g_{x, \xi}$ with geodesic $\g_{z, \xi}$ starting from $z\in\M$ which is asymptotic to $\xi$ (\cite{EO}). Note that the absolute value of the difference of the Busemann function at two points are always less than their distance. It follows that,  if we consider the Busemann function $b_{\bfv}$ as a function defined on $W^s(x,\xi) ,$
\begin{equation}\label{Buse-geo}
\nabla b_{{\bf v}}(z)=-\overline{X}(z, \xi),
\end{equation}
where $\overline{X}(z, \xi)$ is the tangent vector to $W^s(\bfv )$ which projects to $(z, \xi)=\dot\gamma_{z, \xi}(0)$.  This relationship explains why the Busemann function is involved in the analysis of geometric and dynamical quantities: the variation of $\overline{X}$ is related to variation of asymptotic geodesics, the theory of Jacobi fields; while the vector field  $\overline{X}$ on $S\M$ defines   the geodesic flow. 

We are going to use both interpretations of $\overline{X}$ to see how the linear drift is related the geometry. Since we only discuss $C^3$ metrics in this paper, we will state the results in this setting. But most results have corresponding versions for $C^k$ metrics.

We begin with the theory of Jacobi fields and Jacobi tensors.  Most notations will also be used in Section 4.  Recall the Jacobi fields along a geodesic $\gamma$ are vector fields $t\mapsto J(t)\in T_{\gamma(t)}M$ which describe infinitesimal variation of geodesics around $\gamma$.  It is well-known that $J(t)$ satisfies the Jacobi equation
\begin{equation}\label{Jacobi  field-definition}
\nabla_{\dot{\gamma}(t)}\nabla_{\dot{\gamma}(t)} J(t)+R(J(t), \dot{\gamma}(t))\dot{\gamma}(t)=0
\end{equation}
and is uniquely determined by the values of $J(0)$ and $J'(0)$.  (Here for vector fields $Y,Z$  along $\M$, we denote $\nabla _Y Z$ and $R(Y,Z)$ the {\it {covariant derivative}}  and the {\it {curvature tensor}} associated to the Levi-Civita connection of $\wt g$.)  Let $N(\gamma)$ be the normal bundle of $\gamma$:
\[
N(\gamma):=\cup_{t\in \Bbb R}N_t(\gamma), \ \mbox{where}\ N_t(\gamma)=\{Y\in T_{\gamma(t)}M:\ \langle Y, \dot{\gamma}(t)\rangle=0\}.
\]
It follows from (\ref{Jacobi field-definition}) that if $J(0)$ and $J'(0)$ both belong to $N_0(\g)$, then $J(t)$ and $J'(t)$ both belong to $N_t(\g)$, for all $t \in \R$.
Also,  it is easy to deduce from (\ref{Jacobi  field-definition}) that the Wronskian  of two Jacobi  fields $J$ and  $\widetilde{J}$ along $\g$:
\[
W(J, \widetilde{J}):=\langle J', \widetilde{J}\rangle-\langle J, \widetilde{J}'\rangle
\]
is constant. 

A \emph{$(1, 1)$-tensor along $\gamma$} is a family $V=\{V(t), \ t\in \Bbb
R\}$, where $V(t)$ is an endomorphism of $N_t(\gamma)$ such that for
any family $Y_t$ of parallel vectors along $\gamma$, the covariant
derivative $\nabla_{\dot{\gamma}(t)} (V(t)Y_t)$ exists.  The
curvature tensor $R$ induces a symmetric $(1, 1)$-tensor along
$\gamma$ by $R(t)Y=R(Y, \dot{\gamma}(t))\dot{\gamma}(t)$. A $(1, 1)$-tensor $V(t)$ along $\gamma$ is called a \emph{Jacobi tensor} if it
satisfies
\[
\nabla_{\dot{\gamma}(t)}\nabla_{\dot{\gamma}(t)} V(t)+R(t)V(t)=0.
\]
If $V(t)$ is a Jacobi tensor along $\gamma$, then $V(t)Y_t$ is a Jacobi field for any parallel field $Y_t$.

For each $s>0$, ${\bf v} \in S\M$,  let $S_{\bfv, s}$ be the Jacobi tensor along the geodesic $\gamma _\bfv$ with the boundary conditions $S_{\bfv, s}(0)={\mbox{Id}}$ and $S_{\bfv, s}(s)=0$.  Since $(\M, \wt{g})$ has no conjugate points, the limit $\lim_{s\to +\infty}S_{\bfv, s}=:S_{\bfv}$ exists (\cite{Gre}).
 The tensor
$S_{\bfv}$ is called the \emph{stable tensor} along the geodesic
$\gamma _\bfv$. Similarly, by reversing the time $s$, we obtain the
\emph{unstable tensor} $U_{\bfv}$ along the geodesic $\gamma _\bfv$.

To relate the stable and unstable tensors to  the dynamics of the geodesic flow, we first recall the metric structure of the tangent space $TT\M$ of  $T\M$.    For $x\in \M$
and ${\rm v}\in T_x\M$, an element $w \in T_{\rm v} T\M$   is
{\it {vertical }} if its projection on $T_x\M$ vanishes. The vertical subspace $V_{\rm v} $   is  identified with $T_x\M$.
The connection defines a {\it {horizontal }} complement $H_{\rm v}$,   also identified with $T_x\M.$
This gives a  horizontal/vertical Whitney sum decomposition
\[
TT\M=T\M\oplus T\M.
\]
Define the inner product on $TT\M$ by
\[
\langle (Y_1, Z_1), (Y_2, Z_2)\rangle_{\wt{g}}:=\langle Y_1,
Y_2\rangle_{\wt{g}}+\langle Z_1, Z_2\rangle_{\wt{g}}.
\]
It induces a Riemannian metric on $T\M$, the so-called Sasaki
metric. The  unit tangent bundle $S\M$ of
the universal cover $(\M, \wt{g})$ is a subspace of $T\M$  with
tangent space
\[
T_{(x, {\rm v})}S\M=\{(Y, Z):\ Y, Z\in T_x\M, Z\perp  {\rm v}\},\  \mbox{for}\
x\in \M,  {\rm v}\in S_x\M.
\]

Assume ${\bf v}=(x, {\rm v})  \in S\M$.  Horizontal vectors in $T_{{\bf v}}S\M$ correspond to pairs $(J(0),0)$.  In particular, the geodesic spray $ \overline X_{{\bf v}}$  at $\bf v$ is the horizontal vector associated with $({\rm v},0)$.  A vertical vector in $T_{{\bf v}}S\M$   is a vector tangent to $S_x\M$.  It corresponds to a pair $(0, J'(0))$, with $J'(0) $ orthogonal to  ${\rm v}$. The orthogonal space to $\overline{X}_{{\bf v}}$ in $T_{\bfv} SM$ corresponds to pairs $({\rm v}_1, {\rm v}_2), {\rm v}_i \in N_0(\g_\bfv)$ for $i =1,2.$ 

The dynamical feature of Jacobi fields can be seen using the geodesic flow on the unit tangent bundle. Let ${\bf \Phi}_t$  be the time $t$ map of the geodesic flow on $S\M$,  in coordinates,
\begin{equation*}\label{geodesic flow}
{\bf\Phi}_t(x, \xi)=(\gamma_{x, \xi}(t), \xi), \ \forall  (x, \xi)\in S\M.
\end{equation*}
Let $D{\bf \Phi}_t$ be the tangent map of ${\bf \Phi}_t$.   Then, if $(J(0), J'(0) )$ is the horizontal/vertical decomposition of  ${\bf w}\in T_{(x, \xi)}S\M$,  $(J(t), J'(t) )$ is the horizontal/vertical decomposition of  $D {\bf \Phi}_t{\bf w}\in T_{{\bf \Phi}_t(x, \xi)}S\M$. 

Due to the negative curvature nature of the metric, the geodesic flow on $S\M$ is \emph{Anosov}: the tangent bundle $TS\M$ decomposes into the Witney sum of three $D{\bf \Phi}_t$-invariant   subbundles ${\bf E}^{\rm c}\oplus {\bf E}^{\rm ss}\oplus {\bf E}^{\rm uu}$, where ${\bf E}^{\rm c}$ is the 1-dimensional subbundle tangent to the flow and $ {\bf E}^{\rm ss}$  and $ {\bf E}^{\rm uu}$ are the strongly contracting and expanding subbundles, respectively, so that there are constants $C, c>0$ such that
\begin{itemize}
\item[i)] $\|D{\bf\Phi}_t {\bf w}\|\leq Ce^{-ct}\|{\bf w}\|$ for ${\bf w}\in {\bf E}^{\rm ss}$, $t>0$.\\
\item[ii)] $\|D{\bf \Phi}_t ^{-1}{\bf w}\|\leq Ce^{-ct}\|{\bf w}\|$   for ${\bf w}\in {\bf E}^{\rm uu}$, $t>0$.
\end{itemize}
For any geodesic $\bfv = (x, \xi ) \in S\M$, let $S_{\bfv},
U_{\bfv}$ be the stable and unstable tensors along $\gamma _\bfv$,
respectively. The stable subbundle ${\bf E}^{\rm ss}$ at $\bfv$
is the graph of the mapping $S'_{\bfv}(0)$, considered as a map
from $ \overline{N_0(\gamma _\bfv )}$ to $V_\bfv $
sending $Y$ to
$S'_{\bfv}(0)Y$, where  $\overline{N_0(\gamma _\bfv )} := \{ {\bf {w}}, {\bf {w}} \in H_\bfv, {\bf {w}} \perp \overline X_\bfv \}$. 
Similarly, the unstable subbundle ${\bf E}^{\rm uu}$
at $\bfv$ is the graph of the mapping $U'_{\bfv}(0)$
considered as  a map from $ \overline{N_0(\gamma _\bfv )}$ to $V_\bfv .$
Due to the Anosov property of the geodesic flow, the distributions of ${\bf E}^{\rm ss}, {\bf E}^{\rm uu}$ (and hence ${\bf E}^{\rm c}\oplus {\bf E}^{\rm ss}, {\bf E}^{\rm c}\oplus {\bf E}^{\rm uu}$) are H\"{o}lder continuous (this is first proved by Anosov  (\cite{Ano}),   see \cite[Proposition 4.4]{Ba} for a similar but simpler argument by Brin). As a consequence, the $(1,1)$-tensors  $S_{
\bfv}, S'_{\bfv}, U_{\bfv}, U'_{\bfv}$ are also H\"{o}lder
continuous with respect to $\bfv$. 

    We are in a situation to see the relation between the Busemann function and the geodesic flow. Let $x_0\in \M$ be a reference point  and for any $\xi\in \partial\M$ consider $b_{x_0, \xi}(\cdot):=\lim_{z\rightarrow \xi}b_{x_0, z}(\cdot).$   For any ${\bf v}=(x, \xi)\in \M\times \partial \M$,  the set
\[\left\{(y, \xi):\  b_{x_0, \xi}(y)=b_{x_0, \xi }(x) \right\}\]
 turns out to coincide with the   \emph{strong stable  manifold at ${\bf v}$}, denoted $W^{ss}({\bf v})$,  which is 
\begin{equation*}
W^{ss}({\bf v}):=\left\{(y, \eta):\ \limsup\limits_{t\to +\infty}\frac{1}{t}\log {\mbox{dist}}\left(\Phi_t(y, \eta),  \Phi_t(\bf v)\right)<0\right\}.
\end{equation*}
(The  \emph{strong unstable manifold  at ${\bf v}$}, denoted $W^{su}({\bf v})$,  is defined  by reversing the time.)
In other words, the collection of the foot points $y$ such
that $(y, \xi)\in W^{ss}(x, \xi)$ form the \emph{stable horosphere},
which is a level set of Busemann function.  Note that $W^{ss} (\bf v) $
 locally is a $C^{2}$ graph from ${\bf E}_{\bfv}^{\rm
ss}$ to ${\bf E}_{\bfv}^{\rm c}\oplus{\bf E}_{\bfv}^{\rm
uu}$ and is tangent to ${\bf E}^{\rm ss}_{\bfv}$. So, by the Jacobian characterization of ${\bf E}^{\rm ss}_{\bfv}$ of the previous paragraph and (\ref{Buse-geo}), it is
true (\cite{Esc, HIH}) that
\[
\nabla_{{\bf w}}(\nabla b_{x, \xi})(x)=-S'_{(x,\xi)}(0)({ \bf w}),\ 
 \forall { \bf w}\in T_{x}\M.
\]
Thus, 
\begin{equation*}\label{Div-trace}\Delta_{x}b_{x,\xi}=-{\rm Div}\overline{X}=-{\mbox{Trace of}}\ S'_{(x,\xi) }(0), \end{equation*} which is the mean curvature of the horosphere  $W^{ss}(x, \xi)$  at $x$. 
Note that for  each $\psi\in G$,
\[
b_{x_0, \psi\xi}(\psi x)=b_{x_0 , \xi}(x)+b_{\psi ^{-1}x_0, \xi}( x_0).
\]
Hence  $\Delta_x b_{x_0, \xi}$ satisfies $\Delta_{\psi
x}b_{x_0, \psi\xi}=\Delta_x b_{x_0, \xi}$   and defines a function $B$ on the
unit tangent bundle $SM$, which is called the \emph{Laplacian of the
Busemann function}.  Due to the hyperbolic nature of the geodesic
flow,  the function $B$ is a  H\"{o}lder continuous function on $SM$.

Now,  we can state the integral formula for the linear drift. 
\begin{prop}\label{formulas-l-h-Y-l}Let $\LL=\D+Y$ be  such
that $Y^*$, the dual of $Y$ in the cotangent bundle  to the stable foliation over  $SM$,
satisfies $dY^*=0$ leafwisely  and $\mbox{pr}(-\langle \overline{X}, Y\rangle)>0$. Then we have
\begin{eqnarray}
\ell_{\LL}&=&-\int_{\scriptscriptstyle{M_0\times \pp\M}}
\left({\rm{Div}}\overline{X}+ \langle Y, \overline{X}\rangle\right)\
  d\wt{{\bf m}}.\label{ell-formula}
\end{eqnarray}
\end{prop}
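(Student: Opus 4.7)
The plan is to apply It\^o's formula to the Busemann function along the leafwise diffusion and then identify the resulting pathwise limit with $\ell_{\LL}$.

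Fix $\bfv = (x,\xi)\in S\M$. On the leaf $W^s(\bfv)\cong \M$, the diffusion $y_t$ (the $\M$-projection of $\om(t)$) has generator $\LL_{\bfv} = \D + Y_{\bfv}$. From (\ref{Buse-geo}) and the discussion that $\D_y b_{(x,\xi)}(y)$ equals the mean curvature of the horosphere at $(y,\xi)$, $\nabla b_{(x,\xi)}(y) = -\overline{X}(y,\xi)$ and $\D b_{(x,\xi)}(y) = -{\rm{Div}}\overline{X}(y,\xi)$. Applying It\^o to the $C^2$ function $b_{(x,\xi)}$ yields
\[
b_{(x,\xi)}(y_t) \,=\, M_t \,+\, \int_0^t F(\om(s))\,ds, \qquad F(y,\eta) \,:=\, -{\rm{Div}}\overline{X}(y,\eta) \,-\, \langle Y, \overline{X}\rangle(y,\eta),
\]
where $F \in C(SM)$ is independent of the reference point $x$ (changing $x$ merely shifts $b_{(\cdot,\xi)}$ by a constant), and $M_t$ is a continuous local martingale with $\langle M\rangle_t$ of order $t$ (since $|\nabla b_{(x,\xi)}|\equiv 1$), so that $M_t/t\to 0$ $\P$-a.s.

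Proposition \ref{mixing} and Birkhoff's ergodic theorem applied to $F$ give, for $\P$-a.e.\ $\om$,
\[
\frac{1}{t}\int_0^t F(\om(s))\,ds \;\longrightarrow\; \int_{SM} F\,d\wt{\bf m} \;=\; -\int({\rm{Div}}\overline{X} + \langle Y, \overline{X}\rangle)\,d\wt{\bf m}.
\]
Combining with the martingale estimate, $b_{(x,\xi)}(y_t)/t$ converges almost surely to the asserted right-hand side. It remains to show this limit equals $\ell_{\LL}$. By (\ref{Dis-ell}), $d_{\W}(\om(0),\om(t))/t \to \ell_{\LL}$ almost surely, and the $1$-Lipschitz property of $b_{(x,\xi)}$ gives $|b_{(x,\xi)}(y_t)|\leq d(x,y_t)$. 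For equality, I invoke Hamenst\"adt's boundary convergence \cite{H2}: the pressure hypothesis $\mbox{pr}(-\langle\overline{X},Y\rangle)>0$ imposes an outward drift along the leaf, so the limiting direction $\eta := \lim_t\om(t)\in\partial\M$ satisfies $\eta\neq\xi$ $\P$-a.s. In negative sectional curvature, $b_{(x,\xi)}(\gamma_{x,\eta}(s)) = s + O(1)$ as $s\to\infty$ whenever $\eta\neq\xi$, and since $y_t$ shadows $\gamma_{x,\eta}$ with sublinear error, one concludes $b_{(x,\xi)}(y_t) = d(x,y_t) + o(t)$ $\P$-a.s., matching the two limits.

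The main obstacle is this final identification step, which combines Hamenst\"adt's boundary convergence with the asymptotic comparison $b_{(x,\xi)}(\gamma_{x,\eta}(s)) = s + O(1)$ for $\eta\neq\xi$ in negative curvature; both inputs reflect the outward character of the drift that the pressure hypothesis imposes.
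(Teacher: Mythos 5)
Your proposal follows essentially the same route as the paper: apply It\^o (equivalently, the leafwise generator plus ergodicity from Proposition \ref{mixing}) to the Busemann function $b_{\bfv}$, obtain $\lim_t b_{\bfv}(\om(t))/t = -\int(\rm{Div}\,\overline{X}+\langle Y,\overline{X}\rangle)\,d\wt{\bf m}$, and then identify this limit with $\ell_{\LL}$ by comparing $b_{\bfv}(\om(t))$ with $d(x,\om(t))$. The only cosmetic difference is in the last step: the paper notes directly that $b_{\bfv}(\om(t))-d(x,\om(t))\to -2(\xi|\eta)_x$, a finite number when $\eta\neq\xi$, whereas you argue via shadowing of the geodesic $\gamma_{x,\eta}$ with sublinear error, which is a slightly more indirect (and less quantitatively clean) route to the same conclusion.
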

(Observe that the classical formula (\ref{formulas-ell-h})  for the linear drift is obtained from Proposition \ref{formulas-l-h-Y-l} by considering the metric $g^\l$ and $Y\equiv 0$.)

\begin{proof} For $\P$-a.e. path
${\om}\in {\Om}_{+}$, we still denote $\om$ its projection to  $\M$ and let  ${\bf v}:=\om(0)$ and  $\eta:=\lim_{t\to +\infty}{\om}(t)\in \pp\M$.  We see that when $t$
goes to infinity, the process $b_{{\bf v}}({\om}(t))-d(x,
{\om}(t))$ converges $\P$-a.e. to the a.e. finite
number $-2(\xi|\eta)_x$, where
\begin{equation}\label{Gromov-product-def-sec3}
(\xi|\eta)_x:=\lim\limits_{y\to \xi, z\to \eta}(y|z)_x\ \mbox{and}\  (y|z)_x:=\frac{1}{2}\left(d(x,
y)+d(x, z)-d(y, z)\right).
\end{equation}
So for $\P$-a.e. $\om\in \Om_+$, we have
\[
\lim\limits_{t\to +\infty}\frac{1}{t}b_{{\bf
v}}({\om}(t))=\ell_{\LL}.
\]
Using the fact that the $\LL$-diffusion has leafwise infinitesimal
generator $\Delta+Y$ and is ergodic with invariant measure ${\bf m}$ on $SM$, we obtain
\begin{eqnarray*}
  \ell_{\LL}&=&\lim_{t\to +\infty}\frac{1}{t}\int_{0}^{t}\frac{\pp}{\pp s} b_{{\bf v}}({\om}(s))\ ds\\
  &=& \lim_{t\to +\infty}\frac{1}{t}\int_{0}^{t}(\Delta+Y)b_{{\bf v}}({\om}(s))\ ds \, \left(= \, \int _{\scriptscriptstyle{M_0\times \pp\M}}(\Delta+Y)b_{{\bf v}}\, \  d\wt{{\bf m}}\right)\\
  &=& -\int_{\scriptscriptstyle{M_0\times \pp\M}} \left({\rm{Div}}\overline{X}+ \langle Y, \overline{X}\rangle\right)\
  d\wt{{\bf m}}.
\end{eqnarray*}
\end{proof}

\bigskip

The negative of the logarithm of the Green function  has a lot of properties analogous to a distance function.  First of all, let us recall some  classical results concerning Green
functions  from \cite{An}.

\begin{lem}\label{GM-classical Harnack}(see \cite[Remark 3.1]{An}) Let $\LL=\D+Y$ be  such
that $Y^*$, the dual of $Y$ in the cotangent bundle  to the stable foliation over  $SM$,
satisfies $dY^*=0$ leafwisely and $\mbox{pr}(-\langle \overline{X}, Y\rangle)>0$  and let
${\bf G}(\cdot, \cdot)=\{{G}_{{\bf v}}(\cdot, \cdot)\}_{{\bf v}\in
S\M}$ be the Green function of $\LL$.  There exists a constant
$c_0\in (0, 1)$ such that for any ${\bf v}\in S\M$ and any $x, y,
z\in
  \M$ with mutual distances greater than
$1$,
\begin{equation}\label{Green-geq}
G_{{\bf v}}(x, z)\geq c_0G_{{\bf v}}(x, y) G_{{\bf v}}(y, z).
\end{equation}
\end{lem}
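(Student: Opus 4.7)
The plan is to decompose paths from $x$ to $z$ at the first time they enter a small fixed neighborhood of $y$ and to combine the resulting strong Markov identities with uniform Harnack bounds, in the spirit of Ancona's three-point technique.

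First I would fix $r\in(0,\varsigma/2)$ with $\varsigma$ from Lemma \ref{Harnack}, set $\overline B:=\overline{B(y,r)}$, and let $\tau$ denote the first hitting time of $\overline B$ for the $\LL_\bfv$-diffusion started at $x$. Since $d(x,y),d(y,z)>1>r$, neither $x$ nor $z$ lies in $\overline B$ and the trajectory cannot visit $y$ before $\tau$. The strong Markov property then yields the exact identity
\[
G_\bfv(x,y)\;=\;\E_x\!\left[G_\bfv(\omega_\tau,y)\,;\,\tau<+\infty\right],
\]
whereas the analogous decomposition for $G_\bfv(x,z)$ produces an additional nonnegative killed-Green-function term, which I would drop to obtain
\[
G_\bfv(x,z)\;\geq\;\E_x\!\left[G_\bfv(\omega_\tau,z)\,;\,\tau<+\infty\right].
\]

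Next I would apply Harnack-type bounds to both integrands. On the one hand, $G_\bfv(\cdot,z)$ is positive and $\LL_\bfv$-harmonic on $\overline B$, so an elliptic Harnack inequality---deduced from the parabolic estimate of Lemma \ref{Harnack} by standard time-integration arguments---furnishes a constant $c_H>0$, independent of $\bfv$ and of the choice of $y,z$ with $d(y,z)>1$, such that $G_\bfv(w,z)\geq c_H\,G_\bfv(y,z)$ for every $w\in\partial B$, whence $G_\bfv(x,z)\geq c_H\,G_\bfv(y,z)\,\P_x(\tau<+\infty)$. On the other hand, setting $M_r:=\sup\{G_\bfv(w,y):d(w,y)=r\}$, the identity above yields $G_\bfv(x,y)\leq M_r\,\P_x(\tau<+\infty)$; the Gaussian upper bound of Lemma \ref{Sa-Thm-6.1} together with the exponential-in-time decay (\ref{p-t-x-y}) shows that $M_r$ is bounded by a constant depending only on $r$ and the geometry of $(M,g)$, uniformly in $\bfv$ and $y$. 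Assembling the inequalities produces $G_\bfv(x,z)\geq(c_H/M_r)\,G_\bfv(x,y)\,G_\bfv(y,z)$, and I would take $c_0:=\min\{1/2,\,c_H/M_r\}\in(0,1)$.

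The main obstacle I anticipate is the uniformity of the Harnack constant $c_H$ and of the boundary bound $M_r$ across all $\bfv\in S\M$. Both rest ultimately on the fact that Lemmas \ref{Harnack} and \ref{Sa-Thm-6.1} hold with constants independent of $\bfv$, which in turn reflects the compactness of $M$ and the uniform bound on $\|Y\|$. The short-distance behaviour of $G_\bfv(\cdot,y)$ entering $M_r$ is the most delicate point, but is controlled by a uniform time integration of the Gaussian upper bound.
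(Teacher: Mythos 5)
The paper states this lemma with a direct citation to Ancona's Remark~3.1 and does not supply its own argument, so there is no in-paper proof to compare against; your task was effectively to reconstruct Ancona's original argument. Your proposal does exactly that, and it is correct. The strong-Markov decomposition at the first hitting time $\tau$ of $\overline{B(y,r)}$ gives the exact identity $G_{\bfv}(x,y)=\E_x[G_{\bfv}(\omega_\tau,y);\,\tau<\infty]$ and the inequality $G_{\bfv}(x,z)\geq \E_x[G_{\bfv}(\omega_\tau,z);\,\tau<\infty]$ after dropping the killed Green function, and both are legitimate since $x,z\notin\overline B$. The Harnack step is even slightly simpler than you indicate: there is no need to pass through an auxiliary elliptic Harnack inequality. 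Taking $t'=1$ in Lemma~\ref{Harnack}, for $d(w,y)\leq\varsigma$ and all $t\geq 1$ one has $p_{\bfv}(t,w,z)\geq A\,p_{\bfv}(t-1,y,z)$, so integrating in $t$ over $[1,\infty)$ yields directly
\[
G_{\bfv}(w,z)\;\geq\;\int_1^\infty p_{\bfv}(t,w,z)\,dt\;\geq\;A\,G_{\bfv}(y,z),
\]
i.e.\ $c_H=A$ with no further work. The uniform bound on $M_r$ is handled exactly as you say: the tail $\int_1^\infty$ is controlled by the exponential decay (\ref{p-t-x-y}), and the near-diagonal part $\int_0^1$ is controlled by the Gaussian bound of Lemma~\ref{Sa-Thm-6.1}, the singularity being killed by $e^{-r^2/(C_2 t)}$ for $d(w,y)=r$ fixed; compactness of $M$ gives uniformity of the volume factors. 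Assembling $G_{\bfv}(x,z)\geq c_H G_{\bfv}(y,z)\,\P_x(\tau<\infty)$ with $G_{\bfv}(x,y)\leq M_r\,\P_x(\tau<\infty)$ gives (\ref{Green-geq}) with $c_0=\min\{1/2,\,c_H/M_r\}$. Provided you keep the constraint $r<\varsigma$ (so Lemma~\ref{Harnack} applies) and $r<1$ (so $x,z\notin\overline B$), the argument is complete. In the paper such three-point estimates feed into Lemma~\ref{Green-d-relation} and the Green-``metric'' lemmas, so it is worth noting that the uniformity over $\bfv\in S\M$, which you correctly identify as the delicate point, is exactly what makes those downstream arguments go through.
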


For  ${\rm v, w}\in S_x \M$, $x\in \M$,  the angle $\angle_x ({\rm v, w})$ is
the unique number $0\leq \theta\leq \pi$ such that $\langle v,
w\rangle=\cos  \theta$.  Given  ${\rm v}\in S_x \M$ and  $0<\theta<\pi$,
the  set
\[\Gamma_x({\rm v}, \theta):=\{y\in \M\cup\pp\M:\ \ \angle_x ({\rm v}, \dot\g_{x, y}(0))<\theta\}\]
 is called  the \emph{cone of vertex $x$, axis ${\rm v}$, and angle $\theta$},  where $\g_{x, y}$ is  the geodesic segment that starts at $x$ and ends at $y$. For any $s>0$, the cone $\Gamma$ with vertex $\gamma_{{\rm v}}(s)$ (where $\gamma_{{\rm v}}$ is the geodesic starting at $x$ with  initial speed ${\rm v}$),
 axis $\dot\g_{{\rm v}}(s)$ and angle $\theta$ is called the
 \emph{$s$-shifted cone} of $\Gamma_x({\rm v}, \theta)$. The following is a special case of the Ancona's inequality  at infinity (\cite{An}).

\begin{lem}\label{GM-Ancona inequality}(see \cite[Theorem 1']{An}) Let  $\LL$ and ${\bf G}$ be as in Lemma \ref{GM-classical Harnack}.  Let $\Gamma:=\Gamma_{x_0}({\rm v},
\frac{\pi}{2})$ be a cone in $\M$ with vertex $x_0$, axis ${\rm v}$ and
angle $\frac{\pi}{2}$.  Let $\Gamma_{1}$ be the $1$-shifted cone of
$\Gamma$ and $x_1$ be the vertex of $\Gamma_{1}$.  There exists a
constant $c_1$  such that for  any ${\bf v}\in S\M$, any $\Gamma$,
all $x\in \M\backslash \Gamma$ and $z\in \Gamma_{1}$,
\begin{eqnarray}
 G_{{\bf v}}(x, z)\leq c_1 G_{{\bf
v}}(x, x_1) G_{{\bf v}}(x_0, z).\label{Har-infinity}
\end{eqnarray}
\end{lem}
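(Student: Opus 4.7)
The plan is to use the classical barrier method of Ancona. First I would use weak coercivity of $\LL$, combined with the parabolic bounds of Lemma \ref{Sa-Thm-6.1} and the exponential decay (\ref{p-t-x-y}), to derive uniform exponential decay of the Green function: there exist constants $C_\star, c_\star>0$ independent of $\bfv$ such that $G_{\bfv}(x,y) \leq C_\star e^{-c_\star d(x,y)}$ whenever $d(x,y)\geq 1$. Combined with Lemma \ref{Harnack}, this makes $G_{\bfv}(\cdot,\cdot)$ locally uniformly comparable on unit balls.

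Then I would proceed in three steps. Fix $z \in \Gamma_1$ and set $u(y) := G_{\bfv}(y, z)$ and $\phi(y) := G_{\bfv}(y, x_1)$; both are positive and $\LL$-harmonic on $\M \setminus \overline{\Gamma}$ (note $x_1 \in \Gamma$, so neither has a singularity in that region). The crucial intermediate claim is a boundary estimate: for all $y \in \partial \Gamma$ and $z \in \Gamma_1$,
\[
G_{\bfv}(y, z) \;\leq\; C\, G_{\bfv}(y, x_1)\, G_{\bfv}(x_0, z).
\]
The geometric input is that in negative curvature the cone $\Gamma$ has a ``thin neck'' near $x_1$: the $1$-shift ensures that to pass from $y\in \partial\Gamma$ into $\Gamma_1$ one must effectively travel within bounded distance of $x_1$. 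Combined with a Harnack chain comparing $G_{\bfv}(x_1, z)$ and $G_{\bfv}(x_0, z)$ (they differ by a controlled factor since $d(x_0,x_1)=1$) and the reverse submultiplicative comparison $G_{\bfv}(y, z) \leq C' G_{\bfv}(y, x_1) G_{\bfv}(x_1, z)$, this yields the displayed boundary bound.

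Finally, I would apply the maximum principle to $u - c_1 G_{\bfv}(x_0, z)\,\phi$ on $\M \setminus \overline{\Gamma}$: it is $\LL$-harmonic there, nonpositive on $\partial \Gamma$ by the boundary estimate, and decays to $0$ at infinity thanks to weak coercivity and the exponential upper bound. Hence it is nonpositive throughout the complement, which is exactly (\ref{Har-infinity}).

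The hard part will be the reverse submultiplicative estimate $G_{\bfv}(y,z) \leq C' G_{\bfv}(y, x_1)\, G_{\bfv}(x_1, z)$ for $y\in \partial\Gamma$ and $z\in \Gamma_1$, which is the substantive content of Ancona's theorem and cannot be obtained from Harnack chains alone. Establishing it requires constructing a positive $\LL$-superharmonic function on a neighborhood of $\partial \Gamma$ that decays exponentially away from $x_1$; this is where weak coercivity and the Gromov-hyperbolic geometry of negative curvature (the thin-triangle property forcing near-geodesics through a bounded neighborhood of $x_1$) enter in an essential way.
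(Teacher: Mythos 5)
The paper does not prove this lemma: it is cited verbatim from Ancona (\cite[Theorem 1$'$]{An}), so there is no internal proof against which to compare yours. Judging your sketch on its own, the top-level structure is the right one — Ancona's argument does reduce the complement estimate to a boundary estimate on $\pp\Gamma$ and uses a maximum-principle extension — and your Step 1 (extracting an exponential upper bound $G_\bfv(x,y)\leq C_\star e^{-c_\star d(x,y)}$ from Lemma \ref{Sa-Thm-6.1} and (\ref{p-t-x-y}) by splitting the time integral at $T\sim d(x,y)$) is legitimate and, importantly, does not circularly invoke Lemma \ref{Green-d-relation}, whose proof in the paper itself relies on the present lemma.

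The gap is that all the substance is deferred to the final paragraph, which restates rather than proves the crucial estimate. The claim that it suffices to construct a single positive $\LL$-superharmonic barrier on a neighborhood of $\pp\Gamma$ decaying away from $x_1$ undersells the difficulty: that would control $G_\bfv(y,z)$ on $\pp\Gamma$ for $z$ within bounded distance of $x_1$, but Lemma \ref{GM-Ancona inequality} asserts a bound with a constant $c_1$ uniform over \emph{all} $z\in\Gamma_1$, including $z$ arbitrarily deep in the cone. A single barrier near $\pp\Gamma$ does not see where $z$ lies, so it cannot by itself produce the factor $G_\bfv(x_0,z)$ (which decays exponentially in $d(x_0,z)$). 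Ancona's actual mechanism is an iteration along a $\Phi$-chain of nested subcones separating $x_1$ from $z$: a local boundary Harnack/decay estimate, furnished by weak coercivity, is applied at each gate and the results are telescoped, producing the product structure in (\ref{Har-infinity}). Your sketch omits this iteration entirely, and the maximum-principle step you describe only closes the argument once the boundary estimate is known uniformly in $z$, which is precisely the deferred part. As written, then, this is an accurate map of the terrain but not a proof; the core of Ancona's Theorem 1$'$ remains to be supplied.
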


We may assume  $c_1=c_0^{-1}$, where $c_0$ is as in Lemma
\ref{GM-classical Harnack}.   As a  consequence of Lemma
\ref{GM-classical Harnack} and Lemma \ref{GM-Ancona inequality},
${\bf G}$ is related to the distance $d$ in the following way.

\begin{lem}\label{Green-d-relation} Let  $\LL$ and ${\bf G}$ be as in Lemma \ref{GM-classical Harnack}. There exist positive numbers $c_2, c_3, \alpha_2, \alpha_3$ such that  for any ${\bf v}\in S\M$ and  any $x, z\in \M$ with $d(x, z)\geq 1$,
\begin{equation}\label{equ-Green-d-relation}
c_2 e^{-\alpha_2 d(x, z)}\leq G_{{\bf v}}(x, z)\leq c_3 e^{-\alpha_3
d(x, z)}.
\end{equation}
\end{lem}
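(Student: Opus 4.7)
The plan is to prove the two inequalities separately, combining the heat kernel estimates already recalled above with the classical Harnack-type inequality.

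For the upper bound, I would write $G_{\bf v}(x,z) = \int_0^T p_{\bf v}(t,x,z)\,dt + \int_T^\infty p_{\bf v}(t,x,z)\,dt$ for a parameter $T\geq 1$ to be optimized. The tail integral is controlled directly by Lemma \ref{p-t-x-y} (exponential decay $p_{\bf v}(t,x,z)\leq B e^{-\varepsilon t}$), which gives $\int_T^\infty p_{\bf v}(t,x,z)\,dt \leq (B/\varepsilon)e^{-\varepsilon T}$. For the short-time piece, I would use Lemma \ref{Sa-Thm-6.1} (the Gaussian upper bound); since $M$ is compact one can bound $\mathrm{Vol}(y,\sqrt t)$ from below uniformly in $y$ for $t$ in any compact subinterval of $(0,\infty)$, and then the integral over $(0,T)$ is bounded by an expression of the form $C(T)\exp\!\left(-d^{2}(x,z)/(C_2 T)\right)$ (the maximum of the integrand is attained near $t\sim d(x,z)/\sqrt{C_2}$, so this bound is sharp up to polynomial factors). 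Choosing $T=\lambda d(x,z)$ with $\lambda>0$ small enough to balance $d^{2}/(C_2 T)$ against $\varepsilon T$ yields $G_{\bf v}(x,z)\leq c_3 e^{-\alpha_3 d(x,z)}$, with the constants independent of $\bf v$ because every ingredient in Lemmas \ref{p-t-x-y} and \ref{Sa-Thm-6.1} is uniform in $\bf v$.

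For the lower bound, I would iterate the classical Harnack inequality \eqref{Green-geq} along the minimizing $\wt g$-geodesic from $x$ to $z$. Fix a spacing $d_0>1$ and pick points $y_0=x,y_1,\dots,y_n=z$ on this geodesic with $d(y_i,y_{i+1})\in[d_0-\tfrac12,d_0+\tfrac12]$ and $n=O(d(x,z)/d_0)$; all consecutive mutual distances are then at least $1$, so applying \eqref{Green-geq} recursively yields
\[
G_{\bf v}(x,z)\ \geq\ c_0^{\,n-1}\prod_{i=0}^{n-1} G_{\bf v}(y_i,y_{i+1}).
\]
Each factor at distance $\leq d_0+\tfrac12$ admits a uniform positive lower bound $c'>0$: integrating Wang's Gaussian lower bound \eqref{W-p-t-lower-b} over a fixed compact interval $t\in[t_1,t_2]$ bounded away from $0$ and $\infty$ yields a positive constant depending only on $d_0$, $\beta$, $\sigma$, $m$ and the curvature bounds, all of which are uniform in $\bf v$. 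Shrinking $c'$ if necessary we may assume $c_0c'<1$, and substituting $n\leq d(x,z)/d_0+1$ gives the desired $c_2 e^{-\alpha_2 d(x,z)}$ with $\alpha_2=-d_0^{-1}\log(c_0 c')>0$.

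The main technical point is in the upper bound: one has to verify that the balancing step can be carried out with constants uniform in $\bf v\in S\wt M$. This uniformity is ensured because the leafwise ingredients --- the constants $B,\varepsilon$ of Lemma \ref{p-t-x-y}, the constants $C_1,C_2,K_1$ of Lemma \ref{Sa-Thm-6.1}, and the volume lower bounds on balls of radius $\sqrt t$ --- are all uniform over $\bf v$ by compactness of $M$ and the standing assumption that $\|Y\|$ is bounded. The lower bound, by contrast, is a routine iteration once Wang's inequality provides a uniform positive lower bound at bounded scales.
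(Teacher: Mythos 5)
Your lower bound matches the paper's in structure: both iterate the almost-multiplicativity (\ref{Green-geq}) along a chain of points on the geodesic $\gamma_{x,z}$ and feed it a uniform positive floor at bounded scales. The paper spaces the chain at exactly distance one, which forces it to first extend Lemma \ref{GM-classical Harnack} to the case $d(x,y)=1$ via the classical Harnack inequality, and then cites \cite[Proposition~7]{An} for the floor; you use a larger spacing so (\ref{Green-geq}) applies directly, and manufacture the floor by integrating Wang's lower bound (\ref{W-p-t-lower-b}) over a fixed time window. Both routes work, although you should take $d_0>3/2$ rather than $d_0>1$ so that every consecutive gap $d(y_i,y_{i+1})\geq d_0-\tfrac12$ is strictly larger than $1$. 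Your upper bound, by contrast, is a genuinely different argument: the paper simply cites \cite[Corollary~4.8]{H2}, whose proof rests on Ancona's inequality at infinity (Lemma \ref{GM-Ancona inequality}), whereas you derive the bound directly from the heat-kernel estimates of Lemmas \ref{p-t-x-y} and \ref{Sa-Thm-6.1}. That is more self-contained and a reasonable alternative.

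There is, however, a gap in your short-time estimate. You invoke a lower bound on $\mathrm{Vol}(y,\sqrt t)$ ``for $t$ in any compact subinterval of $(0,\infty)$,'' but you then integrate over $(0,T)$, which is not such an interval: as $t\to 0$ the prefactor $\left(\mathrm{Vol}(x,\sqrt t)\,\mathrm{Vol}(z,\sqrt t)\right)^{-1}$ in Lemma \ref{Sa-Thm-6.1} blows up like $t^{-m}$, so the justification collapses near the left endpoint. The fix is routine but must be supplied: because $d(x,z)\geq 1$, the Gaussian factor $e^{-d^2(x,z)/(C_2 t)}$ kills any polynomial in $1/t$, so one may absorb $t^{-m}e^{-1/(2C_2 t)}$ into a universal constant and bound the remaining factor $e^{-d^2/(2C_2 t)}$ on $(0,T]$ by its value at $t=T$ (alternatively, split once more at $t=1$ and use that balls of radius at least $1$ have volume uniformly bounded below under the pinched-curvature hypothesis). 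With that repair, taking $T=\lambda\, d(x,z)$ with $\lambda$ small does produce the exponential decay. A minor related slip: for $T$ of order $d(x,z)$ the map $t\mapsto t^{-m}e^{-d^2/(C_2 t)}$ is increasing on $(0,T]$, so its maximum there is at the right endpoint $t=T$ rather than at an interior critical point $t\sim d/\sqrt{C_2}$; that endpoint behaviour is in fact precisely what the estimate uses.
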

\begin{proof} The upper bound of (\ref{equ-Green-d-relation}) was shown in \cite[Corollary 4.8]{H2} using Ancona's inequality at infinity (cf. Lemma \ref{GM-Ancona inequality}).  For the  lower bound,  we first observe that Lemma \ref{GM-classical Harnack} also holds true if $x, y, z$ satisfies $d(x, z)>1$ and $d(x, y)=1$.  Indeed,  by  the classical
Harnack  inequality (\cite{LY}), there exists $c_4\in (0, 1)$ such
that for any ${\bf v}\in S\M$ and $x, y, z\in \M$ with $d(x, z)>1$
and $d(x, y)\leq1$,
\begin{equation}\label{LY-harnack}
c_4 G_{{\bf v}}(y, z)\leq G_{{\bf v}}(x, z)\leq c_4^{-1} G_{{\bf
v}}(y, z).
\end{equation}
Since $d(x, y)=1$,  by \cite[Proposition 7]{An}, there is  $c_5\in
(0, 1)$ (independent of $x, y$)
with\begin{equation}\label{G-x-y-distance 1} c_5\leq G_{{\bf v}}(x,
y)\leq c_5^{-1}.
\end{equation}
So,  if $c_0\leq c_4c_5$, then (\ref{Green-geq}) holds  true for $x,
y, z\in \M$ with $d(x, z)>1$ and $d(x, y)=1$. Now, for $x, z\in \M$
with $d(x, z)>1$, choose a sequence of points $x_i, 1\leq i\leq n,$
on the geodesic segment $\gamma_{x, z}$ with $x_0=x, x_n=z,$ $d(x_i,
x_{i+1})=1, i=0, \cdots, n-2,$ and $d(x_{n-1}, z)\in [1, 2)$.
Applying (\ref{Green-geq}) successively for $x_i, x_{i+1}, z$, we
obtain
\[
G_{{\bf v}}(x, z)\geq G_{{\bf v}}(x_{n-1}, z)(c_0c_5)^{n-1}\geq
c_4c_5(c_0c_5)^{n-1}\geq c_4c_5 (c_0c_5)^{d(x, y)},
\]
where, to derive the second inequality, we use (\ref{LY-harnack})
and the fact that the lower bound of (\ref{G-x-y-distance 1}) holds
for any $x, y\in \M$ with $d(x, y)\leq 1$.  The lower bound
estimation of (\ref{equ-Green-d-relation}) follows for $c_2=c_4c_5$
and $\alpha_2=-\ln c_0c_5$.
\end{proof}

We may assume the constants $c_2, c_3$ in Lemma
\ref{Green-d-relation} are such that $c_2$ is smaller than $1$ and
$c_3=c_2^{-1}$. For each ${\bf v}\in S\M$, $x, z\in \M$,  let
\[d_{{G}_{{\bf v}}}(x, z):=\left\{
  \begin{array}{ll}
   -\ln \left(c_2 G_{{\bf v}}(x, z)\right), & \hbox{if}\ d(x, z)>1; \\
   -\ln c_2, & \hbox{otherwise.}
  \end{array}
\right.
\]
Although  $d_{{G}_{{\bf v}}}$  is  always greater than the positive
number $\min\{\alpha_3, -\ln c_2\}$ by (\ref{equ-Green-d-relation}),  we still call it a  \emph{`Green
metric'}  for $\LL_{{\bf v}}$ (after \cite{BHM1} for the hyperbolic
groups case) since  it  satisfies an almost triangle inequality in
the following sense.
\begin{lem}\label{lem-triangle-d-G}
  There exists a constant $c_6\in (0, 1)$ such that for all $x, y, z\in \M$,
  \begin{equation}\label{almost-triangle-inequ}
 d_{{G}_{{\bf v}}}(x, z)\leq d_{{G}_{{\bf v}}}(x, y)+d_{{G}_{{\bf v}}}(y, z)-\ln c_6.
  \end{equation}
\end{lem}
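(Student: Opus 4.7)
The plan is to deduce (\ref{almost-triangle-inequ}) from Lemma~\ref{GM-classical Harnack} by a short case analysis on whether each of the three distances $d(x,y)$, $d(y,z)$, $d(x,z)$ exceeds $1$. The core case, in which all three distances exceed $1$, is immediate: each of the three quantities $d_{G_{\bf v}}$ is then given by $-\ln(c_2 G_{\bf v})$, and taking $-\ln$ of (\ref{Green-geq}) followed by an elementary rearrangement yields
\[
d_{G_{\bf v}}(x,z) \;\leq\; d_{G_{\bf v}}(x,y) + d_{G_{\bf v}}(y,z) - \ln(c_0/c_2),
\]
which is exactly (\ref{almost-triangle-inequ}) provided $c_6 \leq c_0/c_2$. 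This single step is the essence of the lemma; everything that follows is bookkeeping for the boundary cases.

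In each remaining configuration at least one of the three distances is $\leq 1$. I would treat these using the classical Harnack inequality (\ref{LY-harnack}), the uniform two-sided bound (\ref{G-x-y-distance 1}) for $G_{\bf v}$ at distance~$1$, and the exponential bounds (\ref{equ-Green-d-relation}). Specifically, if $d(x,y)\leq 1$ while $d(y,z),\ d(x,z)>1$, then (\ref{LY-harnack}) gives $G_{\bf v}(x,z)\leq c_4^{-1}G_{\bf v}(y,z)$ and hence $d_{G_{\bf v}}(x,z)\leq d_{G_{\bf v}}(y,z)+\ln c_4\leq d_{G_{\bf v}}(y,z)$, while the positive quantity $d_{G_{\bf v}}(x,y)=-\ln c_2$ on the right-hand side of (\ref{almost-triangle-inequ}) more than absorbs the leftover $-\ln c_6$ term (the symmetric case $d(y,z)\leq 1$ is identical). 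If $d(x,z)\leq 1$, the left-hand side of (\ref{almost-triangle-inequ}) is the constant $-\ln c_2$, while both terms on the right-hand side are bounded below by $\min\{\alpha_3,-\ln c_2\}>0$, so the inequality holds once $c_6$ is chosen small enough. Finally, if two of the three distances are $\leq 1$ then the third is $\leq 2$, and the lower bound in (\ref{equ-Green-d-relation}) yields $d_{G_{\bf v}}(x,z)\leq -2\ln c_2+2\alpha_2$, making both sides comparable up to an absolute constant.

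Taking $c_6\in(0,1)$ to be the minimum of the finitely many constants produced in these subcases gives the uniform constant claimed by the lemma, independent of ${\bf v}$. I do not anticipate any real obstacle: the substantive ingredient is the logarithmic form of Ancona's inequality encoded in Lemma~\ref{GM-classical Harnack}, and the remaining cases reduce to routine manipulations using bounds already established in the paper.
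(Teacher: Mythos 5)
Your overall plan coincides with the paper's: split into cases according to which of $d(x,y)$, $d(y,z)$, $d(x,z)$ exceed $1$, handle the core case (all three $>1$) by taking $-\ln$ of (\ref{Green-geq}), and absorb the remaining cases with the classical Harnack inequality (\ref{LY-harnack}) together with the a priori bounds (\ref{G-x-y-distance 1}) and (\ref{equ-Green-d-relation}). Your treatment of the core case, of the case $d(x,z)\leq 1$, and of the case where two of the three distances are $\leq 1$ is correct.

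There is, however, a concrete direction error in the case $d(x,y)\leq 1$, $d(y,z)>1$, $d(x,z)>1$. You invoke the \emph{upper} Harnack bound $G_{\bf v}(x,z)\leq c_4^{-1}G_{\bf v}(y,z)$ and conclude $d_{G_{\bf v}}(x,z)\leq d_{G_{\bf v}}(y,z)+\ln c_4 \leq d_{G_{\bf v}}(y,z)$. But an \emph{upper} bound on $G_{\bf v}(x,z)$ yields a \emph{lower} bound on $d_{G_{\bf v}}(x,z)=-\ln(c_2 G_{\bf v}(x,z))$, namely $d_{G_{\bf v}}(x,z)\geq d_{G_{\bf v}}(y,z)+\ln c_4$, which is not what you want. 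And the stated conclusion $d_{G_{\bf v}}(x,z)\leq d_{G_{\bf v}}(y,z)$ cannot be true in general: replacing $y$ by a nearby $x$ may well increase the distance to $z$ and therefore increase $d_{G_{\bf v}}(\cdot, z)$ as well (cf.\ (\ref{d-G-d-greater-1})). What you need here is the \emph{lower} Harnack bound $G_{\bf v}(x,z)\geq c_4 G_{\bf v}(y,z)$, which gives $d_{G_{\bf v}}(x,z)\leq d_{G_{\bf v}}(y,z)-\ln c_4$; since $d_{G_{\bf v}}(x,y)=-\ln c_2>0$ may be added to the right-hand side for free, this gives (\ref{almost-triangle-inequ}) with $c_6 = c_4$ in this case, as in the paper. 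Once this direction is corrected, your proof goes through and is essentially identical to the paper's.
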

\begin{proof} If $d(x, z)\leq 1$,  then (\ref{almost-triangle-inequ}) holds  for $c_6=c_2$.  If  $x, y, z$ have mutual distances greater than $1$,   then (\ref{almost-triangle-inequ}) holds for $c_6=c_0$ by Lemma \ref{GM-classical Harnack}.  If $d(x, z)>1$ and $d(y, z)\leq 1$, using the classical Harnack inequality (\ref{LY-harnack}),  we have
\[
{{G}_{{\bf v}}}(x, z)\geq c_4 {{G}_{{\bf v}}}(x, y)
\]
and hence (\ref{almost-triangle-inequ}) holds with $c_6=c_4$ if, furthermore,
$d(x, y)>1$ or with $c_6=c_4c_5$ otherwise. The case  that $d(x,
z)>1$, $d(x, y)\leq 1$ can be treated similarly.
\end{proof}

By Lemma \ref{Green-d-relation},  $d_{{G}_{{\bf v}}}$ is  comparable
to  the  metric $d$ for  any $x, z\in \M$ with  $d(x, z)>1$:
\begin{equation}\label{d-G-d-greater-1}
\alpha_3 d(x, z)\leq d_{{G}_{{\bf v}}}(x, z)\leq \alpha_2 d(x,
z)-2\ln c_2.
\end{equation}
Using Lemma \ref{GM-Ancona inequality},  we can further obtain that
$d_{{G}_{{\bf v}}}$  is almost additive along the geodesics.

\begin{lem}\label{triangle-along geodesic} Let  $\LL$ and ${\bf G}$ be as in  Lemma \ref{GM-classical Harnack}. There exists a constant $c_7$ such that for any ${\bf v}\in S\M$, any $x, z\in
\M$ and $y$ in the geodesic segment $\g_{x, z}$ connecting $x$ and
$z$,
\begin{equation}\label{almost-add-along-geodesic}
\left|d_{{G}_{{\bf v}}}(x, y)+d_{{G}_{{\bf v}}}(y, z)- d_{{G}_{{\bf
v}}}(x, z)\right|\leq -\ln c_7.
\end{equation}
\end{lem}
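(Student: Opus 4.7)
The strategy is as follows. One direction, $d_{{G}_{{\bf v}}}(x,z)-d_{{G}_{{\bf v}}}(x,y)-d_{{G}_{{\bf v}}}(y,z)\leq -\ln c_6$, is immediate from Lemma~\ref{lem-triangle-d-G} and holds for arbitrary $y\in\M$. The new content is the reverse almost additivity $d_{{G}_{{\bf v}}}(x,y)+d_{{G}_{{\bf v}}}(y,z)\leq d_{{G}_{{\bf v}}}(x,z)+O(1)$, which is false for general triples but should hold when $y$ lies on $\gamma_{x,z}$. I will prove it via Ancona's inequality at infinity (Lemma~\ref{GM-Ancona inequality}).

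For the principal case $d(x,y),d(y,z)>1$, parametrize the geodesic through $x,y,z$ so that $\gamma(0)=y$ and $\dot\gamma(0)$ points toward $z$, set $x_0:=\gamma(-1)$ and ${\rm v}:=\dot\gamma(-1)$, and form the cone $\Gamma:=\Gamma_{x_0}({\rm v},\pi/2)$ with $1$-shifted cone $\Gamma_1$ of vertex $x_1=\gamma(0)=y$. Since $x$ lies on $\gamma$ on the opposite side of $x_0$ from $y$, one has $x\notin\Gamma$, and since $z$ lies on the axis of $\Gamma_1$, $z\in\Gamma_1$. Lemma~\ref{GM-Ancona inequality} then yields
\[
G_{{\bf v}}(x,z)\leq c_1\,G_{{\bf v}}(x,y)\,G_{{\bf v}}(x_0,z),
\]
and a single classical Harnack step (\ref{LY-harnack}), valid since $d(x_0,y)=1$ and $d(x_0,z)=1+d(y,z)>1$, replaces $G_{{\bf v}}(x_0,z)$ by $c_4^{-1}G_{{\bf v}}(y,z)$. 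Taking $-\ln$ converts this into the desired bound $d_{{G}_{{\bf v}}}(x,y)+d_{{G}_{{\bf v}}}(y,z)-d_{{G}_{{\bf v}}}(x,z)\leq\ln(c_1 c_4^{-1}c_2^{-1})$. The short-distance cases are disposed of separately: because $y$ lies on $\gamma_{x,z}$, one has $d(x,z)=d(x,y)+d(y,z)$, so $d(x,z)\leq 1$ makes all three Green distances equal to the constant $-\ln c_2$; while if $d(x,z)>1$ but $d(x,y)\leq 1$ (the symmetric case being analogous), classical Harnack gives $G_{{\bf v}}(x,z)\leq c_4^{-1}G_{{\bf v}}(y,z)$, from which the bound follows using the fixed value $d_{{G}_{{\bf v}}}(x,y)=-\ln c_2$ and (\ref{G-x-y-distance 1}). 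Choosing $c_7$ to be the minimum of $c_6$ and the finitely many constants produced in these cases concludes the proof.

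The main obstacle is the geometric setup needed to invoke Ancona. Lemma~\ref{GM-Ancona inequality} is formulated with an offset base point $x_0$ and a $1$-shifted vertex $x_1$, so $y$ cannot be placed at the vertex of $\Gamma$; it has to be realized as $x_1$, and the mismatch between $x_0$ and $y$ in the second Green factor is then absorbed by the short Harnack step described above. The hypothesis that $y$ lie on $\gamma_{x,z}$ is precisely what ensures simultaneously $x\notin\Gamma$ and $z\in\Gamma_1$, and is the geometric reason the reverse inequality can be obtained at all.
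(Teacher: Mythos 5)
Your argument is essentially the paper's: in the principal case you invoke Lemma~\ref{GM-Ancona inequality} with the cone axis along $\gamma_{x,z}$ and absorb the base-point discrepancy by a single Harnack step, which is exactly what the paper does (the paper places $y$ at $x_0$ and shifts $x_1$ toward $z$, whereas you place $y$ at $x_1$ and put $x_0$ one unit toward $x$ --- a mirror image of the same computation). One small gloss-over: in your second short-distance bucket you lump together the sub-case $d(x,y),d(y,z)\le 1<d(x,z)$, where ``classical Harnack plus~(\ref{G-x-y-distance 1})'' is not quite enough --- Harnack compares $G_{\bf v}(x,z)$ to $G_{\bf v}(y,z)$, but when $d(y,z)\le 1$ there is no \emph{upper} bound on $G_{\bf v}(y,z)$, so you do not get a constant upper bound on $G_{\bf v}(x,z)$ this way. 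The paper handles that sub-case directly by noting $1<d(x,z)\le 2$ and using the two-sided bound~(\ref{d-G-d-greater-1}) on $d_{G_{\bf v}}$; this is an easy patch to your argument but it is a different tool than the one you cite.
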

\begin{proof}Let $x, z\in
\M$ and $y$ belong to the geodesic segment $\g_{x, z}$. If  $d(x,
y), d(y,z)\leq 1$, then $d(x, z)\leq 2$ and, using
(\ref{d-G-d-greater-1}), we obtain (\ref{almost-add-along-geodesic})
with $c_7=c_2^2 e^{-2\alpha_2}$. If $d(x, y)\leq 1$ and $d(y,z)>1$
(or $d(y,z)\leq 1$ and $d(x, y)>1$), using Harnack's inequality
(\ref{LY-harnack}), we have (\ref{almost-add-along-geodesic}) with
$c_7=c_2c_4$. Finally, if $x, y, z$ have mutual distances greater than $1$, we have by Lemma  \ref{GM-classical
Harnack} and Lemma \ref{GM-Ancona inequality} (where we can use
Harnack's inequality to replace $G_{{\bf v}}(x, x_1)$ in
(\ref{Har-infinity}) by $c_4^{-1}G_{{\bf v}}(x, x_0))$ that \[
\left|\ln G_{{\bf v}}(x, y)+\ln G_{{\bf v}}(y, z)-\ln G_{{\bf v}}(x,
z) \right|\leq -\ln (c_1c_4)
\]
and consequently,
\[
 \left|d_{{G}_{{\bf v}}}(x, y)+d_{{G}_{{\bf v}}}(y, z)- d_{{G}_{{\bf
v}}}(x, z)\right|\leq -\ln (c_1c_2 c_4).
\]
\end{proof}

More is true, as we can see  from Lemma \ref{GM-Ancona inequality}
and Lemma \ref{lem-triangle-d-G} as well.

\begin{lem}\label{triangle-separated by cones} Let  $\LL$ and ${\bf G}$ be as in Lemma \ref{GM-classical Harnack}.
There exists a constant $c_8$ such that for any ${\bf v}\in S\M$, if
$x, y, z\in \M$ are such that $x$ and $z$ are separated by some cone
$\Gamma$ with vertex $y$ and angle $\frac{\pi}{2}$, and $\Gamma_1$,
the 1-shifted cone of $\Gamma$, i.e., $x\in \M\backslash \Gamma$,
$z\in \Gamma_1$, then
\[
\left|d_{{G}_{{\bf v}}}(x, y)+d_{{G}_{{\bf v}}}(y, z)- d_{{G}_{{\bf
v}}}(x, z)\right|\leq -\ln c_8.
\]
\end{lem}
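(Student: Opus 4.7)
The lower bound $d_{G_{\bf v}}(x,y)+d_{G_{\bf v}}(y,z)-d_{G_{\bf v}}(x,z)\geq \ln c_6$ is already free, since it is just a rearrangement of the almost triangle inequality proved in Lemma \ref{lem-triangle-d-G}. So the content of this lemma is the reverse estimate, which is where the cone separation is actually used. The plan is to invoke Ancona's inequality at infinity (Lemma \ref{GM-Ancona inequality}) to produce a multiplicative lower bound of the form $G_{\bf v}(x,z)\geq c\,G_{\bf v}(x,y)G_{\bf v}(y,z)$, and then translate this into the desired additive bound for $d_{G_{\bf v}}$.

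Concretely, treat first the generic case in which $d(x,y), d(y,z), d(x,z)$ all exceed $1$. Apply Lemma \ref{GM-Ancona inequality} with base point $x_0:=y$ (so $\Gamma=\Gamma_{y}({\rm v},\pi/2)$ and $x_1:=y_1=\gamma_{\rm v}(1)$ is the vertex of $\Gamma_1$); by hypothesis $x\in \M\setminus\Gamma$ and $z\in \Gamma_1$, so
\[
G_{\bf v}(x,z)\ \leq\ c_1\,G_{\bf v}(x,y_1)\,G_{\bf v}(y,z).
\]
Since $d(y,y_1)=1$, the classical Harnack inequality (\ref{LY-harnack}) gives $G_{\bf v}(x,y_1)\leq c_4^{-1}G_{\bf v}(x,y)$, so
\[
G_{\bf v}(x,z)\ \leq\ c_1c_4^{-1}\,G_{\bf v}(x,y)\,G_{\bf v}(y,z).
\]
Substituting $d_{G_{\bf v}}(\cdot,\cdot)=-\ln c_2-\ln G_{\bf v}(\cdot,\cdot)$ (valid for points at distance $>1$) and taking logarithms converts this into
\[
d_{G_{\bf v}}(x,y)+d_{G_{\bf v}}(y,z)-d_{G_{\bf v}}(x,z)\ \leq\ -\ln(c_1^{-1}c_2c_4).
\]

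It remains to dispose of the boundary cases in which one of the three distances is $\leq 1$. These are handled by direct applications of the classical Harnack inequality (\ref{LY-harnack}) and the uniform two-sided bound (\ref{G-x-y-distance 1}): when $d(x,y)\leq 1$ the term $d_{G_{\bf v}}(x,y)$ equals the constant $-\ln c_2$, while Harnack relates $G_{\bf v}(y,z)$ to $G_{\bf v}(x,z)$ up to the factor $c_4$, so the full combination is bounded by a constant; the symmetric case $d(y,z)\leq 1$ is analogous. The remaining case $d(x,z)\leq 1$, combined with cone separation, forces $d(x,y)$ and $d(y,z)$ to be bounded by a universal constant, and then (\ref{G-x-y-distance 1}) makes all three $d_{G_{\bf v}}$-terms individually bounded. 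Taking $c_8$ to be the smallest of the constants produced in all these sub-cases (and the lower-bound constant from Lemma \ref{lem-triangle-d-G}) finishes the proof.

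The only delicate step is verifying Ancona's inequality applies in the form stated: one must check that the geometric hypothesis of the present lemma ($x\notin \Gamma$, $z\in \Gamma_1$) matches exactly the hypothesis of Lemma \ref{GM-Ancona inequality} with $x_0=y$; after that point, everything reduces to bookkeeping of multiplicative constants, so the expected main obstacle is simply producing a clean treatment of the short-distance boundary cases.
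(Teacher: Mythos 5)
Your proposal is correct and follows exactly the approach the paper intends: the paper offers no formal proof of this lemma, instead stating immediately before it that it follows ``from Lemma~\ref{GM-Ancona inequality} and Lemma~\ref{lem-triangle-d-G}'' --- precisely the two ingredients you use (Ancona's inequality at infinity for the upper bound, the almost-triangle inequality for the lower bound), with short-distance edge cases absorbed by the classical Harnack inequality~(\ref{LY-harnack}) and the two-sided bound~(\ref{G-x-y-distance 1}). Your main computation $G_{\bf v}(x,z)\leq c_1 c_4^{-1}G_{\bf v}(x,y)G_{\bf v}(y,z)$ and its translation to $d_{G_{\bf v}}(x,y)+d_{G_{\bf v}}(y,z)-d_{G_{\bf v}}(x,z)\leq -\ln(c_1^{-1}c_2c_4)$ are both correct; the only cosmetic looseness is in the case $d(x,z)\leq 1$, which the cone hypotheses in fact rule out (nearest-point projection onto $\gamma_{\rm v}$ is 1-Lipschitz and sends $x$, $z$ to parameters $\leq 0$ and $\geq 1$ respectively), so it is vacuous rather than merely ``bounded.''
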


\bigskip

The counterpart of  the Busemann function for the analysis of the pathwise limits for stochastic entropy is  the Poisson kernel function.  Let ${\bf v}=(x, \xi)\in \M\times \partial\M$. A \emph{Poisson kernel
function} $k_{{\bf v}}(\cdot, \eta)$ of $\LL_{{\bf v}}$ at $\eta\in
\partial\M$ is a positive $\LL_{{\bf v}}$-harmonic function on $\M$
such that
\[
k_{{\bf v}}(x, \eta)=1, k_{{\bf v}}(y, \eta)=O(G_{{\bf v}}(x, y)), \
\mbox{as}\ y\to \eta'\not=\eta.
\]
A point $\eta\in \partial\M$ is a \emph{Martin point of $\LL_{{\bf v}}$}  if
it satisfies the following properties:
\begin{itemize}
\item[i)] there exists a Poisson kernel function $k_{{\bf
v}}(\cdot, \eta)$ of $\LL_{{\bf v}}$ at $\eta$,
\item[ii)]the Poisson kernel function is unique, and
\item[iii)] if $y_n\to \eta$, then $\ln G_{{\bf v}}(\cdot, y_n)-\ln G_{{\bf v}}(x, y_n)\to \ln k_{{\bf
v}}(\cdot, \eta)$ uniformly on compact sets.
\end{itemize}
Since  $(M, g)$ is negatively curved and  $\LL_{{\bf v}}$ is weakly
coercive, every point $\eta$ of the geometric boundary $\pp\M$ is a
Martin point by Ancona  (\cite{An}). Hence $k_{{\bf v}}(\cdot, \eta)$ is also called the Martin kernel function at $\eta$. 

The function  $k_{\bfv} (\cdot, \eta) $ should be understood as  a function on $W^s (\bfv)$ for all $\eta$, i.e.  it is identified with 
 $k_{\bfv} (\p(\cdot), \eta) $,  where $\p: S\M\to \M$ is the projection map.  In case $\LL=\Delta$,   all the $k_{\bfv} (\cdot, \eta)$'s are the same as $k_{\eta}(\cdot)$, the Martin kernel function on $\M$ associated to $\Delta$. In general,  $k_{\bfv}$ may vary from leaf to leaf.

Finally,  we can state the integral formula for the stochastic entropy.

\begin{prop}\label{formulas-l-h-Y-h}Let $\LL=\D+Y$ be  such
that $Y^*$, the dual of $Y$ in the cotangent bundle  to the stable foliation over  $SM$,
satisfies $dY^*=0$ leafwisely  and $\mbox{pr}(-\langle \overline{X}, Y\rangle)>0$. Then we have
\begin{eqnarray}
h_{\LL}&=&\int_{\scriptscriptstyle{M_0\times \pp\M}}\|\nabla \ln
k_{{\bf v}}(x, \xi)\|^2\ d\wt{{\bf m}}.\label{h-formula}
\end{eqnarray}
\end{prop}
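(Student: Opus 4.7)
The plan is to adapt Kaimanovich's derivation of the stochastic entropy formula to the setting of subordinated laminated diffusions, applying It\^o's formula to the Martin kernel along $\LL_\bfv$-paths, and combining it with the pathwise Green function limit established in Proposition \ref{prop-h-in p-g}.

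First I would fix $\bfv=(x,\xi)\in S\M$ and apply It\^o's formula to the $\LL_\bfv$-harmonic function $y\mapsto\ln k_\bfv(y,\xi)$ along an $\LL_\bfv$-diffusion path $\om$. Since $\LL_\bfv k_\bfv(\cdot,\xi)=0$, a direct computation gives $\LL_\bfv\ln k_\bfv(\cdot,\xi)=-\|\nabla\ln k_\bfv(\cdot,\xi)\|^2$, so that
\[
\ln k_\bfv(\om(t),\xi)=M_t-\int_0^t\|\nabla\ln k_\bfv(\om(s),\xi)\|^2\,ds,
\]
where $M_t$ is a local martingale whose quadratic variation is proportional to the integral on the right. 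The key structural remark is that $\nabla\ln k_\bfv(\cdot,\xi)$ depends only on the leaf $W^s(\bfv)$ and not on the normalization basepoint used to define $k_\bfv$, so the integrand defines a genuine continuous function $f$ on $SM$, namely $f(y,\xi)=\|\nabla\ln k_{(y,\xi)}(y,\xi)\|^2$. Mixing of the shift flow on $(\Om_+,\P)$ (Proposition \ref{mixing}) together with Birkhoff's ergodic theorem then yield $\frac{1}{t}\int_0^t f(\om(s))\,ds\to\int f\,d\wt{\bf m}$ almost surely, and since the quadratic variation of $M$ grows linearly, the strong law of large numbers for continuous local martingales gives $M_t/t\to 0$ almost surely.

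The hard step is identifying $\lim_{t\to\infty}t^{-1}\ln k_\bfv(\om(t),\xi)$, and this is where the negative curvature enters essentially. Under $\P$, for almost every path the first coordinate $\om_M(t)$ tends to a point $\om_M(\infty)\in\pp\M$ whose conditional distribution given the starting point is non-atomic, so it differs almost surely from the fixed leaf direction $\xi$. Hence for $t$ large and $z$ sufficiently close to $\xi$, the basepoint $x$ separates $\om(t)$ and $z$ by a cone of angle $\pi/2$, and Ancona's almost additivity (Lemma \ref{triangle-separated by cones}) gives
\[
\ln G_\bfv(\om(t),z)=\ln G_\bfv(\om(t),x)+\ln G_\bfv(x,z)+O(1)\qquad(z\to\xi).
\]
Passing to the Martin limit $z\to\xi$ produces $\ln k_\bfv(\om(t),\xi)=\ln G_\bfv(\om(t),x)+O(1)$, and invoking Proposition \ref{prop-h-in p-g} yields $\frac{1}{t}\ln k_\bfv(\om(t),\xi)\to -h_\LL$ almost surely.

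Substituting these three convergences into the It\^o identity divided by $t$ produces
\[
-h_\LL=\lim_{t\to\infty}\frac{M_t}{t}-\int\|\nabla\ln k_\bfv(x,\xi)\|^2\,d\wt{\bf m}=-\int\|\nabla\ln k_\bfv(x,\xi)\|^2\,d\wt{\bf m},
\]
which is the claimed formula. The main obstacle is the Ancona-based Martin kernel asymptotic: it combines the non-atomicity of the harmonic exit measure at any fixed boundary point with a careful cone-separation argument along almost every diffusion trajectory, both of which rely on the negative curvature of $\wt g$ and the weak coercivity of $\LL$ implied by the positivity of the pressure of $-\langle\overline X,Y\rangle$.
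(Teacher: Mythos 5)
Your proposal follows essentially the same route as the paper: first identify the pathwise limit $-\frac{1}{t}\ln k_{\bf v}(\om(t),\xi)\to h_\LL$ by comparing $\ln k_{\bf v}(\om(t),\xi)$ with $\ln G_{\bf v}(x,\om(t))$ via Ancona and invoking Proposition~\ref{prop-h-in p-g}, and then pass to the integral formula by the It\^o decomposition of $\ln k_{\bf v}(\om(t),\xi)$, killing the martingale by the strong law of large numbers and evaluating the drift by the ergodic theorem. Your It\^o step is simply the rigorous version of what the paper writes informally as $\lim_t -\frac1t\int_0^t \frac{\partial}{\partial s}\ln k_{\bf v}(\om(s),\xi)\,ds$, so the decomposition, the key lemma (Ancona), and the ergodic input are the same.

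There is one geometric imprecision in your Ancona step that, as written, is not valid: you claim that once $\om(\infty)\neq\xi$ and $t$ is large, the basepoint $x=\om(0)$ separates $\om(t)$ and $z$ (near $\xi$) by a cone of angle $\pi/2$. This need not hold if $\om(\infty)$ and $\xi$ are angularly close when viewed from $x$; then no $\pi/2$-cone at $x$ toward $\xi$ will exclude $\om(t)$. The paper avoids this by centering the cone at $z_t$, the point of $\gamma_{\om(t),\xi}$ closest to $x$: cone separation along a geodesic is automatic (Lemma~\ref{triangle-separated by cones}), and since $\om(\infty)\neq\xi$ forces $d(x,z_t)$ to stay $\P$-a.s.\ bounded in $t$, Harnack then transfers the $O(1)$ Green function comparison from $z_t$ to $x$. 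After this patch your argument coincides with the paper's proof.
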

(Since each $k_{\bfv} (\cdot, \eta) $ is a function on $W^s(\bfv)$, in particular, when $\eta=\xi$,   its gradient (for the lifted metric from $\M$ to $W^s (\bfv)$)  is a tangent vector to $W^s (\bfv)$.  We also observe  that the classical formula (\ref{formulas-ell-h})  for the stochastic entropy is obtained from Proposition \ref{formulas-l-h-Y-h} by considering the metric $g^\l$ and $Y\equiv 0$.)

\begin{proof}
For $\P$-a.e. path
${\om}\in {\Om}_{+}$, we still denote $\om$ its projection to  $\M$ and write  ${\bf v}:=\om(0)$.    When $t$
goes to infinity, we see that \[
\limsup\limits_{t\to +\infty}|\ln G_{{\bf v}}(x, {\om}(t))-\ln
k_{{\bf v}}({\om}(t), \xi)|<+\infty,
\] 
  Indeed, let $z_t$ be the point on the geodesic ray $\g_{{\om}(t), \xi}$
closest to $x$. Then, as $t \to +\infty $,
\begin{equation}\label{Green-Matin-kernel-function}
 G_{\bf v} (x, {\om}(t)) \asymp G_{\bf v} (z_t, {\om}(t)) \asymp \frac{G_{\bf v} (y, {\om}(t)) }{G_{\bf v} (y, z_t)}
\end{equation}
for all $y$ on the geodesic going from ${\om} (t)$ to $\xi$,
where $\asymp$ means up to some multiplicative constant independent of $t$. The
first $\asymp$  comes from Harnack inequality using the fact
that $\sup_t d(x, z_t)$ is finite $\P$-almost everywhere. (For $\P$-a.e. $\om\in \Om_+$, $\eta=\lim_{t\rightarrow +\infty}\om(t)$ differs from $\xi$ and  $d(x, z_t)$, as $t\rightarrow +\infty$,  tends to  the distance between $x$ and the geodesic asymptotic to $\xi$ and $\eta$ in opposite  directions.)
The second $\asymp$  comes from Ancona's inequality (\cite{An}).
Replace $G_{\bf v} (y, {\om}(t))/G_{\bf v} (y, z_t)$ by its limit
as $y \to \xi $, which is $k_{(z_t, \xi ) } ( {\om}(t), \xi ) $,
which is itself $\asymp k_{\bf v} ( {\om}(t), \xi )$ by
Harnack inequality again.
 By (\ref{h-in g}), it follows that,    for $\P$-a.e.  $\om\in
\Om_+$,
\begin{equation*}
\lim\limits_{t\to +\infty}-\frac{1}{t}\ln k_{{\bf v}}({\om}(t),
\xi)=h_{\LL}.
\end{equation*}
  Again, using the fact that the $\LL$-diffusion
has leafwise infinitesimal generator $\Delta+Y$ and is ergodic, we obtain
\begin{eqnarray*}
  h_{\LL}&=&\lim_{t\to+\infty}-\frac{1}{t}\int_{0}^{t}\frac{\pp}{\pp s}(\ln k_{{\bf
v}}({\om}(s), \xi))\ ds\\
  &=&\lim_{t\to+\infty}\frac{1}{t}\int_{0}^{t}-(\D+Y)\left(\ln k_{{\bf
v}}({\om}(s), \xi)\right)\ ds \left(= - \int_{\scriptscriptstyle{M_0\times \pp\M}}(\D+Y)\left(\ln k_{{\bf
v}}\right)d\wt{{\bf m}}\right)\\
  &=&  \int_{\scriptscriptstyle{M_0\times \pp\M}}\|\nabla \ln k_{{\bf
v}}(\cdot, \xi)\|^2\  d\wt{{\bf m}}.
\end{eqnarray*}
The last equality comes from the fact that  the Martin
kernel function $k_{{\bf v}}(\cdot, \xi)$ satisfies $(\D +Y)  (k_{{\bf
v}}(\cdot, \xi))=0$.
\end{proof}

\subsection{A Central limit theorem for the linear drift and the stochastic entropy}\label{Subsection-CLT}
With the help of the Busemann function and the  Martin kernel function, we can further describe the distributions of the pathwise limits for time large.  In this subsection, we recall the Central Limit Theorems for
$\ell_{\LL}$ and $h_{\LL}$ and the ingredients of the proof that we will use later.
\begin{prop}(\cite{H2})\label{CLT} Let $\LL=\D+Y$ be  such
that $Y^*$, the dual of $Y$ in the cotangent bundle  to the stable foliation over  $SM$,
satisfies $dY^*=0$ leafwisely and $\mbox{pr}(-\langle \overline{X}, Y\rangle)>0$. Then there
are positive numbers $\sigma_0$ and $\sigma_1$ such that the
distributions of the variables
\[
\frac{1}{\sigma_0 \sqrt{t}}\left[ d_{\W}(\om(0),
\om(t))-t\ell_{\LL}\right]\ \ \mbox{and}\ \ \frac{1}{\sigma_1\sqrt{t}}\left[\ln \bfg(\om(0),
\om(t))+th_{\LL}\right]
\]
are asymptotically close to the  normal distribution when
$t$ goes to infinity.
\end{prop}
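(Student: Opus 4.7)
The plan is a martingale approximation reducing both CLTs to the central limit theorem for continuous martingales. For $\P$-almost every path $\om$ with $\om(0) = {\bf v} = (x, \xi)$, the limit $\eta := \lim_{t\to\infty}\om(t) \in \pp\M$ exists and differs from $\xi$. As in the proof of Proposition \ref{formulas-l-h-Y-l}, $d_{\W}(\om(0), \om(t)) - b_{\bf v}(\om(t))$ converges $\P$-a.s.\ to the finite limit $-2(\xi|\eta)_x$; similarly, by \eqref{Green-Matin-kernel-function} and Harnack's inequality, $\ln \bfg(\om(0), \om(t)) - \ln k_{\bf v}(\om(t), \xi)$ is $\P$-a.s.\ bounded. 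Since these remainders are $o(\sqrt{t})$, it suffices to establish the CLT for $b_{\bf v}(\om(t)) - t\ell_{\LL}$ and for $-\ln k_{\bf v}(\om(t), \xi) - t h_{\LL}$.

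Apply the leafwise It\^o formula to $b_{\bf v}$ along the $\LL_{\bf v}$-diffusion on $W^s({\bf v}) \cong \M$. Using \eqref{Buse-geo}, $\|\nabla b_{\bf v}\| \equiv 1$, so
\begin{equation*}
b_{\bf v}(\om(t)) - b_{\bf v}(\om(0)) = \int_0^t \LL b_{\bf v}(\om(s))\, ds + M^b_t, \qquad \langle M^b \rangle_t = t.
\end{equation*}
By Proposition \ref{formulas-l-h-Y-l}, the function $\phi := \LL b_{\bf v} - \ell_{\LL} = -({\rm Div}\,\overline{X} + \langle Y, \overline{X}\rangle) - \ell_{\LL}$ has zero $\bf m$-mean on $SM$. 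The crux is to solve the leafwise Poisson equation $\LL F = \phi$ for a bounded, leafwise $C^2$ function $F$ on $SM$, formally $F = -\int_0^\infty P_s \phi\, ds$, where $P_s$ is the semigroup of the laminated diffusion on $SM$; convergence in a suitable function space follows from the weak coerciveness of $\LL$ combined with the mixing of Proposition \ref{mixing}. Applying It\^o to $F$ gives
\begin{equation*}
\int_0^t \phi(\om(s))\, ds = F(\om(t)) - F(\om(0)) - M^F_t,
\end{equation*}
so that
\begin{equation*}
b_{\bf v}(\om(t)) - t\ell_{\LL} = (M^b_t - M^F_t) + O(1) \quad \P\text{-a.s.}
\end{equation*}

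By Birkhoff's theorem applied to the ergodic shift on $(\Om_+, \P)$, $\frac{1}{t}\langle M^b - M^F\rangle_t \to \sigma_0^2 := \int_{SM} \|\overline{X} + \nabla F\|^2 \, d{\bf m}$ (using $\nabla b_{\bf v} = -\overline{X}$), and $\sigma_0^2 > 0$ because $\nabla F \equiv -\overline{X}$ would force $F$ to coincide leafwise with $-b_{\bf v}$ and hence be unbounded. The martingale CLT then yields $(M^b_t - M^F_t)/(\sigma_0\sqrt{t}) \Rightarrow \mathcal{N}(0,1)$, proving the first assertion. The entropy case is identical with $b_{\bf v}$ replaced by $-\ln k_{\bf v}(\cdot, \xi)$: since $\LL k_{\bf v}(\cdot, \xi) = 0$, a direct computation gives $\LL \ln k_{\bf v} = -\|\nabla \ln k_{\bf v}\|^2$, so It\^o produces a drift $\int_0^t \|\nabla \ln k_{\bf v}\|^2(\om(s))\, ds$; a second Poisson equation $\LL F_1 = \|\nabla \ln k_{\bf v}\|^2 - h_{\LL}$ (well-defined by Proposition \ref{formulas-l-h-Y-h}) yields the approximating martingale with
\begin{equation*}
\sigma_1^2 := \int_{SM} \|\nabla \ln k_{\bf v} + \nabla F_1\|^2 \, d{\bf m} > 0,
\end{equation*}
positivity being equivalent to $h_{\LL} > 0$. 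The main technical obstacle throughout is the construction and leafwise regularity of the Poisson solutions $F, F_1$: the lamination $\W$ is only H\"older transversally and $P_s$ acts on a non-compact foliated space, so quantitative correlation decay beyond the qualitative mixing of Proposition \ref{mixing} is needed; weak coerciveness plus Hamenst\"adt's analysis of the laminated heat kernel in \cite{H2} supply the required bounds.
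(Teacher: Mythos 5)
Your proof follows essentially the same route as the paper: a martingale approximation obtained by solving the leafwise Poisson equations for correctors $u_0, u_1$ (your $F$ and $F_1$) via $u = -\int_0^\infty Q_s\phi\,ds$, using the exponential contraction of the laminated semigroup on a weighted H\"older space (Hamenst\"adt's Theorem $5.13$, which the paper invokes), followed by the continuous-martingale CLT and the transfer from the Busemann/Martin-kernel cocycles to $d_\W$ and $\ln\bfg$ via the $\P$-a.s.\ bounded remainders. You also correctly pinpoint the one delicate step, namely the integrability and leafwise regularity of the Poisson solutions, and attribute it to the quantitative spectral-gap estimates of \cite{H2} rather than to the qualitative mixing of Proposition~\ref{mixing} alone; aside from an inconsequential sign and a missing factor of $2$ in $\sigma_i^2$ (coming from the generator being $\Delta$ rather than $\tfrac12\Delta$), this matches the paper's argument.
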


The proof of the proposition relies on the contraction property of
the  action of the diffusion process on a certain space of
H\"{o}lder continuous functions. Let $Q_t$ $(t\geq 0)$ be the action
of $[0, +\infty)$ on continuous functions $f$ on $SM$ which
describes the $\LL$-diffusion, i.e.,  \[ Q_t(f)(x,
\xi)=\int_{\scriptscriptstyle{M_0\times \pp\M}} \wt{f}(y, \eta)\bfp(t,
(x, \xi), d(y, \eta)),
\]
where $\wt{f}$ denotes  the $G$-invariant extension of $f$ to $S\M$.  For $\iota>0$, define a norm $\|\cdot\|_{\iota}$ on the space of
continuous functions $f$ on $SM$ by letting
\[
\|f\|_{\iota}=\sup\limits_{x, \xi}|\wt{f}(x, \xi)|+\sup\limits_{x, \xi_1,
\xi_2} |\wt{f}(x, \xi_1)-\wt{f}(x, \xi_2)|\exp(\iota(\xi_1|\xi_2)_x),
\]
where $(\xi_1|\xi_2)_x$ is defined as in (\ref{Gromov-product-def-sec3}),
and let $\mathcal{H}_{\iota}$ be the Banach space of continuous
functions $f$ on $SM$ with $\|f\|_{\iota}<+\infty$. It was shown
(\cite[Theorem 5.13]{H2}) that for sufficiently small $\iota>0$, as $t \to \infty $,
$Q_t$ converges to the mapping $f\mapsto \int  f\ d{\bf m}$
exponentially in $t$ for $f\in \mathcal{H}_{\iota}$. As a consequence, one
concludes that for any $f\in \mathcal{H}_{\iota}$ with $\int f\
d{\bf m}=0$, $u=-\int_{0}^{+\infty} Q_t f\ dt$,  is, up to an
additive constant function, the unique element in
$\mathcal{H}_{\iota}$ which solves $\LL u=f$ (\cite[Corollary
5.14]{H2}). Applying this property to $b_{{\bf v}}$  and $k_{{\bf
v}}(\cdot, \xi)$,    where we  observe that both ${\bf v} \mapsto \Delta
b_{{\bf v}}$ and  $\bfv\mapsto \nabla \ln k_{{\bf v}}(\cdot, \xi)$ are $G$-invariant  and descend to
 H\"{o}lder continuous functions on $SM$ (see \cite{A,HPS} and \cite{H1},
respectively), we obtain  two H\"{o}lder continuous functions $u_0,
u_1$ on $SM$ (or on $M_0\times \partial\M$)  such that 
\begin{eqnarray*}
 \LL (u_0)&=& -\left({\rm{Div}}(\overline{X})+\langle
Y, \overline{X}\rangle\right)+\int_{\scriptscriptstyle{M_0\times
\pp\M}} \left({\rm{Div}}(\overline{X})+\langle
Y, \overline{X}\rangle\right)\ d\wt{{\bf m}}\\
&=& -\left({\rm{Div}}(\overline{X})+\langle Y,
\overline{X}\rangle\right)-\ell_{\LL}, \ \ \rm{by}\ \
(\ref{ell-formula}), \ \rm{and}\\
 \LL (u_1)&=&\|\nabla \ln k_{{\bf
v}}(\cdot, \xi)\|^2-\int_{\scriptscriptstyle M_0\times
\pp\M}\|\nabla \ln k_{{\bf v}}(\cdot, \xi)\|^2\ d\wt{{\bf m}}\\
&=&\|\nabla \ln k_{{\bf v}}(\cdot, \xi)\|^2- h_{\LL}, \ \rm{by}\
 \ (\ref{h-formula}),
\end{eqnarray*}
where we continue to denote $u_0$ and $u_1$ their $G$-invariant extensions to $S\M$.
For  each $\om\in\Om_+$ belonging to a stable leaf of $S\M$,  we also denote   $\om$ its projection to $\M$.  Then for $f=-b_{{\bf v}}+u_0$  (or $\ln k_{{\bf v}}(\cdot, \xi)+u_1$),  $
f({\om}(t))-f({\om}(0))-\int_{0}^{t} (\LL f)({\om}(s))\ ds$
is a martingale with increasing process $2\|\nabla
f\|^2({\om}(t))\ dt$. In other words, we have the following.

\begin{prop}(cf. \cite[Corollary 3]{L5})\label{M-0-M-1}
For any ${\bf v}=(x, \xi)$, the process $({\bf Z}_{t}^{0})_{t\in
\Bbb R_+}$ with $\om(0)={\bf v}$ [respectively, $({\bf
Z}_{t}^{1})_{t\in \Bbb R_+}$ with $\om(0)={\bf v}$],
 \begin{equation*}
 {\bf Z}_{t}^{0}:=-b_{\om(0)}({\om}(t))+t\ell_{\LL}+
u_0 (\om(t))-u_0 (\om(0))
  \end{equation*}
  \big[respectively,
\begin{equation*}
{\bf Z}_{t}^{1}:=\ln k_{{\bf v}}({\om}(t),
\xi)+th_{\LL}+u_1 (\om(t))-u_1 (\om(0))\big]
\end{equation*}
is a martingale with increasing process
\[
2\|\overline{X}+\nabla u_0\|^2({\om}(t))\ dt\ \
[{\rm{respectively}}, \ 2\|\nabla \ln k_{{\bf v}}(\cdot, \xi)+\nabla
u_1\|^2({\om}(t))\ dt].
\]
\end{prop}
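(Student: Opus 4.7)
The plan is to realize ${\bf Z}_t^0$ and ${\bf Z}_t^1$ as the It\^o martingales associated with two well-chosen functions on the leaf $W^s(\bfv)$, namely $f_0 := -b_{\bfv} + u_0$ and $f_1 := \ln k_{\bfv}(\cdot, \xi) + u_1$. First I would compute $\LL f_0$ and $\LL f_1$. Using $\nabla b_{\bfv} = -\overline X$ together with $\Delta b_{\bfv} = -\mbox{Div}\,\overline X$ from Subsection 2.4, one obtains $\LL b_{\bfv} = -(\mbox{Div}\,\overline X + \langle Y, \overline X\rangle)$, and the defining identity $\LL u_0 = -(\mbox{Div}\,\overline X + \langle Y, \overline X\rangle) - \ell_{\LL}$ then yields $\LL f_0 = -\ell_{\LL}$. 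In parallel, for any positive $\LL$-harmonic function $k$ the elementary identity $\LL(\ln k) = \LL k / k - \|\nabla \ln k\|^2$ (valid for $\LL = \Delta + Y$ since the carr\'e-du-champ of $\LL$ is that of $\Delta$) gives $\LL(\ln k_{\bfv}(\cdot,\xi)) = -\|\nabla \ln k_{\bfv}(\cdot,\xi)\|^2$, whence the definition of $u_1$ forces $\LL f_1 = -h_{\LL}$.

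Next I would invoke It\^o's formula for the leafwise $\LL$-diffusion: for any sufficiently smooth $f$ on $W^s(\bfv)$ the process $f(\om(t)) - f(\om(0)) - \int_0^t (\LL f)(\om(s))\,ds$ is a local martingale with quadratic variation $\int_0^t 2\|\nabla f\|^2(\om(s))\,ds$ (the factor $2$ reflects the normalization $\LL = \Delta + Y$ rather than $\tfrac12 \Delta + Y$; the first-order drift $Y$ contributes nothing to the bracket). Applying this to $f_0$ and using the normalization $b_{\bfv}(\om(0)) = b_{\bfv}(x) = 0$ recovers precisely ${\bf Z}_t^0$, whose bracket is $2\|{-}\nabla b_{\bfv} + \nabla u_0\|^2 = 2\|\overline X + \nabla u_0\|^2$; applying it to $f_1$ and using $\ln k_{\bfv}(\om(0),\xi) = \ln 1 = 0$ recovers ${\bf Z}_t^1$ with bracket $2\|\nabla \ln k_{\bfv}(\cdot,\xi) + \nabla u_1\|^2$, as stated.

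The one step that needs genuine care is the regularity of $u_0$ and $u_1$ and the promotion of the resulting local martingales to true martingales. A priori $u_0, u_1 \in \mathcal H_\iota$ are only H\"older continuous, but they solve $\LL u = F$ on each leaf with H\"older right-hand side $F$, so interior Schauder estimates for the leafwise-elliptic operator $\LL$ give $u_0, u_1 \in C^{2,\alpha}$ along leaves, which is enough to justify It\^o's formula. Boundedness of $u_0, u_1$ on the compact base $SM$, combined with the Gaussian-type upper bound of Lemma \ref{Sa-Thm-6.1} and the linear-in-$t$ control of $b_{\bfv}(\om(t))$ and $\ln k_{\bfv}(\om(t),\xi)$ coming from the pathwise limits (\ref{Dis-ell}) and (\ref{h-in p-t}), supplies the uniform integrability needed to dispose of the localising sequence and obtain genuine martingales.
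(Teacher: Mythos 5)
Your proposal is correct and follows the same route the paper takes: set $f_0 = -b_{\bfv}+u_0$, $f_1 = \ln k_{\bfv}(\cdot,\xi)+u_1$, compute $\LL f_0 = -\ell_{\LL}$ and $\LL f_1 = -h_{\LL}$ from $\nabla b_{\bfv} = -\overline X$, $\LL$-harmonicity of $k_{\bfv}$, and the defining equations for $u_0, u_1$, then read off It\^o's decomposition with carr\'e-du-champ $2\|\nabla f\|^2$. The only difference is that the paper simply asserts the martingale property, whereas you supply the (implicit) justification via leafwise Schauder regularity of $u_0,u_1$ and the moment bounds needed to upgrade from local to genuine martingales — a reasonable elaboration rather than a different argument.
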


The last ingredient in the proof of  Proposition \ref{CLT} is  a Central Limit Theorem for martingales.

\begin{lem}(\cite[Chapter IV, Theorem 1.3]{RY})\label{CLT_martingale} Let $\mathsf{M}=(\mathsf{M}_t)_{t\geq 0}$ be a continuous, square-integrable centered martingale with respect to an increasing filtration $(\mathfrak{F}_t)_{t\geq 0}$ of a probability space, with stationary increments.  Assume that $\mathsf{M}_0=0$ and
\begin{equation}\label{check-CLT for martingale}
\lim\limits_{t\to +\infty}\E\left(\left|\frac{1}{t}\langle \mathsf{M}, \mathsf{M}\rangle_t-\sigma^2\right|\right)=0
\end{equation}
for some  real  number $\sigma^2$, where $\langle \mathsf{M}, \mathsf{M}\rangle_t$ denotes the quadratic variation of $\mathsf{M}_t$.  Then  the laws of $\mathsf{M}_t/\sqrt{t}$ converge in distribution to a centered normal law with variance $\sigma^2$.
\end{lem}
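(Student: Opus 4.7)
The result is a classical form of the martingale CLT, so the plan is to follow the standard time-change approach via Dubins--Schwarz.

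First I would invoke the Dubins--Schwarz theorem: since $\mathsf{M}$ is a continuous martingale with $\mathsf{M}_0 = 0$, there exists, on a possibly enlarged probability space, a standard one-dimensional Brownian motion $(B_u)_{u \geq 0}$ such that
\[
\mathsf{M}_t \; = \; B_{A_t}, \qquad \text{where } A_t := \langle \mathsf{M}, \mathsf{M}\rangle_t.
\]
The hypothesis (\ref{check-CLT for martingale}) reads $A_t/t \to \sigma^2$ in $L^1$, and in particular in probability. (The stationary increments hypothesis guarantees that $A_t/t$ actually has a limit in $L^1$; it is not otherwise needed for the CLT below.) Writing
\[
\frac{\mathsf{M}_t}{\sqrt{t}} \; = \; \frac{B_{A_t}}{\sqrt{t}} \; = \; \frac{B_{t\sigma^2}}{\sqrt{t}} \; + \; \frac{B_{A_t} - B_{t\sigma^2}}{\sqrt{t}},
\]
I reduce the problem to two facts.

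The first term is easy: $B_{t\sigma^2}/\sqrt{t}$ has exactly the law $\mathcal{N}(0, \sigma^2)$ by Brownian scaling, for every $t>0$. The heart of the argument is to show that the remainder $R_t := (B_{A_t} - B_{t\sigma^2})/\sqrt{t}$ tends to $0$ in probability. Fix $\varepsilon>0$ and a large constant $K$, and split on the event $\{|A_t/t - \sigma^2| \leq \varepsilon\}$, whose complement has vanishing probability by hypothesis. On this event,
\[
|R_t| \; \leq \; \frac{1}{\sqrt{t}} \sup_{|u - t\sigma^2| \leq \varepsilon t} |B_u - B_{t\sigma^2}|,
\]
and Brownian scaling centered at $t\sigma^2$ shows that this supremum has the same distribution as $\sqrt{\varepsilon t}\,\sup_{|s| \leq 1}|\widetilde{B}_s|$ for a standard Brownian motion $\widetilde{B}$. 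Dividing by $\sqrt{t}$ gives a random variable of order $\sqrt{\varepsilon}$, so $\P(|R_t|>K\sqrt{\varepsilon})$ is uniformly small in $t$ by Markov's inequality applied to $\sup_{|s|\leq 1}|\widetilde{B}_s|$; letting $\varepsilon \to 0$ then $t\to \infty$ gives $R_t \to 0$ in probability.

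Combining the two, Slutsky's lemma yields $\mathsf{M}_t/\sqrt{t} \Rightarrow \mathcal{N}(0,\sigma^2)$. The only delicate point is controlling the oscillation of $B$ on a random window whose length is $o(t)$ but random; this is handled cleanly by the scaling above. Everything else is bookkeeping. One alternative I would also consider, as a sanity check, is the characteristic-function approach: apply It\^o's formula to $\exp(i\lambda \mathsf{M}_t/\sqrt{t} + \lambda^2 A_t/(2t))$ to identify it as a local martingale and deduce $\E[\exp(i\lambda \mathsf{M}_t/\sqrt{t})] \to \exp(-\lambda^2\sigma^2/2)$ from the $L^1$ convergence of $A_t/t$, but the Dubins--Schwarz route is the most transparent.
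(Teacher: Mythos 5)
The paper does not prove this lemma: it simply cites it as a known result from Revuz--Yor, so there is no in-text argument to compare against. Your proof via the Dubins--Schwarz time change is correct and is in fact the standard route for continuous martingales (and the one underlying the reference). The decomposition $\mathsf{M}_t/\sqrt t = B_{t\sigma^2}/\sqrt t + R_t$, with the oscillation of $B$ on the deterministic window $\{|u - t\sigma^2|\le \varepsilon t\}$ controlling $R_t$ on the high-probability event $\{|A_t/t-\sigma^2|\le\varepsilon\}$, is sound; the scaling estimate for the two-sided supremum is correct, and sending $t\to\infty$ then $\varepsilon\to 0$ yields $R_t\to 0$ in probability, after which Slutsky closes the argument. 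Two small cleanups worth noting: (i) one should take $\varepsilon<\sigma^2$ so the window stays in $[0,\infty)$; (ii) the Dubins--Schwarz representation assumes $\langle\mathsf{M},\mathsf{M}\rangle_\infty=\infty$ a.s., which follows from your hypothesis only when $\sigma^2>0$ (using monotonicity of $A_t$ and convergence in probability of $A_t/t$); the case $\sigma^2=0$ should be dispatched separately and trivially via $\E[\mathsf{M}_t^2/t]=\E[A_t/t]\to 0$. Your remark that the stationary-increments hypothesis is not used once (\ref{check-CLT for martingale}) is assumed is also correct; in the paper it serves only to identify the limit of the averaged quadratic variation as the spatial integral against the harmonic measure.
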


Now we see that both  ${\bf Z}_{t}^{0}$ and ${\bf Z}_{t}^{1}$ are continuous and square integrable.  The respective average variances converge  to, respectively,   $\sigma_0^2$ and   $\sigma_1^2$, where
\begin{eqnarray*}
  \sigma_0^2&=&2\int_{\scriptscriptstyle{M_0\times \pp\M}} \|\overline{X}+\nabla u_0\|^2\ d\wt{{\bf m}}, \\
  \sigma_1^2&=& 2\int_{\scriptscriptstyle{M_0\times \pp\M}} \|\nabla \ln k_{{\bf v}}(\cdot, \xi)+\nabla
u_1\|^2\ d\wt{{\bf m}}.
\end{eqnarray*} 
By Proposition \ref{mixing}, the $\LL$-diffusion system is  mixing,  (\ref{check-CLT for martingale}) holds for ${\bf Z}_{t}^{0}$ and ${\bf Z}_{t}^{1}$ with $\sigma=\sigma_0$ or $\sigma_1$, respectively.  Hence both  $(1/(\sigma_0 \sqrt{t})){\bf
Z}_{t}^{0}$ and  $(1/(\sigma_1
\sqrt{t})){\bf Z}_{t}^{1}$ will converge to the normal distribution as $t$ tends to infinity.  Note  that in the proof of Propositions \ref{formulas-l-h-Y-l} and  \ref{formulas-l-h-Y-h} we
have shown that for $\P$-a.e. $\om\in {\Om}_+$,  
$b_{{\bf
v}}({\om}(t))-d_{\W}(\om(0), {\om}(t))$  converges  to a finite number
and that  \[ \limsup\limits_{t\to +\infty}|\ln G_{{\bf v}}(\om(0),
{\om}(t))-\ln k_{{\bf v}}({\om}(t), \xi)|<+\infty.
\]
As a consequence,  we see from  Proposition \ref{M-0-M-1}  that
$(1/(\sigma_0 \sqrt{t}))\left[ d_{\W}(\om(0),
\om(t))-t\ell_{\LL}\right]$ and  $(1/(\sigma_0 \sqrt{t})){\bf
Z}_{t}^{0}$ (respectively,  $(1/(\sigma_1 \sqrt{t}))\left[\ln
\bfg(\om(0), \om(t))+th_{\LL}\right]$ and  $(1/(\sigma_1
\sqrt{t})){\bf Z}_{t}^{1}$) have the same asymptotical distribution,
which is normal,  when $t$ goes to infinity.

\subsection{Construction of the diffusion processes}\label{4.1-Girsanov} So far, we know that both the linear drift and the stochastic entropy are quantities concerning the average behavior of diffusions and they can be evaluated along typical paths.  To see how they vary  when we change the generators of the diffusions from $\LL$ to $\LL+Z$ (which also fulfills the requirement of  Proposition \ref{formulas-l-h-Y-l}  (or Proposition \ref{formulas-l-h-Y-h})),  our very first step is to  understand the change of distributions of the corresponding diffusion processes on the path spaces. For this, we
use techniques of stochastic differential equation (SDE) to construct on the same probability space all the diffusion processes.

We begin with the general theories of SDE on a  smooth manifold $\bf N.$ Let
$X_1, \cdots, X_d, V$ be bounded $C^1$ vector fields on a $C^3$ Riemannian
manifold $({\bf N}, \langle\cdot, \cdot\rangle)$.  Let
$B_t=(B_{t}^{1}, \cdots, B_{t}^{d})$ be a real $d$-dimensional Brownian
motion  on a probability space $(\Theta, \mathcal{F}, \mathcal{F}_t,
\Bbb Q)$ with generator $\Delta$. An ${\bf N}$-valued semimartingale ${\bf x}=({\bf
x}_t)_{t\in \Bbb R_+}$ defined up to a stopping time $\tau$ is said
to be a solution of the following Stratonovich SDE
\begin{equation}\label{SDE-x}
d{\bf x}_t=\sum_{i=1}^{d}X_i({\bf x}_t)\circ dB_{t}^{i} +V({\bf
x}_t)\ dt
\end{equation}
up to $\tau$ if for all $f\in C^{\infty}({\bf N})$,
\[
f({\bf x}_t)=f({\bf x}_0)+\int_{0}^{t} \sum_{i=1}^{d} X_i f({\bf
x}_s)\circ dB_{s}^{i}+\int_{0}^{t} Vf({\bf
x}_s)\ ds, \ 0\leq t<\tau.
\]
 Call a second order
differential operator $\mathbf{A}$ the generator of ${\bf x}$ if
\[
f({\bf x}_t)-f({\bf x}_0)-\int_{0}^{t}\mathbf{A} f({\bf x}_s)\ ds, \
0\leq t<\tau,
\]
is a local martingale for all $f\in C^{\infty}({\bf N})$. It is
known (cf.
 \cite{Hs}) that (\ref{SDE-x}) has a unique solution with a
 H\"{o}rmander type second order elliptic operator generator
 \[
\mathbf{A}=\sum_{i=1}^{d}X_i^2 + V.
 \]
If  $X_1, \cdots, X_d, V$ are such that the corresponding
$\mathbf{A}$ is the Laplace operator on ${\bf N}$, then the solution
of the SDE (\ref{SDE-x}) generates the Brownian motion on ${\bf N}$.
However, there is no general way of obtaining such a collection of
vector fields on a general Riemannian manifold.

To obtain  the Brownian motion $({\bf x}_t)_{t\in \R_+}$ on ${\bf
N}$, we  adopt the Eells-Elworthy-Malliavin approach (cf.
\cite{El}) by constructing a canonical diffusion on the frame bundle $O(\bf N)$.
 Let $TO(\bf N) $ be the tangent space of $O(\bf N)$. For $x\in \bf N$
and ${\bf {w}} \in O_x(\bf N)$, an element $\bf u \in T_{\bf w} O(\bf N) $ is
{\it {vertical }} if its projection on $T_x\bf N$ vanishes. The canonical connection associated with the metric defines a {\it {horizontal }} complement,  identified with $T_x\bf N.$ For a vector $v\in T_x \bf N$, ${\bf H}_v$,   the horizontal lift of $v$ to $T_{\bf w} O(\bf N)$,  describes the infinitesimal parallel transport of the frame $\bf w$ in the direction of $v.$ 

Suppose ${\bf N}$ has dimension $m$. Let
$B_t=(B_{t}^{1}, \cdots, B_{t}^{m})$ be an $m$-dimensional Brownian
motion  on a probability space $(\Theta, \mathcal{F}, \mathcal{F}_t,
\Bbb Q)$ with generator $\Delta$. Let $\{e_{i}\}$ be the standard orthonormal basis on
$\R^{m}$. Then, we consider  the  canonical diffusion on the
orthonormal bundle $O({\bf N})$ given by the solution ${\bf w}_t$ of
the Stratonovich SDE
\begin{eqnarray*} && d
{\bf w}_t=\sum_{i=1}^{m}{\bf H}_i({\bf w}_t)\circ dB_{t}^{i},\\
&& {\bf w}_0={\bf w},\end{eqnarray*}  where  ${\bf H}_i({\bf w}_t)$  is the horizontal lift of ${\bf w}_te_i$ to ${\bf w}_t$. The Brownian motion ${\bf
x}=({\bf x}_t)_{t\in \R_+}$  can be obtained as the projection on
${\bf N}$ of ${\bf w}_t$ for any choice of ${\bf w}_0$ which
projects to ${{\bf x}}_0$.  We can regard ${\bf x}(\cdot)$ as a
measurable map from $\Theta$ to $C_{{\bf x}_0}(\Bbb R_+, {\bf N})$,
the space of continuous functions $\rho$ from $\Bbb R_+$ to ${\bf
N}$ with $\rho(0)={\bf x}_0$. So
$$\P:=\Q({\bf x}^{-1})$$
gives the probability distribution of the Brownian motion paths in
$\Om_+$.  For any $\tau\in \Bbb R_+$,  let $C_{{\bf x}_0}([0, \tau],
{\bf N})$ denote the space of continuous functions $\rho$ from $[0,
\tau]$ to ${\bf N}$ with $\rho(0)={{\bf x}}_0$. Then ${\bf x}$ also
induces a measurable map ${\bf x}_{[0, \tau]}:\ \Theta \to C_{{\bf
x}_0}([0, \tau], {\bf N})$ which sends $\underline{\om}$ to $({\bf
x}_t(\underline{\om}))_{t\in [0, \tau]}$.  We see that
\[
\P_{\tau}:=\Q ({\bf x}_{[0, \tau]}^{-1})
\]
describes the distribution probability of the Brownian motion paths
on ${\bf N}$ up to time $\tau$.

More generally, we can obtain in the same way, and on the same probability space, a diffusion with generator $\D + V_1$, where $V_1$ is  a  bounded $C^1$ vector field on ${\bf N}$. We
denote by $\overline{V}_1$ the horizontal lift of $V_1$ in $O({\bf
N})$. Consider the Stratonovich SDE on $O (\bf N)$
\begin{eqnarray*} && d
{\bf u}_t=\sum_{i=1}^{m}{\bf H}_i({\bf u}_t)\circ dB_{t}^{i}+ \overline{V}_1({\bf u}_t)\ dt,\label{horizontal lift}\\
&& {\bf u}_0={\bf u}.\notag\end{eqnarray*} Then,  the diffusion process ${\bf y}=({{\bf y}}_t)_{t\in
\Bbb R_+}$ on ${\bf N}$ with infinitesimal generator $\Delta_{{\bf
N}}+V_1$  can be obtained as the
projection  on ${\bf N}$ of the  solution ${\bf u}_t$ for any choice
of ${\bf u}_0$ which projects to ${\bf y}_0$. We call ${\bf u}_t$
the  horizontal lift of ${\bf y}_t$.
 Let $\P^1$ be the distribution
of ${\bf y}$ in $C_{{\bf y}_0}(\Bbb R_+, \bf N)$ and let
$\P_{\tau}^1$ ($\tau\in \Bbb R_+$) be the distribution of $({\bf
y}_t(\underline{\om}))_{t\in [0, \tau]}$ in $C_{{\bf y}_0}([0,
\tau], {\bf N})$, respectively. Then
\[
\P^1=\Q({\bf y}^{-1}),\ \ \ \P^1_{\tau}=\Q({\bf y}_{[0,
\tau]}^{-1}).
\]

We now express the relation between $\P^1_\tau $ and $\P_\tau$,  as described by the Girsanov-Cameron-Martin formula.
Let ${\rm M}^1_t$ be the random process on $\Bbb R$ satisfying ${\rm
M}_{0}^1=1$ and the Stratonovich SDE
\[
d{\rm M}^1_{t}={\rm M}^1_{t}\langle\frac{1}{2}V_1({\bf x}_t), {\bf w}_t\circ
dB_t\rangle_{{\bf x}_t}-{\rm M}^1_{t}\left(\|\frac{1}{2}V_1({\bf x}_t)\|^2+{\rm
Div}\left(\frac{1}{2}V_1({\bf x}_t)\right)\right).
\]
Then
\[
{\rm M}^1_{t}=\exp\left\{\int_{0}^{t}\langle \frac{1}{2}V_1({\bf
x}_s(\u{\om})), {\bf w}_s(\u{\om})\circ dB_s(\u{\om})\rangle_{{\bf
x}_s}-\int_{0}^{t}\left(\|\frac{1}{2}V_1({\bf x}_s(\u{\om}))\|^2+{\rm Div}
\big(\frac{1}{2}V_1({\bf x}_s(\u{\om}))\big)\right)\ ds\right\}.
\]
In the more familiar Ito's stochastic integral form, we have
\[
d{\rm M}^1_{t}= \frac{1}{2}{\rm M}^1_{t}\langle V_1({\bf x}_t), {\bf w}_t
dB_t\rangle_{{\bf x}_t}
\]
and
\begin{equation}\label{Girsanov-martingale-new-coefficients}
{\rm M}^1_{t}=\exp\left\{\frac{1}{2}\int_{0}^{t}\langle V_1({\bf
x}_s(\u{\om})), {\bf w}_s(\u{\om}) dB_s(\u{\om})\rangle_{{\bf
x}_s}-\frac{1}{4}\int_{0}^{t}\|V_1({\bf x}_s(\u{\om}))\|^2\ ds\right\}.
\end{equation}
Since each $\E_{\Q}\left(\exp\{\frac{1}{4}\int_{0}^{t}\|V_1({\bf
x}_s(\u{\om}))\|^2\ ds\}\right)$ is finite, we have by Novikov
(\cite{N}),    that ${\rm M}^1_{t}, t\geq 0$, is a continuous
$(\mathcal{F}_t)$-martingale, i.e.,
\[
\E_{\Q}\left({\rm M}^1_t\right)=1\ \ \mbox{for every}\ t\geq 0,
\]
where $\E_{\Q}$ is the expectation of a random variable with respect
to $\Q$. For $\tau\in \Bbb R_+$, let $\Q^1_{\tau}$ be the probability
on $\Theta$, which is absolutely continuous with respect to $\Q$
with
\[
\frac{d{\Q^1_{\tau}}}{d\Q}(\u{\om})={\rm M}^1_{\tau}(\u{\om}).
\]
Note that ${\rm M}^1_{\tau}$ is a martingale, so that the projection
of $\Q^1_{\tau}$ on the coordinates up to $\tau' <\tau $ is given by
the same formula. A  version of the Girsanov theorem (cf.
\cite[Theorem 11B]{El}) says that $(({\bf y}_t)_{t\in [0, \tau]},
\Q)$ is isonomous to $(({\bf x}_t)_{t\in [0, \tau]}, \Q^1_\tau)$ in
the sense that for any finite numbers $\tau_1, \cdots, \tau_s\in [0,
\tau]$,
\begin{equation}\label{Girsanov-1}
\left(\Q({\bf y}_{\tau_1}^{-1}), \cdots, \Q({\bf
y}_{\tau_s}^{-1})\right)=\left(\Q^1_{\tau}({\bf
x}_{\tau_1}^{-1}),\cdots, \Q^1_{\tau}({\bf x}_{\tau_s}^{-1})\right).
\end{equation}
(The coefficients in (\ref{Girsanov-martingale-new-coefficients}) differ from the ones in  \cite{El} because $B_t$ has generator $\Delta$.)
Let $\Q^1$ be the probability on $\Theta$ associated with
$\{\Q^1_{\tau}\}_{\tau\in \Bbb R_+}$. Then (\ref{Girsanov-1})
intuitively means that by changing the measure $\Q$ on $\Theta$ to
$\Q^1$, ${\bf x}$ has the same distribution as $({\bf y}, \Q)$. As a
consequence, we have $\P^1_{\tau}=\Q^1_{\tau}({\bf x}^{-1})$ for all
$\tau\in \Bbb R_+$ and hence
\begin{equation*}
\frac{d \P_{\tau}^{1}}{d\P_{\tau}}\left({\bf x}_{[0,
\tau]}\right)=\E_{\Q}\left({\rm M}^1_{\tau}\big | \mathcal{F}({\bf
x}_{[0, \tau]})\right), \ \rm{a.s.},
\end{equation*}
where $\E_{\Q}\left(\cdot \ |\ \cdot\right)$ is the conditional
expectation with respect to $\Q$ and  $\mathcal{F}({\bf x}_{[0,
\tau]})$ is the smallest $\sigma$-algebra on $\Theta$ for which the
map ${\bf x}_{[0, \tau]}$ is measurable.

Let $V_2$ be another bounded $C^1$ vector field on ${\bf N}$. Consider the
diffusion process ${\bf z}=({\bf z}_t)_{t\in \Bbb R_+}$ on ${\bf N}$
with the same initial point as ${\bf y}$, but with infinitesimal
generator $\Delta_{{\bf N}}+V_1+V_2$. Let $\P^2$ be the distribution
of  ${\bf z}$ in the space of continuous paths on ${\bf N}$  and let $\P^2_{\tau}   (\tau\in \Bbb R_+)$
be the distribution of $({\bf z}_t(\underline{\om}))_{t\in [0,
\tau]}$. The Girsanov-Cameron-Martin formula on manifolds (cf.
\cite[Theorem 11C]{El}) says that $\P_{\tau}^2$ is absolutely
continuous with respect to $\P_{\tau}^1$ with
\begin{equation}\label{Girsanov-2}
\frac{d \P_{\tau}^{2}}{d\P_{\tau}^1}({\bf y}_{[0,
\tau]})=\E_{\Q}\left({\rm M}_{\tau}^{2}\big |\mathcal{F}({\bf
y}_{[0, \tau]})\right),\ \rm{a.s.},
\end{equation}
where
\[
{\rm M}^2_{\tau}(\u{\om})=\exp\left\{\frac{1}{2}\int_{0}^{\tau}\langle V_2({\bf
y}_s(\u{\om})), {\bf u}_s(\u{\om}) dB_s(\u{\om})\rangle_{{\bf
y}_s}-\frac{1}{4}\int_{0}^{\tau}\|V_2({\bf y}_s(\u{\om}))\|^2\ ds\right\}
\]
and  $\mathcal{F}({\bf y}_{[0, \tau]})$ is the smallest
$\sigma$-algebra on $\Theta$ for which the map ${\bf y}_{[0, \tau]}$
is measurable.

\section{Regularity of the linear drift and the stochastic entropy for $\Delta+Y$}

Consider a one-parameter family of variations
$\{\LL^{\lambda}=\Delta+Y+Z^{\l}:\ |\lambda|<1\}$ of $\LL$ with
$Z^0=0$ and $Z^{\lambda}$ twice differentiable in $\l$ so that
$\sup_{\l\in (-1, 1)}\max\{\|\frac{dZ^{\l}}{d\l}\|, \|\frac{d^2
Z^{\l}}{d\l^2}\|\}$ is finite.  Assume each $\LL^{\lambda}$ is
subordinate to the stable foliation, $Y+Z^{\l}$  is  such
that $(Y+Z^{\l})^*$, the dual of $(Y+Z^{\l})$ in the cotangent bundle  to the stable foliation over  $SM$,
satisfies $d(Y+Z^{\l})^*=0$ leafwisely and  $\mbox{pr}(-\langle \overline{X}, Y+Z^{\l}\rangle)>0$. Then each
$\LL^{\l}$ has a unique harmonic measure. Hence the linear drift for
$\LL^{\l}$,
 denoted  $\overline{\ell}_{\l}:=\ell_{\LL^{\l}}$, and the stochastic entropy
for $\LL^{\l}$, denoted  $\overline{h}_{\l}:=h_{\LL^{\l}}$, are well-defined. In this section, we  show the differentiability  of
$\overline{\ell}_{\l}$ and $\overline{h}_{\l}$ in $\l$ at $0$ (Theorem \ref{differential-ell} and Theorem \ref{differential-h}).

Consider the diffusion process of the stable foliation of $S\M$ corresponding to $\LL^{\l}$ ($\l\in (-1, 1)$). Let
$B_t=(B_{t}^{1}, \cdots, B_{t}^{m})$ be an $m$-dimensional Brownian
motion  on a probability space $(\Theta, \mathcal{F}, \mathcal{F}_t,
\Bbb Q)$ with generator $\Delta$.  For each ${\bf v}=(x, \xi)\in S\M$, $W^s({\bf v})$ can be
identified with $\M\times \{\xi\}$, or simply $\M$. So for each
$\l\in (-1, 1)$, there exists the diffusion process ${\bf y}_{\bf
v}^{\l}=({\bf y}^{\l}_{{\bf v}, t})_{t\in \Bbb R_+}$ on $W^s({\bf
v})$ starting from ${\bf v}$ with infinitesimal generator
$\LL^{\l}_{\bf v}$.  Each ${\bf y}_{\bf v}^{\l}$ induces a measurable map
from $\Theta$ to $C_{{\bf v}}(\Bbb R_+, W^s({\bf v}))\subset \Om_+$
and $\overline\P^{\l}_{{\bf v}}:=\Q({({\bf y}_{{\bf v}}^{\l})}^{-1})$ gives
the distribution probability of ${\bf y}_{\bf v}^{\l}$ in $C_{{\bf
v}}(\Bbb R_+, W^s({\bf v}))$. For any $\tau\in \Bbb R_+$, let
$\overline \P_{{\bf v}, \tau}^{\l}$ be the distribution of $({\bf
y}^{\l}_{{\bf v}, t})_{t\in [0, \tau]}$ in $C_{{\bf v}}([0, \tau],
W^s({\bf v}))$. We have by the Girsanov-Cameron-Martin formula on
manifolds (\ref{Girsanov-2}) that $\overline\P^{\l}_{{\bf v}, \tau}$
is absolutely continuous with respect to $\overline\P^{0}_{{\bf v},
\tau}$ with
\begin{equation}\label{Gir-P}
\frac{d \overline\P_{{\bf v}, \tau}^{\l}}{d\overline\P_{{\bf v},
\tau}^0}({\bf y}_{\bf v, [0, \tau]}^0 )=\E_{\Q}\left(\overline{\rm
M}^{\l}_{\tau}\big | \mathcal{F}\big({\bf y}_{\bf v, [0,
\tau]}^0\big)\right),\ \ \rm{a.s.},
\end{equation}
where \[ \overline{\rm
M}_{\tau}^{\l}(\u{\om})=\exp\left\{\frac{1}{2}\int_{0}^{\tau}\langle
Z^{\l}({\bf y}_{{\bf v}, s}^0(\u{\om})), {\bf u}_{{\bf v},
s}^0(\u{\om}) dB_s(\u{\om})\rangle_{{\bf y}_{{\bf v},
s}^0}-\frac{1}{4}\int_{0}^{\tau}\|Z^{\l}({\bf y}_{{\bf v}, s}^0(\u{\om}))\|^2\
ds\right\},
\]
${\bf u}_{{\bf v}, t}^0$ is the  horizontal lift of  ${\bf
y}^{0}_{{\bf v}, t}$ to   $O(W^s({\bf v}))$  and
$\mathcal{F}({\bf y}_{\bf v, [0, \tau]}^0)$ is the smallest
$\sigma$-algebra on $\Theta$ for which the map ${\bf y}_{\bf v, [0,
\tau]}^0$ is measurable.

 For each $\l\in (-1, 1)$, let ${\bf m}^{\l}$ be the unique
$\LL^{\l}$-harmonic measure and $\wt{\bf m}^{\l}$ be its  $G$-invariant  extension to  $S\M$. 
We see that $\overline
\P^{\l}=\int \overline\P_{{\bf v}}^{\l}\ d{\wt {\bf m}}^{\l}({\bf
v})$ is the shift invariant measure on ${\wt{ \Om}}_+$  corresponding to
$\wt {\bf m}^{\l}$  and we restrict $\overline
\P^{\l} $ to $\Om _+$.   Consider the  space $\overline{\Theta}=SM\times
\Theta$ with product $\sigma$-algebra and  probability
$\overline\Q^{\l}$, $d\overline\Q^{\l}({\bf v},
\un{\om})=d\Q(\un{\om})\times d{\bf m}^{\l}({\bf v})$.  Let
${\bf y}_t^{\l}:\ SM\times \Theta\to S\M$ be  such that
\[
{\bf y}_t^{\l}({\bf v}, \un{\om})={\bf y}^{\l}_{{\bf v},
t}(\un{\om}), \ {\rm{for}}\ ({\bf v}, \un{\om})\in SM\times \Theta.
\]
Then ${\bf y}^{\l}=({\bf y}^{\l}_t)_{t\in \Bbb R_+}$ defines a
random process on the probability space $(\overline\Theta,
\overline\Q^{\l})$ with images in the space of continuous paths on
the stable leaves of $S\M$. 

Simply write ${\bf y}_t={\bf y}^0_t$ and let ${\bf u}_t$ be such
that ${\bf u}_t({\bf v}, \un{\om})={\bf u}_{{\bf v}, t}^0(\un\om)$
for $({\bf v}, \un\om)\in \overline\Theta$. Denote by
$(Z^{\l})'_{0}:=(dZ^{\l}/d\l)|_{\l=0}$. We consider  three random
variables on $(\overline\Theta,\overline{\Bbb Q}^0)$:
\begin{eqnarray*}
{\bf M}_{t}^{0}&:=& \frac{1}{2}\int_{0}^{t}\langle (Z^{\l})'_{0}({\bf y}_{s}),
{\bf u}_{s}
dB_s \rangle_{{\bf y}_{s}},\\
{\bf Z}_{\ell, t}^0&:=& \left[
d_{\W}({\bf y}_0, {\bf y}_t)-t\ell_{\LL^0}\right],\\
{\bf Z}_{h, t}^0&:=&  -\left[{\bf 1}_{\{d({\bf y}_0, {\bf y}_t)\geq
1\}}\cdot\ln \bfg({\bf y}_0, {\bf y}_t)+th_{\LL^0}\right],
\end{eqnarray*}
where ${\bf 1}_{B}$ is the characteristic function of the event $B$.
We will prove the following two Propositions separately in Sections \ref{sec-4.2} and \ref{sec-4.3}.

\begin{prop}\label{Covariance-1} The laws of the  random vectors $({\bf Z}_{\ell, t}^0/\sqrt{t}, {\bf
M}_{t}^{0}/\sqrt{t})$ under $\overline\Q^0$ converge  in distribution as $t$
tends to $+\infty$ to a  bivariate centered normal  law  with some covariance
matrix $\Sigma_{\ell}$. The covariance matrices of $({\bf Z}_{\ell,
t}^0/\sqrt{t}, {\bf M}_{t}^{0}/\sqrt{t})$ under $\overline\Q^0$
converge to $\Sigma_{\ell}$.
\end{prop}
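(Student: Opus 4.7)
The plan is to reduce the statement to a joint central limit theorem for two continuous square-integrable martingales and then apply the Cramér–Wold device together with the scalar martingale CLT in Lemma~\ref{CLT_martingale}. First, I would replace the non-martingale ${\bf Z}_{\ell,t}^0$ by the martingale ${\bf Z}_t^0$ of Proposition~\ref{M-0-M-1}. As noted in the proof of Proposition~\ref{formulas-l-h-Y-l}, the process $b_{{\bf v}}(\om(t))-d_{\W}(\om(0),\om(t))$ converges $\overline{\Q}^0$-a.s.\ to the finite Gromov product $-2(\xi|\eta)_x$, and $u_0$ is bounded on $SM$; hence ${\bf Z}_{\ell,t}^0 - {\bf Z}_t^0$ is $O(1)$ almost surely, so that $t^{-1/2}({\bf Z}_{\ell,t}^0-{\bf Z}_t^0)\to 0$ in probability under $\overline{\Q}^0$. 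Since ${\bf M}_t^0$ is by construction a continuous $L^2$ martingale (Itô integral of a bounded predictable integrand), it suffices to establish the joint CLT for $(t^{-1/2}{\bf Z}_t^0, t^{-1/2}{\bf M}_t^0)$.

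Second, I would compute the three bracket processes. By Proposition~\ref{M-0-M-1}, $\langle{\bf Z}^0,{\bf Z}^0\rangle_t=2\int_0^t\|\overline{X}+\nabla u_0\|^2({\bf y}_s)\,ds$. For ${\bf M}^0$ the standard Itô calculus for stochastic integrals along the horizontal lift gives $\langle{\bf M}^0,{\bf M}^0\rangle_t=\tfrac12\int_0^t\|(Z^\l)'_0({\bf y}_s)\|^2\,ds$, and the cross bracket (using $\nabla b_{\bf v}=-\overline{X}$ leafwise from (\ref{Buse-geo})) is
\[
\langle{\bf Z}^0,{\bf M}^0\rangle_t \;=\; \int_0^t \langle \overline{X}+\nabla u_0,\, (Z^\l)'_0\rangle({\bf y}_s)\,ds.
\]
Since the shift flow is mixing on $(\Om_+,\overline{\P}^0)$ by Proposition~\ref{mixing}, the Birkhoff ergodic theorem gives $\overline{\Q}^0$-a.s.\ convergence of $t^{-1}\langle{\bf Z}^0,{\bf Z}^0\rangle_t$, $t^{-1}\langle{\bf M}^0,{\bf M}^0\rangle_t$ and $t^{-1}\langle{\bf Z}^0,{\bf M}^0\rangle_t$ to the constants $\sigma_0^2$, $\tfrac12\int\|(Z^\l)'_0\|^2\,d\wt{\bf m}^0$ and $\int\langle\overline{X}+\nabla u_0,(Z^\l)'_0\rangle\,d\wt{\bf m}^0$, respectively. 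These integrands are bounded (because $\overline{X}$, $(Z^\l)'_0$ and $u_0$ are all bounded H\"older continuous on $SM$), so dominated convergence upgrades the a.s.\ convergence to convergence in $L^1$, which is the integrability hypothesis required in Lemma~\ref{CLT_martingale}.

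Third, I would invoke the Cramér–Wold device. For any $(a,b)\in\R^2$ the linear combination $a{\bf Z}_t^0+b{\bf M}_t^0$ is a continuous centered $L^2$ martingale with stationary increments (under $\overline{\Q}^0$), and by the previous step its normalized bracket satisfies
\[
\lim_{t\to+\infty}\E_{\overline{\Q}^0}\left|\tfrac{1}{t}\langle a{\bf Z}^0+b{\bf M}^0,\, a{\bf Z}^0+b{\bf M}^0\rangle_t - (a^2\Sigma_{11}+2ab\Sigma_{12}+b^2\Sigma_{22})\right|=0,
\]
with the $\Sigma_{ij}$ being the three integrals above. Lemma~\ref{CLT_martingale} then yields asymptotic normality of $t^{-1/2}(a{\bf Z}_t^0+b{\bf M}_t^0)$ with variance $a^2\Sigma_{11}+2ab\Sigma_{12}+b^2\Sigma_{22}$, and Cramér–Wold identifies the joint limit as the bivariate centered normal with covariance matrix $\Sigma_\ell=(\Sigma_{ij})$. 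Finally, convergence of the covariance matrices themselves follows once I upgrade weak convergence to $L^2$ convergence, which I would obtain from uniform $L^p$ ($p>2$) bounds on $t^{-1/2}{\bf Z}_{\ell,t}^0$ and $t^{-1/2}{\bf M}_t^0$: the former from Gaussian-like upper bounds on $d_\W(\om(0),\om(t))$ in negative curvature (as used after (\ref{h-s-p-t})), the latter from the BDG inequality applied to the bounded integrand of ${\bf M}^0$.

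The main obstacle is the covariance convergence statement, which is strictly stronger than the weak convergence of laws: it requires uniform integrability of the squared normalized processes, not just tightness. All other steps are routine given the mixing of the shift flow and the Hölder regularity of $\overline X$, $\nabla u_0$, $(Z^\l)'_0$, and $B$; the bracket computation for ${\bf Z}^0$ has already been done in Proposition~\ref{M-0-M-1}, and the cross-bracket computation relies only on the Itô isometry applied to the two stochastic integral representations of ${\bf Z}^0$ (via Itô's formula applied to $-b_{\bf v}+u_0$) and of ${\bf M}^0$.
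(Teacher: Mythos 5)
Your core CLT argument is essentially the one in the paper: replace $\mathbf{Z}_{\ell,t}^0$ by the martingale from Proposition~\ref{M-0-M-1}, pass to scalar linear combinations via Cram\'er--Wold, and apply Lemma~\ref{CLT_martingale}; the normalized brackets converge because they are Birkhoff averages over the mixing shift flow of Proposition~\ref{mixing}. Two points, though, one minor and one substantive.

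The minor one is a sign. Since $b_{\bfv}(\om(t))-d_{\W}(\om(0),\om(t))$ converges and $\mathbf{Z}_t^0$ starts with $-b_{\bfv}(\om(t))+t\ell_{\LL}$, it is $\mathbf{Z}_{\ell,t}^0+\mathbf{Z}_t^0$ (not the difference) that stays bounded in $L^2$; the approximating martingale is $-\mathbf{Z}_t^0$, and the cross-covariance in $\Sigma_\ell$ carries the opposite sign from your $\int\langle\overline{X}+\nabla u_0,(Z^\l)'_0\rangle\,d\wt{\bf m}$. This propagates but is trivially repaired.

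The substantive divergence is in the covariance-convergence step, which you correctly identify as the crux. You propose uniform integrability of $\mathbf{Z}_{\ell,t}^0/\sqrt{t}$ and $\mathbf{M}_t^0/\sqrt{t}$ in some $L^p$, $p>2$: BDG does give it for $\mathbf{M}_t^0$, but for $\mathbf{Z}_{\ell,t}^0$ the cited Gaussian estimates only directly give second-moment control, and extracting a uniform-in-$t$ $L^p$ bound for $p>2$ on the \emph{centered} quantity $d_\W(\om(0),\om(t))-t\ell_{\LL^0}$ is work you do not carry out; in the paper's terms one would need an $L^p$ strengthening of Lemma~\ref{cov-bound-ell}. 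The paper avoids higher moments entirely: because the pair $(-\overline{\bf Z}_t^0,\mathbf{M}_t^0)$ is a continuous $L^2$ martingale with stationary increments under $\overline{\Q}^0$, the functional equation for $t\mapsto\E[(a\overline{\bf Z}_t^0+b\mathbf{M}_t^0)^2]$ forces it to be linear in $t$, so $\Sigma_\ell[a,b]=\tfrac1t\,\E[(-a\overline{\bf Z}_t^0+b\mathbf{M}_t^0)^2]$ holds \emph{exactly for every} $t$, not just asymptotically. One then passes from the martingale pair to $(\mathbf{Z}_{\ell,t}^0,\mathbf{M}_t^0)$ by the uniform $L^2$ bound $\sup_t\E(|\mathbf{Z}_{\ell,t}^0+\overline{\bf Z}_t^0|^2)<\infty$ of Lemma~\ref{cov-bound-ell} and Cauchy--Schwarz. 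This is both lighter and closes the gap: an $L^2$ estimate suffices. So your plan is recoverable, but you should either establish the $p>2$ moment bounds carefully or, more efficiently, swap in the stationary-increments observation. Note also that the a.s.~convergence of the Gromov product gives only a pathwise $O(1)$ bound, not the uniform-in-$t$ $L^2$ bound; the quantitative Lemma~\ref{cov-bound-ell} (which rests on the shadow Lemma~\ref{shadow lem}) is indispensable for the covariance statement, and should be cited for that step rather than only for convergence in probability.
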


\begin{prop}\label{Covariance-2} The laws  of the random vectors $({\bf Z}_{h, t}^0/\sqrt{t}, {\bf
M}_{t}^{0}/\sqrt{t})$ under $\overline\Q^0$ converge in distribution as $t$
tends to $+\infty$ to a bivariate centered normal law with some covariance
matrix $\Sigma_{h}$. The covariance matrices of $({\bf Z}_{h,
t}^0/\sqrt{t}, {\bf M}_{t}^{0}/\sqrt{t})$ under $\overline\Q^0$
converge to $\Sigma_{h}$.
\end{prop}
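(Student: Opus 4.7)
The plan is to mirror the one-dimensional argument that established Proposition \ref{CLT}, treating the joint convergence by the Cram\'er--Wold device combined with the martingale central limit theorem of Lemma \ref{CLT_martingale}. The main task is to exhibit, up to bounded additive corrections, a pair of square-integrable continuous martingales whose time-averaged covariation matrix has an explicit almost-sure limit; joint bivariate Gaussian convergence and the identification of $\Sigma_{h}$ then follow.

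First I would replace the entropy coordinate by the martingale ${\bf Z}_t^1$ of Proposition \ref{M-0-M-1},
\[
{\bf Z}_t^1 \; = \; \ln k_{{\bf y}_0}({\bf y}_t, \xi) + t\, h_{\LL^0} + u_1({\bf y}_t) - u_1({\bf y}_0).
\]
The H\"older continuous function $u_1$ is bounded on $SM$, and the proof of Proposition \ref{formulas-l-h-Y-h} shows via \eqref{Green-Matin-kernel-function} that $|\ln \bfg({\bf y}_0,{\bf y}_t) - \ln k_{{\bf y}_0}({\bf y}_t, \xi)|$ remains bounded $\overline\Q^0$-a.s. Hence ${\bf Z}_{h,t}^0 = -{\bf Z}_t^1 + O(1)$ almost surely, and by Slutsky's lemma $({\bf Z}_{h,t}^0/\sqrt t,\; {\bf M}_t^0/\sqrt t)$ has the same distributional limit as $(-{\bf Z}_t^1/\sqrt t,\; {\bf M}_t^0/\sqrt t)$.

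Both coordinates in the latter pair are continuous centered $L^2$-martingales with stationary increments on $(\overline\Theta, \overline\Q^0)$. Writing $V := \nabla \ln k_{{\bf v}}(\cdot,\xi) + \nabla u_1$, one has the It\^o representations $d{\bf Z}_t^1 = \langle V({\bf y}_t),\, {\bf u}_t\, dB_t\rangle$ and $d{\bf M}_t^0 = \tfrac12 \langle (Z^\l)'_0({\bf y}_t),\, {\bf u}_t\, dB_t\rangle$, so, using that $B_t$ has generator $\Delta$,
\[
\langle {\bf Z}^1, {\bf Z}^1\rangle_t = 2\!\int_0^t\! \|V\|^2({\bf y}_s)\, ds,\quad \langle {\bf M}^0, {\bf M}^0\rangle_t = \tfrac{1}{2}\!\int_0^t\! \|(Z^\l)'_0\|^2({\bf y}_s)\, ds,
\]
\[
\langle {\bf Z}^1, {\bf M}^0\rangle_t = \int_0^t \langle V, (Z^\l)'_0\rangle({\bf y}_s)\, ds.
\]
By the mixing of the shift flow (Proposition \ref{mixing}), Birkhoff's theorem yields $L^1$ convergence of each $\frac{1}{t}\langle\cdot,\cdot\rangle_t$ to the corresponding spatial integral against $\wt{\bf m}^0$. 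Applying Lemma \ref{CLT_martingale} to every real linear combination $a{\bf Z}_t^1 + b{\bf M}_t^0$ (again a continuous, square-integrable, centered martingale with stationary increments) produces a one-dimensional Gaussian limit of the correct variance, and Cram\'er--Wold assembles these into the bivariate law $\mathcal N(0,\Sigma_h)$, with diagonal entries $\sigma_1^2$ (as in Proposition \ref{CLT}) and $\tfrac12\int \|(Z^\l)'_0\|^2\, d\wt{\bf m}^0$, and off-diagonal entry $-\int \langle V, (Z^\l)'_0\rangle\, d\wt{\bf m}^0$.

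The hard part, to my mind, is not the distributional convergence but the upgrade to convergence of the actual covariance matrices, which requires uniform integrability of the squared and mixed quantities divided by $t$. For ${\bf M}_t^0$ this is immediate from the boundedness of $(Z^\l)'_0$ and Burkholder--Davis--Gundy. For ${\bf Z}_{h,t}^0$ one has to control $\ln \bfg({\bf y}_0,{\bf y}_t)$ in $L^2(\overline\Q^0)$ uniformly in $t$; I would handle this by combining the Gaussian-type upper bound of Lemma \ref{Sa-Thm-6.1} (forcing $|\ln \bfg|$ to grow at most linearly in $d({\bf y}_0,{\bf y}_t) + t$) with the exponential tails of $d({\bf y}_0,{\bf y}_t)/t$ afforded by the weak coercivity of $\LL^0$ and the Harnack-type estimates of Section 2. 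Once those moment bounds are in place, $({\bf Z}_{h,t}^0 + {\bf Z}_t^1)/\sqrt t$ is negligible in $L^2$, so the covariance matrices of the original pair also converge to $\Sigma_h$.
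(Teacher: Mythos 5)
Your overall architecture matches the paper's: pass to the martingale $\overline{\bf Z}_t^1$ of Proposition \ref{M-0-M-1}, note that $(-\overline{\bf Z}_t^1, {\bf M}_t^0)$ is a centered $L^2$ martingale with stationary increments, invoke mixing (Proposition \ref{mixing}) to get the $L^1$-convergence hypothesis of Lemma \ref{CLT_martingale}, apply that lemma to linear combinations $a\overline{\bf Z}_t^1 + b{\bf M}_t^0$ (Cram\'er--Wold), and then transfer the result to $({\bf Z}_{h,t}^0, {\bf M}_t^0)$.

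The gap is in the transfer step, specifically the moment bound needed for the convergence of the covariance matrices. The distributional transfer is fine: $\overline\P^0$-a.s. boundedness of $|\ln \bfg(\om(0),\om(t)) - \ln k_{\bf v}(\om(t),\xi)|$ gives almost-sure convergence of $({\bf Z}_{h,t}^0 + \overline{\bf Z}_t^1)/\sqrt t$ to $0$, hence Slutsky. But for the covariance matrices one needs $\E_{\overline\Q^0}\bigl( |{\bf Z}_{h,t}^0 + \overline{\bf Z}_t^1|^2 \bigr)$ to stay \emph{uniformly bounded in $t$}, so that its $1/t$-normalization vanishes. Your proposed route --- Gaussian-type heat-kernel upper bounds (Lemma \ref{Sa-Thm-6.1}) plus exponential tails of $d(\om(0),\om(t))/t$ --- at best controls $\E[\ln^2 \bfg]/t$ or $\E[({\bf Z}_{h,t}^0)^2]/t$ (each term separately), and does not produce the required \emph{cancellation} between $-\ln\bfg(\om(0),\om(t))$ and $\ln k_{\bf v}(\om(t),\xi)$; bounding each summand by $O(d + t)$ in $L^2$ gives a constant $\cdot\, t$, not a constant. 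The paper supplies this cancellation in Lemma \ref{cov-bound-h}, which controls $\E\bigl(\bigl|d_{G_{\bf v}^0}(\om(0),\om(t)) + \ln k_{\bf v}^0(\om(t),\xi)\bigr|^2\bigr)$ uniformly in $t$ by an event decomposition that exploits Gromov hyperbolicity and the almost-additivity of the Green metric along geodesics (Lemmas \ref{GM-classical Harnack}--\ref{triangle-separated by cones}), together with the shadow lemma (Lemma \ref{shadow lem}) and the $L^2$ bound of Lemma \ref{cov-bound-ell}. That hyperbolicity-driven estimate is the piece your argument is missing, and it is not a corollary of the parabolic heat-kernel bounds.
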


\subsection{The differential of the linear drift}\label{sec-4.2}  For any $\l\in (-1, 1)$, let  $\overline{\ell}_{\l}$ be the linear drift of $\LL^{\l}$.  The main result of this subsection is the
following.
\begin{theo}\label{differential-ell}The function
$\l\mapsto \overline{\ell}_{\l}$   is  differentiable at $0$ with
\begin{equation*}
\frac{d\overline{\ell}_{\l}}{d\l}\Big|_{\l=0}=\lim\limits_{t\to+\infty}\frac{1}{t}\E_{\overline\Q^0}({\bf
Z}_{\ell, t}^{0}{\bf M}_t^0).
\end{equation*}
\end{theo}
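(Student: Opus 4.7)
My plan is to couple $\l$ to $t$ by setting $\l = 1/\sqrt{t}$, at which scale the CLT fluctuations of $d_\W({\bf y}_0,{\bf y}_t)$ and the Girsanov exponent relating $\LL^\l$ to $\LL^0$ both live on the $\sqrt{t}$-scale. First I would upgrade Proposition \ref{CLT} to the averaged form
\[
\E_{\overline{\Q}^0}\bigl[d_\W({\bf y}_0^\l,{\bf y}_t^\l)\bigr] \;=\; t\,\overline{\ell}_\l + o(\sqrt{t}),
\]
uniformly in $\l$ in a neighbourhood of $0$. This should follow from the martingale decomposition ${\bf Z}_t^0$ of Proposition \ref{M-0-M-1} (applied with $Y$ replaced by $Y+Z^\l$) together with uniform integrability of the recentered variables, using the continuous dependence of the corrector $u_0$, the variance $\sigma_0$ and $\overline{\ell}_\l$ on $\l$, and the leafwise exponential bounds of Section~2, which are uniform near $\l=0$. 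Subtracting this expansion at $\l$ and at $0$, dividing by $\l$, and combining with the Girsanov identity (\ref{Gir-P}) applied leafwise and integrated against the common initial measure $\wt{{\bf m}}^0$, I get
\[
\frac{\overline{\ell}_\l - \overline{\ell}_0}{\l} \;=\; \frac{1}{\l t}\,\E_{\overline{\Q}^0}\!\left[d_\W({\bf y}_0,{\bf y}_t)\bigl(\overline{\rm M}^\l_t-1\bigr)\right] \,+\, o\!\left(\frac{1}{\l\sqrt{t}}\right).
\]
Since $\E_{\overline{\Q}^0}[\overline{\rm M}^\l_t]=1$, the factor $d_\W$ above can be replaced by ${\bf Z}^0_{\ell,t}=d_\W - t\overline{\ell}_0$; setting $\l=1/\sqrt{t}$ then turns $1/(\l t)$ into $1/\sqrt{t}$ and reduces the problem to
\[
\lim_{t\to\infty}\E_{\overline{\Q}^0}\!\left[\frac{{\bf Z}^0_{\ell,t}}{\sqrt{t}}\bigl(\overline{\rm M}^{1/\sqrt{t}}_t - 1\bigr)\right].
\]

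Next I would expand the Girsanov density. Since $Z^0\equiv 0$ and $Z^\l$ is $C^2$ in $\l$ with bounded first two $\l$-derivatives, $Z^\l = \l(Z^\l)'_0 + O(\l^2)$ uniformly on $SM$, so
\[
\overline{\rm M}^{1/\sqrt{t}}_t \;=\; \exp\!\left(\frac{{\bf M}^0_t}{\sqrt{t}} - \frac{1}{4t}\!\int_0^t\!\|(Z^\l)'_0({\bf y}_s)\|^2\,ds + \varepsilon_t\right),
\]
with $\varepsilon_t \to 0$ in $\overline{\Q}^0$-probability. The ergodic theorem (which follows from Proposition \ref{mixing}) identifies $\tfrac{1}{t}\!\int_0^t\!\|(Z^\l)'_0\|^2\,ds \to \k := \int\|(Z^\l)'_0\|^2\,d\wt{{\bf m}}^0$, while Proposition \ref{Covariance-1} provides joint convergence in distribution of $({\bf Z}^0_{\ell,t}/\sqrt{t},\,{\bf M}^0_t/\sqrt{t})$ to a centered bivariate Gaussian $(Z,M)$ with covariance $\Sigma_\ell$; the quadratic variation of ${\bf M}^0$ moreover gives $\mathrm{Var}(M) = \k/2$. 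Granted uniform integrability of $({\bf Z}^0_{\ell,t}/\sqrt{t})\,\overline{\rm M}^{1/\sqrt{t}}_t$ (to be obtained from uniform exponential moment bounds on $\overline{\rm M}^\l_t$ via Novikov's criterion and the second-moment control on ${\bf Z}^0_{\ell,t}/\sqrt{t}$ built into Proposition \ref{Covariance-1}), the displayed limit equals $\E\bigl[Z\bigl(e^{M-\k/4}-1\bigr)\bigr]$.

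To finish, I invoke the Gaussian integration-by-parts identity $\E[Z e^M] = \mathrm{Cov}(Z,M)\,\E[e^M]$. Combined with $\E[e^{M-\k/4}] = 1$ (which is exactly the condition $\mathrm{Var}(M)/2 = \k/4$) and $\E[Z]=0$, this gives $\E[Z(e^{M-\k/4}-1)] = \mathrm{Cov}(Z,M)$. The covariance-convergence half of Proposition \ref{Covariance-1} then rewrites this as $\lim_{t\to\infty}\tfrac{1}{t}\E_{\overline{\Q}^0}[{\bf Z}^0_{\ell,t}\,{\bf M}^0_t]$, establishing the theorem for $\l\to 0^+$; the case $\l\to 0^-$ is symmetric.

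The main obstacle will be the two uniformity statements: the uniformity in $\l$ of the CLT asymptotic at the start of the argument, and the uniform integrability required to upgrade weak convergence of $({\bf Z}^0_{\ell,t}/\sqrt{t})\,\overline{\rm M}^{1/\sqrt{t}}_t$ to convergence of its expectation. Both should be extractable from uniform exponential moment bounds on $\overline{\rm M}^\l_t$ for small $\l$ (perturbative Novikov) combined with the quadratic control provided by the martingale $\mathrm{Z}^0_t$ of Proposition \ref{M-0-M-1}.
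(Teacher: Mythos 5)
Your overall plan --- coupling $\l = \pm 1/\sqrt t$, expanding the Girsanov density as $\overline{\rm M}^\l_t = \exp\bigl({\bf M}^0_t/\sqrt t - \tfrac{1}{2t}\langle{\bf M}^0,{\bf M}^0\rangle_t + o(1)\bigr)$, reducing to the bivariate CLT for $({\bf Z}^0_{\ell,t}/\sqrt t,\,{\bf M}^0_t/\sqrt t)$, upgrading weak convergence to convergence of expectations by uniform $L^{3/2}$ moments (H\"older plus a further Girsanov change of measure to control the exponential factor), and closing with the Gaussian identity $\E[Z e^{M - \mathrm{Var}(M)/2}] = \mathrm{Cov}(Z,M)$ --- is precisely the argument the paper carries out in Proposition \ref{Covariance-1} and Lemma \ref{ell-Y-2}, and all your normalizations ($\mathrm{Var}(M)=\kappa/2$, $\E[e^{M-\kappa/4}]=1$) check out against the paper's conventions.

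The one place where your route deviates, and where I would push back, is the preliminary ``averaged'' estimate. You propose $\E_{\overline\P^\l}[d_\W(\om(0),\om(t))] = t\,\overline\ell_\l + o(\sqrt t)$ uniformly in $\l$, to be extracted from ``continuous dependence of the corrector $u_0$, the variance $\sigma_0$ and $\overline\ell_\l$ on $\l$.'' The paper is at pains \emph{not} to require any regularity of the corrector in $\l$ --- avoiding that is precisely the reason for switching to Mathieu's pathwise method in the first place. What it actually proves is the sharper and cleaner uniform $O(1)$ estimate (\ref{sub-add-ell}): since $\wt{\bf m}^\l$ is $\LL^\l$-stationary, one has $\E_{\overline\P^\l}[b_{\om(0)}(\om(t))] = t\,\overline\ell_\l$ \emph{exactly} (the corrector $u_0^\l$ drops out in the stationary expectation and never appears), and the remaining gap $\E_{\overline\P^\l}\bigl[\,|d_\W(\om(0),\om(t)) - b_{\om(0)}(\om(t))|^2\bigr]$ is bounded uniformly in $\l\in[-\delta_1,\delta_1]$ and $t$ by Lemma \ref{cov-bound-ell}, which is a purely geometric statement resting on the shadow Lemma \ref{shadow lem} and Gromov hyperbolicity, not on any spectral or corrector estimate. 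If you route instead through $u_0^\l$ and $\sigma_0^\l$, you would need at least uniform bounds on these across $\l$, which are neither established in the paper nor needed; so replace that part of your plan with the exact stationarity identity for $b$ and the geometric $L^2$-bound on the Busemann gap.
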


We fix a fundamental domain $M_0$ of $\M$ and identify $\Om_+$ with the lift of its elements in $\wt{\Omega}_+$ starting from $M_0$. In the following two  subsections,   we restrict the probabilities on $\wt{\Om}_+$  to $\Omega_+$.  
For any $\tau\in \Bbb R_+$,  recall that  $\overline \P_{{\bf v},
\tau}^{\l}$ is  the distribution of $({\bf y}^{\l}_{{\bf v},
t})_{t\in [0, \tau]}$ in $C_{{\bf v}}([0, \tau], W^s({\bf v}))$. By
an abuse of notation, we can also  regard   $\overline \P_{{\bf v},
\tau}^{\l}$ as a measure on $\Om_+$ whose value only depends on
$(\om(t))_{t\in [0, \tau]}$ for $\om=(\om(t))_{t\in \Bbb R_+}\in
\Om_+$.  Let $\overline \P^{\l}_t=\int \overline{\P}_{{\bf v}, t}^{\l}\ d{{\bf
m}}^{\l}({\bf v})$. Then
\[
\overline{\ell}_{\l}=\lim\limits_{t\to
+\infty}\frac{1}{t}\E_{\overline\P^{\l}_{t}}\left(d_{\W}(\om(0),
\om(t))\right).
\]

We will prove Theorem \ref{differential-ell} in two steps.
Firstly, using negative curvature, we find a
finite number $D_{\ell}$ such that for  all $\l\in [-\delta_1,
\delta_1]$ (where $\delta_1$ is from Lemma \ref{shadow lem}) and all
$t>0$,
\begin{equation}\label{sub-add-ell}
\big|\E_{\overline\P^{\l}}\left( d_{\W}(\om(0),
\om(t))\right)-t\overline\ell_{\l}\big|\leq D_{\ell}.\end{equation}
In particular, for $t = \l ^{-2}$, 
\[ \big|\l \E_{\overline\P^{\l}}\left( d_{\W}(\om(0),
\om(\l^{-2}))\right)-\frac{1}{\l} \overline\ell_{\l}\big|\leq \l D_{\ell}.\]
Thanks to (\ref{sub-add-ell}), the study of  $\displaystyle \frac{d\overline{\ell}_{\l}}{d\l}\Big|_{\l=0}=\lim\limits_{\l \to 0 } \frac{1}{\l} \big|\overline{\ell}_{\l} - \overline{\ell}_{0} \big | $ reduces to the study of \[ \lim\limits_{\l \to 0 } \big|\l \E_{\overline\P^{\l}}\left( d_{\W}(\om(0),
\om(\l^{-2}))\right)-\frac{1}{\l} \overline\ell_{0}\big|. \] 
Setting  $\l=\pm 1/\sqrt{t}$,  the second step is to show 
\begin{equation}\label{pass to limit-ell}
\lim_{t\to +\infty}
{\rm(I)}_{\ell}^t=\lim_{t\to +\infty} \E_{\overline\Q^0}\left(\frac{1}{\sqrt{t}} {\bf
Z}_{\ell, t}^0\cdot \overline{\rm
M}_{t}^{\l}\right),
\end{equation}
where 
\begin{equation*}
{\rm(I)}_{\ell}^t : = \E_{\overline\P^{\l}_t}\left(\frac{1}{\sqrt{t}}\left( d_{\W}(\om(0),
\om(t))-t\overline\ell_{0}\right)\right). \end{equation*}
Using notations from the previous subsection,  we know $d\overline{\P}^{\l}_t /d\overline\P^{0}_t$ is given by $\overline{\rm
M}_{t}^{\l}$. Each $\overline{\P}^{\l}_t$ is a random perturbation of the distribution $\overline{\P}^{0}_t$ on the path space and at the scale of $\l=1/\sqrt{t}$,   $\overline{\rm
M}_{t}^{\l}$  converge  in distribution to  $e^{{\bf
M}^0-\frac{1}{2}\E_{\overline\Q^0}(({\bf M}^{0})^2)}$ as $t$ goes to infinity. Consequently, 
$\frac{1}{\sqrt{t}} {\bf
Z}_{\ell, t}^0\cdot\overline{\rm
M}_{t}^{\l}$  converge  in distribution to  ${\bf Z}_{\ell}^{0}e^{{\bf
M}^0-\frac{1}{2}\E_{\overline\Q^0}(({\bf M}^{0})^2)}$, which can be identified with $\lim_{t\to+\infty}(1/t)\E_{\overline\Q^0}({\bf
Z}_{\ell, t}^{0}{\bf M}_t^0)$ using Proposition \ref{Covariance-1} (see Lemma \ref{ell-Y-2}).

 We therefore follow the above discussion and prove (\ref {sub-add-ell}) and (\ref{pass to limit-ell}).  Let us first show that there is a 
finite number $D_{\ell}$ such that for  all $\l\in [-\delta_1,
\delta_1]$  and all
$t>0$,
\begin{equation*}
\big|\E_{\overline\P^{\l}}\left( d_{\W}(\om(0),
\om(t))\right)-t\overline\ell_{\l}\big|\leq D_{\ell}.\end{equation*}
Since  the $\LL^{\l}$-diffusion has leafwise infinitesimal generator $\LL^{\l}_{\bf v}$ and $\P^{\l}$ is stationary,  we have
\begin{eqnarray*}
\E_{\overline\P^{\l}}\left(
b_{{\om}(0)}({\om}(t))\right)&=&\E_{\overline\P^{\l}}\left(\int_{0}^{t}\frac{\pp}{\pp s}b_{{\om}(0)}({\om}(s))\ ds\right)\\
  &=&\E_{\overline\P^{\l}}\left(\int_{0}^{t}(\LL_{{\om}(0)}^{\l}b_{{\om}(0)})({\om}(s))\ ds\right)\\
  &=&  t \int_{\scriptscriptstyle{M_0\times \pp\M}}\LL^{\l}_{\bf v}b_{{\bf v}}\ d\wt{{\bf m}}^{\l}\\
  &=& t\overline{\ell}_{\l}.
\end{eqnarray*}
So, proving  (\ref{sub-add-ell}) reduces to showing  that for all $\l\in[-\delta_1, \delta_1]$ and all $t>0$,
\begin{equation}\label{equivalent-sub-add-ell}
\E_{\overline\P^{\l}}\left(\left| d_{\W}({\om}(0), {\om}(t))-
b_{{\om}(0)}({\om}(t))\right|\right)<D_{\ell},
\end{equation}
which intuitively means that  for all $\l, \bfv $, the $\LL^{\l}_\bfv$-diffusion orbits ${\om}(t)$ does not accumulates to the point $\xi \in \pp \M$ such that  $\om (0) = \bfv  = (x,\xi )$. For $\om\in \Om_+$, we still denote $\om$ its projection to $\M$.  Then the leafwise distance $d_{\W}({\om}(0), {\om}(t))$ in (\ref{equivalent-sub-add-ell}) is  just $d({\om}(0), {\om}(t))$.

 We first take a look at the distribution of  ${\om}(\infty):=\lim_{s\to +\infty}{\om}(s)$  on the boundary.  Let
$x\in \M$ be a reference point and let $\iota>0$ be a positive
number.  Define
\[
d_x^{\iota}(\zeta, \eta):=\exp\left(-\iota(\zeta|\eta)_x\right), \
\forall \zeta, \eta\in \pp\M,
\]
where $(\zeta|\eta)_x$ is defined as in  (\ref{Gromov-product-def-sec3}).
If $\iota_0$ is small,  each  $d_x^{\iota}(\cdot, \cdot)$ ($x\in \M,
\iota\in (0, \iota_0)$) defines a distance on $\pp\M$ (\cite{GH}), the so-called
 \emph{$\iota$-Busemann distance}, which is related
to the Busemann functions since
\[
b_{{\bf v}}(y)=\lim\limits_{\zeta, \eta\to
\xi}\left((\zeta|\eta)_y-(\zeta|\eta)_x\right),\ \mbox{for any} \
{\bf v}=(x, \xi)\in S\M,  y\in \M.
\]
The following shadow lemma (\cite[Lemma 2.14]{Moh}, see also \cite{PPS})  says that the
$\LL^{\l}$-harmonic measure has a positive dimension on the boundary
in a uniform way.

\begin{lem}\label{shadow lem}There are $D_1, \delta_1,  \alpha_1, \iota_1>0$ such that for all $\l\in [-\delta_1,
\delta_1]$,  all ${\bf v}\in SM$ and all $\zeta\in \pp\M$, $t>0$,
\[
\overline\P_{{\bf v}}^{\l}\left(d_x^{\iota_1}\left(\zeta,
{\om}(\infty)\right)\leq e^{-t}\right)\leq D_1 e^{-\alpha_1 t},
\]
where  
we identify 
${\om}(s)$ with its projection on $\M$.
\end{lem}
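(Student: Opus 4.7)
The plan is to convert the $\iota_1$-Busemann condition $d_x^{\iota_1}(\zeta,\om(\infty))\leq e^{-t}$ into the geometric statement that $\om(\infty)$ lies in a shadow from $x$ of a small ball sitting on the geodesic $\gamma_{x,\zeta}$ at distance $\sim t/\iota_1$, and then to control the resulting harmonic-measure mass uniformly in ${\bf v}, \zeta,\lambda$ via the Ancona and Green function estimates of Subsection 2.4.

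First, rewrite the hypothesis as $(\zeta|\om(\infty))_x\geq t/\iota_1=:s$. By standard hyperbolicity in $(\wt M,\wt g)$, there is $r_0>0$ depending only on the sectional curvature bounds of $M$ such that $(\zeta|\eta)_x\geq s$ forces the geodesic rays $\gamma_{x,\zeta}$ and $\gamma_{x,\eta}$ to remain within $r_0$ of each other up to distance $s$. Consequently $\om(\infty)$ lies in the shadow $\mathcal S:=\mathcal S(x;z_s,r_0)\subset\pp\wt M$ from $x$ of the ball $B(z_s,r_0)$, with $z_s:=\gamma_{x,\zeta}(s)$. The proof thus reduces to an exponential upper bound on $\overline\P_{\bf v}^{\lambda}(\om(\infty)\in\mathcal S)$, uniformly in ${\bf v},\zeta$, and $\lambda\in[-\delta_1,\delta_1]$.

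Second, estimate this shadow mass. Since $M$ is compact and $\pi({\bf v})$ lies in a bounded fundamental domain, $d(\pi({\bf v}),z_s)\geq s-O(1)$. A standard Sullivan-type shadow argument for harmonic measures in negative curvature --- decomposing via strong Markov at the first entry of $\om_t^\lambda$ into $B(z_s,r_0+1)$ and using Ancona's inequality at infinity (Lemma \ref{GM-Ancona inequality}) to discard paths looping around this ball --- yields
\[
\overline\P_{\bf v}^{\lambda}\big(\om(\infty)\in\mathcal S\big)\;\leq\; C\, G_{\bf v}^{\lambda}\big(\pi({\bf v}),z_s\big)\;\leq\; Cc_3\,e^{-\alpha_3 d(\pi({\bf v}),z_s)}\;\leq\; C'\,e^{-\alpha_3 s}
\]
by Lemma \ref{Green-d-relation}. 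Setting $\alpha_1:=\alpha_3/\iota_1$ and absorbing the $O(1)$ term into $D_1$ completes the quantitative bound.

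Third, I need the uniformity in $\lambda$: since $\lambda\mapsto Z^{\lambda}$ is $C^2$ with uniformly bounded first derivatives and $Z^0=0$, the positive pressure condition $\mbox{pr}(-\langle\overline X,Y+Z^{\lambda}\rangle)>0$ persists on an open neighborhood of $0$, and all the constants --- the Harnack constants of Lemma \ref{GM-classical Harnack}, Ancona's constants of Lemma \ref{GM-Ancona inequality}, and the Green function constants $c_3,\alpha_3$ of Lemma \ref{Green-d-relation} --- can be chosen uniform on some compact interval $[-\delta_1,\delta_1]$ by revisiting the proofs of \cite{An,H2}. This fixes $\delta_1$, and $\iota_1$ is chosen in $(0,\iota_0)$ so that $d_x^{\iota_1}$ is a genuine distance. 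The main obstacle is precisely this $\lambda$-uniformity: Ancona's Martin boundary theory and the ensuing Green function decay were traditionally stated for a single weakly coercive operator, and certifying that the constants persist uniformly across the $C^2$-family $\LL^\lambda$ requires a careful audit of those arguments. Once in place, the reduction from Gromov product to shadow and the shadow hitting estimate itself are classical in the spirit of \cite{Moh,PPS}.
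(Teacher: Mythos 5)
The paper itself gives no proof of this lemma: it simply cites Mohsen \cite[Lemma 2.14]{Moh} (and refers the reader also to \cite{PPS}). Your sketch reproduces the standard argument one finds in those sources, and it is essentially correct. The reduction from $d_x^{\iota_1}(\zeta,\om(\infty))\leq e^{-t}$ to $(\zeta\mid\om(\infty))_x\geq t/\iota_1$ is immediate from the definition of the $\iota$-Busemann distance; the identification of $\{\eta:(\zeta\mid\eta)_x\geq s\}$ with a shadow of a ball of bounded radius centred at $z_s=\g_{x,\zeta}(s)$ is classical Gromov hyperbolicity (here $d(x,z_s)=s$ exactly, so the $O(1)$ hedge is not even necessary); and the shadow-to-Green-function bound $\overline\P^\l_{\bfv}(\om(\infty)\in\mathcal S)\leq C\,G^\l_\bfv(x,z_s)$ is the Ancona/Sullivan estimate, most cleanly obtained by testing the Martin representation $1=\int k^\l_{\bfv}(z_s,\eta)\,d\mu^\l_x(\eta)$ against $\eta$ in the shadow and applying Lemma \ref{GM-Ancona inequality} at the separating point $z_s$; your ``strong Markov at first entry'' phrasing is an equivalent formulation. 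Coupled with the exponential decay of Lemma~\ref{Green-d-relation}, you get $\a_1=\a_3/\iota_1$. You correctly identify the only genuinely delicate point, namely the $\l$-uniformity of the Harnack, Ancona, and Green-function constants; the paper itself simply asserts this uniformity (see the sentence preceding the proof of Theorem~\ref{differential-h}, where the constants $c_0,\dots,c_8,\a_2,\a_3$ are declared to depend only on the geometry and the coefficients of $\LL$ and hence to hold for all $\LL^\l$ with $|\l|<1$), so flagging it as the point requiring an audit of \cite{An,H2} is exactly right.
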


As a consequence, we see that for $\overline\P^{\l}$-almost all
orbits $\om\in \Omega_+$,  the distance between ${\om}(s)$ and
$\g_{{\om}(0), {\om}(\infty)}$, the geodesic connecting
${\om}(0)$ and ${\om}(\infty)$, is bounded in the following sense.

\begin{lem}\label{travel-along-geodesic} There exists a finite number $D_2$ such that for all $\l\in [-\delta_1,
\delta_1]$ (where $\delta_1$ is as in Lemma \ref{shadow lem}) and
$s\in \Bbb R_+$,
\begin{equation*}
\E_{\overline\P^{\l}}\left( d({\om}(s), \g_{{\om}(0),
{\om}(\infty)})\right)<D_2.
\end{equation*}
\end{lem}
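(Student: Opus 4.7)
\emph{Proof plan.} The argument combines the Gromov hyperbolicity of $(\M, \wt g)$ with the Markov property of the $\LL^\l$-diffusion and a $G$-equivariant application of Lemma \ref{shadow lem}. Since $\M$ has pinched negative curvature and is thus Gromov hyperbolic, there exists a uniform constant $C$ such that for every $x, y \in \M$ and $\eta \in \pp \M$,
\[ d_{\wt g}(y, \g_{x, \eta}) \leq (x|\eta)_y + C, \]
where $(x|\eta)_y = \tfrac{1}{2}(d_{\wt g}(x,y) - b_{(y, \eta)}(x))$ is the point-to-boundary Gromov product. Hence it suffices to bound $\E_{\overline \P^\l}((\om(0)|\om(\infty))_{\om(s)})$ uniformly in $s \geq 0$ and $\l \in [-\delta_1, \delta_1]$.

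By the Markov property of the foliated $\LL^\l$-diffusion on the stable leaf $W^s(\bfv)$, conditionally on $\om(s) = y$ the limit $\om(\infty)$ is distributed according to the exit measure $\nu^\l_{(y, \xi)}$ of the $\LL^\l$-diffusion started at $(y, \xi)$, where $\xi$ is the boundary direction of the stable leaf. Taking conditional expectation reduces the task to establishing the uniform bound
\[ \int_{\pp \M} (x|\eta)_y \, d\nu^\l_{(y, \xi)}(\eta) \leq D_2' \]
over all $x \in \M$, $(y, \xi) \in S\M$, and $\l \in [-\delta_1, \delta_1]$.

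To apply Lemma \ref{shadow lem}, we first convert the point-boundary Gromov product into a boundary-boundary one: let $\tilde\xi \in \pp \M$ be the endpoint of the geodesic ray $\g_{y, x}$ extended past $x$, so that $(x|\tilde\xi)_y = d_{\wt g}(x, y)$. Gromov's $\delta$-inequality $(\tilde\xi|\eta)_y \geq \min((\tilde\xi|x)_y, (x|\eta)_y) - \delta$, combined with the trivial bound $(x|\eta)_y \leq d_{\wt g}(x, y)$, yields $(x|\eta)_y \leq (\tilde\xi|\eta)_y + \delta$ for a uniform $\delta$. We then shift the base point of the diffusion: choose $g \in G$ with $g^{-1} y \in M_0$ and apply Lemma \ref{shadow lem} at $g^{-1}(y, \xi) \in SM$. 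By the $G$-equivariance $d^{\iota_1}_{gz}(g\cdot, g\cdot) = d^{\iota_1}_z(\cdot, \cdot)$ of the Busemann distance, this produces an exponential tail of $\nu^\l_{(y, \xi)}$ for $d^{\iota_1}_{gx_0}(\tilde\xi, \cdot)$. Since $d_{\wt g}(gx_0, y)$ is bounded by a constant depending only on $\operatorname{diam}(M_0)$, passing from $d^{\iota_1}_{gx_0}$ to $d^{\iota_1}_y$ costs only a uniform multiplicative factor, yielding
\[ \nu^\l_{(y, \xi)}\bigl(\{\eta \in \pp\M : (\tilde\xi|\eta)_y \geq t\}\bigr) \leq D_1' e^{-\alpha_1 \iota_1 t} \]
with constants $D_1', \alpha_1, \iota_1$ independent of $(y, \xi), x$, and $\l \in [-\delta_1, \delta_1]$.

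Integrating this tail via the layer-cake formula gives $\int (\tilde\xi|\eta)_y\, d\nu^\l_{(y, \xi)}(\eta) \leq D_1'/(\alpha_1 \iota_1)$, and averaging over $\bfv$ against $\wt{\bf m}^\l$ delivers the uniform constant $D_2$ in the statement. The principal technical obstacle lies in the third step: since $\om(s)$ may drift arbitrarily far from the fixed reference $x_0$ of Lemma \ref{shadow lem}, a naive application would involve an unbounded change of base point; the $G$-equivariance of the Busemann distance together with the compactness of $M$ is what keeps this shift bounded by $\operatorname{diam}(M_0)$.
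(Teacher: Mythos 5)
Your proposal is correct, but it follows a genuinely different route from the paper. The paper first extends $\overline\P^\l$ to a two-sided shift-invariant measure $\breve\P^\l = \int \overline\P^\l_{\bf v}\otimes (\overline\P')^\l_{\bf v}\, d{\bf m}^\l({\bf v})$ using the reversed diffusion, then applies shift-invariance to rewrite the quantity as $\E_{\breve\P^\l}\big(d(\om(0), \g_{\om(-s), \om(\infty)})\big)$; this transfers the base point of the Gromov product to $\om(0)\in M_0$, so Lemma \ref{shadow lem} applies verbatim with the fixed reference $x$ and no change of base point is ever needed. You instead stay on the one-sided path space and condition on $\mathcal F_s$ via the Markov property, which places the base point at $\om(s)$; you then pay for this with the $G$-equivariance argument to translate the shadow lemma from $M_0$ to an arbitrary $y\in\M$, with the resulting $O(\operatorname{diam} M_0)$ shift of the Busemann-distance reference point absorbed into the constants, and with the auxiliary boundary point $\tilde\xi$ converting the point-to-boundary Gromov product $(x|\eta)_y$ into a boundary-to-boundary one. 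Both arguments ultimately rest on the same two ingredients — Lemma \ref{shadow lem} and the $\delta$-hyperbolicity estimate $d(\cdot,\g)\approx (\cdot|\cdot)$ — but the paper's time-reversal keeps the geometry anchored at the fundamental domain and avoids the equivariance bookkeeping, whereas yours avoids introducing the reversed process and the two-sided measure. Both are valid; the paper's is arguably lighter on prerequisites about translation of the shadow lemma, yours avoids having to define and use $(\overline\P')^\l_{\bf v}$.
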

\begin{proof}
  Extend $\overline\P^{\l}$ to a shift invariant probability measure
$\breve\P^{\l}$ on the set of trajectories from $\Bbb R$ to $SM$,
by
\[
\breve\P^{\l}=\int_{\scriptscriptstyle SM}\overline\P_{{\bf
v}}^{\l}\otimes (\overline\P')_{{\bf v}}^{\l}\ d{\bf m}^{\l}(\bf v),
\]
where $(\overline\P')_{{\bf v}}^{\l}$ is the probability describing
the reversed $\LL_{{\bf v}}^{\l}$-diffusion starting from ${\bf v}$.
Then we have by invariance of $\breve\P^{\l}$ that
\begin{eqnarray}
\E_{\overline\P^{\l}}\left( d({\om}(s), \g_{{\om}(0),
{\om}(\infty)})\right)&=& \E_{\breve \P^{\l}}\left(
d({\om}(0), \g_{{\om}(-s), {\om}(\infty)})\right)\notag\\
&=& \int \left( d(x, \g_{{\om}(-s), {\om}(\infty)})\right)
d\overline \P^\l_{\bf v}  (\wt {\om}) d (\overline \P')^\l _{\bf v}
(\wt {\om}(-s)) d{\bf m}^{\l}({\bf v}).\label{distance-om-s}
\end{eqnarray}
Fix ${\om} (-s ) = z $ at distance $D$ from $x$, and let $\zeta
\in \pp\M $ be  $\lim_{t\to +\infty}\gamma_{x, z}(t)$. We estimate
\[  \int d(x, \g_{z,
{\om}(\infty)})\  d\overline \P^\l_{\bf v}  (\wt {\om})= \int
_0^{+\infty} \overline \P^\l_{\bf v}  ( d(x, \g_{z,
{\om}(\infty)})
>t )\ dt. \] For $t \geq D $, it is clear that $ \overline
\P^\l_{\bf v} ( d(x, \g_{z, {\om}(\infty)}) >t ) = 0 $. For $t  <
D$, if $d(x, \g_{z, {\om}(\infty)}) >t $, then $
d_x^{\iota_1}\left(\zeta,
{\om}(\infty)\right) \leq C e^{-\iota_1 t}$ for some constant $C$ 
and hence we have by Lemma \ref{shadow lem} that
\[  \overline \P^\l_{\bf v}  ( d(x, \g_{z, {\om}(\infty)}) >t ) \leq C D_1 e^{-\a_1 \iota_1 t} .\]
Therefore,
\[  \int  d(x, \g_{z,
{\om}(\infty)}) \  d\overline \P^\l_{\bf v}  (\wt {\om}) \leq
\int_1^D C D_1 e^{-\a_1 \iota_1 t}\  dt +1\leq \frac{C
D_1}{\a_1\iota_1}e^{-\alpha_1\iota_1}+1:= D_2.\] Using
(\ref{distance-om-s}), we obtain that  $\E_{\overline\P^{\l}}\left(
d({\om}(s), \g_{{\om}(0), {\om}(\infty)})\right)$ is
bounded by $D_2$ as well.
\end{proof}

Now,  using Lemmas \ref{shadow lem} and
\ref{travel-along-geodesic}, we prove in Lemma \ref{cov-bound-ell}  that there is a bounded square
integrable difference between $d_{\W}({\om}(0), {\om}(s))$ and
$b_{{\om}(0)}({\om}(s))$ for all $s$ (cf. \cite[Lemma 3.4]{Ma}).   This Lemma  \ref{cov-bound-ell}  implies  (\ref{equivalent-sub-add-ell}) and  therefore concludes the proof of (\ref{sub-add-ell}).

\begin{lem}\label{cov-bound-ell}
  There exists a finite number $D_3$ such that for all $\l\in [-\delta_1,
\delta_1]$ (where $\delta_1$ is as in Lemma \ref{shadow lem}) and
$s\in \Bbb R_+$,
\[
\E_{\overline\P^{\l}}\left(\left| d_{\W}({\om}(0), {\om}(s))-
b_{{\om}(0)}({\om}(s))\right|^2\right)<D_3.
\]
\end{lem}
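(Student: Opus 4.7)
The plan is to reduce the claimed bound to controlling a single Gromov product along the trajectory. Write $\bfv = \om(0) = (x,\xi)$ and let $y_s := \pi(\om(s)) \in \wt M$. The definition of the Busemann function gives the basic identity
\[
d_\W(\om(0), \om(s)) - b_{\om(0)}(\om(s)) \;=\; d(x, y_s) - b_\bfv(y_s) \;=\; 2\,(y_s|\xi)_x,
\]
where $(y_s|\xi)_x := \lim_{z \to \xi}(y_s|z)_x$ is the Gromov product at infinity. It therefore suffices to produce a constant (uniform in $\l \in [-\d_1,\d_1]$ and $s \in \R_+$) bounding $\E_{\overline\P^\l}[(y_s|\xi)_x^2]$.

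The key geometric input, which uses only the $\d$-hyperbolicity of $(\wt M, \wt g)$, is the following: there is $C>0$ depending only on the curvature bounds such that for every $y \in \wt M$ and every $\eta \in \pp\wt M \setminus \{\xi\}$,
\[
(y|\xi)_x \;\le\; (\xi|\eta)_x \,+\, d(y, \g_{x,\eta}) \,+\, C.
\]
The plan to prove this is to let $p$ be the closest-point projection of $y$ onto the ray $\g_{x, \eta}$. The identity above shows $y \mapsto (y|\xi)_x$ is $1$-Lipschitz, so $(y|\xi)_x \le (p|\xi)_x + d(y, p)$. Since $p \in \g_{x,\eta}$ we have $(p|\eta)_x = d(x, p)$, and Gromov's four-point inequality $\min\{(p|\xi)_x, (p|\eta)_x\} \le (\xi|\eta)_x + \d$ together with the trivial estimate $(p|\xi)_x \le d(x,p)$ forces $(p|\xi)_x \le (\xi|\eta)_x + \d$ in both cases of the min.

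Applying this with $\eta = \om(\infty)$ (which is well-defined $\overline\P^\l$-a.s., and differs from $\xi$) and using $(a+b+c)^2 \le 3(a^2+b^2+c^2)$, one gets
\[
(y_s|\xi)_x^2 \;\le\; 3\Bigl[(\xi|\om(\infty))_x^2 \;+\; d(y_s, \g_{x,\om(\infty)})^2 \;+\; C^2\Bigr].
\]
Each expectation on the right is uniformly bounded. Lemma \ref{shadow lem} with $\zeta = \xi$ yields $\overline\P^\l_\bfv[(\xi|\om(\infty))_x \ge t/\iota_1] \le D_1 e^{-\a_1 t}$, with constants independent of $\l$ and $\bfv$, so all moments (in particular the second) of $(\xi|\om(\infty))_x$ under $\overline\P^\l$ are uniformly bounded. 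For the second term, the exponential tail estimate $\overline\P^\l_\bfv(d(x, \g_{z,\om(\infty)}) > t) \le CD_1 e^{-\a_1 \iota_1 t}$ derived in the proof of Lemma \ref{travel-along-geodesic} is already the needed ingredient: the same time-reversal and shift-invariance identity, applied to the square rather than to the first moment, gives a uniform bound on $\E_{\overline\P^\l}[d(y_s, \g_{x,\om(\infty)})^2]$.

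The only real work is the geometric inequality; once it is in place, the probabilistic estimates come directly from the shadow lemma together with the argument already used for Lemma \ref{travel-along-geodesic}. I expect the mild obstacle to be tracking the additive constants in the four-point inequality uniformly in the variable curvature, but this is standard in pinched negative curvature.
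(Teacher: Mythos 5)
Your proof is correct and uses the same two ingredients as the paper's proof (the shadow lemma applied to $(\xi|\om(\infty))_x$, and the exponential tail for $d(\om(s),\g_{\om(0),\om(\infty)})$ established in the proof of Lemma~\ref{travel-along-geodesic}); the only difference is in packaging. You prove the pointwise hyperbolicity inequality $(y_s|\xi)_x \le (\xi|\om(\infty))_x + d(y_s,\g_{x,\om(\infty)}) + \delta$ and then take $L^2$ norms, whereas the paper works directly with the tail $\overline\P^\l_\bfv\bigl((\om(s)|\xi)_x > \sqrt t\bigr)$, splitting into the two events $(\om(s)|\om(\infty))_x \ge \tfrac14\sqrt t$ (which by the four-point inequality forces $(\xi|\om(\infty))_x \gtrsim \sqrt t$) and its complement (which forces $d(\om(s),\g)\gtrsim\sqrt t$). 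These are the same dichotomy expressed once deterministically and once probabilistically, so this is not a genuinely different route.
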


\begin{proof} It is clear that
\begin{eqnarray*}
\E_{\overline\P^{\l}}\left(\left| d_{\W}({\om}(0), {\om}(s))-
b_{{\om}(0)}({\om}(s))\right|^2\right)= 4\int
\left({\om}(s)\big| \xi\right)_x^2 \ d\overline \P^\l_{\bf v}
({\om}) d{\bf m}^{\l}({\bf v}),
\end{eqnarray*}
where $\om(0)={\bf v}=(x, \xi)$  and $\left({\om}(s)\big| \xi\right)_x:=\lim_{y\to
\xi}\left({\om}(s)\big| y\right)_x$ (see (\ref{Gromov-product-def-sec3}) for the definition of $(z|y)_x$ for $x, y, z\in \M$).
So, it suffices to estimate
\begin{equation*}
\int_{0}^{+\infty} \overline\P^{\l}_{{\bf v}}( ({\om}(s)\big|
\xi)_x^2>t)\ dt=\int_{0}^{+\infty} \overline\P^{\l}_{{\bf v}}(
({\om}(s)\big| \xi)_x>\sqrt{t})\ dt.
\end{equation*}
 For each $t>0$, divide the
event  $\{\om\in \Om_+:\ ({\om}(s)\big| \xi)_x>\sqrt{t}\}$ into
two sub-events
\begin{eqnarray*}
A_1(t)&:=&\{\om\in \Om_+:\ ({\om}(s)\big| \xi)_x>\sqrt{t},\
({\om}(s)\big| {\om}(\infty))_x\geq \frac{1}{4}\sqrt{t}\},\\
A_2(t)&:=&\{ \om\in \Om_+:\ ({\om}(s)\big| \xi)_x>\sqrt{t}, \
({\om}(s)\big| {\om}(\infty))_x <\frac{1}{4}\sqrt{t} \}.
\end{eqnarray*}
We estimate $\overline\P^{\l}_{{\bf v}}(A_i(t))$, $i=1, 2$,
successively.  Since $M$ is a closed connected negatively curved
Riemannian manifold, its universal cover  $\M$ is Gromov hyperbolic in the sense that
there exists $\delta>0$ such that for any $x_1, x_2, x_3\in \M$,
\[
(x_1 | x_2)_x\geq \min\{(x_1  | x_3)_x,\ (x_2  | x_3)_x\}-\delta.
\]
So on each $A_1(t)$, where  $t>64\delta^2$, we have
\[
(\xi |{\om}(\infty))_x\geq \frac{1}{8}\sqrt{t}.
\]
Hence, by Lemma \ref{shadow lem},
\begin{eqnarray*}
\overline\P^{\l}_{{\bf v}}(A_1(t))\leq \overline\P^{\l}_{{\bf v}}((\xi |{\om}(\infty))_x\geq
\frac{1}{8}\sqrt{t})=\overline\P^{\l}_{{\bf
v}}(d_x^{\iota_1}({\om}(\infty),
\xi)<e^{-\frac{1}{8}\iota_1\sqrt{t}})\leq D_1
e^{-\frac{1}{8}\iota_1\alpha_1\sqrt{t}},
\end{eqnarray*}
where the last quantity is integrable with respect to $t$,
independent of $s$. For $\om\in A_2(t)$,
\[
d_{\W}({\om}(0), {\om}(s))\geq ({\om}(s)\big| \xi)_x>\sqrt{t}.
\]
On the other hand, the  point $y(s)$ on $\gamma_{{\om}(0),
{\om}(\infty)}$ closest to  ${\om}(s)$ satisfies
\[
({\om}(s)\big|y(s))_x
\leq ({\om}(s)\big| {\om}(\infty))_x
<\frac{1}{4}\sqrt{t}.
\]
So we must have
\[
d({\om}(s), \gamma_{{\om}(0),
{\om}(\infty)})>\frac{1}{2}\sqrt{t}.
\]
Hence,
\[
\int_{0}^{\infty} \overline\P^{\l}_{{\bf v}}(A_2(t))\ dt\leq  \int \overline\P^{\l}_{{\bf v}}\left(d({\om}(s),
\gamma_{{\om}(0), {\om}(\infty)})>\frac{1}{2}\sqrt{t}\right)\
dt\ d{\bf m}^{\l}({\bf v}),
\]
which,  by the same argument as the one used in the proof of Lemma \ref{travel-along-geodesic}, is  bounded from above by some constant independent of $s$.
\end{proof}

To show $\lim_{t\to +\infty}
{\rm(I)}_{\ell}^t =\lim_{t\to+\infty}(1/t)\E_{\overline\Q^0}({\bf
Z}_{\ell, t}^{0}{\bf M}_t^0)$, we first prove Proposition \ref{Covariance-1}.

\begin{proof}[Proof of Proposition \ref{Covariance-1}] Let $({\bf Z}_{t}^0)_{t\in \Bbb R_+}$,  $u_0$ be as  in Proposition \ref{M-0-M-1}.   The process $({\bf Z}_{t}^0)_{t\in \Bbb R_+}$ is a centered martingale with stationary increments and its law under $\overline\P^0$ is the same as the law of $(\overline{\bf Z}_{t}^{0})_{t\in \Bbb R_+}$ under
$\overline\Q^0$, where  $(\overline{\bf Z}_{t}^{0})_{t\in \Bbb R_+}$ on $(\overline\Theta, \overline\Q^0)$  is  given by
\[
\overline{\bf Z}_{t}^{0}({\bf v}, \un\om)=-b_{{\bf
v}}\left({\bf y}_{{\bf v}, t}(\un\om)\right)+t\overline{\ell}_0+
u_0 ({\bf y}_{{\bf v},
t}(\un\om)))-u_0 ({\bf v}).
  \]

The pair $(-\overline{\bf Z}_{t}^0, {\bf
M}_{t}^{0})$ is  a centered martingale on $(\overline\Theta, \overline\Q^0)$ with
stationary increments.  To show $(-\overline{\bf Z}_{t}^0/\sqrt{t}, {\bf
M}_{t}^{0}/\sqrt{t})$ converge in distribution to a bivariate centered normal vector, it suffices to show  for any  $(a, b)\in \Bbb R^2$,  the combination  $-a\overline{\bf Z}_{t}^0/\sqrt{t}+b{\bf
M}_{t}^{0}/\sqrt{t}$  converge to a centered normal distribution.  The martingales  $\overline{\bf Z}_{t}^{0}$  and  ${\bf
M}_{t}^{0}$ on $(\overline\Theta, \overline\Q^0)$ have integrable increasing process functions  $2\|\overline{X}+\nabla u_0\|^2$ and $\|(Z^{\l})'_{0}\|^2$, respectively.
Using Schwarz inequality, we conclude  that  $-a\overline{\bf Z}_{t}^0+b{\bf
M}_{t}^{0}$ also has an integrable increasing process function.  
By Lemma \ref{CLT_martingale},  $-a\overline{\bf Z}_{t}^0/\sqrt{t}+b{\bf
M}_{t}^{0}/\sqrt{t}$  converge in distribution in $\overline\Q^0$ to a centered normal law  with variance $\Sigma_{\ell}[a,
b]=(a, b)\Sigma_{\ell}(a, b)^{T}$ for some matrix $\Sigma_{\ell}$.    Since  both
$\overline{\bf Z}_{t}^0$ and ${\bf M}_{t}^0$ have stationary
increments, we also have
\[
\Sigma_{\ell}[a, b]=\frac{1}{t}\Bbb E_{\overline\Q^0}\left[(-a\overline{\bf Z}_t^0+b{\bf
M}_t^0)^2\right],\ \mbox{for all}\ t\in \Bbb
R_+.
\]
The condition (\ref{check-CLT for martingale}) in Lemma \ref{CLT_martingale} is satisfied since the increasing process $\<-a\overline{\bf Z}_\cdot^0+b{\bf
M}_\cdot^0, -a\overline{\bf Z}_\cdot^0+b{\bf
M}_\cdot^0 \>_n $ is a Birkhoff sum of a square integrable function over a mixing system (Proposition \ref{mixing}). This shows Proposition \ref{Covariance-1} for the pair  $(-\overline{\bf
Z}_{t}^0/\sqrt{t}, {\bf M}_{t}^{0}/\sqrt{t} )$ instead of the pair  $({\bf Z}_{\ell, t}^0/\sqrt{t}, {\bf
M}_{t}^{0}/\sqrt{t}).$

Recall that  $\overline\P_{{\bf v}}^0$-a.e. $\om\in
\Om_+$ is such that 
$b_{{\bf v}}({\om}(t))-d_{\W}(\om(0), {\om}(t))$
converges to a finite number.   Moreover, we have by Lemma
\ref{cov-bound-ell} that
\[
\sup_{t} \E_{\overline\P^{\l}}(\big|{\bf Z}_{\ell,
t}^0+\overline{\bf Z}_{t}^0 \big|^2)<+\infty
\]
and hence
\[
\ \ \ \ \ \ \ \ \ \ \ \E_{\overline\P^{\l}}(\frac{1}{t}\big|{\bf Z}_{\ell,
t}^0+\overline{\bf Z}_{t}^0 \big|^2)\to 0,\ \rm{as}\ t\to +\infty.
\]
Consequently,  $({\bf Z}_{\ell, t}^0/\sqrt{t},
{\bf M}_{t}^{0}/\sqrt{t})$  has the same limit normal law  as  $(-\overline{\bf
Z}_{t}^0/\sqrt{t}, {\bf M}_{t}^{0}/\sqrt{t} )$ and its covariance matrix  under $\overline\Q^0$
converges to $\Sigma_{\ell}$ as $t$ goes to infinity.
\end{proof}

We  state  one  more  lemma  from \cite{Bi}
on the limit of the expectations of a class of random
variables
 on a common probability space which converge  in distribution.

 \begin{lem}(cf. \cite[Theorem 25.12]{Bi})\label{convergence-lemma}
   If the random variables $X_t$ ($t\in \Bbb R$) on a common probability space converge to $ X$ in
   distribution, and there exists some $q>1$ such that
   $\sup_{t}\E_{\nu}\left(|X_t|^{q}\right)<+\infty$, then $X$ is
   integrable and
   \[
\lim\limits_{t\to
+\infty}\E_{\nu}\left(X_t\right)=\E_{\nu}\left(X\right).
   \]
 \end{lem}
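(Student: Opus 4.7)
The plan is to prove this standard probability lemma by combining a uniform integrability argument with convergence in distribution via a truncation/approximation scheme, using only tools from the excerpt's own setting (so no appeal to Skorokhod representation is strictly necessary, though it would also work).

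First, I would verify that $X$ is integrable. By H\"older's inequality, $\E_{\nu}(|X_t|) \leq \left(\E_{\nu}(|X_t|^q)\right)^{1/q}$, so $\sup_t \E_{\nu}(|X_t|) \leq C^{1/q}$ where $C := \sup_t \E_{\nu}(|X_t|^q) < +\infty$. Since $X_t \to X$ in distribution, $|X_t| \to |X|$ in distribution as well (continuous mapping). Applying Fatou's lemma in the form valid for convergence in distribution (equivalently: $\E_{\nu}(|X| \wedge N) = \lim_t \E_{\nu}(|X_t| \wedge N) \leq C^{1/q}$ for each $N$, then let $N \to \infty$ by monotone convergence), one concludes $\E_{\nu}(|X|) \leq C^{1/q}$, so $X$ is integrable, and in fact $\E_{\nu}(|X|^q) \leq C$ by the same argument applied to $|X_t|^q$.

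Second, I would establish uniform integrability of the family $\{X_t\}$. For any $M > 0$, the Markov-type bound
\begin{equation*}
\E_{\nu}(|X_t| \, \mathbf{1}_{\{|X_t| > M\}}) \;\leq\; M^{1-q}\, \E_{\nu}(|X_t|^q) \;\leq\; C M^{1-q}
\end{equation*}
holds uniformly in $t$, and tends to $0$ as $M \to \infty$ since $q > 1$. The same estimate applies to $X$.

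Third, I would conclude by a truncation argument. For $M > 0$, let $\phi_M : \R \to \R$ be the continuous truncation $\phi_M(x) = (-M) \vee (x \wedge M)$. Since $\phi_M$ is bounded and continuous, convergence in distribution of $X_t$ to $X$ gives $\E_{\nu}(\phi_M(X_t)) \to \E_{\nu}(\phi_M(X))$ as $t \to \infty$. Moreover $|x - \phi_M(x)| \leq |x|\, \mathbf{1}_{\{|x|>M\}}$, so by the second step
\begin{equation*}
\bigl|\E_{\nu}(X_t) - \E_{\nu}(\phi_M(X_t))\bigr| \leq C M^{1-q}, \qquad \bigl|\E_{\nu}(X) - \E_{\nu}(\phi_M(X))\bigr| \leq C M^{1-q},
\end{equation*}
both uniformly in $t$. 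Given $\varepsilon > 0$, first choose $M$ with $C M^{1-q} < \varepsilon/3$; then choose $T$ large enough so that $|\E_{\nu}(\phi_M(X_t)) - \E_{\nu}(\phi_M(X))| < \varepsilon/3$ for $t \geq T$. The triangle inequality yields $|\E_{\nu}(X_t) - \E_{\nu}(X)| < \varepsilon$, proving $\E_{\nu}(X_t) \to \E_{\nu}(X)$.

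No step here is a serious obstacle; the main content is the uniform tail estimate from the $L^q$ bound, which is immediate from Markov. The only mildly delicate point is justifying integrability of $X$ from convergence in distribution, which I handle by combining the bounded-truncation limit $\E_{\nu}(|X| \wedge N) = \lim_t \E_{\nu}(|X_t| \wedge N)$ (valid because $x \mapsto |x| \wedge N$ is bounded continuous) with monotone convergence as $N \to \infty$. Since the result is standard and cited from \cite[Theorem 25.12]{Bi}, the proof sketch above should suffice.
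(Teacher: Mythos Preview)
Your proof is correct and is the standard argument for this classical result. The paper itself does not give a proof of this lemma: it is simply stated with a reference to \cite[Theorem 25.12]{Bi}, so there is no paper proof to compare against. Your uniform-integrability-plus-truncation argument is exactly the content of the cited theorem in Billingsley.
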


By the above discussion, Theorem \ref{differential-ell} follows from
\begin{lem}\label{ell-Y-2}
 We have  $\lim_{t \to +\infty }{\rm(I)}_{\ell}^t =\lim_{t\to+\infty}(1/t)\E_{\overline\Q^0}({\bf Z}_{\ell, t}^{0}{\bf
M}_t^0)$.
\end{lem}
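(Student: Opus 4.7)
The plan is to combine the Girsanov formula (\ref{Gir-P}) with the joint central limit theorem of Proposition \ref{Covariance-1}. First, setting $\l = 1/\sqrt{t}$, Girsanov gives
\[
 {\rm(I)}_{\ell}^t \; = \; \E_{\overline\Q^0}\!\left(\tfrac{1}{\sqrt{t}}\, {\bf Z}_{\ell,t}^0 \cdot \overline{\rm M}_t^\l\right),
\]
so the lemma amounts to identifying this limit with $\lim_t \tfrac{1}{t}\E_{\overline\Q^0}({\bf Z}_{\ell,t}^0{\bf M}_t^0)$. Since ${\bf M}_t^0$ is a centered martingale, the latter limit equals $\Sigma_{\ell}[1,2]$ by Proposition \ref{Covariance-1}; it therefore suffices to prove $\lim_t {\rm(I)}_\ell^t = \Sigma_{\ell}[1,2]$.

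Next, I would Taylor expand $Z^\l = \l (Z^\l)_0' + O(\l^2)$, uniformly on $SM$ by the $C^3$ regularity of $\vf^\l$, and substitute into the exponent of $\overline{\rm M}_t^\l$. This yields
\[
\log \overline{\rm M}_t^\l \; = \; \tfrac{1}{\sqrt{t}}\,{\bf M}_t^0 \; - \; \tfrac{1}{4t}\int_0^t \|(Z^\l)'_0({\bf y}_s^0)\|^2\,ds \; + \; R_t,
\]
with $R_t \to 0$ in probability, since the $O(\l^2)$ stochastic integral has quadratic variation of order $1/t$ and the $O(\l^3)$ drift contribution is of order $1/\sqrt{t}$. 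Ergodicity (Proposition \ref{mixing}) forces the drift integral to converge a.s.\ to $\tfrac{1}{4}\int\|(Z^\l)'_0\|^2\,d\wt{\bf m}^0 = \tfrac{1}{2}\lim_t\tfrac{1}{t}\E({\bf M}_t^0)^2$; the factor $\tfrac{1}{2}$ reflects that $B_t$ has generator $\Delta$, so the increasing process of ${\bf M}_t^0$ is $\tfrac{1}{2}\|(Z^\l)'_0\|^2\,dt$. Combining this with Proposition \ref{Covariance-1} via Slutsky's theorem gives joint convergence in distribution
\[
\Bigl(\tfrac{1}{\sqrt{t}}{\bf Z}_{\ell,t}^0,\; \overline{\rm M}_t^\l\Bigr) \;\xrightarrow{\,d\,}\; \Bigl({\bf Z}_\ell^\infty,\; \exp\!\bigl({\bf M}^\infty - \tfrac{1}{2}\E({\bf M}^\infty)^2\bigr)\Bigr),
\]
where $({\bf Z}_\ell^\infty,{\bf M}^\infty)$ is the centered bivariate normal vector with covariance $\Sigma_{\ell}$.

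The third step is to upgrade convergence in distribution to convergence of expectations via Lemma \ref{convergence-lemma}, which requires uniform $L^{1+\varepsilon}$ control on the product for some $\varepsilon>0$. For any $p>1$, writing $\overline{\rm M}_t^\l$ as a true exponential martingale times $\exp\bigl(\tfrac{p^2-p}{4}\int_0^t\|Z^\l\|^2\,ds\bigr)$ and using that $\|Z^\l\|_\infty^2 = O(1/t)$ when $\l=1/\sqrt{t}$, the deterministic factor is uniformly bounded, so $\sup_t \E(\overline{\rm M}_t^\l)^p < \infty$. For ${\bf Z}_{\ell,t}^0$, the decomposition ${\bf Z}_{\ell,t}^0 = -{\bf Z}_t^0 + (u_0\!\circ\!{\bf y}_t - u_0({\bf v})) + 2({\bf y}_t\,|\,\xi)_{\bf v}$ arising from Proposition \ref{M-0-M-1} and the Gromov-product identity $d_{\W} - b_{{\bf v}} = 2(\cdot\,|\,\xi)_{{\bf v}}$ splits the displacement into a martingale with uniformly bounded rate of increasing process, a bounded function, and a Gromov-product correction; Burkholder-Davis-Gundy applied to the first and the exponential-tail estimate of Lemma \ref{shadow lem} (as used in the proof of Lemma \ref{cov-bound-ell}) applied to the third together give $\sup_t \E|{\bf Z}_{\ell,t}^0/\sqrt{t}|^p < \infty$ for every $p$. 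A H\"older inequality then supplies the required uniform integrability.

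Finally, Lemma \ref{convergence-lemma} and the standard centered-bivariate-Gaussian identity $\E[X\exp(Y - \tfrac{1}{2}\E Y^2)] = \E[XY]$ yield
\[
\lim_{t\to+\infty}{\rm(I)}_{\ell}^t \; = \; \E[{\bf Z}_\ell^\infty\,{\bf M}^\infty] \; = \; \Sigma_{\ell}[1,2] \; = \; \lim_{t\to+\infty}\tfrac{1}{t}\E_{\overline\Q^0}({\bf Z}_{\ell,t}^0\,{\bf M}_t^0),
\]
completing the plan. The hardest step will be the uniform integrability: while the $L^2$ bound on ${\bf Z}_{\ell,t}^0/\sqrt{t}$ is immediate from Proposition \ref{Covariance-1}, upgrading it to uniform $L^p$ for some $p>2$ forces one to combine the martingale $L^p$-theory for ${\bf Z}_t^0$ with the exponential shadow-lemma tails controlling the Gromov-product error.
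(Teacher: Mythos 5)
Your proof matches the paper's argument step-for-step: Girsanov to express ${\rm(I)}_{\ell}^t$ as $\E_{\overline\Q^0}\bigl(t^{-1/2}{\bf Z}_{\ell,t}^0\,\overline{\rm M}_t^\l\bigr)$, Taylor expansion of $\log \overline{\rm M}_t^\l$ with the surplus $O(\l^2)$-terms vanishing almost surely, Proposition \ref{Covariance-1} for the joint distributional limit, Lemma \ref{convergence-lemma} to pass to expectations, and the Gaussian identity $\E\bigl[X e^{Y-\frac12\E Y^2}\bigr]=\E[XY]$ at the end. The only place you diverge is the uniform-integrability step: the paper applies H\"older at exponent $3/2$ with conjugates $(4/3,4)$, so that only the $L^2$ bound on ${\bf Z}_{\ell,t}^0/\sqrt t$ (already granted by Proposition \ref{Covariance-1}) and the sixth moment of $\overline{\rm M}_t^\l$ (your exponential-martingale reweighting) are needed --- your additional Burkholder--Davis--Gundy plus shadow-lemma machinery for arbitrary $L^p$ moments of ${\bf Z}_{\ell,t}^0/\sqrt t$ is correct but superfluous; also recall that ${\rm(I)}_\ell^t$ involves $\l=\pm 1/\sqrt t$, so you should remark (as the paper does) that the calculation with $\l=-1/\sqrt t$ is identical.
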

\begin{proof}
Let  ${\bf y}=({\bf y}_t)_{t\in \Bbb R_+}=({\bf y}_{{\bf v},
t})_{{\bf v}\in SM, t\in \Bbb R_+}$  be the diffusion process on
$(\overline\Theta, \overline\Q^{0})$ corresponding to $\LL^{0}$.
We know from Section \ref{4.1-Girsanov}   that  $\overline\P^{\l}_{{\bf
v}, t}$ is absolutely continuous with respect to
$\overline\P^{0}_{{\bf v}, t}$ with
\begin{equation*}
\frac{d \overline\P_{{\bf v}, t}^{\l}}{d\overline\P_{{\bf v},
t}^0}({\bf y}_{\bf v, [0, t]})=\E_{\Q}\left(\overline{\rm
M}^{\l}_{t}\big | \mathcal{F}({\bf y}_{\bf v, [0, t]})\right),
\end{equation*}
where \[ \overline{\rm
M}_{t}^{\l}(\u{\om})=\exp\left\{\frac{1}{2}\int_{0}^{t}\langle Z^{\l}({\bf
y}_{{\bf v}, s}(\u{\om})), {\bf u}_{{\bf v}, s}(\u{\om})
dB_s(\u{\om})\rangle_{{\bf y}_{{\bf v},
s}(\u{\om})}-\frac{1}{4}\int_{0}^{t}\|Z^{\l}({\bf y}_{{\bf v},
s}(\u{\om}))\|^2\ ds\right\}.
\]
Consequently  we have
\begin{eqnarray*}
 \lim_{t \to +\infty}\rm{(I)}_{\ell}^t&=&\lim_{t\to +\infty} \E_{\overline\P^0}\left(\frac{1}{\sqrt{t}}\left( d_{\W}(\om(0),
\om(t))-t\overline\ell_{0}\right)\frac{d\overline\P_{\om(0), t}^{\l}}{d\overline\P_{\om(0), t}^{0}}\right)\\
& =&\lim_{t\to +\infty}
\E_{\overline\Q^0}\left(\frac{1}{\sqrt{t}}\left( d_{\W}({\bf y}_0,
{\bf y}_t)-t\overline\ell_{0}\right)\cdot
e^{{\rm{(II)}}_{\ell}^{t}}\right),
\end{eqnarray*}
where \[ {\rm{(II)}}_{\ell}^{t}=\frac{1}{2}\int_{0}^{t}\langle Z^{\l}({\bf y}_{
s}({\bf v}, \un{\om})), \ {\bf u}_{s}({\bf v}, \un{\om})d
B_{s}\rangle_{{\bf y}_{ s}({\bf v},
\un{\om})}-\frac{1}{4}\int_{0}^{t}\|Z^{\l}({\bf y}_{s}({\bf v}, \un{\om}))\|^2\
ds.
\]
Let $\overline{Z}^{\l}$ be such that $Z^{\l}=\l
(Z^{\l})'_{0}+\l^2\overline{Z}^{\l}$.
 We calculate for $\l=1/\sqrt{t}$   that
\begin{eqnarray*}
{\rm{(II)}}_{\ell}^{t}&=& \frac{1}{2\sqrt{t}}\int_{0}^{t}\langle (Z^{\l})'_0({\bf y}_{s}), \ {\bf u}_{s}d B_{s}\rangle_{{\bf y}_{s}}-\frac{1}{4t}\int_{0}^{t}\|(Z^{\l})'_0({\bf y}_{s})\|^2\ ds\\
&& +\frac{1}{2t}\int_{0}^{t}\langle \overline{Z}^{\l}({\bf y}_{s}), \ {\bf u}_{s}d B_{s}\rangle_{{\bf y}_{s}}\\
&& -\frac{1}{2t^{\frac{3}{2}}}\int_{0}^{t}\langle (Z^{\l})'_0({\bf y}_{s}), \overline{Z}^{\l}({\bf y}_{s})\rangle_{{\bf y}_{s}}\ ds-\frac{1}{4t^2}\int_{0}^{t} \|\overline{Z}^{\l} ({\bf y}_{s})\|^2\ ds\\
&=:&\frac{1}{\sqrt{t}}{\bf
M}_{t}^0-\frac{1}{2t}\langle {\bf
M}_t^0, {\bf
M}_t^0\rangle_t+  \rm{(III)}_{\ell}^{t}+
\rm{(IV)}_{\ell}^{t},
\end{eqnarray*}
where both $\rm{(III)}_{\ell}^{t}$ and $\rm{(IV)}_{\ell}^{t}$ converge
almost surely to zero  as $t$ goes to infinity.   Therefore, by Proposition  \ref{Covariance-1},  the variables  $\frac{1}{\sqrt{t}} {\bf
Z}_{\ell, t}^0\cdot\overline{\rm
M}_{t}^{\l}$  converge  in distribution to  ${\bf Z}_{\ell}^{0}e^{{\bf
M}^0-\frac{1}{2}\E_{\overline\Q^0}(({\bf M}^{0})^2)},$ where $( {\bf Z}_{\ell}^{0}, {\bf M}^{0}) $ is a bivariate normal  variable with covariance matrix $\Sigma _\ell .$

Indeed, to  justify
\begin{equation*}
\lim_{t \to +\infty } \rm{(I)}_{\ell}^t=\lim_{t\to +\infty} \E_{\overline\Q^0}\left(\frac{1}{\sqrt{t}} {\bf
Z}_{\ell, t}^0\cdot \overline{\rm
M}_{t}^{\l}\right) =
\E_{\overline\Q^0}\left({\bf Z}_{\ell}^{0}e^{{\bf
M}^0-\frac{1}{2}\E_{\overline\Q^0}(({\bf M}^{0})^2)}\right),
\end{equation*}
we have by Lemma \ref{convergence-lemma} that it suffices to show
for $q=\frac{3}{2}$,
\[
\sup\limits_{t}\E_{\overline\Q^0}\left(\left|\frac{1}{\sqrt{t}} {\bf
Z}_{\ell, t}^0\cdot
\overline{\rm
M}_{t}^{\l}\right|^{q}\right)<+\infty.
\]
By H\"{o}lder's inequality, we calculate that
\begin{eqnarray*}
\left(\E_{\overline\Q^0}\left(\left|\frac{1}{\sqrt{t}} {\bf
Z}_{\ell, t}^0\cdot
\overline{\rm
M}_{t}^{\l}\right|^{\frac{3}{2}}\right)\right)^4&\leq&
\left(\E_{\overline\Q^0}\left(\frac{1}{t}\left| {\bf Z}_{\ell,
t}^0\right|^2\right)\right)^{3}\cdot \E_{\overline\Q^0}\left(e^{6{\rm{(IV)}}_{\ell}^{t}}\right)\\
&=:& \rm{(V)}_{\ell}^{t}\cdot \rm{(VI)}_{\ell}^{t},
\end{eqnarray*}
where $\rm{(V)}_{\ell}^{t}$ is uniformly bounded in $t$ by
Proposition \ref{Covariance-1}. For $\rm{(VI)}_{\ell}^{t}$,  we use the Girsanov-Cameron-Martin formula to conclude that
\begin{eqnarray*}
\rm{(VI)}_{\ell}^{t}&=& \E_{\overline\Q^0}\left(\exp
\left(3\int_{0}^{t}\langle Z^{\l}({\bf y}_{ s}({\bf v}, \un{\om})),
\ {\bf u}_{s}({\bf v}, \un{\om})d B_{s}\rangle_{{\bf y}_{ s}({\bf
v}, \un{\om})}-\frac{3}{2}\int_{0}^{t}\|Z^{\l}({\bf y}_{s}({\bf v},
\un{\om}))\|^2\ ds\right)\right)\\
&\leq& \E_{\widetilde{\Q}}\left(\exp\left(\frac{15}{2}\int_{0}^{t}\|Z^{\l}({\bf
y}_{s}({\bf v}, \un{\om}))\|^2\ ds\right)\right)
\end{eqnarray*}
for some probability measure $\widetilde{\Q}$ on $\overline\Theta$.
Using again $Z^{\l}=\l (Z^{\l})'_{0}+\l^2\overline{Z}^{\l}$ and that
$\l=1/\sqrt{t}$, we see that
\[
\int_{0}^{t}\|Z^{\l}({\bf y}_{s}({\bf v}, \un{\om}))\|^2\ ds\leq
\frac{2}{t}\int_{0}^{t} \|(Z^{\l})'_{0}\|^2\
ds+\frac{2}{t^2}\int_{0}^{t} \|\overline{Z}^{\l}\|^2\ ds,
\]
where the quantities on the right hand side of the inequality are
uniformly bounded in $t$. So $\rm{(VI)}_{\ell}^{t}$ is uniformly
bounded in $t\geq 1$ as well. Now, (\ref{pass to limit-ell}) holds. The calculation is the same with $\l = -1/\sqrt t.$ 

Finally, since $({\bf Z}_{\ell}^0,{\bf M}^{0})$ has a
bivariate normal
distribution, we have\[
\E_{\overline\Q^0}\left({\bf Z}_{\ell}^{0}e^{{\bf
M}^0-\frac{1}{2}\E_{\overline\Q^0}(({\bf
M}^{0})^2)}\right)=\E_{\overline\Q^0}({\bf Z}_{\ell}^{0}{\bf M}^0),
\]\footnote{We leave the proof of the equality as an exercise.  Let the couple $(x,y) $ have a bivariate centered normal distribution. By diagonalizing the covariance matrix, we may assume that \begin{eqnarray*} x &=& \cos \theta X - \sin \theta Y \\ y &=& \sin \theta X + \cos \theta Y, \end{eqnarray*} where $X$ and $Y$ are independent centered normal distributions with variance $\sigma ^2 $ and $\tau ^2 $ respectively. Then by independence, all $\E xy, \E y^2 $ and $\E xe^y$ are easy to compute and one finds $\E xy = \E (x e^{y - \E y^2 /2})$.} which is  $\lim_{t\to+\infty}(1/t)\E_{\overline\Q^0}({\bf Z}_{\ell,
t}^{0}{\bf M}_t^0)$ by  Proposition \ref{Covariance-1}.
\end{proof}

\subsection{The differential of the stochastic entropy}\label{sec-4.3}  For any $\l\in (-1, 1)$, let  $\overline{h}_{\l}$ be the entropy of $\LL^{\l}$.  In this subsection, we establish the following formula for $(d\overline{h}_{\l}/d\l)|_{\l=0}$.

\begin{theo}\label{differential-h}The function
$\l\mapsto \overline{h}_{\l}$   is  differentiable at $0$ with
\[
\frac{d\overline{h}_{\l}}{d\l}\Big|_{\l=0}=\lim\limits_{t\to+\infty}\frac{1}{t}\E_{\overline\Q^0}({\bf
Z}_{h, t}^{0}{\bf M}_t^0).
\]
\end{theo}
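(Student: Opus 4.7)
The proof will follow the same strategy as that of Theorem \ref{differential-ell}, replacing the leafwise distance $d_\W(\om(0),\om(t))$ by $-\ln \bfg(\om(0),\om(t))$ and the Busemann function $b_\bfv$ by the logarithm of the Martin kernel $k^\l_\bfv(\cdot,\xi)$ of $\LL^\l_\bfv$. The first step is to establish the analog of (\ref{sub-add-ell}): there exists $D_h<+\infty$ such that for all $\l$ in a small enough neighborhood of $0$ and all $t>0$,
\[
\bigl|\E_{\overline\P^\l}(-\ln\bfg(\om(0),\om(t))) - t\overline h_\l\bigr| \leq D_h,
\]
together with its $L^2$ strengthening. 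Since $\LL^\l_\bfv \ln k^\l_\bfv(\cdot,\xi) = -\|\nabla \ln k^\l_\bfv(\cdot,\xi)\|^2$, stationarity and Proposition \ref{formulas-l-h-Y-h} will give $\E_{\overline\P^\l}(\ln k^\l_\bfv(\om(t),\xi)) = -t\overline h_\l$, and it then remains to compare $-\ln\bfg(\om(0),\om(t))$ with $-\ln k^\l_\bfv(\om(t),\xi)$ in $L^2$, uniformly in $\l$. This will be the main technical obstacle: unlike the linear-drift case, where Gromov hyperbolicity alone suffices (Lemma \ref{cov-bound-ell}), here one must combine the shadow lemma (Lemma \ref{shadow lem}) and the geodesic-tracking estimate (Lemma \ref{travel-along-geodesic}) with Ancona's comparison (\ref{Green-Matin-kernel-function}) and the almost-additivity of the Green metric along geodesics (Lemma \ref{triangle-along geodesic}). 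Since all the constants in Lemmas \ref{GM-classical Harnack}--\ref{triangle-separated by cones} and in the shadow lemma are uniform over $\LL^\l$, the resulting $L^2$ bound will be uniform in $\l$.

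With that bound in hand, the remainder will mirror the proof of Lemma \ref{ell-Y-2}. Setting $\l=\pm 1/\sqrt t$, the bound reduces the statement to the computation of
\[
\lim_{t\to +\infty} (\mathrm{I})^t_h, \qquad (\mathrm{I})^t_h := \E_{\overline\P^\l_t}\!\left(\tfrac{1}{\sqrt t}\bigl(-\ln\bfg(\om(0),\om(t)) - t\overline h_0\bigr)\right),
\]
and the Girsanov--Cameron--Martin formula (\ref{Gir-P}) rewrites this as $(\mathrm{I})^t_h = \E_{\overline\Q^0}\bigl(\tfrac{1}{\sqrt t}{\bf Z}_{h,t}^0\cdot\overline{\rm M}_t^\l\bigr)$. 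Expanding the exponent of $\overline{\rm M}_t^\l$ with $Z^\l = \l(Z^\l)'_0 + \l^2 \overline Z^\l$ and $\l=1/\sqrt t$, the two lower-order terms become negligible almost surely, and Proposition \ref{Covariance-2} yields joint convergence in distribution of $(\tfrac{1}{\sqrt t}{\bf Z}_{h,t}^0,\tfrac{1}{\sqrt t}{\bf M}_t^0)$ to a bivariate centered normal $({\bf Z}_h^0,{\bf M}^0)$. Consequently $\tfrac{1}{\sqrt t}{\bf Z}_{h,t}^0\cdot\overline{\rm M}_t^\l$ converges in distribution to ${\bf Z}_h^0 \exp\bigl({\bf M}^0 - \tfrac12 \E_{\overline\Q^0}(({\bf M}^0)^2)\bigr)$.

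To pass from distributional convergence to convergence in expectation I will replicate the H\"older plus Girsanov exponential-moment control of Lemma \ref{ell-Y-2}: Proposition \ref{Covariance-2} controls the $L^2$ norm of $\tfrac{1}{\sqrt t}{\bf Z}_{h,t}^0$, while $\|Z^\l\|^2 = O(1/t)$ gives a uniform exponential-moment bound, so that $\sup_t \E_{\overline\Q^0}\bigl(\bigl|\tfrac{1}{\sqrt t}{\bf Z}_{h,t}^0 \cdot \overline{\rm M}_t^\l\bigr|^{3/2}\bigr) < +\infty$. Lemma \ref{convergence-lemma} then gives
\[
\lim_{t\to +\infty} (\mathrm{I})^t_h = \E_{\overline\Q^0}\!\left({\bf Z}_h^0 \exp\bigl({\bf M}^0 - \tfrac12 \E_{\overline\Q^0}(({\bf M}^0)^2)\bigr)\right) = \E_{\overline\Q^0}({\bf Z}_h^0 {\bf M}^0),
\]
the last equality being the bivariate centered normal identity invoked in the footnote of Lemma \ref{ell-Y-2}; Proposition \ref{Covariance-2} identifies this with $\lim_t (1/t)\E_{\overline\Q^0}({\bf Z}_{h,t}^0{\bf M}_t^0)$. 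The same computation with $\l=-1/\sqrt t$ will show that both one-sided limits agree, yielding both the differentiability of $\overline h_\l$ at $0$ and the claimed formula.
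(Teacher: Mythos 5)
Your proposal misses the central difficulty of the entropy case and is therefore structurally incomplete. When you replace the leafwise distance by $-\ln\bfg$, the quantity you control is not $\overline h_\l$. Indeed, if $\bfg$ is the $\LL^0$-Green function (as it must be for your later Girsanov step, since ${\bf Z}_{h,t}^0$ is built from $\bfg^0$), then under $\overline\P^\l$ the pathwise limit of $-\frac1t\ln\bfg^0(\om(0),\om(t))$ is the \emph{auxiliary} quantity $\overline h_{\l,0}$, not $\overline h_\l$. Correspondingly, the relation $\LL^\l_\bfv(\ln k^0_\bfv)$ (not $\ln k^\l_\bfv$) is what stationarity turns into $\E_{\overline\P^\l}(\ln k_\bfv^0(\om(t),\xi)) = -t\overline h_{\l,0}$, and the $L^2$ comparison afforded by Lemmas \ref{GM-classical Harnack}--\ref{triangle-separated by cones} (they all concern a single operator's Green function and Martin kernel) is between $d_{G^0_\bfv}$ and $\ln k^0_\bfv$ — it cannot compare $\bfg^0$ with $k^\l_\bfv$. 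So the bound you would actually obtain is $|\E_{\overline\P^\l}(d_{G^0_\bfv}(\om(0),\om(t))) - t\overline h_{\l,0}| \le D_h$, with $\overline h_{\l,0}$ in place of $\overline h_\l$, and you are left with an unhandled gap $\overline h_\l - \overline h_{\l,0}$.

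That gap is exactly why the paper splits $(\overline h_\l - \overline h_0)/\l = {\rm(I)}_h^\l + {\rm(II)}_h^\l$ with ${\rm(I)}_h^\l := (\overline h_\l - \overline h_{\l,0})/\l$ and ${\rm(II)}_h^\l := (\overline h_{\l,0} - \overline h_0)/\l$. Your sketch only ever engages with ${\rm(II)}_h^\l$, whose treatment (Lemma \ref{cov-bound-h}, Girsanov, Proposition \ref{Covariance-2}, the H\"older/Novikov moment bound, Lemma \ref{convergence-lemma}) matches your second half. But you have no argument for ${\rm(I)}_h^\l\to 0$, and this is not parallel to anything in the drift case: there one simply has $\E_{\overline\P^\l}(b_\bfv(\om(t))) = t\overline\ell_\l$ exactly, whereas here the change of Green metric from $G^0_\bfv$ to $G^\l_\bfv$ cannot be lumped into the $L^2$ comparison. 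The paper's proof of $\lim_{\l\to 0}{\rm(I)}_h^\l = 0$ is a separate lemma based on the integral formulas (\ref{h-lambda-lambda-integral formula})--(\ref{h-lambda-0-integral formula}), the elementary inequality $-\ln a \le a^{-1}-1$ for the upper bound, and a relative-entropy / conditional-Girsanov estimate ($\ln$ of a ratio of density heat kernels, disintegrated as in (\ref{disintegration of p-t-x-y})--(\ref{ln-p-0-lambda})) for the lower bound, which yields $\overline h_\l - \overline h_{\l,0} = O(\l^2)$. You would need to supply this piece before your otherwise-correct second half can conclude.
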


Since  $c_i, 0\leq i\leq 8,$ and $\alpha_2, \alpha_3$ of Lemmas \ref{GM-classical Harnack}--\ref{triangle-separated by cones} depend
only on the geometry of $\M$ and  the coefficients of $\LL$, we  may 
assume the constants are such that  these lemmas hold true for every couple
$\LL^{\l}, {\bf G}^{\l}$ with $\l\in (-1, 1)$.

For each  $\l \in (-1, 1)$, by Lemma \ref{almost-triangle-inequ} and the Subadditive Ergodic Theorem we obtain a constant $\overline
    h_{\l, 0}$ such that for  $\overline\P^{\l}$-a.e. $\om\in \Om_+$,
\begin{equation*}\lim\limits_{t\to
+\infty}-\frac{1}{t} \ln G_{{\bf v}}^{0}({\om}(0),
{\om}(t))=\overline h_{\l, 0}, \ \mbox{where}\ {\bf v}=\om(0).
\end{equation*}
For $\overline\P^{\l}$-a.e. $\om\in \Om_+$, since ${\om}(t)$ converges to a point in $\pp\M$
as $t$ tends to infinity, we also have
\begin{equation}\label{h_lambda_0-definition}
\lim\limits_{t\to +\infty}\frac{1}{t} d_{G_{{\bf
v}}^{0}}({\om}(0), {\om}(t))=\overline h_{\l, 0}. 
\end{equation}
The equation (\ref{h_lambda_0-definition})  continues to hold if we  replace  the pathwise limit by its expectation.   Actually, since $\overline{\P}^{\lambda}$ is a probability on $\Om_+$, using (\ref{d-G-d-greater-1}),  we see
that
\[
\E_{\overline\P^{\l}}\left(\sup_{t\in [0, 1]} [d_{G_{{\bf
v}}^{0}}({\om}(0), {\om}(t))]^2\right)\leq
2\alpha_2^2\E_{\overline\P^{\l}}\left(\sup_{t\in
[0,1]}[d_{\W}(\om(0), \om(t))]^2\right) +8(\ln c_2)^2<+\infty. 
\]
So, using
(\ref{almost-triangle-inequ}), we have by the Subadditive Ergodic
Theorem  that  for $\l \in (-1, 1)$,  
\[\lim\limits_{t\to
+\infty}\frac{1}{t}\E_{\overline\P^{\l}}\left( d_{G_{{\bf
v}}^{0}}({\om}(0), {\om}(t))\right)=\overline h_{\l, 0}.\]

The main strategy to prove Theorem \ref{differential-h} is to
split $(\overline h_{\l}-\overline h_0)/\l$ into two terms:
\begin{eqnarray*}
\frac{1}{\l}(\overline h_{\l}-\overline h_0)=\frac{1}{\l}(\overline
h_{\l}-\overline h_{\l, 0})+\frac{1}{\l}(\overline h_{\l,
0}-\overline h_{0})=:{\rm{(I)}}_{h}^{\l} +
{\rm{(II)}}_{h}^{\l},\label{h-lambda-I-II}
\end{eqnarray*}
then show $\lim_{\l\to 0}{\rm{(I)}}_{h}^{\l}=0$ and  $\lim_{\l\to
0}{\rm{(II)}}_{h}^{\l}=\lim_{t\to+\infty}(1/t)\E_{\overline\Q^0}({\bf
Z}_{h, t}^{0}{\bf M}_t^0)$
successively. Since $d_{G_{{\bf
v}}^{0}}$  behaves in the same way as a distance function, the terms $\overline h_{\l, 0}$ and $\overline h_{0}$ are the `linear drifts' of the diffusions with respect these `distances' in distributions $\overline \P^{\l}$ and $\overline \P^{0}$, respectively. Hence $\lim_{\l\to 0}{\rm{(II)}}_{h}^{\l}$ can be evaluated  by following the evaluation of  $(d\overline{\ell}_{\l}/ d\l)|_{\l=0}$  in Section \ref{sec-4.2}. The new term  ${\rm{(I)}}_{h}^{\l}$  represents the contribution of the of change of Green `metric' between $G_{\bfv}^0 $ and $G_{\bfv}^\l$. It turns out that this contribution is of order $\l^2$ for $C^1$ drift change of $\LL^{0}$. Consequently, we have the following.

\begin{lem}
  $\lim_{\l\to 0}{\rm{(I)}}_{h}^{\l}= 0$.\end{lem}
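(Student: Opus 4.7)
The plan is to identify $\overline h_\l - \overline h_{\l,0}$ with a single integral visibly of order $\l^2$, so that dividing by $\l$ gives $\lim_{\l\to 0}{\rm (I)}_h^\l=0$.

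First, I would rerun the martingale argument behind Proposition \ref{formulas-l-h-Y-h} along $\LL^\l$-diffusion paths (with invariant measure $\wt{\bf m}^\l$) but applied to $\ln k^0_\bfv(\cdot,\xi)$ in place of $\ln k^\l_\bfv(\cdot,\xi)$. Since $\LL^\l\ln k^0 = \LL^0\ln k^0 + \langle Z^\l,\nabla\ln k^0\rangle = -\|\nabla\ln k^0\|^2 + \langle Z^\l,\nabla\ln k^0\rangle$, ergodicity of $(\overline\P^\l,\sigma_t)$ yields
\[
\overline h_{\l,0} \;=\; \int\|\nabla\ln k^0\|^2\,d\wt{\bf m}^\l - \int\langle Z^\l,\nabla\ln k^0\rangle\,d\wt{\bf m}^\l,
\]
and combined with $\overline h_\l=\int\|\nabla\ln k^\l\|^2\,d\wt{\bf m}^\l$ from Proposition \ref{formulas-l-h-Y-h},
\[
\overline h_\l-\overline h_{\l,0} \;=\; \int\bigl(\|\nabla\ln k^\l\|^2-\|\nabla\ln k^0\|^2+\langle Z^\l,\nabla\ln k^0\rangle\bigr)\,d\wt{\bf m}^\l.
\]

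Next, set $\ln q^\l:=\ln k^\l-\ln k^0$ leafwise. The identities $\LL^\l k^\l=0$ and $\LL^\l k^0 = k^0\langle Z^\l,\nabla\ln k^0\rangle$ give the leafwise PDE
\[
\LL^\l\ln q^\l + 2\langle\nabla\ln k^0,\nabla\ln q^\l\rangle + \|\nabla\ln q^\l\|^2 + \langle Z^\l,\nabla\ln k^0\rangle = 0,
\]
which, together with $\|\nabla\ln k^\l\|^2-\|\nabla\ln k^0\|^2 = 2\langle\nabla\ln k^0,\nabla\ln q^\l\rangle + \|\nabla\ln q^\l\|^2$, collapses the previous display to the clean identity
\[
\overline h_\l-\overline h_{\l,0} \;=\; -\int \LL^\l\ln q^\l\,d\wt{\bf m}^\l.
\]
To extract the $O(\l^2)$ rate, I would exploit the $C^1$-dependence of $Z^\l$ on $\l$ with $Z^0=0$: perturbation of the parametrised elliptic equation $\LL^\l k^\l=0$, with uniform leafwise control supplied by Ancona's inequality, gives $\ln q^\l=\l\psi+O(\l^2)$ in a suitable H\"older class, where $\psi$ satisfies the linearised PDE $\LL^0\psi + 2\langle\nabla\ln k^0,\nabla\psi\rangle + \langle Z',\nabla\ln k^0\rangle=0$ with $Z':=(dZ^\l/d\l)|_{\l=0}$. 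Thus $\LL^\l\ln q^\l = \l\,\LL^0\psi + O(\l^2)$, and the $\LL^0$-invariance of $\wt{\bf m}^0$ -- in the form $\int \LL^0\psi\,d\wt{\bf m}^0=0$, obtained by integrating against the density $k^0$ of $\wt{\bf m}^0$ and invoking $(\LL^0)^*k^0=0$ from Proposition \ref{harmonic-mea}(iii) -- kills the $O(\l)$ contribution. Hence $\overline h_\l-\overline h_{\l,0}=O(\l^2)$ and ${\rm (I)}_h^\l\to 0$.

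The principal obstacle lies entirely in this last step: establishing the $C^1$-regularity of $\nabla\ln k^\l$ in $\l$ with uniform bounds on $SM$, and justifying $\int\LL^0\psi\,d\wt{\bf m}^0=0$ despite $\psi$ being only leafwise defined and $G$-quasi-invariant on $\M\times\pp\M$ (so not a bounded function on $SM$). The regularity can be extracted from the resolvent identity for $\LL^\l$ together with Ancona's boundary Harnack estimates, while the invariance identity is obtained by a cut-off approximation along the leaves, the boundary contributions being controlled by the mixing of Proposition \ref{mixing} and the exponential decay of the Martin kernel estimates at infinity.
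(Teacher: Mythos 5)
Your formal algebra is correct and more structured than what the paper does: the exact identity $\overline h_\l-\overline h_{\l,0}=-\int\LL^\l\ln q^\l\,d\wt{\bf m}^\l$ with $q^\l=k^\l_{\bf v}/k^0_{\bf v}$ is a genuine observation not present in the paper, and the linearised equation $\LL^0\psi+2\langle\nabla\ln k^0_{\bf v},\nabla\psi\rangle+\langle Z',\nabla\ln k^0_{\bf v}\rangle=0$ is the right one (indeed it collapses the remaining integral to $\int\LL^0\psi\,d\wt{\bf m}^0$, which must vanish for the lemma to hold). The trouble is that your route runs directly into the two problems you flag at the end, and neither of your proposed resolutions is adequate. (i) The $C^1$-in-$\l$ dependence of $\nabla\ln k^\l_{\bf v}$ with estimates \emph{uniform over the leaf bundle} is precisely the regularity question the paper says in the introduction it is designed to avoid: ``we have the problem of showing the regularity in $\l$ of the gradient of the Martin kernels. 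We avoid this by using an idea from Mathieu\ldots{}''. A resolvent identity for $\LL^\l$ does not come for free here because the relevant resolvent is a leafwise Green operator on a noncompact leaf; getting $C^1$ control of its boundary values uniformly in ${\bf v}\in SM$ would require an argument on the scale of Hamenst\"adt's H\"older-structure theorems, and that work is not contained in your sketch. (ii) The identity $\int\LL^0\psi\,d\wt{\bf m}^0=0$ cannot be obtained the way you suggest. The obstruction is \emph{spatial}, not dynamical: $\psi$ (equivalently $\Psi$ with $\psi_{\bf v}(y,\xi)=\Psi(y,\xi)-\Psi(\p({\bf v}),\xi)$) is only a $G$-cocycle, so integrating $(\LL^0)^*k^0=0$ by parts over a fundamental domain $M_0$ leaves boundary terms on $\pp M_0\times\pp\M$ involving $\nabla\Psi$ and the jump of $\Psi$ across $\pp M_0$. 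A cut-off function $\chi$ along the leaf produces terms in $\nabla\chi$ that have no reason to vanish, and the mixing of the shift semi-flow in Proposition~\ref{mixing} is a statement about the time evolution of the diffusion, not about cancellation of those spatial boundary contributions; invoking it here does not close the argument.

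By contrast, the paper's proof circumvents both issues with a short self-contained probabilistic argument: $\limsup_{\l\to0}{\rm(I)}_h^\l\leq0$ follows from the elementary inequality $-\ln a\leq a^{-1}-1$ applied to the ratio of heat kernels, and the matching lower bound is obtained by conditioning on the endpoint, writing $\ln(p^0_{\bf v}(t,x,y)/p^\l_{\bf v}(t,x,y))\geq\E_{\P^\l_{{\bf v},{\bf w},t}}\bigl(\ln(d\P^0_{{\bf v},t}/d\P^\l_{{\bf v},t})\bigr)$ by Jensen, and then observing that the stochastic-integral part of the Girsanov density is a centered martingale so that the conditional Kullback--Leibler term is bounded below by $-\tfrac14(\l C)^2 t$. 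This requires no regularity of Martin kernels in $\l$ and no leafwise integration by parts. Your approach, if the two gaps could be filled, would give additional information (an exact formula for $\overline h_\l-\overline h_{\l,0}$), but as written it is not a proof of the lemma: the crux steps are asserted, not established, and they are exactly the difficulties the paper's strategy was chosen to avoid.
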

  
  \begin{proof}
  For each $\lambda\in (-1, 1)$, recall that by Proposition \ref{prop-h-in p-g} we have for $\overline{\P}^{\lambda}$-a.e. $\om\in \Omega_+$,  $\om(0)=:{\bf v}$, 
  \begin{eqnarray}
  \overline{h}_{\lambda}&=&\lim\limits_{t\to +\infty}-\frac{1}{t}\ln p_{{\bf v}}^{\l}(t, \om(0), \om(t))\notag\\
  &=& \lim\limits_{t\to +\infty}-\frac{1}{t}\int \left(\ln p_{\bf v}^{\l}(t, x, y)\right)  p_{\bf v}^{\l}(t, x, y)\ dy. \label{h-lambda-lambda-integral formula}
   \end{eqnarray}
   Similarly, by the same proof as for Proposition   \ref{prop-h-in p-g}, we have that 
   \begin{equation}\label{h-lambda-0-inf-s}
   \overline h_{\l, 0}=\inf_{s>0}\{\overline h_{\l, 0}(s)\},
   \end{equation}
  where  for $\overline{\P}^{\lambda}$-a.e. $\om\in \Omega_+$,  $\om(0)=:{\bf v}$, 
    \begin{eqnarray}
  \overline{h}_{\lambda, 0}(s)&:=&\lim\limits_{t\to +\infty}-\frac{1}{t}\ln p_{{\bf v}}^{0}(st, \om(0), \om(t))\notag\\
  &=& \lim\limits_{t\to +\infty}-\frac{1}{t}\int \left(\ln p_{\bf v}^{0}(st, x, y)\right)  p_{\bf v}^{\l}(t, x, y)\ dy.\label{h-lambda-0-integral formula}
   \end{eqnarray}
Since we are considering  the pathwise limit on $p^0_{\bf v}$ with respect to $\overline{\P}^{\l}$, the infimum in (\ref{h-lambda-0-inf-s}) is not necessarily obtained at $s=1$.  But  we still have $\limsup_{\l\to 0+}{\rm{(I)}}_{h}^{\l}\leq 0$ since
\begin{eqnarray*}
\overline{h}_{\l}-\overline{h}_{\l, 0}&=&\sup_{s>0}\left\{\lim\limits_{t\to +\infty}-\frac{1}{t}\int \left(\ln \frac{p_{\bf v}^{\l}(t, x, y)}{p_{\bf v}^0(st, x, y)}\right)\ p_{\bf v}^{\l}(t, x, y)\ dy\right\}\\
&\leq&\sup_{s>0}\left\{\lim\limits_{t\to +\infty}\frac{1}{t}\int  \frac{p_{\bf v}^0(st, x, y)}{p_{\bf v}^{\l}(t, x, y)}\ p_{\bf v}^{\l}(t, x, y)\ dy\right\}\notag\\
&=& 0,\notag
\end{eqnarray*}
where we use $-\ln a \leq a^{-1}-1$ for $a>0$  to derive the second inequality.

To show  $\liminf_{\l\to 0+}{\rm{(I)}}_{h}^{\l}\geq 0$,  we observe that  by (\ref{h-lambda-lambda-integral formula}), (\ref{h-lambda-0-inf-s}) and (\ref{h-lambda-0-integral formula}), 
\begin{equation}\label{h-lambda-0-lambda-lb}\overline
h_{\l}-\overline h_{\l, 0}\geq \overline
h_{\l}-\overline h_{\l, 0}(1)=\lim\limits_{t\to +\infty}-\frac{1}{t}\int \left(\ln \frac{p_{\bf v}^{\l}(t, x, y)}{p_{\bf v}^0(t, x, y)}\right)\ p_{\bf v}^{\l}(t, x, y)\ dy.\end{equation}
We proceed to estimate $\ln (p_{\bf v}^{\l}(t, x, y)/p_{\bf v}^0(t, x, y))$ using  the Girsanov-Cameron-Martin formula in  Section  \ref{4.1-Girsanov}.  For  ${\bf v}, {\bf w}\in S\M$,  let $\Omega_{{\bf v}, {\bf w}, t}$ be the collection of $\om\in \Om_+$ such that  $\om(0)={\bf v}, \om(t)={\bf w}$. Since the space $\Om_+$ is separable, the measure $\P^{\l}$ disintegrates into a class of conditional probabilities $\{\P_{{\bf v, w}, t}^{\l}\}_{{\bf v}, {\bf w}\in S\M}$ on  $\Omega_{{\bf v}, {\bf w}, t}$'s  such that
\begin{equation}\label{disintegration of p-t-x-y}
\E_{\P_{{\bf v, w}, t}^{\l}}\left(\frac{d\P_{t}^{0}}{d\P_{t}^{\l}}\right)=\frac{ {\bf p}^{0}(t, {\bf v}, {\bf w})}{ {\bf p}^{\l}(t, {\bf v}, {\bf w})}.
\end{equation}
Letting ${\bf v}=(x, \xi), {\bf w}=(y, \xi)$ in (\ref{disintegration of p-t-x-y}), we obtain
\begin{equation}\label{ln-p-0-lambda}
\ln \frac{p^0_{\bf v}(t, x, y)}{p^{\l}_{\bf v}(t, x, y)}=\ln\left(\E_{\P_{{\bf v, w}, t}^{\l}}\left(\frac{d\P_{{\bf v}, t}^{0}}{d\P_{{\bf v}, t}^{\l}}\right)\right)\geq  \E_{\P_{{\bf v, w}, t}^{\l}}\left(\ln\left(\frac{d\P_{{\bf v}, t}^{0}}{d\P_{{\bf v}, t}^{\l}}\right)\right).
\end{equation}
Recall that
\begin{equation*}
\frac{d \overline\P_{{\bf v}, t}^{0}}{d\overline\P_{{\bf v},
t}^{\l}}({\bf y}^{\l}_{{\bf v}, [0, t]})=\E_{\Q^{\l}}\left(\overline{\rm
M}^{\l}_{t}\big | \mathcal{F}({\bf y}^{\l}_{{\bf v}, [0, t]})\right),
\end{equation*}
where ${\bf y}^{\l}=({\bf y}^{\l}_{{\bf v},
t})_{{\bf v}\in SM, t\in \Bbb R_+}$ is the diffusion process on
$(\overline\Theta, \overline\Q^{\l})$ corresponding to $\LL^{\l}$ and \[ \overline{\rm
M}_{t}^{\l}(\u{\om})=\exp\left\{-\frac{1}{2}\int_{0}^{t}\langle Z^{\l}({\bf
y}^{\l}_{{\bf v}, s}(\u{\om})), {\bf u}^{\l}_{{\bf v}, s}(\u{\om})
dB_s(\u{\om})\rangle_{{\bf y}^{\l}_{{\bf v},
s}(\u{\om})}-\frac{1}{4}\int_{0}^{t}\|-Z^{\l}({\bf y}^{\l}_{{\bf v},
s}(\u{\om}))\|^2\ ds\right\}.
\]
So we can further deduce from (\ref{ln-p-0-lambda}) that
\begin{eqnarray*}
& &\ln \frac{p^0_{\bf v}(t, x, y)}{p^{\l}_{\bf v}(t, x, y)}\\ && \geq  \E_{\P_{{\bf v, w}, t}^{\l}}\left(\E_{\Q^{\l}}\left( (-\frac{1}{2}\int_{0}^{t}\langle Z^{\l}({\bf
y}^{\l}_{{\bf v}, s}), {\bf u}^{\l}_{{\bf v}, s}
dB_s\rangle_{{\bf y}^{\l}_{{\bf v},
s}}-\frac{1}{4}\int_{0}^{t}\|Z^{\l}({\bf y}^{\l}_{{\bf v},
s})\|^2\ ds )\big| \mathcal{F}({\bf y}^{\l}_{{\bf v}, [0, t]})\right)\right)\notag\\
&&= -\E_{\P_{{\bf v, w}, t}^{\l}}\left(\E_{\Q^{\l}}\left( (\frac{1}{4}\int_{0}^{t}\|Z^{\l}({\bf y}^{\l}_{{\bf v},
s})\|^2\ ds )\big | \mathcal{F}({\bf y}^{\l}_{{\bf v}, [0, t]})\right)\right)\; 
\geq -\frac{1}{4}(\l C)^2 t,
\end{eqnarray*}
where the equality holds true since $\int_{0}^{t}\langle Z^{\l}({\bf
y}^{\l}_{{\bf v}, s}), {\bf u}^{\l}_{{\bf v}, s}
dB_s\rangle_{{\bf y}^{\l}_{{\bf v},
s}}$ is a centered martingale  and $C$ is some constant which bounds the norm of $dZ^{\l}/d\l$.  Reporting this in (\ref{h-lambda-0-lambda-lb}) gives
\[
\liminf_{\l\to 0+}{\rm{(I)}}_{h}^{\l}=\liminf_{\l\to 0+}\frac{1}{\l}\left(\overline{h}_{\l}-\overline{h}_{\l, 0}\right)\geq -\frac{1}{4}\limsup_{\l\to 0+}(\l C^{2}) =0.
\]
We prove in the same way (with switched  arguments) that  $\lim_{\l\to 0-}{\rm{(I)}}_{h}^{\l}= 0$.
   \end{proof}

The analysis of ${\rm{(II)}}_{h}^{\l}$ is analogous to that was used
for $(d\overline{\ell}_{\l}/d\l)|_{\l=0}$.
We first  find a finite number  $D_{h}$ such
that for $\l\in[-\delta_1, \delta_1]$ (where
$\delta_1$ is from Lemma \ref{shadow lem})  and all $t\in \Bbb R_+$,\begin{equation}\label{sub-add-h-new}\big|\E_{\overline\P^{\l}}\left(d_{G_{{\bf v}}^{0}}({\om}(0),
{\om}(t))\right)-t\overline h_{\l, 0}\big|\leq
D_{h}.\end{equation}
Indeed, using again the fact that the  $\LL^{\l}$-diffusion has leafwise infinitesimal generator $\LL^{\l}_{\bf v}$ and $\P^{\l}$ is stationary,  we have
\begin{eqnarray*}
\E_{\overline\P^{\l}}\left(
-\ln k_{{\bf
v}}^{0}({\om}(t), \xi)\right)&=&\E_{\overline\P^{\l}}\left(-\int_{0}^{t}\frac{\pp}{\pp s}(\ln k_{{\bf
v}}^{0}({\om}(s), \xi))\ ds\right)\\
  &=&\E_{\overline\P^{\l}}\left(-\int_{0}^{t}\LL^{\l}_{\bf v}(\ln k_{{\bf
v}}^{0})({\om}(s))\ ds\right)\\
  &=&  -t \int_{\scriptscriptstyle{M_0\times \pp\M}}\LL^{\l}_{\bf v}(\ln k_{{\bf
v}}^{0})\ d\wt{{\bf m}}^{\l}\\
  &=& t\overline{h}_{\l, 0}.
\end{eqnarray*}
So (\ref{sub-add-h-new}) will be a simple consequence of the following lemma.

\begin{lem}\label{cov-bound-h}
  There exists a finite number  $\wt{D}_3$ such that for all $\l\in [-\delta_1,
\delta_1]$  and
$t\in \Bbb R_+$,
\[
\E_{\overline\P^{\l}}\left(\left|d_{G_{{\bf
v}}^{0}}({\om}(0), {\om}(t))+\ln k_{{\bf
v}}^{0}({\om}(t), \xi)\right|^2\right)<\wt{D}_3.
\]
\end{lem}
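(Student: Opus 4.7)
My plan is to mimic the strategy of Lemma~\ref{cov-bound-ell}: I will reduce the quantity inside the expectation to a bounded constant plus a multiple of the Gromov product $(\omega(t)\,|\,\xi)_x$, and then invoke the uniform $L^2$ control on this Gromov product that is already established in the proof of Lemma~\ref{cov-bound-ell}.

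The first (and main) step is to extract, from the Harnack--Ancona chain developed in the proof of Proposition~\ref{formulas-l-h-Y-h}, constants $C_1, C_2 > 0$ such that
\[
\bigl|\ln G^0_{\bf v}(x, \omega(t)) - \ln k^0_{\bf v}(\omega(t), \xi)\bigr|
\;\leq\; C_1 + C_2\, d(x, z_t)
\]
holds for all $\bfv = (x,\xi) \in S\M$ and $\omega(t) \in \M$ with $d(x,\omega(t)) > 1$, where $z_t$ denotes the point on the geodesic ray $\gamma_{\omega(t), \xi}$ closest to $x$. The linear-in-distance term arises from applying the classical Harnack inequality~(\ref{LY-harnack}) along a geodesic from $x$ to $z_t$ twice: once to pass from $G^0_{\bf v}(x, \omega(t))$ to $G^0_{\bf v}(z_t, \omega(t))$, and once to pass from the Martin kernel $k^0_{(z_t,\xi)}(\omega(t),\xi)$ to $k^0_{\bf v}(\omega(t), \xi)$. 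The constant $C_1$ absorbs the bounded factors from Ancona's inequality (Lemma~\ref{GM-Ancona inequality}) and from the $O(1)$ discrepancy between $d_{G^0_{\bf v}}$ and $-\ln G^0_{\bf v}$. The case $d(x,\omega(t)) \leq 1$ is handled separately: both $d_{G^0_\bfv}(\omega(0), \omega(t))$ and $\ln k^0_\bfv(\omega(t),\xi)$ are bounded on this range, uniformly in $\bfv$ and in $\l \in [-\delta_1, \delta_1]$, by Harnack and the two-sided estimates~(\ref{equ-Green-d-relation}).

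The remainder of the argument is then routine. By the Gromov $\delta$-hyperbolicity of $\M$ (negative sectional curvature), the distance from $x$ to any geodesic $\gamma_{\omega(t),\xi}$ agrees with the Gromov product $(\omega(t)\,|\,\xi)_x$ up to a bounded error, so $d(x, z_t) \leq (\omega(t)\,|\,\xi)_x + C_3$. Moreover, the proof of Lemma~\ref{cov-bound-ell} in fact establishes a uniform bound $\E_{\overline\P^{\l}}\!\bigl[(\omega(t)\,|\,\xi)_x^2\bigr] \leq D_3/4$ for all $\l \in [-\delta_1, \delta_1]$ and $t \geq 0$, since one computes directly that $|d_{\W}(\omega(0), \omega(t)) - b_{\omega(0)}(\omega(t))| = 2(\omega(t)\,|\,\xi)_x$. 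Assembling these three ingredients via $(a+b)^2 \leq 2a^2 + 2b^2$ yields a suitable $\wt{D}_3$. The only delicate point will be to verify that the Harnack and Ancona constants can indeed be chosen independently of $\l \in [-\delta_1, \delta_1]$; this is exactly what was arranged in the paragraph preceding the lemma, where the constants $c_0, \ldots, c_8$ and $\alpha_2, \alpha_3$ were normalized to work uniformly over this range of $\l$.
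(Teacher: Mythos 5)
Your overall strategy is exactly the one the paper follows: reduce the quantity inside the expectation to a constant plus a multiple of $d(x, z_t)$, pass to the Gromov product via $\delta$-hyperbolicity (the paper's equation~(\ref{delta-hyper})), and invoke the uniform $L^2$ bound on $(\omega(t)\,|\,\xi)_x$ that Lemma~\ref{cov-bound-ell} already supplies (your identity $|d_{\W}(\omega(0),\omega(t)) - b_{\omega(0)}(\omega(t))| = 2(\omega(t)|\xi)_x$ is correct, since $d \geq b$ always). The observation about uniformity in $\lambda$ via the normalized constants $c_0,\ldots,c_8$, $\alpha_2$, $\alpha_3$ is also on point.

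However, the justification you give for the intermediate bound $|\ln G^0_{\bf v}(x,\omega(t)) - \ln k^0_{\bf v}(\omega(t),\xi)| \leq C_1 + C_2\, d(x,z_t)$ has a real gap. You derive it by Harnack from $x$ to $z_t$, then Ancona at $z_t$, then Harnack again. This chain collapses on the event $z_t(\omega) = \omega(t)$ (which has positive $\overline\P^{\l}$-probability for every finite $t$, since the diffusion may initially drift towards $\xi$): the first Harnack step would compare $G^0_{\bf v}(x,\omega(t))$ to $G^0_{\bf v}(\omega(t),\omega(t))$, which is infinite, and there is no cone at $z_t=\omega(t)$ separating $\omega(t)$ from points of $\gamma_{\omega(t),\xi}$ beyond it, so Ancona's inequality does not apply in the form you use. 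The paper isolates exactly this configuration as its case $A'_3(t)$ and treats it by passing to the two-sided extension $\g'_{\omega(t),\xi}$ and its nearest point $z'_t \preceq z_t$, then bounding both $d_{G^0_{\bf v}}(\omega(0),\omega(t))$ and $|\ln k^0_{\bf v}(\omega(t),\xi)|$ directly via Lemmas~\ref{triangle-along geodesic} and~\ref{triangle-separated by cones} and the crude comparison~(\ref{d-G-d-greater-1}), arriving at a bound linear in $(\omega(t)|\xi)_{\omega(0)}$. Your claimed inequality is in fact true in this case too, but the Harnack--Ancona chain you cite is not what proves it; a separate argument (such as the paper's) is required. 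Similarly, when $d(x,z_t) \leq 1 < d(x,\omega(t))$ (the paper's case $A'_1$) the Ancona step at $z_t$ needs a word of care, though there all terms are trivially bounded. Filling in the $z_t = \omega(t)$ case would complete your proof.
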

\begin{proof} For ${\bf v}=(x, \xi)\in S\M$,  $\om \in \Omega_+$ starting from ${\bf v}$, $t\geq 0$,  we continue to denote $\om$ its projection to $\M$.  Let  $z_t({\om})$ be the point on the geodesic ray $\g_{{\om}(t), \xi}$
closest to $x$.  We will divide $\Omega_+$ into four events
$A'_i(t)$, $1\leq i\leq 4$, and show there exists a finite
$\wt{D}'_3$ such that
\[
{\bf I}_i:=\E_{\overline\P^{\l}}\left(\left|d_{G_{{\bf
v}}^{0}}({\om}(0), {\om}(t))+\ln k_{{\bf
v}}^{0}({\om}(t), \xi)\right|^2\cdot {\bf
1}_{A'_i(t)}\right)\leq \wt{D}'_3.
\]

Let $A'_1(t)$ be the event that $d({\om}(0), {\om}(t))>1$ and
$d({\om}(0), z_t({\om}))\leq 1$. For $\om\in A'_1(t)$,   using
Harnack's inequality  (\ref{LY-harnack}) and  Lemma
\ref{triangle-along geodesic},  we easily specify the constant ratios involved in (\ref{Green-Matin-kernel-function}) and obtain ${\bf I}_1\leq (\ln
(c_2c_4^2c_7))^2$.

Let $A'_2(t)$ be the collection of $\om$ such that both
$d({\om}(0), {\om}(t))$ and $d({\om}(0), z_t({\om}))$
are greater than $1$ and $z_t({\om})\not={\om}(t)$. For such
$\om$, we first have by Lemma \ref{triangle-separated by cones} that
\begin{equation}\label{equ-1-lem-4.17}
\left|d_{G_{{\bf v}}^{0}}({\om}(0), {\om}(t))-d_{G_{{\bf
v}}^{0}}({\om}(0), z_t({\om}))-d_{G_{{\bf
v}}^{0}}({\om}(t), z_t({\om}))\right|\leq -\ln c_8.
\end{equation}
For  $d_{G_{{\bf
v}}^{0}}({\om}(t), z_t({\om}))$,  it is true by
Lemma \ref{triangle-along geodesic}  that
\[
\left|d_{G_{{\bf
v}}^{0}}({\om}(t), z_t({\om}))+\ln G_{\bf v}^0(y, {\om}(t))-\ln G_{\bf v}^0(y, z_t({\om}))\right|\leq -\ln c_7,
\]
where  $y$ is an arbitrary point on $\g_{z_t({\om}), \xi}$  far away from $z_t({\om})$.  Then we can use  Lemma
\ref{triangle-separated by cones}  to replace  $\ln G_{\bf v}^0(y, z_t({\om}))$ by $ \ln G_{\bf v}^0(y, {\om}(0))-\ln G_{\bf v}^0(z_t({\om}),  {\om}(0))$, which, by letting $y$ tend to $\xi$,  gives
\[
\left|d_{G_{{\bf v}}^{0}}({\om}(t), z_t({\om}))+\ln
k_{{\bf v}}^{0}({\om}(t), \xi)\right|\leq -\ln
(c_7c_8)+\left|\ln {G}_{{\bf v}}^{0}({\om}(0),
z_t({\om}))\right|.
\]
This, together with (\ref{equ-1-lem-4.17}), further  implies \begin{eqnarray*}
  \left|d_{G_{{\bf
v}}^{0}}({\om}(0), {\om}(t))+\ln k_{{\bf
v}}^{0}({\om}(t), \xi)\right|&\leq& -\ln (c_2
c_7c_8^2)+2\left|\ln {G}_{{\bf
v}}^{0}({\om}(0), z_t({\om}))\right|\\
&\leq& -\ln (c_2^5 c_7c_8^2)+2\alpha_2 d({\om}(0),
z_t({\om})).
\end{eqnarray*}
Since $\M$ is $\delta$-Gromov hyperbolic for some $\delta>0$,
it is true (cf. \cite[Proposition 2.1]{K2})  that
\[
d(x, \g_{y, z})\leq (y|z)_x+4\delta,\ \mbox{for any}\   x, y,
z\in \M.
\]
Consequently, we have \begin{equation}\label{delta-hyper}
d({\om}(0), z_t({\om}))\leq ({\om}(t)|\xi)_{{\om}(0)}+4\delta=
\frac{1}{2}\left|d({\om}(0), {\om}(t))-b_{{\bf
v}}({\om}(t))\right|+4\delta.
\end{equation}
Using  Lemma \ref{cov-bound-ell}, we finally obtain
\[
{\bf I}_2\leq 2\left(8\alpha_2\delta-\ln (c_2^5c_7c_8^2)\right)^2+2\alpha_2^2 D_3.
\]

Let $A'_3(t)$ be the collection of $\om$ such that $d({\om}(0),
{\om}(t))>1$ and $z_t({\om})={\om}(t)$. Let
$\g'_{{\om}(t), \xi}$ be the two sided extension of the geodesic
$\g_{{\om}(t), \xi}$ and let $z'_t({\om})\in \g'_{{\om}(t),
\xi}$ be the point closet to ${\om}(0)$. Then
$z'_t({\om})\preceq z_t({\om})$ on $\g'_{{\om}(t), \xi}$.
For $\om\in A'_3(t)$, using (\ref{LY-harnack})  if
$d(z'_t({\om}), {\om}(t))<1$ (or using Lemma
\ref{triangle-separated by cones}, otherwise),   we see that
\begin{eqnarray*}
d_{G_{{\bf
v}}^{0}}({\om}(0), {\om}(t))&\leq& d_{G_{{\bf
v}}^{0}}({\om}(0), z'_t({\om}))+d_{G_{{\bf
v}}^{0}}(z'_t({\om}), {\om}(t))-\ln (c_4c_8)\\
&\leq& \alpha_2 \left(d({\om}(0),
z'_t({\om}))+d(z'_t({\om}), {\om}(t))\right)-\ln (c_2^4 c_4c_8)\\
&\leq& 3\alpha_2 d({\om}(0), \g_{{\om}(t), \xi})-\ln (c_2^4 c_4c_8)\\
&\leq& \frac{3}{2}\alpha_2\left|d({\om}(0), {\om}(t))-b_{{\bf
v}}({\om}(t))\right|+12\alpha_2\delta-\ln (c_2^4 c_4c_8),
\end{eqnarray*}
where we use (\ref{delta-hyper}) to derive the last inequality.
Choose $y\in  \g_{{\om}(t), \xi}$ with $d({\om}(0), y)$ and
$d({\om}(t), y)$ are greater than 1.  Similarly,  using Lemma
\ref{triangle-separated by cones}, and then Lemma
\ref{triangle-along geodesic},  we have
\begin{eqnarray*}
  \left|\ln\frac{G_{{\bf
v}}^{0}({\om}(t), y)}{G_{{\bf v}}^{0}({\om}(0),
y)}\right|&=&\left|d_{G_{{\bf v}}^{0}}({\om}(0),
y)-d_{G_{{\bf v}}^{0}}({\om}(t), y)\right|\\
&\leq& -\ln c_8+ \left|d_{G_{{\bf v}}^{0}}({\om}(0),
z'_t({\om}))+d_{G_{{\bf v}}^{0}}(z'_t({\om}),
y)-d_{G_{{\bf v}}^{0}}({\om}(t), y)\right|\\
&\leq& -\ln (c_7c_8)+d_{G_{{\bf v}}^{0}}({\om}(0),
z'_t({\om}))+d_{G_{{\bf v}}^{0}}(z'_t({\om}),
{\om}(t))\\
&\leq&\frac{3}{2}\alpha_2\left|d({\om}(0), {\om}(t))-b_{{\bf
v}}({\om}(t))\right|+12\alpha_2\delta-\ln (c_2^4c_7c_8).
\end{eqnarray*}
Letting $y$ tend to $\xi$, we obtain
\[
\left|\ln k_{{\bf v}}^{0}({\om}(t), \xi)\right|\leq
\frac{3}{2}\alpha_2\left|d({\om}(0), {\om}(t))-b_{{\bf
v}}({\om}(t))\right|+12\alpha_2\delta-\ln (c_2^4c_7c_8).
\]
Thus, using Lemma \ref{cov-bound-ell} again, we obtain
\begin{eqnarray*}
  {\bf I}_3&\leq& \E_{\overline\P^{\l}}\left(\left(3\alpha_2\left|d({\om}(0), {\om}(t))-b_{{\bf
v}}({\om}(t))\right|+24\alpha_2\delta-\ln (c_2^8c_4c_7c_8^2)\right)^2\right)\\
&\leq& 18\alpha_2^2 D_3+2\left(24\alpha_2\delta-\ln
(c_2^8c_4c_7c_8^2)\right)^2.
\end{eqnarray*}

Finally, let $A'_4(t)$ be the event that $d({\om}(0),
{\om}(t))\leq 1$. Then ${\bf I}_4\leq (-\ln(c_2c_4))^2$ by the
classical Harnack inequality (\ref{LY-harnack}).
\end{proof}

As before, this reduces  the  proof of Theorem \ref{differential-h} to showing  \[\lim_{t\to
+\infty}{\rm{(III)}}_{h}^{t}=\lim_{t\to+\infty}(1/t)\E_{\overline\Q^0}({\bf
Z}_{h, t}^{0}{\bf M}_t^0),\] where  \begin{equation}\label{dfn-III-h-t} 
{\rm{(III)}}_{h}^{t}:= \E_{\overline\P^{\l}_t}\left(\frac{1}{\sqrt{t}}\left(d_{G_{{\bf
v}}^{0}}({\om}(0),
{\om}(t))-t \overline h_0\right)\right). \end{equation} The proof is completely parallel to the computation of  $ \lim_{t \to +\infty}\rm{(I)}_{\ell}^t$.   We  prove Proposition  \ref{Covariance-2} first.

\begin{proof}[Proof of Proposition \ref{Covariance-2}] Let $({\bf Z}_{t}^1)_{t\in \Bbb R_+}$,  $u_1$ be as  in Proposition \ref{M-0-M-1}.   The process $({\bf Z}_{t}^1)_{t\in \Bbb R_+}$ is a centered martingale with stationary increments and its law under $\overline\P^0$ is the same as the law of $(\overline{\bf Z}_{t}^{1})_{t\in \Bbb R_+}$ under
$\overline\Q^0$, where  $(\overline{\bf Z}_{t}^{1})_{t\in \Bbb R_+}$ on $(\overline\Theta, \overline\Q^0)$  is  given by 
 \[
\overline{\bf Z}_{t}^{1}({\bf v}, \un\om)=\ln k_{{\bf
v}}\left({\bf y}_{{\bf v}, t}(\un\om), \xi\right)+t
\overline{h}_0+ u_1 \left({\bf y}_{{\bf v},
t}(\un\om)\right)-u_1 \left({\bf v}\right).
  \]

The pair $(-\overline{\bf Z}_{t}^1, {\bf
M}_{t}^{0})$ is  a centered martingale on $(\overline\Theta, \overline\Q^0)$ with
stationary increments and integrable  increasing process function.  As before, it follows that for $(a, b)\in \Bbb R^2$,  $-a\overline{\bf Z}_{t}^1/\sqrt{t}+b{\bf
M}_{t}^{0}/\sqrt{t}$  converge in distribution in $\overline\Q^0$ to a centered normal law  with variance $\Sigma_{h}[a,
b]=(a, b)\Sigma_{h}(a, b)^{T}$ for some matrix $\Sigma_{h}$.   Therefore, $(-\overline{\bf Z}_{t}^1/\sqrt{t}, {\bf
M}_{t}^{0}/\sqrt{t})$ converge in distribution to a centered normal vector with covariance $\Sigma_{h}$.  Since  both
$\overline{\bf Z}_{t}^1$ and ${\bf M}_{t}^0$ have stationary
increments, we also have
\[
\Sigma_{h}[a, b]=\frac{1}{t}\Bbb E_{\overline\Q^0}\left[(-a\overline{\bf Z}_t^1+b{\bf
M}_t^0)^2\right],\ \mbox{for all}\ t\in \Bbb
R_+.
\]
This shows Proposition \ref{Covariance-2} for the pair  $(-\overline{\bf
Z}_{t}^1/\sqrt{t}, {\bf M}_{t}^{0}/\sqrt{t} )$ instead of the pair  $({\bf Z}_{\ell, t}^1/\sqrt{t}, {\bf
M}_{t}^{0}/\sqrt{t}).$

Recall that for  $\overline\P^0$-a.e.  orbits  $\om\in \Om_+$ with $\om(0)=: {\bf v}$,
  ${\om}$,  the projection of $\om$ to $\M$, is such that
\[ \limsup\limits_{t\to +\infty}|\ln G_{{\bf v}}(x,
{\om}(t))-\ln k_{{\bf v}}({\om}(t), \xi)|<+\infty.
\]
We have by Lemma \ref{cov-bound-h} that
\[
\sup_{t} \E_{\overline\P^{\l}}(\big|{\bf Z}_{h, t}^0+\overline{\bf
Z}_{t}^1 \big|^2)<+\infty .
\]
Therefore,
\[
\ \ \ \ \ \ \ \ \ \E_{\overline\P^{\l}}(\frac{1}{t}\big|{\bf Z}_{h, t}^0+\overline{\bf
Z}_{t}^1 \big|^2)\to 0,\ {\rm{as}}\ t\to +\infty.
\]
Consequently,  $({\bf Z}_{h, t}^0/\sqrt{t}, {\bf
M}_{t}^{0}/\sqrt{t})$  has the same limit normal law  as  $(-\overline{\bf Z}_{t}^1/\sqrt{t},
{\bf M}_{t}^{0}/\sqrt{t})$ and its covariance matrix  under $\overline\Q^0$
converges to $\Sigma_{h}$ as $t$ goes to infinity.
\end{proof}

Finally, Theorem \ref{differential-h} follows from
\begin{lem} $\lim _{t \to +\infty } {\rm{(III)}}_{h}^{t} = \lim_{t\to+\infty}(1/t)\E_{\overline\Q^0}({\bf
Z}_{h, t}^{0}{\bf M}_t^0)$,
where  ${\rm{(III)}}_{h}^{t}$ is defined in  (\ref{dfn-III-h-t}). \end{lem}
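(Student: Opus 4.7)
The plan is to mirror the proof of Lemma \ref{ell-Y-2} verbatim, replacing the role played by the Busemann coordinate and Proposition \ref{Covariance-1} by the Green-metric coordinate and Proposition \ref{Covariance-2}. First I would rewrite ${\rm (III)}_h^t$ as an expectation on the common probability space $(\overline\Theta,\overline\Q^0)$ using the Girsanov-Cameron-Martin formula (\ref{Gir-P}): since $d\overline\P_{{\bf v},t}^\l/d\overline\P_{{\bf v},t}^0 = \E_{\Q}(\overline{\rm M}_t^\l\mid \mathcal{F}({\bf y}_{{\bf v},[0,t]}^0))$, disintegrating over ${\bf m}^0$ gives
\[
{\rm (III)}_h^t \;=\; \E_{\overline\Q^0}\!\left(\frac{1}{\sqrt t}\bigl(d_{G_{\om(0)}^0}({\bf y}_0,{\bf y}_t)-t\overline h_0\bigr)\cdot e^{{\rm (II)}_h^t}\right),
\]
where ${\rm (II)}_h^t$ is the Girsanov exponent evaluated along ${\bf y}_\cdot$. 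Choosing $\l=1/\sqrt{t}$ and expanding $Z^\l = \l(Z^\l)'_0+\l^2\overline Z^\l$, the exponent decomposes exactly as in the drift case into the martingale term $t^{-1/2}{\bf M}_t^0$, the compensator $-\tfrac{1}{2t}\langle {\bf M}^0,{\bf M}^0\rangle_t$, plus two remainders that vanish almost surely as $t\to+\infty$.

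Next, I would invoke Proposition \ref{Covariance-2} to obtain joint convergence in distribution
\[
\bigl({\bf Z}_{h,t}^0/\sqrt t,\, {\bf M}_t^0/\sqrt t\bigr) \;\Longrightarrow\; ({\bf Z}_h^0,{\bf M}^0),
\]
a centered bivariate normal with covariance $\Sigma_h$, together with $L^2$-convergence of the covariance. Combined with the ergodic theorem applied to the increasing process of the martingale ${\bf M}^0$ (so that $(2t)^{-1}\langle {\bf M}^0,{\bf M}^0\rangle_t \to \tfrac{1}{2}\E_{\overline\Q^0}(({\bf M}^0)^2)$ in $L^1$), this shows that the integrand above converges in distribution to ${\bf Z}_h^0\, e^{{\bf M}^0-\frac12\E_{\overline\Q^0}(({\bf M}^0)^2)}$.

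To pass the limit through the expectation I would apply Lemma \ref{convergence-lemma} with $q=3/2$. By H\"older's inequality,
\[
\E_{\overline\Q^0}\!\left(\left|t^{-1/2}{\bf Z}_{h,t}^0 \cdot \overline{\rm M}_t^\l\right|^{3/2}\right)^{4}
\;\le\; \E_{\overline\Q^0}\!\bigl(t^{-1}|{\bf Z}_{h,t}^0|^2\bigr)^3\cdot \E_{\overline\Q^0}\!\bigl(e^{6\,{\rm (II)}_h^t}\bigr).
\]
The first factor is bounded in $t$ by the convergence of the covariance in Proposition \ref{Covariance-2}. For the second, a Girsanov change of measure turns $\E_{\overline\Q^0}(e^{6\,{\rm (II)}_h^t})$ into $\E_{\widetilde\Q}(\exp(\tfrac{15}{2}\int_0^t\|Z^\l\|^2\,ds))$; since $\|Z^\l\|^2\le 2\l^2\|(Z^\l)'_0\|^2 + 2\l^4\|\overline Z^\l\|^2$ and $\l^2 = 1/t$, the integral is uniformly bounded in $t$, which furnishes the required uniform $L^{3/2}$ bound. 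The same argument works verbatim for $\l=-1/\sqrt t$.

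Finally I would use the bivariate normal identity $\E(Xe^{Y-\frac12\E Y^2})=\E(XY)$ (established for the linear drift computation) to conclude
\[
\lim_{t\to+\infty}{\rm (III)}_h^t \;=\; \E_{\overline\Q^0}({\bf Z}_h^0 {\bf M}^0) \;=\; \lim_{t\to+\infty}\frac{1}{t}\E_{\overline\Q^0}({\bf Z}_{h,t}^0 {\bf M}_t^0),
\]
the last equality again by the $L^2$ convergence of covariances in Proposition \ref{Covariance-2}. The only genuine obstacle, and the one I would spend most care on, is verifying the uniform $L^{3/2}$ estimate for the tilted integrand: the first factor requires Lemma \ref{cov-bound-h} (which replaces the role of Lemma \ref{cov-bound-ell} in the drift case) to compare ${\bf Z}_{h,t}^0$ with $\overline{\bf Z}_t^1$, while the second factor needs the crucial cancellation coming from $\l^2 = 1/t$ in the Girsanov exponent.
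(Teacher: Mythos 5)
Your proposal is correct and follows the paper's argument essentially verbatim: the paper's own proof of this lemma consists of the same Girsanov rewriting on $(\overline\Theta,\overline\Q^0)$, invoking Proposition~\ref{Covariance-2} in place of Proposition~\ref{Covariance-1}, citing ``the same reasoning as in the proof of Lemma~\ref{ell-Y-2}'' for the uniform $L^{3/2}$ bound and the application of Lemma~\ref{convergence-lemma}, and closing with the bivariate normal identity. Your emphasis on Lemma~\ref{cov-bound-h} as the analogue of Lemma~\ref{cov-bound-ell} and on the $\l^2 = 1/t$ cancellation in the Girsanov exponent correctly identifies the key ingredients the paper leaves implicit in the phrase ``the same reasoning.''
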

\begin{proof}  
Let  ${\bf y}=({\bf y}_t)_{t\in
\Bbb R_+}=({\bf y}_{{\bf v}, t})_{{\bf v}\in SM, t\in \Bbb R_+}$  be
the diffusion process on $(\overline\Theta, \overline\Q^{\l})$
corresponding to $\LL^{\l}$ defined in  Section  \ref{4.1-Girsanov}.
Using the Girsanov-Cameron-Martin formula  for
$d\overline\P^{\l}_{{\bf v}, t}/d\overline\P^{0}_{{\bf v}, t}$ (see
(\ref{Gir-P})), we have
\begin{eqnarray*}
\lim_{t\to+ \infty }{\rm{(III)}}_{h}^t
 &=&\lim_{t\to +\infty} \E_{\overline\P^0}\left(\frac{1}{\sqrt{t}}\left(d_{G_{{\bf
v}}^{0}}({\om}(0),
{\om}(t))-t \overline h_0\right)\frac{d\overline\P_{\om(0), t}^{\l}}{d\overline\P_{\om(0), t}^{0}}\right)\\
& =&  \lim_{t\to +\infty}
\E_{\overline\Q^0}\left(\frac{1}{\sqrt{t}}\left(d_{G_{{\bf
v}}^{0}}({\bf y}_0({\bf v}, \un{\om}), {\bf y}_t({\bf v},
\un{\om}))-t \overline h_0\right)\cdot
\overline{\rm
M}_{t}^{\l}(\u{\om})\right)\\
&=& \lim_{t\to +\infty}
\E_{\overline\Q^0}\left(\frac{1}{\sqrt{t}} {\bf Z}_{h,
t}^0\cdot
\overline{\rm
M}_{t}^{\l}\right),\end{eqnarray*}
where we identify ${\bf y}_t({\bf v}, \un{\om}))\in \M\times
\{\xi\}$ with its projection point on $\M$. As before, by  Proposition  \ref{Covariance-2},  the variables  $\frac{1}{\sqrt{t}} {\bf
Z}_{h, t}^0\cdot \overline{\rm
M}_{t}^{\l}$  converge  in distribution to  ${\bf Z}_{h}^{0}e^{{\bf
M}^0-\frac{1}{2}\E_{\overline\Q^0}(({\bf M}^{0})^2)}$,  where $( {\bf Z}_{h}^{0}, {\bf M}^{0}) $ is a bivariate centered normal variable with covariance matrix $\Sigma _h .$

  Again, we have by Proposition \ref{Covariance-2} and the same reasoning as in the proof of Lemma \ref{ell-Y-2} that
 \[
\sup\limits_{t}\E_{\overline\Q^0}\left(\left|\frac{1}{\sqrt{t}} {\bf
Z}_{h, t}^0\cdot
\overline{\rm
M}_{t}^{\l}\right|^{\frac{3}{2}}\right)<+\infty.
\]
It follows from  Lemma \ref{convergence-lemma} that
\begin{eqnarray*}\label{pass to limit-h}
\lim_{t\to +\infty}{\rm{(III)}}_{h}^{t}=\lim_{t\to +\infty} \E_{\overline\Q^0}\left(\frac{1}{\sqrt{t}} {\bf
Z}_{h, t}^0\cdot\overline{\rm
M}_{t}^{\l}\right) =
\E_{\overline\Q^0}\left({\bf Z}_{h}^{0}e^{{\bf
M}^0-\frac{1}{2}\E_{\overline\Q^0}(({\bf M}^{0})^2)}\right).
\end{eqnarray*}
Finally, using the fact that  $({\bf Z}_{h}^0,{\bf M}^{0})$ has a bivariate normal
distribution,  we have again \[ \E_{\overline\Q^0}\left({\bf
Z}_{h}^{0}e^{{\bf M}^0-\frac{1}{2}\E_{\overline\Q^0}(({\bf
M}^{0})^2)}\right)=\E_{\overline\Q^0}({\bf Z}_{h}^{0}{\bf M}^0),
\]
which is  $\lim_{t\to+\infty}(1/t)\E_{\overline\Q^0}({\bf Z}_{h,
t}^{0}{\bf M}_t^0)$ by  Proposition \ref{Covariance-2}.
\end{proof}

\section{Infinitesimal Morse correspondence}

In this section, we study the limit (\ref{Intro-X-lambda}) and give an expression for the derivative of the geodesic spray when the metric varies in $\Re(M)$. 

Let $(M, g)$ be a negatively curved closed connected $m$-dimensional
Riemannian manifold as before. Let $\pp\M$ be the geometric boundary
of the universal cover space $(\wt{M}, \wt{g})$.  We can identify
$\wt{M}\times \pp\M$ with $S\M_{\wt{g}}$, the unit tangent bundle of
$\M$ in metric $\wt{g}$, by sending $(x, \xi)$  to  the unit tangent vector of the
$\wt{g}$-geodesic starting at $x$ pointing at $\xi$.

Let $\lambda\in (-1, 1) \mapsto g^{\lambda}$ be a one-parameter
family of $C^3$ metrics on $M$ of negative curvature with $g^0=g$.
Denote by $\wt{g}^{\l}$ the $G$-invariant extension of $g^{\l}$ to $\M$. For each $\l$,
the geometric boundary of $(\M, \wt{g}^{\l})$, denoted
$\pp\M_{\wt{g}^{\l}}$, can be identified with $\pp\M$ since the
identity isomorphism from $G=\pi_1(M)$ to itself induces a
homeomorphism between $\pp\M_{\wt{g}^{\l}}$ and $\pp\M$. So
each $(x, \xi)\in \M\times \pp\M$  is also associated with  the $\wt{g}^{\l}$-geodesic spray $\overline{X}_{\wt{g}^{\l}}(x, \xi)$,  the horizontal vector in $TT\M$ which projects to 
the unit tangent vector of the $\wt{g}^{\l}$-geodesic starting at
$x$ pointing towards $\xi$.  Our  very  first step to study the
differentiability of the  linear drift under a one-parameter family of conformal changes $g^{\l}$ of $g$  is to understand the differentiable dependence of  the geodesic sprays 
$\overline{X}_{\wt{g}^{\l}}(x, \xi)$ on  the parameter $\l$.

For each $g^{\l}$, there exist  \emph{$(g, g^{\l})$-Morse correspondence} (\cite{A,Gr,Mor}), the  homeomorphisms from $SM_{g}$ to
$SM_{g^{\l}}$  sending a $g$ geodesic on $M$ to a $g^\l $ geodesic on $M$.  The
$(g, g^{\l})$-Morse correspondence is not unique, but any two such
maps only differ by shifts in the geodesic flow directions
(i.e., if $F_1, F_2$ are two $(g, g^{\l})$-Morse correspondence
maps,  then there exists a real valued function $t(\cdot)$ on $SM_{g}$
such  that  $F_1^{-1}\circ F_2(v)={
\Phi}_{t(v)}(v)$ for $v\in SM_{g}$), where ${\Phi}$ is the geodesic flow map on
$SM_g$ (\cite{A,Gr,Mor}, see \cite[Theorem 1.1]{FF}).

Let us construct a $(g, g^{\l})$-Morse correspondence map  by lifting the
systems to their universal cover spaces as in \cite{Gr}.  For an
oriented geodesic $\gamma$ in $(\M, \wt{g})$, denote by
$\pp^+(\gamma)\in \pp\M_{\wt{g}}$ and $\pp^-(\gamma)\in
\pp\M_{\wt{g}}$ the asymptotic classes of its positive and negative
directions. The map $\gamma\mapsto (\pp^+(\gamma), \pp^-(\gamma))\in
\pp\M_{\wt{g}}\times \pp\M_{\wt{g}}$ establishes a homeomorphism
between the set of all oriented geodesics in $(\M, \wt{g})$ and
$\pp^2(\M_{\wt{g}})=(\pp\M_{\wt{g}}\times \pp\M_{\wt{g}})\backslash
\{(\xi, \xi):\ \xi\in \pp \M_{\wt{g}}\}$. So the natural
homeomorphism ${\rm D}^{\l}:\ \pp^2(\M_{\wt{g}})\to
\pp^2(\M_{\wt{g}^{\l}})$ induced from the identity isomorphism from
$G$ to itself can be viewed as a homeomorphism between the sets of
oriented geodesics in $(\M, {\wt{g}})$ and $(\M, {\wt{g}^{\l}})$.
Realize points from $S\M_{\wt{g}}$ by pairs $(\gamma, y)$, where
$\gamma$ is an oriented geodesic and $y\in \gamma$, and define a map
$\wt{F}^{\l}:\ S\M_{\wt{g}}\to S\M_{\wt{g}^{\l}}$  by sending
$(\gamma, y)\in S\M_{\wt{g}}$ to
\[
\wt{F}^{\l}(\gamma, y)=({\rm D}^{\l}(\gamma), y'),
\]
where $y'$ is  the intersection point of  ${\rm D}^{\l}(\gamma)$ and the  hypersurface $\{\exp_{\wt{g}} Y:\
Y\bot {\rm v}\}$, where ${\rm v}$   is the vector in $S_y\M_{\wt{g}}$  pointing at  $\pp^+(\gamma)$. The map $\wt{F}^{\l}$ is a homeomorphism since both
$g$ and $g^{\l}$ are of negative curvature. Returning to $SM_{g}$
and $SM_{g^{\l}}$, we obtain a  map
$F^{\l}$.  Given  any sufficiently small $\epsilon$, if $g^{\l}$ is in a
sufficiently small $C^3$-neighborhood of $g$, then $F^{\l}$ is the
only $(g, g^{\l})$-Morse correspondence map such that the footpoint of
$F^{\l}({v})$ belongs to the hypersurface of points  $\{\exp_g Y:\
Y\bot {v}, \|Y\|_{g}<\epsilon\}$.

Regard $SM_{g^{\l}}$ as a subset of $TM$ and let $\pi^{\l}: \
SM_{g^{\l}}\to SM_g$ be the projection map sending $v$ to
$v/\|v\|_g$. The map $\pi^{\l}$ records  the direction information of
the vectors of $SM_{g^{\l}}$ in $SM_g$. Let $F^{\l}: SM_g\to
SM_{g^{\l}}$ be the $(g, g^{\l})$-Morse correspondence map obtained
as above. We obtain a  one-parameter family of homeomorphisms
$\pi^{\l}\circ F^{\l}$ from $SM_g$ to $SM_g$. By using the implicit
function theorem, de la Llave-Marco-Moriy\'{o}n \cite[Theorem A.1]{LMM}
improved the differentiable dependence of $\pi^{\l}\circ F^{\l}$ on
the parameter $\l$.

\begin{theo}\label{FF-thm 2.1}(cf. \cite[Theorem 2.1]{FF})
  There exists a $C^3$ neighborhood of $g$ so that for any $C^3$ one-parameter
family of $C^3$ metrics  $\lambda\in (-1, 1)\mapsto g^{\lambda}$ in
it with $g^0=g$, the map $\l\mapsto \pi^{\l}\circ F^{\l}$ is $C^3$
with values in the Banach manifold of continuous maps $SM_g\to
SM_g$. The tangent to the curve $\pi^{\l}\circ F^{\l}$ is a  continuous
vector field $\Xi_{\l}$ on $SM_g$. \end{theo}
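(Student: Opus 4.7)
My plan is to realize $h^\l := \pi^\l \circ F^\l $ as the unique solution of an implicit equation $\mathcal F (\l, h) = 0$ on an appropriate Banach space of continuous maps $SM_g \to SM_g$, and then apply the inverse/implicit function theorem of \cite{LMM} in the form used in \cite{FF}. Concretely, I would first parametrize maps $h$ near the identity as sections of an open neighborhood of the zero section of the tangent bundle $TSM_g$ (via exponentiation in some fixed auxiliary metric), so that the relevant Banach space is $C^0(SM_g, TSM_g)$. The geometric content of the Morse correspondence is then encoded by requiring that for every $v\in SM_g$, the image $h(v)$ lies on the hypersurface $\{\exp_g Y : Y \perp v\}$ and that the curve $t \mapsto \pi^\l \circ F^\l ({\bf\Phi}_t v)$ coincides, up to time-reparametrization, with the $g^\l$-geodesic through $h(v)$ with asymptotic classes $D^\l(\pp^\pm \gamma_v)$. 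Both conditions can be assembled into a nonlinear operator $\mathcal F(\l, h)$ depending $C^3$ on the pair $(\l, h)$, because $\l \mapsto g^\l$ is $C^3$ in the $C^3$ topology and the geodesic flow and exponential map of a $C^3$ metric depend smoothly on the metric in the requisite sense.

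Next I would verify the hypotheses of the implicit function theorem at $(\l, h) = (0, \mathrm{id})$: clearly $\mathcal F(0, \mathrm{id}) = 0$, so the issue is the invertibility of $D_h \mathcal F(0, \mathrm{id})$ as a bounded operator on $C^0(SM_g, TSM_g)$. Because of the transversality condition (the footpoint lies on a hyperplane perpendicular to $v$) one can quotient out the flow direction and work with sections transverse to $\overline X$. The linearization then takes the form $\xi \mapsto \xi - \mathbf{\Phi}_{t_0}^\ast \xi + (\text{lower order})$ when restricted to the normal bundle of the flow, and its invertibility reduces to solving a cohomological equation along the geodesic flow of $g$. Since this flow is Anosov on $SM_g$ and one works transversely to the flow direction, the existence of a bounded inverse on $C^0$ follows from the hyperbolic splitting ${\bf E}^{\rm ss} \oplus {\bf E}^{\rm uu}$ discussed in Subsection 2.4 (the inverse is essentially summed via the contracting/expanding directions, as in the structural stability theorem of Anosov).

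Once the implicit function theorem is applied, it gives a unique $C^3$ curve $\l \mapsto h^\l \in C^0(SM_g, SM_g)$ with $h^0 = \mathrm{id}$ solving $\mathcal F(\l, h^\l) = 0$. Uniqueness combined with the geometric characterization of $F^\l$ (recalled in the excerpt above) shows $h^\l = \pi^\l \circ F^\l$ near $\l = 0$, so in particular $\Xi_\l := \partial_\l h^\l$ is a continuous vector field on $SM_g$. The regularity statement — that the curve $\l \mapsto \pi^\l \circ F^\l$ is $C^3$ into continuous maps — then follows by the same smoothness assertion in the implicit function theorem.

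I expect the main obstacle to be the inversion of $D_h \mathcal F(0, \mathrm{id})$, i.e.\ producing a bounded inverse on $C^0$ of a transfer-type operator along the geodesic flow. This is where the hyperbolicity of the $g$-geodesic flow does the essential work, and it is also the step that limits the regularity: one gets $C^3$ in $\l$ precisely because $\l \mapsto g^\l$ is $C^3$ and the implicit function theorem preserves this regularity, but no more without extra structural input. The remainder (producing $\mathcal F$, checking its $C^3$ dependence, identifying the solution with $\pi^\l\circ F^\l$) is essentially bookkeeping once the Banach setup is fixed.
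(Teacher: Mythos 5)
The paper does not give a proof of this theorem: it is stated with the attribution ``cf.\ \cite[Theorem 2.1]{FF}'' and the text just preceding it explicitly credits de la Llave--Marco--Moriy\'on \cite[Theorem A.1]{LMM} with establishing the differentiable dependence via the implicit function theorem. Your sketch correctly reconstructs exactly that route: realize $\pi^\l\circ F^\l$ as the solution of an implicit equation in a Banach manifold of $C^0$ sections, linearize at $(\l,h)=(0,\mathrm{id})$, and invert the linearization using the hyperbolic splitting of the $g$-geodesic flow (structural stability of Anosov flows), with the $C^3$ regularity inherited through the implicit function theorem from the $C^3$ dependence of $\l\mapsto g^\l$. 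So your approach is not merely consistent with what the paper does --- it is the argument the paper is silently relying on by citation.

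Two small cautions if you intend to flesh this out. First, make sure the transversality normalization (footpoint constrained to the hypersurface $\{\exp_g Y:\ Y\perp v\}$) is built into $\mathcal F$ so that the residual flow-direction indeterminacy is killed; otherwise $D_h\mathcal F(0,\mathrm{id})$ has a kernel along $\overline X$ and the implicit function theorem does not apply directly. Second, the regularity bookkeeping in this class of arguments is delicate: $C^3$ in $\l$ with only $C^0$ values is exactly what the LMM implicit function theorem is engineered to produce (more spatial regularity of $\Xi_\l$ is \emph{not} available in general, only $C^0$ plus H\"older along the Anosov splitting), so one should not expect to improve the target space beyond $C^0(SM_g, SM_g)$.
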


 Following  Fathi-Flaminio
\cite{FF}, we will call  $\Xi:=\Xi_0$ in Theorem \ref{FF-thm 2.1}
the \emph{infinitesimal Morse correspondence at $g$ for the curve
$g^{\l}$}. It was shown in \cite{FF} that the vector field $\Xi$ only
depends on $g$ and the differential of $g^{\l}$ in $\l$ at $0$. More precisely, the horizontal and the vertical components of $\Xi$ are described by:

\begin{theo}(\cite[Proposition
2.7]{FF})\label{FF-Morse-correspondence}
  Let $\Xi$ be the infinitesimal Morse correspondence at $g$ for the
  curve $g^{\l}$ and let $\Xi_{\g}$ be the restriction of  horizontal component  of $\Xi$ to  a unit speed $g$-geodesic $\g$. Then $\Xi_{\g}$ is the
  unique bounded solution of the equation
  \begin{equation}\label{Morse-correspondence}
\nabla_{\dot{\g}}^2\Xi_{\g}+ {\rm R}(\Xi_{\g},
\dot{\g})\dot{\g}+\Gamma_{\dot{\g}}\dot{\g}-\langle
\Gamma_{\dot{\g}}\dot{\g}, \dot{\g}\rangle \dot{\g}=0
  \end{equation}
  satisfying $\langle\Xi_{\g}, \dot{\g}\rangle=0$ along $\g$, where $\dot\g(t)=\frac{d}{dt}\g(t)$, $\nabla$ and ${\rm R}$ are the Levi-Civita connection and curvature tensor of metric $g$,
   $\nabla^{\l}$ is the Levi-Civita connection of the metric $g^{\l}$ and $\Gamma=\partial_{\l}\nabla^{\l}|_{\l=0}$.  The vertical
  component of $\Xi$ in $T(SM_g)$ is given by
  $\nabla_{\dot{\g}}\Xi_{\g}$.
\end{theo}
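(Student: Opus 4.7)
The plan is to construct, for a fixed $g$-geodesic $\gamma$ lifted to $\wt M$, the associated one-parameter family of $\wt g^\lambda$-geodesics $\gamma^\lambda$ dictated by the Morse correspondence, and then differentiate the geodesic equation in $\lambda$. Concretely, let $\gamma^\lambda$ be the unique $\wt g^\lambda$-geodesic asymptotic to the same endpoints $\partial^\pm(\gamma) \in \partial \wt M$ as $\gamma$, taken in $\wt g^\lambda$-arc-length parametrization, and let $\sigma^\lambda(s) = \gamma^\lambda(\tau^\lambda(s))$ be the reparametrization for which $\sigma^\lambda(s)$ is the footpoint of $F^\lambda(\dot\gamma(s))$. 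By the construction of $F^\lambda$ recalled before Theorem \ref{FF-thm 2.1}, $\sigma^\lambda(s)$ lies on the $g$-hyperplane through $\gamma(s)$ orthogonal to $\dot\gamma(s)$, so the variation field
\[
V(s) \;:=\; \partial_\lambda \sigma^\lambda(s)\big|_{\lambda=0}
\]
automatically satisfies $V(s)\perp\dot\gamma(s)$. Theorem \ref{FF-thm 2.1} supplies the $C^3$-dependence in $\lambda$, and the absence of conjugate points together with the uniform hyperbolicity of the $g$-geodesic flow ensure that $V$ is bounded on all of $\mathbb{R}$.

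Next, I would derive the ODE by differentiating $\nabla^\lambda_{\dot\gamma^\lambda}\dot\gamma^\lambda = 0$ at $\lambda=0$. Writing $\nabla^\lambda = \nabla + \lambda\,\Gamma + O(\lambda^2)$ with $\Gamma = \partial_\lambda \nabla^\lambda|_{\lambda=0}$ and applying the Ricci identity to the commuting coordinate fields $\partial_s c$ and $\partial_\lambda c$ of $c(s,\lambda) = \gamma^\lambda(s)$, a standard variational computation shows that $\tilde V(s) := \partial_\lambda \gamma^\lambda(s)|_{\lambda=0}$ satisfies the Jacobi-with-source equation
\[
\nabla_{\dot\gamma}^2 \tilde V + R(\tilde V, \dot\gamma)\dot\gamma + \Gamma_{\dot\gamma}\dot\gamma \;=\; 0.
\]
The passage from $\tilde V$ to $V$ costs a first-order reparametrization: $V = \tilde V + \tau\,\dot\gamma$ with $\tau(s) = \partial_\lambda \tau^\lambda(s)|_{\lambda=0}$. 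Since $\dot\gamma$ is parallel along $\gamma$, this substitution changes the left-hand side only by a multiple of $\dot\gamma$; projecting onto $\dot\gamma^\perp$ (which commutes with $\nabla_{\dot\gamma}$ by parallelism, and preserves $R(V,\dot\gamma)\dot\gamma$ since the symmetries of $R$ give $\langle R(V,\dot\gamma)\dot\gamma,\dot\gamma\rangle=0$) and using $V\perp\dot\gamma$ produces exactly equation (\ref{Morse-correspondence}) with $\Xi_\gamma = V$. It is this orthogonal projection that accounts for the correction $-\langle \Gamma_{\dot\gamma}\dot\gamma,\dot\gamma\rangle\dot\gamma$.

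Uniqueness of bounded solutions is where negative curvature enters decisively: the difference of two bounded solutions is a Jacobi field along $\gamma$ orthogonal to $\dot\gamma$ that is bounded on all of $\mathbb{R}$, and by the Anosov splitting of the geodesic flow recalled in Section~\ref{Sec-integral formula}, any such field must vanish (a nonzero stable component decays forward but grows backward, and symmetrically for the unstable component). For the vertical part, I would compute in the Sasaki decomposition that the $\lambda$-covariant derivative of the direction of $\pi^\lambda\circ F^\lambda(\dot\gamma(s))$ equals the orthogonal projection of $\nabla^0_\lambda \dot\gamma^\lambda(\tau^\lambda(s))|_{\lambda=0}$ onto $\dot\gamma(s)^\perp$; the reparametrization contribution is tangential and is killed by the projection, while the torsion-free identity $\nabla^0_\lambda T = \nabla^0_s V$ at $\lambda=0$ identifies what remains with $\nabla_{\dot\gamma} V = \nabla_{\dot\gamma}\Xi_\gamma$. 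The main obstacle is not conceptual but bookkeeping: one must track the two reparametrizations (the $\tau^\lambda$ enforcing the orthogonal-footpoint normalization, and the implicit direction-rescaling in $\pi^\lambda$) carefully enough to confirm that all tangential contributions in the variational calculation are precisely absorbed into the single explicit correction $-\langle \Gamma_{\dot\gamma}\dot\gamma,\dot\gamma\rangle\dot\gamma$.
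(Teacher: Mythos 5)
The paper does not prove Theorem \ref{FF-Morse-correspondence}; it is quoted verbatim as Proposition 2.7 of Fathi--Flaminio \cite{FF} and no argument is supplied. So there is no in-paper proof to compare against, only the external reference. That said, your reconstruction is essentially the correct argument, and it is consistent with the paper's set-up of $F^\lambda$ in terms of the boundary map $D^\lambda$ and the orthogonal footpoint normalization.

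In more detail: the passage from $\partial_\lambda(\nabla^\lambda_T T)=0$ to $\nabla_{\dot\gamma}^2\tilde V + R(\tilde V,\dot\gamma)\dot\gamma + \Gamma_{\dot\gamma}\dot\gamma = 0$ is correct, because at $\lambda=0$ the difference tensor $A^\lambda = \nabla^\lambda-\nabla$ vanishes, so the only new contribution beyond the usual Jacobi calculation is $\partial_\lambda A^\lambda(T,T)|_0 = \Gamma_{\dot\gamma}\dot\gamma$. The reparametrization $V=\tilde V+\tau\,\dot\gamma$ only adds $\ddot\tau\,\dot\gamma$ to the left side (since $\dot\gamma$ is parallel and $R(\dot\gamma,\dot\gamma)\dot\gamma=0$), and because $V\perp\dot\gamma$ by the definition of $F^\lambda$, projecting onto $\dot\gamma^\perp$ gives exactly $(\ref{Morse-correspondence})$. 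Your observation that the presence of the term $-\langle\Gamma_{\dot\gamma}\dot\gamma,\dot\gamma\rangle\dot\gamma$ is precisely this orthogonal projection is the right way to understand it. The uniqueness step is also correct: the difference of two bounded solutions of $(\ref{Morse-correspondence})$ orthogonal to $\dot\gamma$ is a bounded orthogonal Jacobi field on all of $\R$, which vanishes under negative curvature since $E^{\rm ss}\cap E^{\rm uu}=\{0\}$ in the Anosov splitting. Your treatment of the vertical component is the least detailed part, but the computation does close up as you indicate: the Sasaki-vertical part of $\partial_\lambda\big(\pi^\lambda\circ F^\lambda(\dot\gamma(s))\big)|_0$ is the $\dot\gamma^\perp$-projection of $D_\lambda T^\lambda|_0$, and using $D_\lambda c_t = D_t c_\lambda$ at $\lambda=0$ together with $\nabla_{\dot\gamma}\dot\gamma=0$ one finds $D_\lambda T^\lambda|_0 = \nabla_{\dot\gamma}V - \dot\tau\,\dot\gamma$, whose orthogonal part is $\nabla_{\dot\gamma}\Xi_\gamma$ because $\langle\nabla_{\dot\gamma}V,\dot\gamma\rangle=\partial_s\langle V,\dot\gamma\rangle=0$. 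One small imprecision: the boundedness of $\Xi_\gamma$ should not be attributed to hyperbolicity of the flow; it follows more directly from the paper's Theorem \ref{FF-thm 2.1}, which already asserts that $\Xi$ is a continuous vector field on the compact manifold $SM_g$. This is a cosmetic point, not a gap.
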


We will still denote by  $\Xi$ the $G$-invariant extension to $T(S\M_{\widetilde{g}})$ of
the infinitesimal Morse-correspondence at $g$ for the curve
$g^{\l}$.
For any geodesic $\gamma$ in  $(\widetilde{M},\widetilde{g})$, let $N(\g) $ be the normal bundle of $\g$: $$ N(\g )= \cup _{t \in \R}N_t (\g),  {\textrm { where }} N_t(\g) \; = \; ( \dot \g (t))^\perp \; = \; \{ E \in T_{\g(t)} \widetilde{M}: \< E, \dot \g (t)\> = 0 \}.$$
The one-parameter family of vectors along $\g$ arising in equation (\ref{Morse-correspondence})
\begin{equation}\label{Upsilon}
\Upsilon (t):=\left(\Gamma_{\dot{\g}}\dot{\g}-\langle
\Gamma_{\dot{\g}}\dot{\g}, \dot{\g}\rangle \dot{\g}\right)(\g(t)), \ t\in \Bbb R,
\end{equation}
is such that $\Upsilon (t)$ belongs to $N_t(\g)$ for all $t$. The restriction of the infinitesimal Morse correspondence  to $\gamma$ is $(\Xi_{\g}, \nabla_{\dot{\g}}\Xi_{\g})$,
with both $\Xi_{\g}$ and  $\nabla_{\dot{\g}}\Xi_{\g}$  belonging to $N(\g)$ as well.  In the following, we  will  specify $\Xi_{\g}$ and  $\nabla_{\dot{\g}}\Xi_{\g}$ using  $\Upsilon$ and a special coordinate system of  $N_t(\g)$'s arising from the stable and unstable Jacobi  fields along $\g$.

Let  ${\bf v} =(x, {\rm v})$  be a point in $T\M$. Recall from Subsection 2.3 the definition (\ref{Jacobi  field-definition}) of Jacobi Fields, Jacobi tensors and, for $\bfv \in S\M$, of the stable and unstable tensors along $\g _\bfv$ denoted $S_\bfv $ and $U_\bfv $. 
 For each ${\bf{v}}\in S\M$, the vectors $(Y, S'_{\bf v}(0)Y)$, $Y\in N_0(\g)$, (or  $(Y, U'_{\bf v}(0)Y)$)   generate $TW_{\bf v}^{ss}$ (or $TW_{\bf v}^{su}$). As a consequence of the Anosov property of the geodesic flow on $S\M$,  the operator $(U'_{{\bf v}}(0)-S'_{{\bf v}}(0))$ is positive and symmetric (see \cite {Bo}). 
  Hence we can choose vectors $\vec{x}_1, \cdots, \vec{x}_{m-1}$ to form a basis of $N_0(\g_{{\bf v}})$ so that
\begin{equation}\label{U-S-delta}
\langle  (U'_{{\bf v}}(0)-S'_{{\bf v}}(0))\vec{x}_i, \vec{x}_j\rangle=\delta_{ij}.
\end{equation}
Let $J_1, \cdots, J_{2m-2}$ be the Jacobi  fields with
\[	
(J_i(0), J'_i(0))=\left\{
  \begin{array}{ll}
(\vec{x}_i,  S'_{{\bf v}}(0)\vec{x}_i), & \hbox{if}\ i\in \{1, \cdots, m-1\}; \\
   (\vec{x}_{i+1-m},  U'_{{\bf v}}(0)\vec{x}_{i+1-m}), & \hbox{if}\ i\in \{m, \cdots, 2m-2\}.
     \end{array}
\right.
\]
Since  the Wronskian of two Jacobi  fields remains constant along  geodesics, we have
\begin{equation}\label{Wronskian}
W(J_i, J_j)=\left\{
  \begin{array}{ll}
0, & \hbox{if}\ i, j\in \{1, \cdots, m-1\}\ \hbox{or}\  i, j\in \{m, \cdots, 2m-2\}; \\
 -\delta_{i, j+1-m}, & \hbox{if}\ i\in \{1, \cdots, m-1\} \ \ \hbox{and}\  j\in\{m, \cdots, 2m-2\}.
     \end{array}
\right.
\end{equation}
Equivalently,  if we write  ${\bf J}_{\rm s}$ for the matrix with column vectors $(J_1, \cdots, J_{m-1})$  and  ${\bf J}_{\rm u}$  for the matrix with column vectors $(J_m, \cdots, J_{2m-2})$,  then  (\ref{Wronskian}) gives
\begin{equation}\label{Wronskian-Matrix form}
{\bf J}^{*}_{w}{\bf J}'_{w}=({\bf J}'_w)^*{\bf J}_w,\ w={\rm s}\ \hbox{or}\  {\rm u},\ \hbox{and}\  \ {\bf J}^*_{\rm u}{\bf J}'_{\rm s}-({\bf J}'_{\rm u})^* {\bf J}_{\rm s}=-\mbox{Id}.
\end{equation}

The collection $(J_i(t), J_i'(t)), i = 1, \cdots, m-1$,  (or  $(J_i(t), J_i'(t)), i = m, \cdots, 2m-2$)   generate $TW_{\dot \g_{\bfv }(t)}^{ss}$ (or $TW_{\dot \g_{\bfv }(t)}^{su}$).
 Consequently,   any $V(t)=(V_1(t), V_2(t))\in TT\M$ along $\g$ with $V_i(t)\in N_t(\g)$, $i=1, 2$, can be expressed as
 \[ V_1(t) = \sum _{i=1 }^{m-1} a_i(t) (J_i(t), J_i'(t)), \quad V_2(t) =  \sum _{i=m }^{2m-2} b_{i-m+1}(t) (J_i(t), J_i'(t)) ,\]
 where $(a_i(t),  b_i(t)),  i = 1 \cdots m-1, $ are $2m-2$ real numbers. Writing them as two column vectors $\vec a(t) , \vec  b(t) $, we write any such  $V(t) $ as \[ V(t) = ({\bf J}_{\rm s}(t)\vec{a}(t), {\bf J}'_{\rm s}(t)\vec{a}(t))+({\bf J}_{\rm u}(t)\vec{b}(t), {\bf J}'_{\rm u}(t)\vec{b}(t)).\] 
 To specify the infinitesimal Morse correspondence $\Xi$  at $g$ for the
  curve $g^{\l}$, it suffices to find the coefficients $\vec{a}(t), \vec{b}(t)$ for the restriction of $\Xi$  along any $\widetilde{g}$-geodesic $\g$.

\begin{prop}\label{a-b-prop}  Let $\Xi$ be the infinitesimal Morse correspondence at $g$ for  a $C^3$ one-parameter
family of $C^3$ metrics  $g^{\l}$ with $g^{0}=g$.  Then the restriction of $\Xi$ to a $\widetilde{g}$-geodesic $\g$ is  $({\bf J}_{\rm s}(t)\vec{a}(t), {\bf J}'_{\rm s}(t)\vec{a}(t))+({\bf J}_{\rm u}(t)\vec{b}(t), {\bf J}'_{\rm u}(t)\vec{b}(t))$ with
\begin{equation}\label{a-b-t-formula}
\vec{a}(t)=\int_{-\infty}^{t} {\bf J}^*_{\rm u}(s)\Upsilon(s)\ ds,\ \ \vec{b}(t)=\int^{+\infty}_{t} {\bf J}^*_{\rm s}(s)\Upsilon(s)\ ds,
\end{equation}
where $\Upsilon (s) $ is given by (\ref{Upsilon}).
\end{prop}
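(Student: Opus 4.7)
\medskip

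My plan is to apply the method of variation of parameters. Writing
\[
\Xi_\gamma(t) \; = \; {\bf J}_{\rm s}(t)\vec{a}(t) + {\bf J}_{\rm u}(t)\vec{b}(t),
\]
I would impose the usual first-order constraint ${\bf J}_{\rm s}\vec a' + {\bf J}_{\rm u}\vec b' = 0$, so that the vertical component of $\Xi$, namely $\nabla_{\dot\gamma}\Xi_\gamma$, reduces to ${\bf J}'_{\rm s}\vec a + {\bf J}'_{\rm u}\vec b$. This is consistent with the fact that the vertical part of $\Xi$ in Theorem \ref{FF-Morse-correspondence} is obtained by taking the $\nabla_{\dot\gamma}$-derivative of the horizontal part, no extra terms coming from the coefficients.

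Next, differentiating once more and using that each column of ${\bf J}_{\rm s}$ and ${\bf J}_{\rm u}$ is a Jacobi field (so that ${\bf J}''_w + R \cdot {\bf J}_w = 0$ as a matrix identity, for $w \in \{\mathrm{s}, \mathrm{u}\}$), the Morse equation (\ref{Morse-correspondence}) collapses to
\[
{\bf J}'_{\rm s}\vec a' + {\bf J}'_{\rm u}\vec b' \; = \; -\Upsilon.
\]
Combined with ${\bf J}_{\rm s}\vec a' + {\bf J}_{\rm u}\vec b' = 0$, this gives a linear system for $(\vec a', \vec b')$. Left-multiplying the first equation by ${\bf J}_{\rm u}^\ast$ and the second by $({\bf J}'_{\rm u})^\ast$ and subtracting, the symmetry ${\bf J}_{\rm u}^\ast {\bf J}'_{\rm u} = ({\bf J}'_{\rm u})^\ast {\bf J}_{\rm u}$ from (\ref{Wronskian-Matrix form}) eliminates $\vec b'$, while the cross-relation ${\bf J}_{\rm u}^\ast {\bf J}'_{\rm s} - ({\bf J}'_{\rm u})^\ast {\bf J}_{\rm s} = -\mathrm{Id}$ leaves
\[
\vec a'(t) \; = \; {\bf J}_{\rm u}^\ast(t)\,\Upsilon(t).
\]
An entirely symmetric manipulation (taking adjoints of the Wronskian identities gives ${\bf J}_{\rm s}^\ast {\bf J}'_{\rm u} - ({\bf J}'_{\rm s})^\ast {\bf J}_{\rm u} = \mathrm{Id}$) yields $\vec b'(t) = -{\bf J}_{\rm s}^\ast(t)\,\Upsilon(t)$.

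It remains to fix the constants of integration so that $\Xi_\gamma$ is the (unique) bounded solution promised by Theorem \ref{FF-Morse-correspondence}. The exponential contraction in the Anosov splitting forces ${\bf J}_{\rm s}(t)$ to decay as $t \to +\infty$ and grow as $t \to -\infty$, and dually for ${\bf J}_{\rm u}$; correspondingly $\|{\bf J}_{\rm u}^\ast(s)\Upsilon(s)\|$ is exponentially small as $s \to -\infty$ and $\|{\bf J}_{\rm s}^\ast(s)\Upsilon(s)\|$ is exponentially small as $s \to +\infty$ (using that $\Upsilon$ is bounded by $G$-invariance on a compact base). Thus
\[
\vec{a}(t) \; = \; \int_{-\infty}^{t} {\bf J}_{\rm u}^\ast(s)\Upsilon(s)\,ds, \qquad \vec{b}(t) \; = \; \int_{t}^{+\infty}{\bf J}_{\rm s}^\ast(s)\Upsilon(s)\,ds
\]
are well-defined.

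The subtle step, and the place I expect some care, is verifying that this choice actually produces a \emph{bounded} $\Xi_\gamma$: $\vec a(t)$ can grow exponentially as $t \to +\infty$ while ${\bf J}_{\rm s}(t)$ decays, and one needs the cancellation rates to match. I would check this by estimating ${\bf J}_{\rm s}(t)\vec a(t)$ directly, using uniform Anosov estimates $\|{\bf J}_{\rm s}(t){\bf J}_{\rm s}(s)^{-1}\| \lesssim e^{-c(t-s)}$ for $s \leq t$ obtained from the standard comparison with the stable tensor, and symmetrically for the unstable term; the resulting bound is controlled by $\sup_s\|\Upsilon(s)\|$. Once boundedness is established, uniqueness of the bounded solution of the Morse equation (Theorem \ref{FF-Morse-correspondence}) forces $\Xi_\gamma$ to coincide with the expression above, completing the proof.
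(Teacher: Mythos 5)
Your proposal is correct and essentially retraces the paper's own proof: variation of parameters in the stable/unstable Jacobi basis $({\bf J}_{\rm s},{\bf J}_{\rm u})$, solution of the resulting system for $(\vec a',\vec b')$ via the Wronskian identities (\ref{Wronskian-Matrix form}), and integration using ${\bf J}_{\rm u}(-\infty)={\bf J}_{\rm s}(+\infty)=0$. One small slip in the elimination step: to kill $\vec b'$ you should left-multiply the constraint ${\bf J}_{\rm s}\vec a'+{\bf J}_{\rm u}\vec b'=0$ by $({\bf J}'_{\rm u})^\ast$ and the forced equation ${\bf J}'_{\rm s}\vec a'+{\bf J}'_{\rm u}\vec b'=-\Upsilon$ by ${\bf J}_{\rm u}^\ast$, not the other way round, after which the identities you cite give exactly $\vec a'={\bf J}_{\rm u}^\ast\Upsilon$.
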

\begin{proof} By the construction of Morse correspondence,  for any $\widetilde{g}$-geodesic $\g$, the value of $\Xi$ along $\g$, denoted $\Xi(\g)$,  belongs to $N(\g)\times N(\g)$. So,  there are  $\vec{a}(t), \vec{b}(t), t\in \R$, such that
\begin{equation*}\label{a-b-Wron-1}
\Xi(\g)=({\bf J}_{\rm s}(t)\vec{a}(t), {\bf J}'_{\rm s}(t)\vec{a}(t))+({\bf J}_{\rm u}(t)\vec{b}(t), {\bf J}'_{\rm u}(t)\vec{b}(t)).\end{equation*}

 The horizontal part $\Xi_\g$ of  $\Xi(\g)$ is  ${\bf J}_{\rm s}(t)\vec{a}(t)+ {\bf J}_{\rm u}(t)\vec{b}(t)$. On the other hand, the vertical part of  $\Xi(\g)$ is  ${\bf J}'_{\rm s}(t)\vec{a}(t)+ {\bf J}'_{\rm u}(t)\vec{b}(t)$, which, by  Theorem \ref{FF-Morse-correspondence}, is also
\[
\nabla_{\dot{\g}}\Xi_{\g}={\bf J}'_{\rm s}(t)\vec{a}(t)+ {\bf J}'_{\rm u}(t)\vec{b}(t)+{\bf J}_{\rm s}(t)\vec{a}'(t)+ {\bf J}_{\rm u}(t)\vec{b}'(t).
\]
So we must have
\begin{equation}\label{a-b-Wron-1}{\bf J}_{\rm s}(t)\vec{a}'(t)+ {\bf J}_{\rm u}(t)\vec{b}'(t)=0.\end{equation}
Differentiating $\nabla_{\dot{\g}}\Xi_{\g}={\bf J}'_{\rm s}(t)\vec{a}(t)+ {\bf J}'_{\rm u}(t)\vec{b}(t)$ along $\g$ and reporting it in (\ref{Morse-correspondence}), we obtain
\[
{\bf J}'_{\rm s}(t)\vec{a}'(t)+ {\bf J}'_{\rm u}(t)\vec{b}'(t)+{\bf J}''_{\rm s}(t)\vec{a}(t)+ {\bf J}''_{\rm u}(t)\vec{b}(t)+{\rm R}(t){\bf J}_{\rm s}(t)\vec{a}(t)+ {\rm R}(t){\bf J}_{\rm u}(t)\vec{b}(t)=-\Upsilon (t),
\]
which simplifies to
\begin{equation}\label{a-b-Wron-2}
{\bf J}'_{\rm s}(t)\vec{a}'(t)+ {\bf J}'_{\rm u}(t)\vec{b}'(t)=-\Upsilon(t)
\end{equation}
by the defining property of Jacobi  fields. Using (\ref{Wronskian-Matrix form}),  we solve $\vec{a}', \vec{b}'$ from (\ref{a-b-Wron-1}),   (\ref{a-b-Wron-2}) with
\begin{equation}\label{a'-b'}
\vec{a}'={\bf J}_{\rm u}^{*}\Upsilon, \ \vec{b}'=-{\bf J}_{\rm s}^{*}\Upsilon.
\end{equation}
Note that ${\bf J}_{\rm u}(-\infty)={\bf J}_{\rm s}(+\infty)=0$.  Finally, we  recover $\vec{a}(t), \vec{b}(t)$ from (\ref{a'-b'}) by integration.
\end{proof}

  For any $s\in \Bbb R$, let $({\rm{K}}_{s}, {\rm{K}}_{s}')$  be the unique Jacobi  field along a $\widetilde{g}$-geodesic $\g$ such that \[
{{\rm K}}_{s}'(s)=\Upsilon (s)\ \ \mbox{and}\ \  {{\rm K}}_{s}(s)=0.
\]
Then
\[
\left({{\rm K}}_s(0), {{\rm K}}'_s(0)\right)=(D{ \bf \Phi}_{s})^{-1}\left(0,   \Upsilon(s)\right).
\]
We further express $\Xi$ using ${\rm{K}}_{s}$'s  by specifying  the value of $\Xi(\g(0))$ for any $\widetilde{g}$-geodesic $\g$.

\begin{prop}\label{a-b-better in K} Let $\Xi$ be the infinitesimal Morse correspondence at $g$ for  a $C^3$ one-parameter
family of $C^3$ metrics  $g^{\l}$ with $g^{0}=g$.  Then for the $\widetilde{g}$-geodesic $\g$ with $\dot\g(0)={\bf v}$:
\begin{eqnarray*}
\Xi_{\g}(0)&=& (U'_{\bf v}(0)-S'_{\bf v}(0))^{-1}\big[\int^{0}_{-\infty}({\rm K}'_{s}(0)-U'_{\bf v}(0){\rm K}_{s}(0))\ ds\\
&&\ \ \ \ \ \ \ \ \ \  \ \ \ \ \ \ \ \ \ \ \ \ \  \ +\int_{0}^{+\infty}({\rm K}'_{s}(0)-S'_{\bf v}(0){\rm K}_{s}(0))\ ds \big],\\
(\nabla_{\dot{\g}}\Xi_{\g}) (0)&=& S'_{\bf v}(0) (U'_{\bf v}(0)-S'_{\bf v}(0))^{-1}\int^{0}_{-\infty}({\rm K}'_{s}(0)-U'_{\bf v}(0){\rm K}_{s}(0))\ ds\\
&&+U'_{\bf v}(0) (U'_{\bf v}(0)-S'_{\bf v}(0))^{-1}\int_{0}^{+\infty}({\rm K}'_{s}(0)-S'_{\bf v}(0){\rm K}_{s}(0))\ ds.
\end{eqnarray*}
\end{prop}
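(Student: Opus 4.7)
The plan is to feed the explicit integrals for $\vec a(0)$ and $\vec b(0)$ from Proposition~\ref{a-b-prop} into the Wronskian pairing with the auxiliary Jacobi field ${\rm K}_s$, so as to trade quantities at time $s$ for quantities at time $0$; then to undo the basis change dictated by the normalization $\langle(U'_{\bf v}(0)-S'_{\bf v}(0))\vec x_i,\vec x_j\rangle=\delta_{ij}$.

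First I would unpack the matrices at $t=0$. By the choice of $\{\vec x_i\}$ and $J_i$, both ${\bf J}_{\rm s}(0)$ and ${\bf J}_{\rm u}(0)$ are the canonical map $X:\R^{m-1}\to N_0(\g)$, $e_i\mapsto\vec x_i$, while ${\bf J}'_{\rm s}(0)=S'_{\bf v}(0)\circ X$ and ${\bf J}'_{\rm u}(0)=U'_{\bf v}(0)\circ X$. Setting $\alpha:=X\vec a(0)$ and $\beta:=X\vec b(0)$ in $N_0(\g)$, Proposition~\ref{a-b-prop} reduces to
\[
\Xi_\g(0)=\alpha+\beta,\qquad (\nabla_{\dot\g}\Xi_\g)(0)=S'_{\bf v}(0)\alpha+U'_{\bf v}(0)\beta,
\]
so it suffices to identify $\alpha$ and $\beta$ with the two displayed integrals (pre-multiplied by $(U'_{\bf v}(0)-S'_{\bf v}(0))^{-1}$).

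Next I would run the Wronskian computation. Because $J_i$ and ${\rm K}_s$ are both Jacobi fields along $\g$, $W(J_i,{\rm K}_s)$ is constant. Evaluating at $t=s$ where ${\rm K}_s(s)=0$ and ${\rm K}'_s(s)=\Upsilon(s)$ gives $W(J_i,{\rm K}_s)=-\langle J_i(s),\Upsilon(s)\rangle$, while at $t=0$ it is $\langle J'_i(0),{\rm K}_s(0)\rangle-\langle J_i(0),{\rm K}'_s(0)\rangle$. Plugging in $J_i(0)=\vec x_k$, $J'_i(0)=U'_{\bf v}(0)\vec x_k$ for columns of ${\bf J}_{\rm u}$, and analogously for ${\bf J}_{\rm s}$, and using the symmetry of $S'_{\bf v}(0)$ and $U'_{\bf v}(0)$, the integrand in $[\vec a(0)]_k$ becomes $\langle \vec x_k,\,{\rm K}'_s(0)-U'_{\bf v}(0){\rm K}_s(0)\rangle$ and the one in $[\vec b(0)]_k$ becomes $\langle \vec x_k,\,{\rm K}'_s(0)-S'_{\bf v}(0){\rm K}_s(0)\rangle$. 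Denoting the two integrals in the statement by $P$ and $Q$ respectively, we obtain $[\vec a(0)]_k=\langle\vec x_k,P\rangle$ and $[\vec b(0)]_k=\langle\vec x_k,Q\rangle$.

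The final step is inverting the $A$-orthonormal frame. With $A:=U'_{\bf v}(0)-S'_{\bf v}(0)$, which is symmetric and positive, the normalization $\langle A\vec x_i,\vec x_j\rangle=\delta_{ij}$ yields the one-line identity $\sum_k\langle\vec x_k,W\rangle\vec x_k=A^{-1}W$ for every $W\in N_0(\g)$ (test both sides with $A\vec x_j$). Applied to $W=P$ and $W=Q$, this gives $\alpha=A^{-1}P$, $\beta=A^{-1}Q$, and the two claimed formulas follow. The only technical point I would check separately is convergence of $P$ and $Q$: writing the pair $({\rm K}_s(0),{\rm K}'_s(0))$ in the Anosov splitting ${\bf E}^{\rm ss}\oplus{\bf E}^{\rm su}$, the combinations ${\rm K}'_s(0)-U'_{\bf v}(0){\rm K}_s(0)$ and ${\rm K}'_s(0)-S'_{\bf v}(0){\rm K}_s(0)$ respectively kill the unstable and stable contributions, so the remaining pieces decay exponentially thanks to the Anosov rates recalled in Section~2, and the main potential obstacle — growth of ${\rm K}_s(0)$ as $|s|\to\infty$ — does not materialize.
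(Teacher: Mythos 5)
Your proposal is correct and follows essentially the same route as the paper: the Wronskian constancy of $W(J_i,{\rm K}_s)$ trades data at time $s$ for data at time $0$, the normalization $\langle(U'_{\bfv}(0)-S'_{\bfv}(0))\vec x_i,\vec x_j\rangle=\delta_{ij}$ is used to identify $\sum_k\langle\vec x_k,\cdot\rangle\vec x_k$ with $(U'_{\bfv}(0)-S'_{\bfv}(0))^{-1}$, (equivalently the paper's ${\bf J}_{\rm u}(0){\bf J}_{\rm u}^*(0)=(U'_{\bfv}(0)-S'_{\bfv}(0))^{-1}$), and the statements for $\Xi_\gamma(0)$ and $\nabla_{\dot\gamma}\Xi_\gamma(0)$ are read off from Proposition~\ref{a-b-prop}. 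Your separate convergence check via the Anosov splitting replicates the remark the paper places immediately after the proposition.
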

\begin{proof} By Proposition \ref{a-b-prop},  for any $\widetilde{g}$-geodesic $\g$,
\[\Xi(\g(0))=\left(\Xi_{\g}(0), (\nabla_{\dot{\g}}\Xi_{\g}) (0)\right)=\left({\bf J}_{\rm s}(0)\vec{a}(0)+{\bf J}_{\rm u}(0)\vec{b}(0), {\bf J}'_{\rm s}(0)\vec{a}(0)+{\bf J}'_{\rm u}(0)\vec{b}(0)\right),\]
where  $\vec{a}(0), \vec{b}(0)$ are given by (\ref{a-b-t-formula}). We first express $\vec{a}(0)$ using  ${\rm{K}}_{s}$'s.  Let $s\leq 0$.  The Wronskian between ${\rm K}_s$ and  any unstable Jacobi fields are preserved along the geodesics and  must have the same value at $\g(s)$ and $\g(0)$.  This gives
\[
{\bf J}^*_{\rm u}(s) \Upsilon (s)= {\bf J}^*_{\rm u}(0) {\rm K}'_s(0)-({\bf J}_{\rm u}')^*(0){\rm K}_s(0).
\]
Consequently,
\begin{eqnarray*}
({\bf J}^*_{\rm u})^{-1}(0){\bf J}^*_{\rm u}(s) \Upsilon (s)={\rm K}'_s(0)-({\bf J}^*_{\rm u})^{-1}(0)({\bf J}_{\rm u}')^*(0){\rm K}_s(0)= {\rm K}'_s(0)-U'_{\bf v}(0){\rm K}_s(0),
\end{eqnarray*}
where we use the fact that ${\bf J}_{\rm u}'(0)=U'_{\bf v}(0){\bf J}_{\rm u}(0)$ for the second equality.  So we have
\[
\vec{a}(0)={\bf J}^*_{\rm u}(0)\int^{0}_{-\infty}({\rm K}'_{s}(0)-U'_{\bf v}(0){\rm K}_{s}(0))\ ds.
\]
Similarly, for any $s\geq 0$, a comparison of  the Wronskian between ${\rm K}_s$ and any stable Jacobi fields at time $s$ and $0$ gives
\[
{\bf J}^*_{\rm s}(s) \Upsilon (s)= {\bf J}^*_{\rm s}(0) {\rm K}'_s(0)-({\bf J}_{\rm s}')^*(0){\rm K}_s(0).
\]
As a consequence, we have
\begin{eqnarray*}
({\bf J}^*_{\rm s})^{-1}(0){\bf J}^*_{\rm s}(s) \Upsilon (s)={\rm K}'_s(0)-({\bf J}^*_{\rm s})^{-1}(0)({\bf J}_{\rm s}')^*(0){\rm K}_s(0)= {\rm K}'_s(0)-S'_{\bf v}(0){\rm K}_s(0),
\end{eqnarray*}
which gives
\[
\vec{b}(0)={\bf J}^*_{\rm s}(0)\int_{0}^{+\infty}({\rm K}'_{s}(0)-S'_{\bf v}(0){\rm K}_{s}(0))\ ds.
\]
The formula for $\Xi(\g(0))$  follows by using ${\bf J}_{\rm s}(0)={\bf J}_{\rm u}(0)$ and ${\bf J}_{\rm u}(0){\bf J}^*_{\rm u}(0)=(U'_{\bf v}(0)-S'_{\bf v}(0))^{-1}$.
\end{proof}

A dynamical point of view of  the integrability of the integrals  in Proposition  \ref{a-b-better in K} is that $({\rm K}'_{s}(0)-U'_{\bf v}(0){\rm K}_{s}(0))$ $(s\leq 0)$ is the stable  vertical part of $(D{\bf \Phi}_{s})^{-1}\left(0,   \Upsilon(s)\right)$   and hence decays exponentially when $s$ goes to $-\infty$,  while $({\rm K}'_{s}(0)-S'_{\bf v}(0){\rm K}_{s}(0))$ $(s\geq 0)$ is the unstable  vertical part of $(D{\bf \Phi}_{s})^{-1}\left(0,   \Upsilon(s)\right)$ and thus decays exponentially when $s$ goes to $+\infty$.

For any curve $\l\in (-1, 1)\mapsto \mathcal{C}_{\l}\in {\bf N}$ (or  $\mathcal{C}^{\l}\in {\bf N}$)  on 
some Riemannian manifold ${\bf N}$, we write
$(\mathcal{C}_{\l})'_0:=(d\mathcal{C}_{\l}/d\l)|_{\l=0}$ (or $(\mathcal{C}^{\l})'_0:=(d\mathcal{C}^{\l}/d\l)|_{\l=0}$) whenever the
differential exists.   We can put a formula concerning $\left(\overline{X}_{\wt{g}^{\l}}\right)'_0$ for any $C^3$  curve $g^{\l}$ in $\Re(M)$ with $g^0=g$.

\begin{prop}\label{lem-W} Let $(M, g)$ be a negatively curved closed connected $m$-dimensional
Riemannian manifold.   Then  for any $C^3$ one-parameter family of $C^3$ metrics $\lambda\in
(-1, 1)\mapsto g^{\lambda}$ in it with $g^0=g$, the map $\l\mapsto
\overline{X}_{\wt{g}^{\l}}(x, \xi)$ is differentiable at $\l=0$ for each ${\bf
v}=(x, \xi)$ with
\[
\left(\overline{X}_{\wt{g}^{\l}}\right)'_0(x, \xi)=\left(0,
\left(\|\overline{X}_{\wt{g}^{\l}}\|_{\wt{g}}\right)'_0({\bf v}){\bf
v}+\int_{0}^{+\infty}({\rm K}'_{s}(0)-S'_{\bf v}(0){\rm K}_{s}(0))\ ds\right).
\]
\end{prop}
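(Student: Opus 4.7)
The strategy is to decompose $X^\l(x,\xi) = \rho^\l(x,\xi)\,W^\l(x,\xi)$, where $\rho^\l(x,\xi) := \|X^\l(x,\xi)\|_{\wt g}$ and $W^\l := X^\l/\rho^\l \in S\M_{\wt g}$ is the $\wt g$-normalized direction of the $\wt g^\l$-geodesic spray. Since $\rho^0\equiv 1$ and $W^0(x,\xi)=\bfv$, where $\bfv = \overline X_{\wt g}(x,\xi)$, the product rule gives $(X^\l)'_0 = (\rho^\l)'_0\,\bfv + (W^\l(x,\xi))'_0$ in $T_x\M$. Since $(x,\xi)$ is fixed, the curve $\l\mapsto \overline X_{\wt g^\l}(x,\xi)$ is a purely vertical variation in $TT\M$, so its derivative at $\l=0$ has zero horizontal component and vertical component $(X^\l)'_0$. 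Identifying $(\rho^\l)'_0$ with $(\|\overline X_{\wt g^\l}\|_{\wt g})'_0(\bfv)$, it therefore suffices to show
\[
(W^\l(x,\xi))'_0 \; = \; \int_0^{+\infty}\bigl({\rm K}'_s(0) - S'_\bfv(0)\,{\rm K}_s(0)\bigr)\, ds.
\]

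To compute $(W^\l)'_0$, I will trade the fixed-footpoint variation for the Morse-correspondence variation $\Xi$, which shifts the footpoint. By the construction of $F^\l$, the image $F^\l(\bfv)\in S\M_{\wt g^\l}$ is the $\wt g^\l$-unit initial vector at some footpoint $y^\l$ of the $\wt g^\l$-geodesic asymptotic to $\xi$, so $F^\l(\bfv) = X^\l(y^\l,\xi)$ and hence $\pi^\l(F^\l(\bfv)) = W^\l(y^\l,\xi)$. Theorems \ref{FF-thm 2.1} and \ref{FF-Morse-correspondence} identify the derivative at $\l=0$ of this curve with $\Xi(\bfv)$, whose horizontal and vertical parts in $T_\bfv S\M_{\wt g}$ are respectively $\Xi_\g(0)$ and $\nabla_{\dot\g}\Xi_\g(0)$. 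The map $\mathcal W^0:\M\to S\M_{\wt g}$ given by $\mathcal W^0(y) := \overline X_{\wt g}(y,\xi) = -\nabla b_\bfv(y)$ has, by the identity $\nabla_Y(\nabla b_\bfv)(x) = -S'_\bfv(0)Y$ recalled in Subsection 2.4, differential at $x$ given by $D\mathcal W^0(x)Y = (Y,\,S'_\bfv(0)Y)$ in the horizontal/vertical splitting: the image of $D\mathcal W^0(x)$ is exactly the strong stable subspace at $\bfv$.

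The chain rule applied to $\l\mapsto W^\l(y^\l,\xi)$ at $\l=0$ then reads
\[
\Xi(\bfv) \; = \; (0,(W^\l(x,\xi))'_0) \,+\, D\mathcal W^0(x)(\Xi_\g(0)) \; = \; (\Xi_\g(0),\;(W^\l(x,\xi))'_0 + S'_\bfv(0)\Xi_\g(0)),
\]
and matching vertical components gives $(W^\l(x,\xi))'_0 = \nabla_{\dot\g}\Xi_\g(0) - S'_\bfv(0)\Xi_\g(0)$. Substituting the explicit expressions from Proposition \ref{a-b-better in K}, the coefficient in front of $\int_{-\infty}^0({\rm K}'_s(0)-U'_\bfv(0){\rm K}_s(0))\,ds$ cancels (both contributions equal $S'_\bfv(0)(U'_\bfv(0)-S'_\bfv(0))^{-1}$), while the coefficient in front of $\int_0^{+\infty}({\rm K}'_s(0)-S'_\bfv(0){\rm K}_s(0))\,ds$ simplifies to $(U'_\bfv(0)-S'_\bfv(0))(U'_\bfv(0)-S'_\bfv(0))^{-1} = \mbox{Id}$. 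This yields the claimed formula. The only real obstacle is the middle step, where one must correctly identify $D\mathcal W^0(x)$ with the graph of the stable Riccati operator $S'_\bfv(0)$ and carefully account for the footpoint motion in the chain rule; after this geometric identification, what remains is a purely algebraic cancellation in the expressions for $\Xi_\g(0)$ and $\nabla_{\dot\g}\Xi_\g(0)$.
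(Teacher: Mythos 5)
Your proof is correct and follows essentially the same route as the paper's: both decompose the variation into the normalization part $(\rho^\l)'_0\,\bfv$ and a direction part, computed as $\nabla_{\dot\g}\Xi_\g(0)-S'_\bfv(0)\Xi_\g(0)$ via a chain rule through the Morse correspondence $\Xi(\bfv)$ and the footpoint correction $Y\mapsto (Y,S'_\bfv(0)Y)$, followed by the algebraic cancellation from Proposition~\ref{a-b-better in K}. The paper presents the same chain rule as a transport along two auxiliary curves (one through the \emph{inverse} footpoint map inside $TW^{s}$, one through $\pi^\l\circ F^\l$) rather than as a direct differentiation of $\l\mapsto W^\l(y^\l,\xi)$, but the content is identical.
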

\begin{proof} Express the homeomorphism $\wt{F}^{\l}$ as a map from $\M\times
\pp\M$ to $\M\times \pp\M_{\wt{g}^{\l}}$ with
\begin{equation*}
\wt{F}^{\l}(x, \xi)=(f^{\l}_{\xi}(x), \xi), \ \forall (x, \xi)\in
S\M,
\end{equation*}
where  $f^{\l}_{\xi}$ records the change of footpoint of the
$(g, g^{\l})$-Morse correspondence $\wt{F}^{\l}$.  We have
\begin{eqnarray*}
 && \frac{1}{\l}\left(\overline{X}_{\wt{g}^{\l}}(x,
\xi)-\overline{X}_{\wt{g}}(x,
\xi)\right)\\
&&=\frac{1}{\l}\left(\overline{X}_{\wt{g}^{\l}}(x,
\xi)-\frac{\overline{X}_{\wt{g}^{\l}}(x,
\xi)}{\|\overline{X}_{\wt{g}^{\l}}(x,
\xi)\|_{\wt{g}}}\right)+\frac{1}{\l}\left(\frac{\overline{X}_{\wt{g}^{\l}}(x,
\xi)}{\|\overline{X}_{\wt{g}^{\l}}(x,
\xi)\|_{\wt{g}}}-\overline{X}_{\wt{g}}(x,
\xi)\right)\\
&&=:(a)_{\l}+(b)_{\l}.
\end{eqnarray*}
When $\l$ tends to zero,  $(a)_{\l}$ tends to $(0,
\left(\|\overline{X}_{\wt{g}^{\l}}\|\right)'_0({\bf v}){\bf v})$.  For $(b)_{\l}$, we can transport $\overline{X}_{\wt{g}}(x,
\xi)$ to $\displaystyle \frac{\overline{X}_{\wt{g}^{\l}}(x,
\xi)}{\|\overline{X}_{\wt{g}^{\l}}(x,
\xi)\|_{\wt{g}}}$ along two pieces of curves: the first is to follow the footpoint of the  inverse of the $(g^{\l},g)$-Morse correspondence  from $\overline{X}_{\wt{g}}(x,
\xi)$ to $\overline{X}_{\wt{g}}((f_{\xi}^{\l})^{-1}(x),
\xi)$ with the constraint that the vector remains  within $TW^{s} (x,\xi )$;   the second is to use the  $(g^{\l}, g)$-Morse correspondence  from $\overline{X}_{\wt{g}}((f_{\xi}^{\l})^{-1}(x),
\xi)$ to $\displaystyle \frac{\overline{X}_{\wt{g}^{\l}}(x,
\xi)}{\|\overline{X}_{\wt{g}^{\l}}(x,
\xi)\|_{\wt{g}}}.$   By Theorem \ref{FF-thm 2.1} and Theorem
\ref{FF-Morse-correspondence},
  the second curve   is $C^1$ and the derivative is 
$\left(\Xi_{\g_{{\bf v}}}(0), \nabla_{\dot{\g}_{{\bf v}}}\Xi_{\g_{{\bf
v}}}(0)\right)$, which is  also $({\bf J}_{\rm s}(0)\vec{a}(0), {\bf J}'_{\rm s}(0)\vec{a}(0))+({\bf J}_{\rm u}(0)\vec{b}(0), {\bf J}'_{\rm u}(0)\vec{b}(0))$ with $\vec{a}(0), \vec{b}(0)$ from Proposition \ref{a-b-prop}.  The horizontal projection of the first  curve is the reverse of the second one; so it is also $C^1$ and the horizontal part of the derivative is $-\Xi_{\g_{{\bf v}}}(0)$.  Since it belongs to  $TW^{s } (x,\xi ) $   which is a graph over the horizontal plane, the vertical part is also $C^1$ and the derivative is given by $ S'_{{\bf v}}(0)(-\Xi_{\g_{{\bf
v}}}(0)).$  So,
\[
\lim\limits_{\l\to 0}(b)_{\l}=\left( 0, (\nabla_{\dot{\g}_{{\bf v}}}\Xi_{\g_{{\bf
v}}})(0)- S'_{{\bf v}}(0)\Xi_{\g_{{\bf
v}}}(0) \right)= \left(0, (U'_{{\bf v}}(0)- S'_{{\bf v}}(0)){\bf J}_{\rm u}(0)\vec{b}(0) \right),
\]
which, by  our choice  of  ${\bf J}_{\rm u}(0)={\bf J}_{\rm s}(0)$ and the defining property of ${\bf J}_{\rm u}(0)$ in (\ref{U-S-delta}),  is
\[
\left(0, ({\bf J}_{\rm s}^{*})^{-1}(0)\vec{b}(0)\right) =\left(0, \int_{0}^{+\infty}({\rm K}'_{s}(0)-S'_{\bf v}(0){\rm K}_{s}(0))\ ds \right) .
\]
\end{proof}
\begin{cor}\label{X-lam-Lem}Let $(M, g)$ be a negatively curved closed connected
Riemannian manifold and let  $\lambda\in (-1, 1)\mapsto g^{\lambda}\in \Re(M)$ be a $C^3$
curve of $C^3$ conformal changes of the metric $g^0=g$.  The map $\l\mapsto
\overline{X}_{\wt{g}^{\l}}(x, \xi)$ is differentiable for each ${\bf
v}=(x, \xi)$ with
\[
\left(\overline{X}_{\wt{g}^{\l}}\right)'_0(x, \xi)=\left(0,
-\vf \circ \p \ {\bf
v}+\int_{0}^{+\infty}({\rm K}'_{s}(0)-S'_{\bf v}(0){\rm K}_{s}(0))\ ds\right),
\]
where $\vf:\ M\to \Bbb R$ is such that $g^{\l}=e^{2\l \vf + O(\l^2)}g$, $\p$ denotes the projections  $\p : SM \to M$ and $\p : S\M \to \M$, and  $\left({{\rm K}}_s(0), {{\rm K}}'_s(0)\right)=(D{\bf \Phi}_{s})^{-1}\left(0,   \Upsilon(s)\right)$ with  $\Upsilon=-\nabla \vf+\langle\nabla \vf,
\dot\g_{\bf v}\rangle\dot\g_{\bf v}$.
\end{cor}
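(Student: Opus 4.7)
The plan is to derive this corollary directly from Proposition \ref{lem-W} by specializing to the conformal case. Two quantities need to be identified: the scalar coefficient $\left(\|\overline{X}_{\wt{g}^{\l}}\|_{\wt{g}}\right)'_0(\bfv)$ appearing in front of $\bfv$, and the explicit form of $\Upsilon(s)$ (and hence of $\mathrm{K}_s$) under a conformal variation.

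For the scalar coefficient, I would simply observe that by definition $\overline{X}_{\wt{g}^{\l}}(x,\xi)$ is the $\wt{g}^{\l}$-unit vector at $x$ pointing at $\xi$; since $\wt{g}^{\l}=e^{2\vf^{\l}}\wt{g}$, its $\wt{g}$-norm equals $e^{-\vf^{\l}(x)}$. Differentiating in $\l$ at $0$ using $\vf^0\equiv 0$ and $(\vf^{\l})'_0=\vf$ gives
\[
\left(\|\overline{X}_{\wt{g}^{\l}}\|_{\wt{g}}\right)'_0(\bfv)=-\vf(x)=-\vf\circ\p(\bfv),
\]
which matches the first summand in the claimed formula.

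For the computation of $\Upsilon$, I would use the standard transformation rule for the Levi-Civita connection under a conformal change: if $\wt g^\l = e^{2\vf^\l}\wt g$, then for vector fields $X,Y$,
\[
\nabla^{\l}_XY \;=\; \nabla_XY + X(\vf^{\l})\,Y + Y(\vf^{\l})\,X - \langle X,Y\rangle\,\nabla\vf^{\l}.
\]
Differentiating in $\l$ at $0$ (where $\vf^0\equiv 0$ so all correction terms vanish at leading order) yields
\[
\Gamma_XY \;=\; X(\vf)\,Y + Y(\vf)\,X - \langle X,Y\rangle\,\nabla\vf.
\]
Specializing to $X=Y=\dot\g$ (a unit-speed $\wt g$-geodesic) gives $\Gamma_{\dot\g}\dot\g=2\dot\g(\vf)\dot\g-\nabla\vf$, so $\langle\Gamma_{\dot\g}\dot\g,\dot\g\rangle=\dot\g(\vf)=\langle\nabla\vf,\dot\g\rangle$. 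Subtracting the tangential component one gets
\[
\Upsilon \;=\; \Gamma_{\dot\g}\dot\g-\langle\Gamma_{\dot\g}\dot\g,\dot\g\rangle\dot\g \;=\; -\nabla\vf+\langle\nabla\vf,\dot\g\rangle\dot\g,
\]
exactly the expression asserted; this is automatically normal to $\dot\g$, consistent with $\mathrm{K}_s$ being a Jacobi field along $\g$ with $\mathrm{K}_s'(s)\in N_s(\g)$.

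With these two identifications, substituting into the conclusion of Proposition \ref{lem-W} immediately yields the stated formula. There is no real obstacle here, since the family $\{g^\l\}$ of conformal changes of a negatively curved metric stays in $\Re(M)$ for $|\l|$ small (negativity of curvature being $C^2$-open), so Proposition \ref{lem-W} applies on the whole curve; the only mild care needed is to verify the transformation formula for $\nabla^\l$ is valid at the $C^3$ regularity we assume, which is standard.
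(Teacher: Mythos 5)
Your proposal is correct and follows essentially the same path as the paper's proof: compute $\|\overline{X}_{\wt{g}^{\l}}\|_{\wt{g}}=e^{-\vf^{\l}\circ\p}$ and differentiate to get the $-\vf\circ\p\,{\bf v}$ term, then specialize $\Gamma_{\dot\g}\dot\g$ via the conformal transformation rule for the Levi-Civita connection and project off the tangential part to recover $\Upsilon$. The only cosmetic difference is that you cite the conformal transformation of $\nabla$ as standard, whereas the paper derives it from Koszul's formula.
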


\begin{proof} Let $\l\in (-1, 1)\mapsto \vf^{\l}$  be such
that $g^{\l}=e^{2\vf^{\l}}g$.  Clearly,
$\|\overline{X}_{\wt{g}^{\l}}\|_{\wt{g}}=e^{-\vf^{\l}\circ \p }$ and hence
$\left(\|\overline{X}_{\wt{g}^{\l}}\|_{\wt{g}}\right)'_0({\bf
v} ){\bf v}=-\vf \circ \p \ {\bf v}$.  Write $\langle\cdot, \cdot\rangle_{\l}$ for the $\wt{g}^{\l}$-inner product and let $\nabla^{\l}$ denote the associated  Levi-Civita connection (we simply write $\langle\cdot, \cdot\rangle$ and $\nabla$ when $\l=0$). Each  $\nabla^{\l}$ is torsion free and preserves the metric  inner product.  Using these two properties, we obtain Koszul's formula, which says for any smooth vector fields $X, Y, Z$ on $\M$, 
\begin{equation}\label{Koszul}
2\langle\nabla^{\l}_X Y, Z\rangle_{\l}=X\langle Y, Z\rangle_{\l}+ Y\langle X, Z\rangle_{\l}-Z\langle X, Y\rangle_{\l}+\langle [X, Y], Z\rangle_{\l}-\langle [X, Z], Y\rangle_{\l}-\langle [Y, Z], X\rangle_{\l}.
\end{equation}
Note that $\wt{g}^{\l}=e^{2\vf^{\l}\circ \p}\wt{g}$, which means $\langle\cdot, \cdot\rangle_{\l}=e^{2\vf^{\l}\circ \p}\langle\cdot, \cdot\rangle$.  So, if we multiply both sides of (\ref{Koszul}) with $e^{-2\varphi^{\l}\circ \p}$ and compare it with the expression (\ref{Koszul}) for $\nabla$,   we obtain
\begin{eqnarray*}
2\langle\nabla^{\l}_X Y, Z\rangle&=& e^{-2\varphi^{\l}\circ \p}\left((D_X e^{2\varphi^{\l}\circ \p}) \langle Y, Z\rangle+ (D_Y e^{2\varphi^{\l}\circ \p})\langle X, Z\rangle-(D_Ze^{2\varphi^{\l}\circ \p})\langle X, Y\rangle\right)\\
&&+X\langle Y, Z\rangle+ Y\langle X, Z\rangle-Z\langle X, Y\rangle+\langle [X, Y], Z\rangle-\langle [X, Z], Y\rangle-\langle [Y, Z], X\rangle\\
&=& 2 (D_X \varphi^{\l}\circ \p) \langle Y, Z\rangle+2 (D_Y \varphi^{\l}\circ \p) \langle X, Z\rangle-2 (D_Z \varphi^{\l}\circ \p) \langle X, Y\rangle+2\langle\nabla_X Y, Z\rangle.\end{eqnarray*}
Since $Z$ is arbitrary, this implies
\[
\nabla^{\l}_X Y-\nabla_X Y=(D_X\vf^{\l}\circ \p)Y+(D_Y\vf^{\l}\circ \p)X-\langle X , Y\rangle\nabla \vf^{\l}\circ \p
\]
for any two smooth vector fields $X, Y$ on  $\M$. As a consequence, we have
\[
\Gamma_{X} Y= (D_X\vf\circ \p)Y+(D_Y\vf\circ \p)X-\langle X , Y\rangle\nabla \vf\circ \p.
\]
In particular,  $\Gamma_{\dot{\g}}\dot{\g}=2\langle\nabla\vf \circ \p , \dot{\g}\rangle\dot{\g}-\nabla\vf \circ \p $ and the equation (\ref{Morse-correspondence}) reduces to 
\begin{equation*}
\nabla_{\dot{\g}}^2\Xi_{\g}+ {\rm R}(\Xi_{\g},
\dot{\g})\dot{\g}-\nabla \vf \circ \p +\langle\nabla \vf \circ \p ,
\dot\g\rangle\dot\g=0.\end{equation*}
The formula for $\left(\overline{X}_{\wt{g}^{\l}}\right)'_0(x, \xi)$  follows immediately by Proposition \ref{lem-W}.
\end{proof}

\section{Proof of the main theorems}

Let $\lambda\in (-1, 1)\mapsto g^{\lambda}\in \Re(M)$ be a $C^3$
curve of  $C^3$ conformal changes of the metric $g^0=g$.  We  simply use the superscript $\l$
$(\l\not=0)$ for $\overline X, {\bf m}, \wt{\bf m}, k_{\bf v},$ $ \P$ to
indicate that the metric used is $g^{\l}$, for instance, ${\bf
m}^{\l}$ is the harmonic measure for the laminated Laplacian in
metric $g^{\l}$.  The corresponding quantities for $g$ will appear  without superscripts.  Let $\l\in (-1, 1)\mapsto \vf^{\l}$  be such
that $g^{\l}=e^{2\vf^{\l}}g$. For each  $\l$, we have \[
\Delta^{\lambda}=e^{-2\vf^{\lambda}}\left(\Delta+(m-2)\nabla\vf^{\lambda}\right)=:
e^{-2\vf^{\lambda}} L_{\lambda}.
\]
Let $\widehat{\LL}^{\l}:=\Delta+Z^{\l}$ with $Z^{\l} = (m-2)\nabla\vf^{\lambda} \circ \p.$
Leafwisely, $Z^\l$ is the dual of the closed form $(m-2)d \vf ^\l\circ \p.$  Moreover, the pressure of the function $- \langle \overline {X}, Z^0 \rangle  = 0 $ is positive. Therefore,  there exists $\d>0$ such that for $|\l | < \d$, the pressure of the function $- \langle \overline{X}, Z^\l \rangle $ is still positive, so that the results of Section 3 apply to $\widehat{\LL}^\l$ for $\l \in (-\d, +\d).$
Note that
  $\widehat \ell _\l $ and $\widehat{h}_{\l}$ defined in  Section 
\ref{Sec-intro} are just the linear drift and the stochastic entropy
for the operator $\widehat{\LL}^{\l}$ with respect to metric $g$.  Let $\ell_{\l}$ and $h_{\l}$ be the linear drift
and entropy for $(M, g^{\l})$ as were defined in  Section
\ref{Sec-intro}. From the results in  Sections  3 and 4, the following limits considered in  Section 
\ref{Sec-intro} exist:
\begin{eqnarray*}
(d\ell_{\lambda}/d\lambda)|_{\lambda=0}&=&\lim\limits_{\l\to
0}\frac{1}{\l}(\ell_{\l}-\widehat{\ell}_{\l})+\ \lim\limits_{\l\to
0}\frac{1}{\l}(\widehat{\ell}_{\l}-\ell_0)\  =: {\rm{\bf
(I)}}_{\ell}+{\rm{\bf (II)}}_{\ell},\\
(dh_{\lambda}/d\lambda)|_{\lambda=0}&=&\lim\limits_{\l\to
0}\frac{1}{\l}(h_{\l}-\widehat{h}_{\l})+\lim\limits_{\l\to
0}\frac{1}{\l}(\widehat{h}_{\l}-h_0)=:{\rm{\bf (I)}}_{h}+{\rm{\bf
(II)}}_{h}.
\end{eqnarray*}
This shows the differentiability in $\l$ at $0$ of $\l \mapsto \ell_\l $ and $\l \mapsto h_\l $ (Theorem \ref{main-thm}). In this section, we give more details and formulas for the derivative. Namely, we prove the following Theorem

\begin{theo}\label{Main-formulas} Let $(M, g)$ be a negatively curved compact connected $m$-dimensional Riemannian
manifold and let $\lambda\in (-1, 1)\mapsto g^{\lambda}=e^{2\vf^{\l}}g\in \Re(M)$
  be a $C^3$ curve of $C^3$  conformal changes of the metric $g^0=g$ with
constant volume. Let $\vf $ be such that
$g^{\l}=e^{2\l \vf + O(\l^2)}g$. With the above notations, the
following holds true.
\begin{itemize}
  \item[i)] The function
$\l\mapsto {\ell}_{\l}$   is  differentiable at $0$ with
\begin{eqnarray}
&&(\ell_{\l})'_0=\int_{\scriptscriptstyle{M_0\times
\pp\M}}\langle\vf \circ \p \ \overline{X}+\int_{0}^{+\infty}({\rm K}'_{s}(0)-S'_{(x, \xi)}(0){\rm K}_{s}(0))\ ds,
\nabla\ln k_{\bf v}\rangle\ d\wt{{\bf
 m}}\notag\\
    &&\ \ \ \ \ \ \ \ \ +(m-2)\int_{{\scriptscriptstyle{M_0\times \pp\M}}} \vf \circ \p\ \langle \nabla u_0+\overline{X}, \nabla
\ln k_{\bf v}  \rangle  \ d{\bf {\wt m}},\label{lineardriftderivative}
\end{eqnarray}
where $\left({{\rm K}}_s(0), {{\rm K}}'_s(0)\right)=(D{\bf \Phi}_{s})^{-1}\left(0,   \Upsilon(s)\right)$ with  $\Upsilon=-\nabla \vf+\langle\nabla \vf,
\dot\g\rangle\dot\g$ along the $\widetilde{g}$-geodesic $\g$ with $\dot\g(0)=(x, \xi)$ and  $u_0$ is the
function defined before Proposition \ref{M-0-M-1}.
  \item[ii)]The function
$\l\mapsto {h}_{\l}$   is  differentiable at $0$ with
\begin{equation}\label{entropyderivative} (h_{\l})'_0= (m-2) \int_{{\scriptscriptstyle{SM}}} \vf \circ \p \ \langle \nabla (u_1+\ln k_{\bf v}), \nabla
\ln k_{\bf v}  \rangle\  d{\bf {m}}, \end{equation}
 where $u_1$ is the
function defined before Proposition \ref{M-0-M-1}.
\end{itemize}
\end{theo}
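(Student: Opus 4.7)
The proof proceeds through the four-term decomposition
\[
\tfrac{d\ell_\l}{d\l}\big|_{\l=0} = \mbox{\bf (I)}_\ell + \mbox{\bf (II)}_\ell, \qquad \tfrac{dh_\l}{d\l}\big|_{\l=0} = \mbox{\bf (I)}_h + \mbox{\bf (II)}_h,
\]
from the Introduction, with $\mbox{\bf (II)}$ capturing the effect of adding a drift to the $\wt g$-Brownian motion (generator changing from $\Delta$ to $\widehat{\LL}^\l = \Delta + Z^\l$ where $Z^\l = (m-2)\nabla(\vf^\l\circ\p)$) and $\mbox{\bf (I)}$ capturing the simultaneous metric and time change (generator $L^\l$ to $\Delta^\l$).

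For the $\mbox{\bf (II)}$-terms the plan is to invoke Theorems \ref{differential-ell} and \ref{differential-h} applied to the family $\widehat{\LL}^\l$. These reduce the problem to computing $\lim_{t\to\infty}(1/t)\E_{\overline\Q^0}({\bf Z}^0_{\bullet,t}{\bf M}^0_t)$. Using Lemmas \ref{cov-bound-ell} and \ref{cov-bound-h} together with Cauchy--Schwarz, I would replace ${\bf Z}^0_{\ell,t}$ by $-{\bf Z}^0_t$ and ${\bf Z}^0_{h,t}$ by $-{\bf Z}^1_t$ (the martingales of Proposition \ref{M-0-M-1}), then compute the cross--quadratic variations with ${\bf M}^0_t$ by It\^o's formula. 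By the mixing property (Proposition \ref{mixing}) this yields
\[
\mbox{\bf (II)}_\ell = -(m-2)\!\int\!\langle\overline X + \nabla u_0,\nabla\vf\rangle\,d{\bf m}, \quad \mbox{\bf (II)}_h = -(m-2)\!\int\!\langle\nabla(\ln k_{\bf v} + u_1),\nabla\vf\rangle\,d{\bf m}.
\]
A leafwise integration by parts against the local density $d\wt{{\bf m}} = k_\xi\,dV\,d\nu$, using $\Delta(\ln k_\xi + u_1) = -h$ and $\mathrm{Div}(\overline X + \nabla u_0) = -\ell$, converts these into $(m-2)\int\vf\langle\cdot,\nabla\ln k_{\bf v}\rangle\,d{\bf m}$ plus correction terms proportional to $\int\vf\,d{\bf m}$. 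The constant-volume hypothesis is crucial here: the fiber integral $y\mapsto\int k_\xi(y)\,d\nu(\xi)$ is a positive bounded $G$-invariant harmonic function on $\wt M$, hence descends to a constant on the compact manifold $M$, so $\p_*{\bf m} = \mathrm{Vol}(M)^{-1}dV_g$, and combined with $\int_M\vf\,dV_g = 0$ this gives $\int_{SM}\vf\,d{\bf m} = 0$, eliminating the correction and producing (\ref{entropyderivative}) and the second integral in (\ref{lineardriftderivative}).

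For $\mbox{\bf (I)}_h$ the plan is to show $h_\l = \widehat h_\l$ identically: the factorization $\Delta^\l = e^{-2\vf^\l}L^\l$ forces the two operators to have the same positive harmonic functions and hence the same Martin kernels $k^\l_\xi$, and the combination $\|\nabla^\l f\|^2_\l\,dV_{g^\l} = e^{(m-2)\vf^\l}\|\nabla f\|^2\,dV$ exactly matches the $L^\l$-harmonic leafwise density from Proposition \ref{harmonic-mea}(iii), while the two boundary measures coincide as exit distributions of the common underlying path process. For $\mbox{\bf (I)}_\ell$ I would expand $\ell_\l - \widehat\ell_\l$ using $\langle X^\l,\nabla^\l f\rangle_\l = \langle X^\l,\nabla f\rangle$ and differentiate at $\l = 0$: the Martin-kernel derivatives cancel between the two terms since $\wt{{\bf m}}^0 = \widehat{{\bf m}}^0 = {\bf m}$; Corollary \ref{X-lam-Lem} contributes $\int\langle-\vf\overline X + \int_0^{+\infty}(K'_s - S'_{\bf v}(0) K_s)\,ds,\nabla\ln k_{\bf v}\rangle\,d{\bf m}$ from the derivative of $X^\l$; and the measure-derivative difference $(\wt{{\bf m}}^\l)'_0 - (\widehat{{\bf m}}^\l)'_0$, arising from $\partial_\l(e^{m\vf^\l}) - \partial_\l(e^{(m-2)\vf^\l}) = 2\vf$, contributes an extra $2\int\vf\langle\overline X,\nabla\ln k_{\bf v}\rangle\,d\wt{{\bf m}}$ that flips the sign of $-\vf\overline X$ into the $+\vf\overline X$ of the statement, with the accompanying boundary-measure correction again vanishing thanks to $\int\vf\,d{\bf m} = 0$.

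The main obstacle will be the last of these steps: carefully separating the contributions to $\mbox{\bf (I)}_\ell$ from (a) the Morse-correspondence derivative of the geodesic spray via Corollary \ref{X-lam-Lem}, (b) the cancelling Martin-kernel derivatives at $\l=0$, and (c) the subtler measure-derivative difference between $\wt{{\bf m}}^\l$ and $\widehat{{\bf m}}^\l$ caused by the mismatched volume factors $e^{m\vf^\l}$ and $e^{(m-2)\vf^\l}$; the constant-volume identity $\int\vf\,d{\bf m} = 0$ must then be invoked at several stages—both to kill the $-(m-2)h\int\vf\,d{\bf m}$ and $-(m-2)\ell\int\vf\,d{\bf m}$ corrections in the $\mbox{\bf (II)}$-terms, and to handle the boundary-measure correction in $\mbox{\bf (I)}_\ell$—so that the combined expression collapses cleanly to (\ref{lineardriftderivative}) and (\ref{entropyderivative}).
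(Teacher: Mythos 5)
Your proposal reproduces the paper's proof essentially step by step: the same four-term decomposition $\mbox{\bf (I)}_\ell+\mbox{\bf (II)}_\ell$ and $\mbox{\bf (I)}_h+\mbox{\bf (II)}_h$; the observation that $\mbox{\bf (I)}_h=0$ because $\D^\l$ and $L^\l$ share Martin kernels and the conformal factors cancel exactly; the reduction of $\mbox{\bf (II)}_\ell,\mbox{\bf (II)}_h$ via Theorems \ref{differential-ell}, \ref{differential-h} to cross-quadratic variations of the martingales of Proposition \ref{M-0-M-1} against ${\bf M}_t$, followed by leafwise integration by parts; the splitting of $\mbox{\bf (I)}_\ell$ into a Morse-correspondence piece from Corollary \ref{X-lam-Lem} and a measure-derivative piece whose ratio factor $e^{2\vf^\l}$ produces the $+2\vf\overline X$ that flips the sign of $-\vf\overline X$; and the use of the constant-volume identity $\int\vf\circ\p\,d{\bf m}=0$ (via $\p_*{\bf m}$ being proportional to $dV_g$) to eliminate the $h_0\int\vf\,d{\bf m}$ and $\ell_0\int\vf\,d{\bf m}$ terms. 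This is the paper's own argument, correctly anticipated.
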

\begin{proof}

Observe firstly that since $g^\l $ has constant volume, $m \int \vf \ d{\rm{Vol}} = \left({\rm{Vol}} (M, g^\l )\right)'_0 = 0 $ and therefore
\begin{equation}\label{volume} \int _{SM}  \vf \circ \p \, d{\bf m} \; = \; 0. \end{equation}

We derive the formula for $(h_{\l})'_0$ first.  Let $\widehat{\bf m}^{\l}$ be the  $G$-invariant extension to $S\M$ of the
harmonic measure corresponding to $\widehat{\LL}^{\l}$ with respect
to  metric $g$. Then $d\widehat{\bf m}^{\l}=e^{-2\vf^{\l} \circ \p}d\wt{{\bf
m}}^{\l}$, where $\vf^{\l} $ also denotes its $G$-invariant extension to $\M$.  Moreover, since there is only a time change between the
leafwise diffusion processes with  infinitesimal operators $\widehat{\LL}^{\lambda}$ and $\Delta^{\lambda}$,  the leafwise Martin kernel functions of the two operators are the same.  (Indeed, because $\widehat{\LL}^{\lambda}$  only differs from $\Delta^{\lambda}$  by multiplication by a positive function, the leafwise positive harmonic functions  of the two generators are the same. In particular, the minimal  leafwise positive harmonic functions normalized at $x=\p({\bf v})$ are the same for  $\widehat{\LL}^\l $ and $\D^\l $.  It is known (\cite[Theorem 3]{An}) that the leafwise Martin kernel functions  $k_{\bf v}^{\l}(\cdot, \xi)$ of  $\widehat{\LL}^{\lambda}$  (or $\Delta^{\lambda}$) can be characterized as minimal  leafwise positive  $\widehat{\LL}^{\lambda}$ (or $\Delta^{\lambda}$)-harmonic functions normalized at $x$  such that $k_{\bf v}^{\l}(y, \xi)$ goes to zero when $y$ tends to a point in the boundary different from $\xi$.   Thus, the two  Martin kernel functions coincide.) Using Proposition \ref{formulas-l-h-Y-h}, we obtain
\begin{equation}\label{widehat-h}
\widehat{h}_{\l}=\int \|\nabla^{0}\ln k_{\bf v}^{\l}(x,\xi)\|_{0}^2\
  d\widehat{{\bf m}}^{\l}=\int e^{-2\vf^{\l}\circ \p}\|\nabla\ln k_{\bf v}^{\l}(x, \xi)\|^2 \ d\wt{{\bf
 m}}^{\l},
\end{equation}
whereas here, and hereafter, the integrals with respect to $\widehat{{\bf m}}^{\l}$ and  $\wt{{\bf
 m}}^{\l}$ are always taken on $M_0\times \partial \M$   and we will omit the
subscript of $\int_{\scriptscriptstyle M_0\times\pp\M}$ whenever
there is no ambiguity.  As before,   $k_{\bfv}^{\l} (\cdot,  \eta) $ should be understood as  a function on $W^s (\bfv)$ for all $\eta$. In particular, for $\eta = \xi$. Then its gradient (for the lifted metric from $\M$ to $W^s (\bfv)$)  is a tangent vector to $W^s (\bfv)$. We also know   $k_{\bf v}^{\l}(y, \eta)=k_{\eta}^{\lambda}(y)$, where $k_{\eta}^{\lambda}$ is the Martin kernel function on $\M$ for the $\wt{g}^{\lambda}$-Laplacian.  Of our special interest is $k_{\bf v}^{\l}(\cdot, \xi)$, which we will abbreviate as $k_{\bf v}^{\l}$ in the following context.

For $(h_{\l})'_0$, we have
\begin{eqnarray*}
(h_{\l})'_0=\lim\limits_{\l\to
0}\frac{1}{\l}(h_{\l}-\widehat{h}_{\l})+\lim\limits_{\l\to
0}\frac{1}{\l}(\widehat{h}_{\l}-h_0)=:{\rm{\bf (I)}}_{h}+{\rm{\bf
(II)}}_{h},
\end{eqnarray*}
if both limits exist.  It is easy to see ${\rm{\bf (I)}}_{h}=0$
since by Proposition \ref{formulas-l-h-Y-h} and (\ref{widehat-h}),
\begin{eqnarray*}
 {\rm{\bf (I)}}_{h}&=&  \lim\limits_{\l\to 0}\frac{1}{\l}\left(\int \|\nabla^{\l}\ln k_{\bfv}^{\l}(x,\xi)\|_{\l}^2\
  d\wt{{\bf m}}^{\l}-\int \|\nabla^{0}\ln k_{\bfv}^{\l}(x, \xi)\|_{0}^2\
  d\widehat{{\bf m}}^{\l}\right)\\
 &=& \lim\limits_{\l\to 0}\int\frac{1}{\l}(e^{-2\vf^{\l}\circ \p}-e^{-2\vf^{\l}\circ \p})\|\nabla\ln k_{\bfv}^{\l}(x, \xi)\|^2 \ d\wt{{\bf
 m}}^{\l},
\end{eqnarray*}
where we use 
\[\nabla^{\l} \ln k^{\l}_{\bfv}(x, \xi) = e^{-2 \vf^{\l} \circ \p}
\nabla \ln k^{\l}_{\bfv}(x, \xi) \; {\mbox {and}} \; \| \nabla^{\l} \ln
k^{\l}_{\bfv}(x, \xi) \|_{\l}^2 = e^{-2 \vf^{\l}\circ \p} \| \nabla \ln
k^{\l}_{\bfv}(x, \xi)\|^2 .\]
Thus,
\begin{equation}\label{zeroentropy} {\rm{\bf (I)}}_{h} \;= \; 0 .\end{equation}

For ${\rm{\bf (II)}}_{h}$, we have by Theorem
\ref{differential-h} that it equals to
 $\lim_{t\to +\infty}(1/t)\E_{\overline\Q}({\bf Z}_{h, t}{\bf
 M}_t)$.
Recall that ${\bf{x}}_t $ belongs to $W^s ({\bf {x}}_0)$. The process
\begin{eqnarray}\label{Z-champ}
{\bf \wt{Z}}_{t}^{1} &=& f_1({\bf x}_t)-f_1({\bf x}_0)-\int_{0}^{t}
(\Delta f_1)({\bf x}_s)\ ds,
\end{eqnarray}
where $f_1=-\ln k_{\bfv} -u_1$ and $\bfv = {\bf {x}}_0$  and the function
$u_1$ is such that
\begin{equation}\label{uone:def}
\D u_1  = \|\nabla \ln k_{\bf v}\|^2 - h_{0}
\end{equation}
 is a martingale with increasing process $\ 2\|\nabla \ln
k_{\bf v}+\nabla u_1\|^2  ({\bf x}_t)\ dt.$ 
It is true by Proposition  \ref{Covariance-2}  that
\[\lim\limits_{t\to +\infty}\frac{1}{t}\E_{\overline\Q}({\bf Z}_{h, t}{\bf M}_t)=\lim\limits_{t\to +\infty}\frac{1}{t}\E_{\overline\Q}({\bf \wt{Z}}_{t}^1{\bf M}_t),\]
where
\[
 {\bf M}_{t}=\frac{1}{2} \int_{0}^{t}\langle (Z^{\l})'_{0}({\bf x}_{s}),
{\bf w}_{s} dB_s \rangle_{{\bf x}_{s}}.
\]
Note that  $(Z^{\l})'_{0}$, the $G$-invariant extension  of $(m-2)\nabla \vf  \circ \p $, is a gradient field.  So, if we write
$\psi=\frac{1}{2}(m-2)\vf \circ \p $,  we have by Ito's formula that
\begin{equation}\label{M-champ}
 {\bf M}_{t}=\psi({\bf x}_t)-\psi({\bf x}_0)-\int_{0}^{t}(\Delta \psi)({\bf x}_s)\ ds
\end{equation}
is a martingale with increasing process $2\|\nabla \psi\|^2$. Using
(\ref{Z-champ}), (\ref{M-champ}) and a straightforward computation
using integration by parts formula for $(a {\bf \wt{Z}}_t^1+b{\bf
M}_t)^2$, $a, b=0$ or $1$,  we
obtain
\[{\bf \wt{Z}}_{t}^1{\bf M}_t=2 \int_{0}^{t}\langle \nabla f_1, \nabla \psi\rangle ({\bf x}_s)\ ds\]
and hence
\begin{eqnarray*}
\lim\limits_{t\to +\infty}\frac{1}{t}\E_{\overline\Q}({\bf
\wt{Z}}_{t}^1{\bf M}_t)= 2 \int \langle\nabla f_1, \nabla
\psi\rangle\ \ d{\bf \wt m}= -2\int\langle\nabla \ln k_{\bf v},  \nabla
\psi\rangle\ d{\bf \wt m} - 2 \int \langle \nabla u_1, \nabla \psi
\rangle\ d\bf{\wt m}.
\end{eqnarray*}
    Here,
\[
-2\int\langle\nabla \ln k_{\bf v}, \nabla \psi\rangle\ d{\bf \wt
m}=2\int_{\scriptscriptstyle SM} {\rm Div}(\nabla \psi)\ d{\bf
m}=(m-2)\int_{\scriptscriptstyle SM} \Delta (\vf \circ \p) \  d{\bf m}=0,
\]
where  the first equality is the integration by parts  formula and $\bf m$ is identified with the restriction of $\widetilde{\bf m}$ to $M_0\times \partial \M$,   and
the last one holds because ${\bf m}$ is $\Delta$-harmonic. We
finally obtain
\begin{equation*}
(h_{\l})'_0= - (m-2)  \int _{\scriptscriptstyle SM}
 \langle \nabla u_1, \nabla \vf \circ \p \rangle\ d{\bf{m}}.
\end{equation*}
Observe that:
  \begin{eqnarray}  2 \langle \nabla u_1, \nabla \vf \circ \p \rangle & = & \D (u_1 \vf \circ \p) - \D (u_1) \vf \circ \p - u_1 \D \vf \circ \p \notag\\
&=&  \D (u_1 \vf \circ \p)  - \vf  \circ \p \|\nabla \ln k_{\bf v}\|^2 + h_{0} \vf \circ \p- u_1 \D
\vf \circ \p,\label{h-calculation of inner product}
\end{eqnarray}
where we use the defining property (\ref{uone:def}) of $u_1$. When
we take the integral of (\ref{h-calculation of inner product}) with respect to ${\bf m}$,
the first term vanishes because $\bf m$ is $\D$ harmonic, the second term
gives $-\int \vf \circ \p  \|\nabla \ln k_{\bf v}\|^2 \ d{\bf m}$, the third
term vanishes by (\ref{volume}). Finally for the last
term, by using the integration by parts formula:
\begin{equation}\label{int-by-path} \int _{\scriptscriptstyle SM} u \D v \ d{\bf m} = \int _{\scriptscriptstyle SM} v \D
u\ d{\bf m} +2 \int_{\scriptscriptstyle SM} v \langle \nabla u,
\nabla \ln k_{\bf v}  \rangle \ d{\bf m} , \end{equation} we have
\begin{eqnarray*}
\int _{\scriptscriptstyle SM} u_1\Delta \vf \circ \p \  d{\bf  m}&=&\int
_{\scriptscriptstyle SM} \vf \circ \p\left( \D u_1 + 2  \langle
\nabla u_1, \nabla \ln k_{\bf v}  \rangle \right)\  d{\bf  m}\\
&=&\int _{\scriptscriptstyle SM} \vf \circ \p\left(\|\nabla \ln k_{\bf v}\|^2 + 2
\langle \nabla u_1, \nabla \ln k_{\bf v} \rangle\right)\ d{\bf m}.
\end{eqnarray*}

\bigskip
Next, we derive the formula for $(\ell_{\l})'_0$. Clearly,
\begin{eqnarray*}
(\ell_{\l})'_0=\lim\limits_{\l\to
0}\frac{1}{\l}(\ell_{\l}-\widehat{\ell}_{\l})+\lim\limits_{\l\to
0}\frac{1}{\l}(\widehat{\ell}_{\l}-\ell_0)=: {\rm{\bf
(I)}}_{\ell}+{\rm{\bf (II)}}_{\ell},
\end{eqnarray*}
if  both limits exist. Here the $\widehat{\ell}_{\l}$ defined in
the introduction  is just the linear drift for the operator
$\widehat{\LL}^{\l}$ with respect to metric $g$. The $\rm{\bf
(II)}_{\ell}$ term  can be analyzed similarly as above for ${\rm{\bf
(II)}}_{h}$.  Indeed, by Theorem \ref{differential-ell}, $\rm{\bf
(II)}_{\ell}=\lim_{t\to +\infty}(1/t)\E_{\overline\Q}({\bf Z}_{\ell,
t}{\bf M}_t)$. The process
\begin{eqnarray}\label{Z-champ-zero}
{\bf \wt{Z}}_{t}^{0} &=& f_0({\bf x}_t)-f_0({\bf x}_0)-\int_{0}^{t}
(\Delta f_0)({\bf x}_s)\ ds,
\end{eqnarray}
where $f_0=b_{{\bf v}}-u_0$  and the
function $u_0$ is such that
\begin{equation}\label{uzero:def}
\D u_0  = -{\rm{Div}}(\overline{X})-\ell_{0}
\end{equation}
is a  martingale  with increasing process
$2\|\overline{X}+\nabla u_0\|^2({\bf x}_t)\ dt.$
It is true by Proposition \ref{Covariance-1} that
\[\lim\limits_{t\to +\infty}\frac{1}{t}\E_{\overline\Q}({\bf Z}_{\ell, t}{\bf M}_t)=\lim\limits_{t\to +\infty}\frac{1}{t}\E_{\overline\Q}({\bf \wt{Z}}_{t}^0{\bf M}_t),\]
where ${\bf M}_t$, by (\ref{M-champ}), is a martingale with
increasing process $2\|\nabla \psi\|^2$. So using (\ref{M-champ}),
(\ref{Z-champ-zero})
 and a straightforward computation using integration
by parts formula for $(a {\bf \wt{Z}}_t^0+b{\bf M}_t)^2$, $a, b=0$
or $1$,  we obtain
\[{\bf \wt{Z}}_{t}^0{\bf M}_t=2 \int_{0}^{t}\langle \nabla f_0, \nabla \psi\rangle ({\bf x}_s)\ ds\]
and hence (recall that $\nabla b _\bfv = -\overline X (\bfv)$, see (\ref{Buse-geo}))
\begin{eqnarray*}
\lim\limits_{t\to +\infty}\frac{1}{t}\E_{\overline\Q}({\bf
\wt{Z}}_{t}^0{\bf M}_t)&=& 2 \int \langle\nabla f_0, \nabla
\psi\rangle\ \ d{\bf \wt{m}}\\
&=& -(m-2)\left(\int\langle \overline{X},
\nabla (\vf \circ \p)\rangle\ d{\bf \wt{m}} +  \int\langle \nabla u_0, \nabla
(\vf \circ \p)\rangle\ d\bf{\wt{m}}\right).
\end{eqnarray*}
Using the formula ${{\rm{Div}}}(\vf \circ \p\ \overline{X})=\vf \circ \p\
{{\rm{Div}}}\overline{X}+\langle\nabla(\vf \circ \p), \overline{X}\rangle$, we obtain
\begin{eqnarray*}
\int\langle \overline{X},
\nabla (\vf \circ \p)\rangle\ d{\bf \wt{m}}&=& \int\left({{\rm{Div}}}(\vf \circ \p\ \overline{X})-\vf \circ \p\
{{\rm{Div}}}\overline{X}\right)\ d{\bf \wt{m}}\\ &=&-\int\vf \circ \p\left(\langle \overline{X}, \nabla\ln k_{\bf v} \rangle+
{{\rm{Div}}}\overline{X}\right)\ d{\bf \wt{m}},
\end{eqnarray*}
where we used the foliated integration by parts formula $\int {\rm{Div}} Y \ d{\bf \wt{m}} = - \int \< Y, \nabla \ln k_{\bf v} \> \ d{\bf \wt{m}}.$  Observe that:
  \begin{eqnarray*}  2 \langle \nabla u_0, \nabla (\vf \circ \p) \rangle & = & \D (u_0 \ \vf \circ \p) - \D (u_0) \vf \circ \p - u_0 \D (\vf \circ \p) \\
&=&  \D (u_0 \ \vf \circ \p )  + \vf \circ \p \  {\rm{Div}}(\overline{X}) + \ell_{0} \vf \circ \p
- u_0 \D (\vf \circ \p),
\end{eqnarray*}
where we use the defining property (\ref{uzero:def}) of $u_0$.  When
we report in the integration $2\int\langle \nabla u_0, \nabla
(\vf \circ \p)\rangle\ d\bf{\wt{m}}$, the first term
vanishes because $\bf m$ is $\D$ harmonic, the second term is $-\int \vf \circ \p \Delta u_0\ d\bf{\wt{m}}$ by (\ref{uzero:def}) and  the third term vanishes by (\ref{volume}).  Again,  using the integration by parts
formula (\ref{int-by-path}) for $\int u_0\Delta (\vf \circ \p)\  d{\bf \wt m}$, we have
\[
\int\langle \nabla u_0, \nabla
(\vf \circ \p) \rangle\ d{\bf{\wt{m}}}=-\int \vf \circ \p( \Delta u_0 +  \langle
\nabla u_0, \nabla \ln k  \rangle )\  d{\bf \wt m}
\]
Finally,  we
obtain
\begin{eqnarray*}
{\rm{\bf (II)}}_{\ell}&=&(m-2)\int \vf \circ \p\left( \D
u_0+{\rm{Div}}\overline{X} +   \langle \nabla u_0+\overline{X},
\nabla \ln k_{\bf v}  \rangle \right) \ d{\bf {\wt
m}}\\
&=& (m-2)\int\vf \circ \p\langle \nabla u_0+\overline{X}, \nabla \ln k_{\bf v}
\rangle  \ d{\bf {\wt m}},
\end{eqnarray*}
where the last equality holds by using (\ref{uzero:def}) and (\ref{volume}).

For ${\rm{\bf (I)}}_{\ell}$, we first observe the convergence of
Martin kernels and harmonic measures.  For any $(x,\xi)=:{\bfv}\in \M \x \pp\M$, the
Martin kernel function $ k^{\l}_{\bf v}(y, \xi)$ converges to  $k_{\bf v}(y,\xi)$
pointwisely as $\l$ goes to zero.  For small $\l$  and fixed $x$,  the function
$\xi\mapsto \nabla \ln k_{x,\xi}^{\l}$ is H\"{o}lder continuous on
$\pp\M$ for some uniform exponent (\cite{H1}).   As a consequence,
we have the convergence of  $\nabla \ln k_{\bfv}^{\l}$ (and hence
$\nabla^{\l} \ln k_{\bfv}^{\l}$) to  $\nabla \ln k_{\bfv}$
when $\l$ tends to zero.  
By uniqueness,   the
harmonic measure $\wt{{\bf
 m}}^{\l}$ converges weakly to $\wt{{\bf
 m}}$ ($\l\to 0$) as well.  By Proposition \ref{formulas-l-h-Y-l},
\[ \ell_{\l}=\int\langle\overline X^{\l}, \nabla^{\l} \ln k_{\bf v}^{\l}\rangle_{\l}\ d\wt{{\bf m}}^{\l}
=\int\langle\overline X^{\l}, \nabla \ln k_{\bf v}^{\l}\rangle\
d\wt{{\bf m}}^{\l}.\]
Thus,
\begin{eqnarray*}
 {\rm{\bf (I)}}_{\ell}&=&\lim\limits_{\l\to
0}\frac{1}{\l}\int\langle(\overline X^{\l}- \overline {X}^0),
\nabla \ln k_{\bf v}\rangle\ d\wt{{\bf m}}+\lim\limits_{\l\to
0}\frac{1}{\l}\left(\int\langle\overline X,
\nabla \ln k_{\bf v}^{\l}\rangle\
d\wt{{\bf m}}^{\l}-\widehat{\ell}_{\l}\right)\\
&=:&  {\rm{\bf (III)}}_{\ell}+ {\rm{\bf (IV)}}_{\ell}
\end{eqnarray*}
if  $ {\rm{\bf (III)}}_{\ell} $ and ${\rm{\bf (IV)}}_{\ell}$ exist. The quantity $ {\rm{\bf (III)}}_{\ell}$,  by Corollary \ref{X-lam-Lem}, is 
\[
\int\langle-\vf \circ \p \ \overline{X}+\int_{0}^{+\infty}({\rm K}'_{s}(0)-S'_{\bf v}(0){\rm K}_{s}(0))\ ds,
\nabla\ln k_{\bf v}\rangle\ d\wt{{\bf
 m}}.
 \]  
  By
Proposition \ref{formulas-l-h-Y-l},
\[
 \widehat{\ell}_{\l}=-\int \left({\rm{Div}}\overline{X}+ \langle Z^{\l}, \overline{X}\rangle\right)\
  d\widehat{{\bf m}}^{\l}.
\]
For ${\rm{\bf (IV)}}_{\ell}$, let us first calculate $\int
{\rm{Div}}\overline{X}\ d\widehat{{\bf m}}$. We have
\begin{eqnarray*}
\int {\rm{Div}}\overline{X}\ d\widehat{{\bf m}}^{\l}&=& \int
e^{-2\vf ^{\l}\circ \p }{\rm{Div}}\overline{X}\ d\wt{{\bf m}}^{\l}\\
&=& \int  e^{-2\vf^{\l}\circ \p } {\rm{Div}}^{\l}\overline{X}\ d\wt{{\bf
m}}^{\l}+ \int
\left({\rm{Div}}\overline{X}-{\rm{Div}}^{\l}\overline{X}\right)\
d\widehat{{\bf m}}^{\l}\\
&=&\int  e^{-2\vf^{\l}\circ \p } {\rm{Div}}^{\l}\overline{X}\ d\wt{{\bf
m}}^{\l}-m\int \langle\nabla(\vf^{\l}\circ \p ), \overline{X}\rangle\
d\widehat{{\bf m}}^{\l},
\end{eqnarray*}
where the last equality holds since
$({{\rm{Div}}}^{\l}-{{\rm{Div}}})(\cdot)=m\langle\nabla (\vf^{\l} \circ \p ,
\cdot\rangle$ for $g^{\l}=e^{2\vf^{\l}}g$.  Note that
\begin{eqnarray*}
{\rm{Div}}^{\l}(e^{-2\vf^{\l}\circ \p }\overline{X})=e^{-2\vf^{\l}\circ \p }{\rm{Div}}^{\l}\overline{X}-2e^{-2\vf^{\l}\circ \p}\langle
\nabla^{\l}(\vf^{\l} \circ \p ), \overline{X}\rangle_{\l}.
\end{eqnarray*}
So we have
\begin{eqnarray*}
  \int {\rm{Div}}\overline{X}\ d\widehat{{\bf m}}^{\l}&=& \int
{\rm{Div}}^{\l}(e^{-2\vf^{\l}\circ \p }\overline{X})\ d\wt{{\bf m}}^{\l}+\int
2e^{-2\vf^{\l}\circ \p }\langle \nabla^{\l}\vf^{\l}\circ \p, \overline{X}\rangle_{\l}
\ d\wt{{\bf m}}^{\l}\\
&&\ \ \ \ \ \ \ \ \ \ \ \ \ \ \ \ \ \ \ \ \ \ \ \ \ \ \ \ \ \ \ \ -m\int \langle\nabla(\vf^{\l} \circ \p ), \overline{X}\rangle\
d\widehat{{\bf m}}^{\l}\\
&=&-\int\langle\overline{X}, \nabla^{\l}\ln
k_{\bf v}^{\l}\rangle_{\l}\ d\widehat{{\bf m}}^{\l}-(m-2)\int
\langle\nabla (\vf^{\l} \circ \p ), \overline{X}\rangle\ d\widehat{{\bf
m}}^{\l}\\
&=&-\int\langle\overline{X}, \nabla\ln
k_{\bf v}^{\l}\rangle\ d\widehat{{\bf m}}^{\l}-(m-2)\int \langle\nabla
(\vf^{\l} \circ \p ), \overline{X}\rangle\ d\widehat{{\bf m}}^{\l},
\end{eqnarray*}
where,  for the second equality,  we use the leafwise integration by parts formula $\int {\rm{Div}}^{\l} Y \ d{\bf \wt{m}^{\l}} = - \int \< Y, \nabla^{\l} \ln k^{\l}_{\bf v} \>_{\l} \ d{\bf \wt{m}^{\l}}$. 
 This  gives
\begin{equation}\label{ellhatlambda}
 \widehat{\ell}_{\l}=\int\langle\overline{X}, \nabla\ln
k_{\bf v}^{\l}\rangle\ d\widehat{{\bf m}}^{\l}.
\end{equation}
Finally, we obtain
\begin{eqnarray*}
  {\rm{\bf (IV)}}_{\ell}=\lim\limits_{\l\to 0}\int\frac{1}{\l}(e^{2\vf^{\l}\circ \p }-1)\langle\overline{X}, \nabla\ln
k_{\bf v}^{\l}\rangle\ d\widehat{{\bf m}}^{\l}=2\int
\vf\circ \p  \ \langle\overline{X}, \nabla\ln k_{\bf v}\rangle\ d\wt{{\bf
 m}}.
\end{eqnarray*}
\end{proof}

\begin{proof}[Proof of Theorem \ref{critical}]
   Let $(M, g)$ be a negatively
curved compact connected Riemannian manifold. Define the {\it
{volume entropy}} $v_g$ by:
\[ v_g \; = \; \lim\limits_{r \to +\infty } \frac {\ln {\rm{Vol}} (B(x, r))}{r} ,\]
where $B(x, r) $ is the ball of radius $r$ in $\M$.  we have $\ell_g \leq  v_g $, $ h_g
\leq \; v_g^2 $  (see
\cite{LeS} and the references within).  In particular, if $\lambda\in (-1, 1)\mapsto
g^{\lambda}\in \Re(M)$ is a $C^3$ curve of conformal changes of the
metric $g^0=g$,
\[ \ell_{g^\l }  \; \leq \; v_{g^\l }, \quad h_{g^\l } \; \leq \; v_{g^\l}^2 .\]

Assume $(M, g^0)$ is  locally symmetric. Then $\ell _{g^0} = v_{g^0}
$ and $h_{g^0} = v_{g^0}^2.$ Moreover it is known (Katok \cite{Ka})
that $v_0 $ is a global minimum of the volume entropy among metrics
$g$ which are conformal to $g^0$ and have the same volume and
(Katok-Knieper-Pollicott-Weiss \cite{KKPW}) that $\l \mapsto v_{g^\l
}$ is differentiable. In particular $v_{g^\l } $ is critical at $\l
= 0 $. Since, by Theorem \ref{main-thm}, $\ell _{g^\l } $ and
$h_{g^\l } $ are differentiable at $\l = 0 $, they have to be
critical as well.
\end{proof}

\begin{remark} We can also show Theorem \ref{critical} using the
formulas in Theorem \ref{Main-formulas}.  Indeed, the conclusion for
the stochastic entropy follows from (\ref{entropyderivative})
    since for a locally symmetric space, the solutions $u_1$ to (\ref{uone:def}) are constant (\cite{L5}) and $\|\nabla \ln k_{\bf v} \|^2 $  is also constant. The derivative is proportional  to $\int \vf \circ \p  \ d{\bf {m}}, $  which vanishes by (\ref{volume}).

     We also see that the stochastic entropy depends only on the volume for surfaces ($m= 2$). For the drift $\ell
     $, it  is true that for a locally symmetric space,  $\nabla \ln
k_{\bf v}=-\ell \nabla b_{\bf v}$ everywhere. The solutions $u_0$ to (\ref{uzero:def}) are
constant for a locally symmetric space as well (\cite{L5}). So
(\ref{lineardriftderivative}) reduces to
\[(\ell_{\l})'_0=-\int_{\scriptscriptstyle{M_0\times
\pp\M}}\vf \circ \p  \  \langle \int_{0}^{+\infty}({\rm K}'_{s}(0)-S'_{\bf v}(0){\rm K}_{s}(0))\ ds, \nabla\ln k_{\bf v}\rangle\ d\wt{{\bf
 m}},\]
which is zero because the vector $\int_{0}^{+\infty}({\rm K}'_{s}(0)-S'_{\bf v}(0){\rm K}_{s}(0))\ ds$
is orthogonal to ${\bf v}$ and hence is orthogonal to  $\nabla \ln k_{\bf v}$.
\end{remark}

\small{{\bf{Acknowledgments}}  The second author was partially supported by  NSFC (No.11331007 and 11422104) and Beijing Higher Education Young Elite Teacher Project. She would also like to thank  LPMA and the  Department of Mathematics of the University of Notre Dame for hospitality during her stays. We are really grateful to  the careful reading and the numerous remarks of the referee, which help us to  improve the writing.

\small
\end{document}